\documentclass[11pt,oneside,reqno]{amsart}

\usepackage{graphicx}
\usepackage{pict2e}
\usepackage{amssymb}
\usepackage{amsthm}                
\usepackage[margin=1in]{geometry}  
\usepackage{graphics}
\usepackage{epstopdf}
\usepackage{amsmath}
\usepackage{graphicx}
\usepackage{subfigure}
\usepackage{latexsym}
\usepackage{amsmath}
\usepackage{amsfonts}
\usepackage{amssymb}
\usepackage{tikz}
\usepackage{mathdots}
\usepackage{mathtools}
\usepackage{hyperref}
\usepackage{comment}
\usepackage{mathtools}
\usepackage{float}
\usepackage[mathscr]{euscript}
\usepackage{enumerate}
\usepackage{epsfig}
\usepackage { graphicx }
\usepackage { hyperref }
\usepackage { graphics }
\usepackage { graphicx }

\theoremstyle{definition}
\newtheorem{theorem}{Theorem}[section]
\newtheorem{lemma}[theorem]{Lemma}
\newtheorem{corollary}[theorem]{Corollary}
\newtheorem{proposition}[theorem]{Proposition}

\theoremstyle{definition}
\newtheorem{definition}[theorem]{Definition}
\newtheorem{example}[theorem]{Example}
\theoremstyle{remark}
\newtheorem{remark}[theorem]{Remark}
\numberwithin{equation}{section}

\bibliographystyle{alphanum} 

\numberwithin{equation}{section}

\begin{document}

\title{The tail of a quantum spin network}

\author{Mustafa Hajij}
\address{Department of Mathematics, Louisiana State University, 
Baton Rouge, LA 70803 USA}
\email{mhajij1@math.lsu.edu}





\begin{abstract} The tail of a sequence $\{P_n(q)\}_{n \in \mathbb{N}}$ of formal power series in $\mathbb{Z}[[q]]$ is the formal power series whose first $n$ coefficients agree up to a common sign with the first $n$ coefficients of $P_n$. This paper studies the tail of a sequence of admissible trivalent graphs with edges colored $n$ or $2n$. We use local skein relations to understand and compute the tail of these graphs. We also give product formulas for the tail of such trivalent graphs. Furthermore, we show that our skein theoretic techniques naturally lead to a proof for the Andrews-Gordon identities for the two variable Ramanujan theta function as well to corresponding identities for the false theta function.
\end{abstract}

 \maketitle
 
 \tableofcontents

\section{Introduction}
The colored Jones polynomial is a link invariant that produces a sequence of Laurent polynomials in one variable with integer coefficients. In \cite{Dasbach} Dasbach and Lin showed that for an alternating link the absolute values of the first three and the last three leading coefficients of the colored Jones polynomial are independent of $n$, provided $n$ is sufficiently large. Let $\mathcal{P}=\{P_n(q)\}_{n \in \mathbb{N}}$ be a sequence of formal power series in $\mathbb{Z}[[q]]$. The tail of the sequence $\mathcal{P}$- if it exists - is the formal power series in $\mathbb{Z}[[q]]$ whose first $n$ coefficients agree up to a common sign with the first $n$ coefficients of $P_n$. In \cite{Cody} Armond and Dasbach introduced the head and the tail of the colored Jones polynomial of an alternating link. These two link invariants which take the form of formal $q$-series with integer coefficients. The existence of these two power series was conjectured by Dasbach and Lin \cite{Dasbach} and was proven by Armond in \cite{Cody2}. Higher stability of the coefficients of the colored Jones polynomial of an alternating link was shown by Garoufalidis and Lˆe in \cite{klb}. Calculations of the tail of the colored Jones polynomial were done by a number of authors, see Armond and Dasbach \cite{Cody}, Garoufalidis and Lˆe \cite{klb}, and Hajij \cite{Hajij}. Recently, Garoufalidis and Vuong  \cite{klb2} have given an algorithm to compute the tails of the colored Jones polynomial of alternating links.\\

Skein theoretic techniques have been used in \cite{Cody} and \cite{Cody2} to understand the head and tail of an alternating link. Write $\mathcal{S}(S^{3})$ to denote the Kauffman Bracket Skein Module of $S^{3}$. Let $L$ be an alternating link and let $D$ be a reduced link diagram of $L$. It was proven in \cite{Cody} that for an adequate link $L$ the first $(n+1)$ coefficients of $n^{th}$ unreduced colored Jones polynomial coincide with the first $(n+1)$ coefficients of the evaluation in $\mathcal{S}(S^{3})$ of a certain quantum spin network obtained from the link diagram $D$. Hence, studying the tail of the colored Jones polynomial can be reduced to studying the tail of these quantum spin networks.\\

A quantum spin network is a banded trivalent graph with edges labeled by non-negative integers, also called the colors of the edges, and the three edges meeting at a vertex satisfy some admissibility conditions. The main purpose of this paper is to understand the tail of a sequence of planner quantum spin network with edges colored $n$ or $2n$. Our method to study the tail of such graphs relies mainly on adapting various skein theoretic identities to new ones that can in turn be used to compute and understand the tail of such graphs. Studying the tail of these graphs via local skein relations does not only give an intuitive method to compute the tails but also demonstrates certain equivalence between the tails of different quantum spin networks as well as the existence of the tail of graphs that are not necessarily derived from alternating links.\\ 

The $q$-series obtained from knots in this way appear to be connected to classical number theoretic identities. Hikami \cite{hikami} realized that that Rogers-Ramanujan identities appear in the study of the
colored Jones polynomial of torus knots. In \cite{Cody2} Armond and Dasbach calculate the head and the tail of the colored Jones polynomial via multiple methods and use these computations to prove number theoretic identities. In this paper we show that the skein theoretic techniques we developed herein can be also used to prove classical identities in number theory. In particular we use skein theory to prove the Andrews-Gordon identities for the two variable Ramanujan theta function, as well as corresponding identities for the false theta function.

\subsection{Plan of the paper} In \ref{NT}, we give the  definitions for the number theoretic functions that we will appear and list some of their properties. In \ref{ST}, we give the necessary skein Theory background. In section \ref{3}, we discuss the existence of the tail of a sequence of skein elements. In section \ref{section4}, we study the tail of a sequence of quantum spin networks via various local skein relations. In section \ref{section5}, we give product formulas for the tail. In section \ref{section6}, we apply our results to study the tail of the colored Jones polynomial and we show that linear skein theory can be used to prove both the Andrews-Gordon identity for the theta function and a corresponding identity for the false theta function.
\subsection{Acknowledgments}I would like to express my gratitude to Oliver Dasbach for his advice and patience. I am also grateful to to Pat Gilmer for teaching me skein theory. I also appreciate the help of Moshe Cohen, Hany Hawasly, Kyle Isvan, Robert Osburn, Eyad Said, and Anastasiia Tsvietkova for offering a number of helpful comments.
\section{Background}
\label{section2}
\subsection{Number Theory}
\label{NT}
In this section we give the definitions of the general Ramanujan theta function and false theta functions and we list some of their properties.\\

Recall that $q$-Pochhammer symbol, denoted $(a;q)_n$, is defined by 
\begin{equation*}
(a;q)_n=\prod\limits_{j=0}^{n-1}(1-aq^j).
\end{equation*}
The $q$-binomial coefficients are given by
\begin{equation*}
\label{qb}
{n \brack i}_{q}=\frac{(q;q)_n}{(q;q)_i(q;q)_{n-i}}.
\end{equation*}
\begin{enumerate}
\item
The general two variable Ramanujan theta function, see \cite{andd2}, is defined by :
\begin{equation}
\label{oneone}
f(a,b)=\sum\limits_{i=0}^{\infty}a^{i(i+1)/2}b^{i(i-1)/2}+\sum\limits_{i=1}^{\infty}a^{i(i-1)/2}b^{i(i+1)/2}
\end{equation}
The definition of $f(a,b)$ implies
 \begin{eqnarray*}
 f(a,b)=f(b,a).
  \end{eqnarray*}
The Jacobi triple product identity of $f(a,b)$ is given by
\begin{equation*}
f(a,b)=(-a;ab)_{\infty}(-b,ab)_{\infty}(ab,ab)_{\infty},
\end{equation*}
It follows immediately from the Jacobi triple product identity that
\begin{eqnarray*}
 f(-q^2,-q)=(q;q)_{\infty}.
 \end{eqnarray*}
The function $f(a,b)$ specializes to (\ref{oneone})
\begin{equation}
f(-q^{2k},-q)=\sum\limits_{i=0}^{\infty}(-1)^i q^{k(i^2+i)}q^{i(i-1)/2}+\sum\limits_{i=1}^{\infty}(-1)^i q^{k(i^2-i)}q^{i(i+1)/2}.
\end{equation}
The Andrews-Gordon identity for the Ramanujan theta function is given by
\begin{eqnarray}
\label{and1}
f(-q^{2k},-q)=(q,q)_{\infty}\sum\limits_{l_1=0}^{\infty}\sum\limits_{l_{2}=0}^{\infty}...\sum\limits_{l_{k-1}=0}^{\infty}\frac{q^{\sum\limits_{j=1}^{k-1}(i_j(i_j+1))}}{\prod\limits_{j=1}^{k-1}(q,q)_{l_j}}
\end{eqnarray}
where $i_j=\sum\limits_{s=j}^{k-1}l_s$. This identity is a generalization of the second Rogers-Ramanujan identity
\begin{equation}
f(-q^{4},-q)=(q,q)_{\infty}\sum\limits_{i=0}^{\infty}\frac{q^{i^2+i}}{(q,q)_{i}}
\end{equation}
\item
The general two variable Ramanujan false theta function is given by (e.g. \cite{andd1}):
\begin{equation}
\Psi(a,b)=\sum\limits_{i=0}^{\infty}a^{i(i+1)/2}b^{i(i-1)/2}-\sum\limits_{i=1}^{\infty}a^{i(i-1)/2}b^{i(i+1)/2}
\end{equation}
In particular
\begin{equation}
\Psi(q^{2k-1},q)=\sum\limits_{i=0}^{\infty} q^{ki^2+(k-1)i}-\sum\limits_{i=1}^{\infty} q^{k(i^2-i)+i}
\end{equation}
We will show that the Andrews-Gordon identities (\ref{and1}) have corresponding identities for the false Ramanujan theta function:
\begin{eqnarray}
\label{fock2}
\Psi(q^{2k-1},q)=(q,q)_{\infty}\sum\limits_{l_1=0}^{\infty}\sum\limits_{l_{2}=0}^{\infty}...\sum\limits_{l_{k-1}=0}^{\infty}\frac{q^{\sum\limits_{j=1}^{k-1}(i_j(i_j+1))}}{(q,q)^2_{l_{k-1}}\prod\limits_{j=1}^{k-2}(q,q)_{l_j}}
\end{eqnarray}
where $i_j=\sum\limits_{s=j}^{k-1}l_s$. The latter identity is a generalization of the following identity (Ramanujan's notebook, Part III, Entry $9$ \cite{rama})
\begin{eqnarray}\Psi(q^{3},q)=(q,q)_{\infty}\sum\limits_{i=0}^{\infty}\frac{q^{i^2+i}}{(q;q)_i^2}
\end{eqnarray}
\end{enumerate}
Using skein theory, we recover and prove the identities (\ref{and1}) and (\ref{fock2}) in Theorem \ref{usingskein}.
\begin{remark}
The identities (\ref{and1}) and (\ref{fock2}) can be derived using $q$-series techniques. See \cite{robert}.  Nonetheless, we will give later a proof that we will give later utilizes completely different tools.
\end{remark}
\subsection{Skein Theory}
\label{ST}
Consider the $3$-manifold $F\times [0,1]$, where $F$ is a connected oriented surface. Denote by $ \mathbb{Q}(A)$ the field generated by the indeterminate $A$ over the rational numbers. Furthermore, set $q=A^4$.
\begin{definition}(J. Przytycki \cite{Przytycki} and V. Turaev \cite{tur})
 The \textit{Kauffman Bracket Skein Module} of $M=F\times [0,1]$, denoted by $\mathcal{S}(F)$, is the $ \mathbb{Q}(A)$ module generated by the set of isotopy classes of framed links in $M$(including the empty knot) modulo the submodule generated by the Kauffman relations \cite{Kauffman1}:
\begin{eqnarray*}(1)\hspace{3 mm}
  \begin{minipage}[h]{0.06\linewidth}
        \vspace{0pt}
        \scalebox{0.04}{\includegraphics{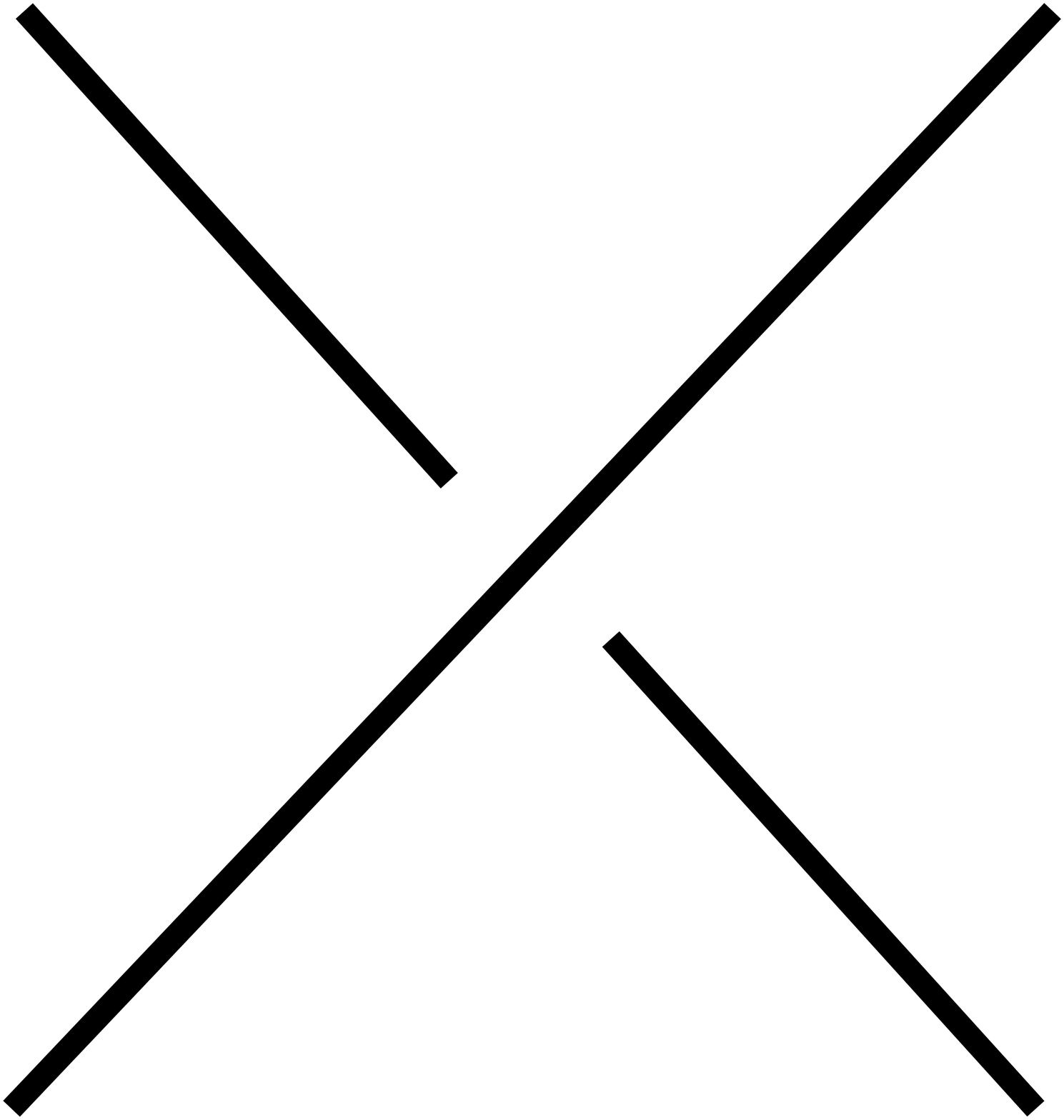}}
   \end{minipage}
   -
     A 
  \begin{minipage}[h]{0.06\linewidth}
        \vspace{0pt}
        \scalebox{0.04}{\includegraphics{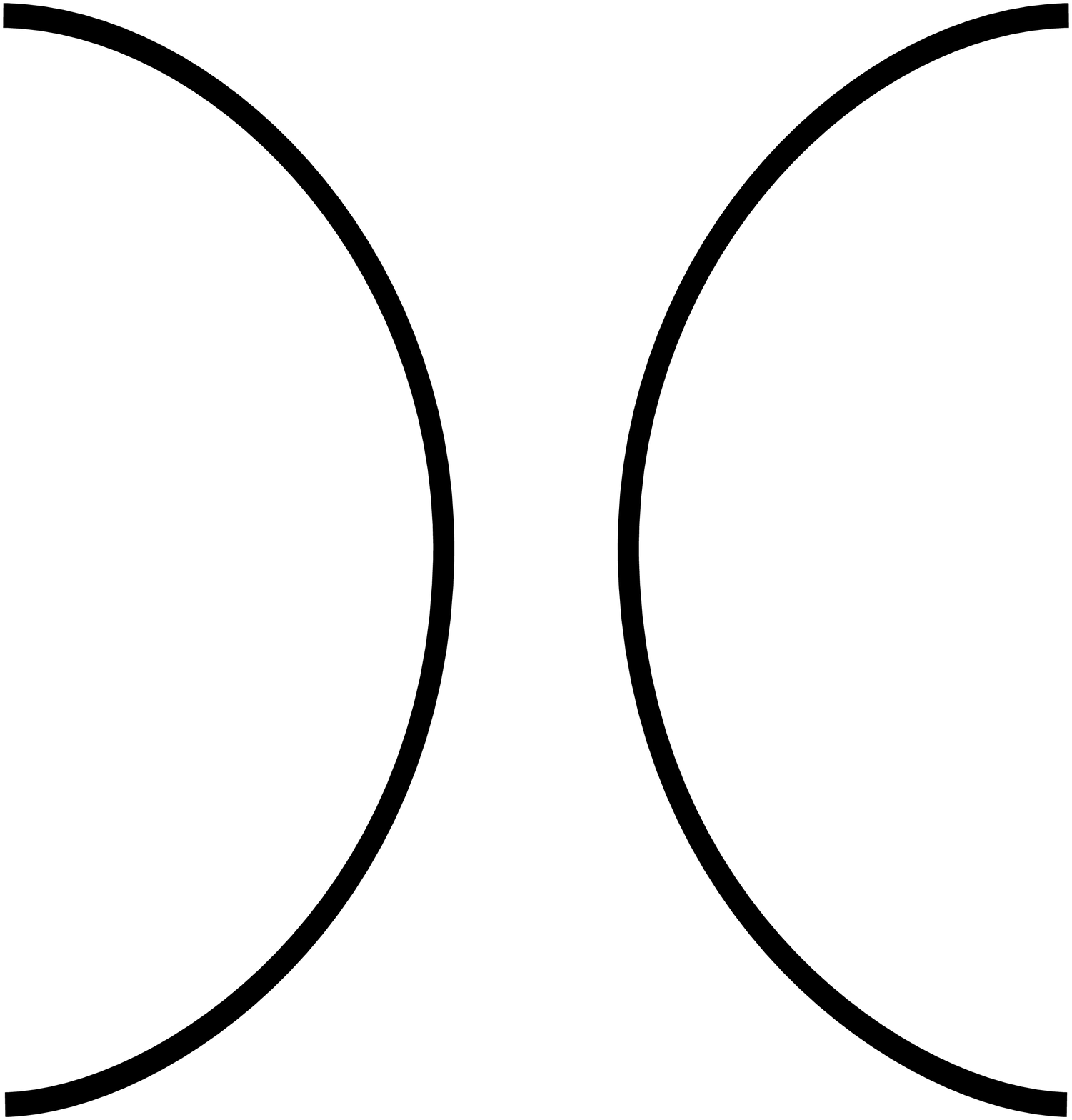}}
   \end{minipage}
   -
  A^{-1} 
  \begin{minipage}[h]{0.06\linewidth}
        \vspace{0pt}
        \scalebox{0.04}{\includegraphics{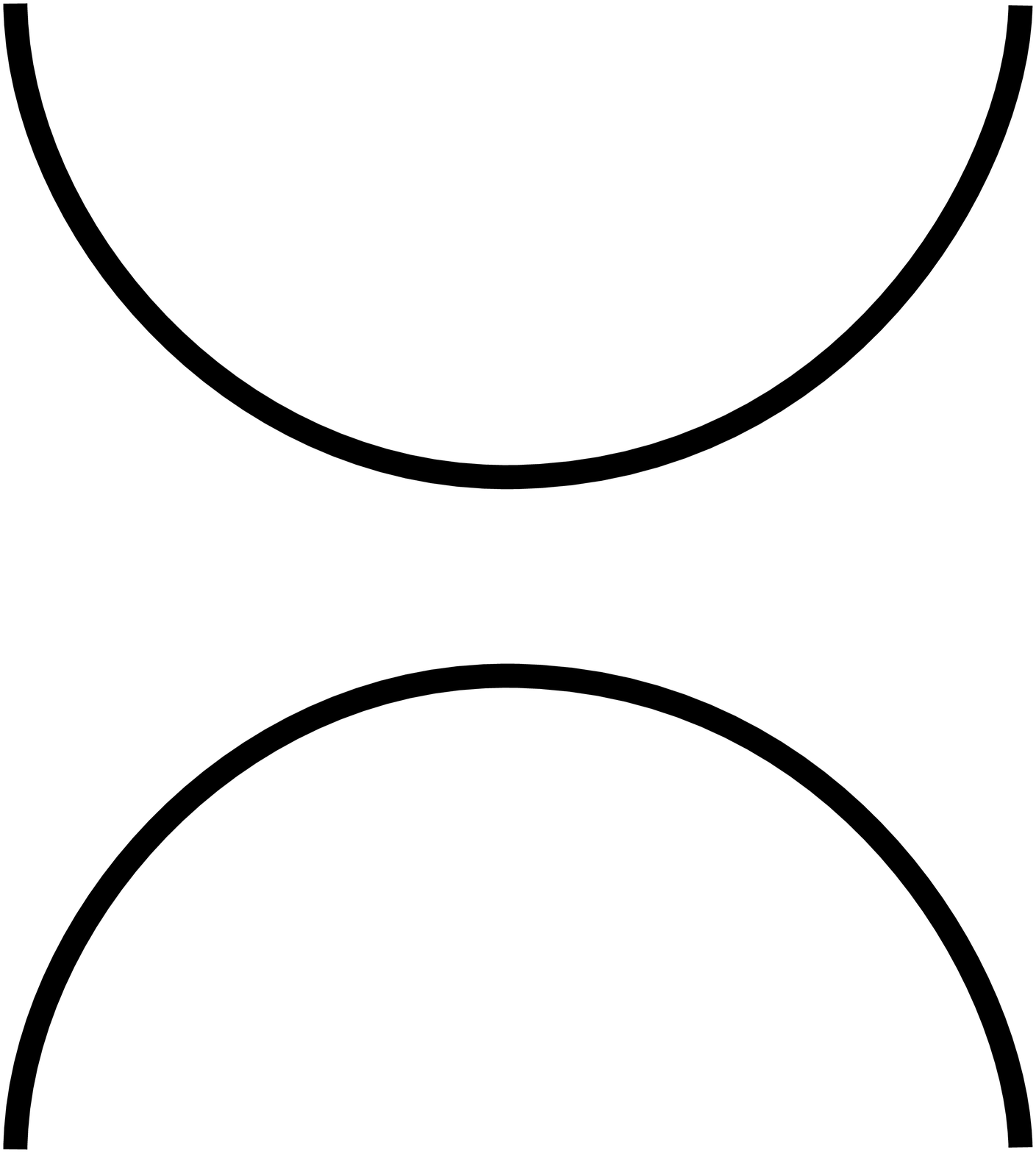}}
   \end{minipage}
, \hspace{20 mm}
  (2)\hspace{3 mm} L\sqcup
   \begin{minipage}[h]{0.05\linewidth}
        \vspace{0pt}
        \scalebox{0.02}{\includegraphics{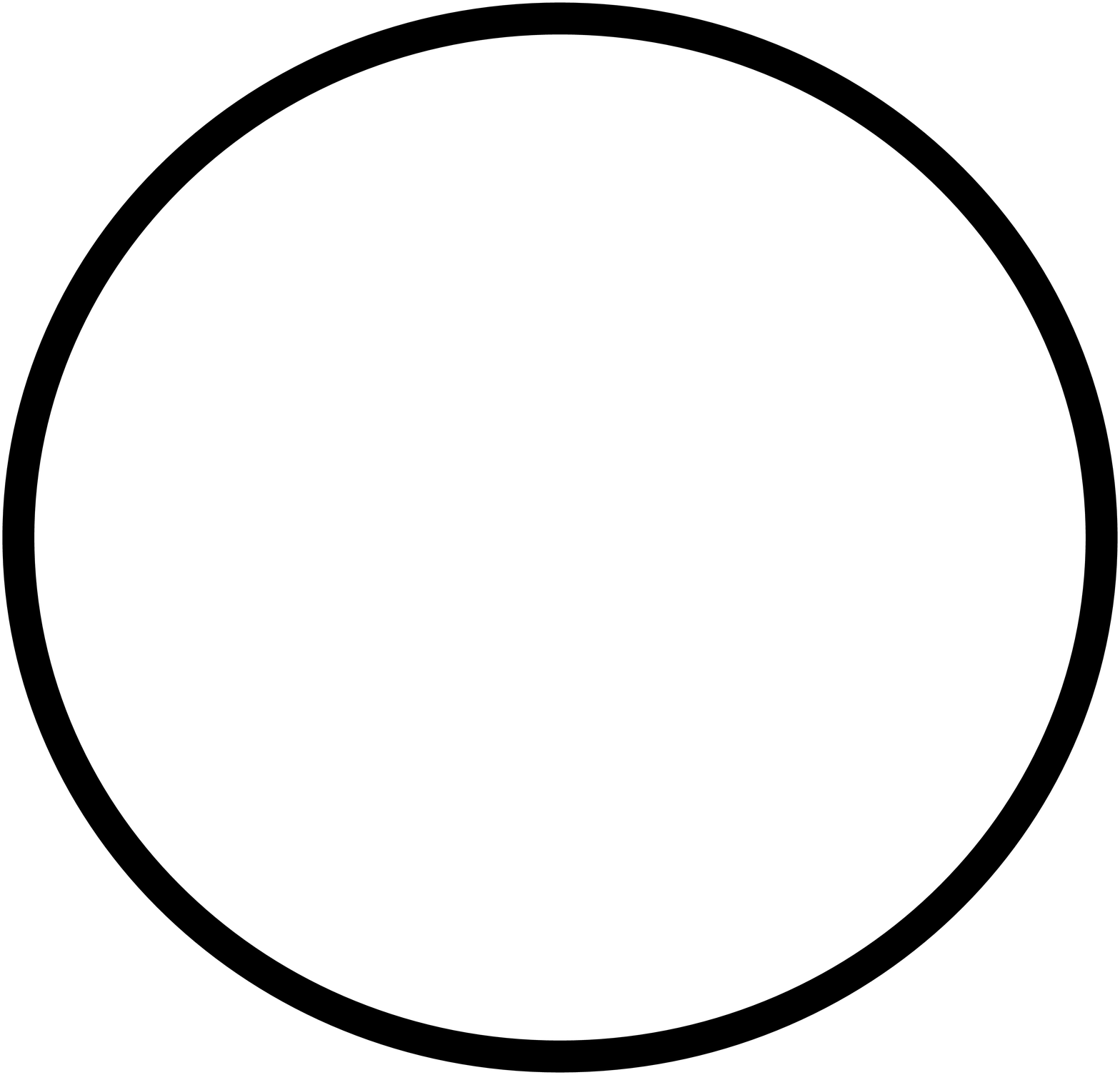}}
   \end{minipage}
  +
  (A^{2}+A^{-2})L. 
  \end{eqnarray*}
where $L\sqcup$ \begin{minipage}[h]{0.05\linewidth}
        \vspace{0pt}
        \scalebox{0.02}{\includegraphics{simple-circle}}
   \end{minipage}  consists of a framed link $L$ in $M$ and the zero-framed knot 
    \begin{minipage}[h]{0.05\linewidth}
        \vspace{0pt}
        \scalebox{0.02}{\includegraphics{simple-circle}}
   \end{minipage} that bounds a disk in $M$.
 \end{definition}    
Since we are considering $F\times [0,1]$ we may considers an appropriate version of link diagrams in $F$ to represent the classes of $\mathcal{S}(F)$ rather than framed links in $M$. A relative version of the Kauffman bracket skein module can be defined if $M$ has a boundary. In this case we specify a finite (possibly empty) set of points on the boundary of $M$. The relative module is the module generated by the set of isotopy classes of framed links together with arcs joining the designated boundary points of $M$ modulo the Kauffman relations specified above.\\

We will work with the skein module of the sphere $\mathcal{S}(S^{2})$. Let $D$ be any diagram in $\mathcal{S}(S^{2})$. The Kauffman relations imply that every crossing in $D$ and every unknot can be eliminated. Thus one can write $D =\langle D \rangle\phi$ in $\mathcal{S}(S^{2})$, where $\langle D \rangle\in \mathbb{Q}(A)$. The coefficient $\langle D \rangle$ is usually called the Kauffman bracket evaluation of $D$ in $\mathcal{S}(S^{2})$. In this case the Kauffman bracket provides an isomorphism $\mathcal{S}(S^{2})\cong\mathbb{Q}(A)$, induced by sending $D$ to $\langle D \rangle$. In particular it send the empty link to $1$. We will also work with the relative skein module $\mathcal{S}(D^2,2n)$, where the rectangular disk $D^2$ has $n$ designated points on the top edge and $n$ designated points on the bottom edge. This relative skein module can be thought of as the $Q(A)$-module generated by the crossingless matching diagrams of the $2n$ points of the disk $D^2$. The module $\mathcal{S}(D^2,2n)$ admits a multiplication given by vertical juxtaposition of two diagrams in $\mathcal{S}(D^2,2n)$. With this multiplication, $\mathcal{S}(D^2,2n)$ is an associative algebra over $Q(A)$ known as the \textit{$n^{th}$ Temperley-Lieb algebra} $TL_n$.\\

The $n^{th}$ \textit{Jones-Wenzl idempotent} (projector), denoted $f^{(n)}$, is a special element in $TL_n$ first discovered by Jones \cite{Jones}. This element and its properties will be used extensively in this paper and we shall adapt a common graphical presentation for this element which is due Lickorish \cite{Lickorish3}. In this graphical notation one thinks of $f^{(n)}$ as an empty box with $n$ strands entering and $n$ strands leaving the opposite side. See Figure \ref{JW}.
\begin{figure}[H]
  \centering
   {\includegraphics[scale=0.13]{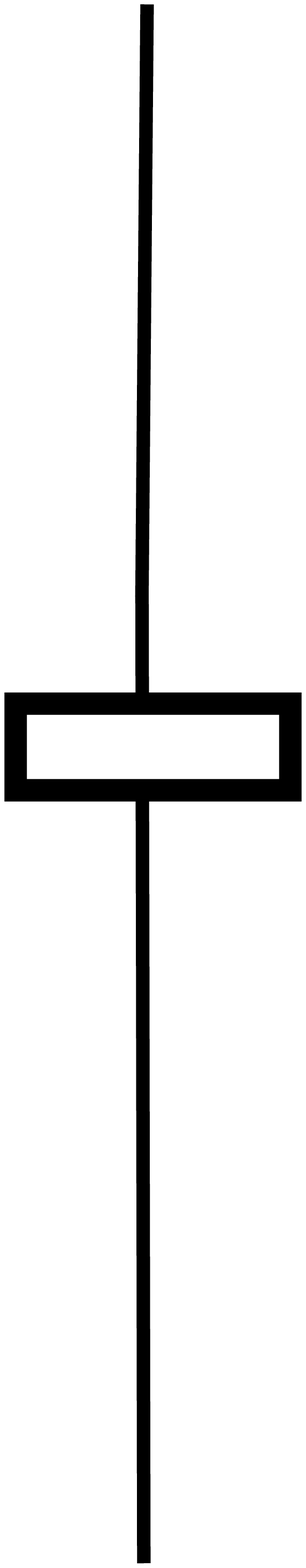}
   \put(-20,+70){\footnotesize{$n$}}
    \caption{The $n^{th}$ Jones-Wenzl idempotent}
  \label{JW}}
\end{figure}
The defining properties of $f^{(n)}$, given in the graphical notation mentioned earlier, are:
\begin{eqnarray}
\label{properties}
\hspace{0 mm}
    \begin{minipage}[h]{0.21\linewidth}
        \vspace{0pt}
        \scalebox{0.115}{\includegraphics{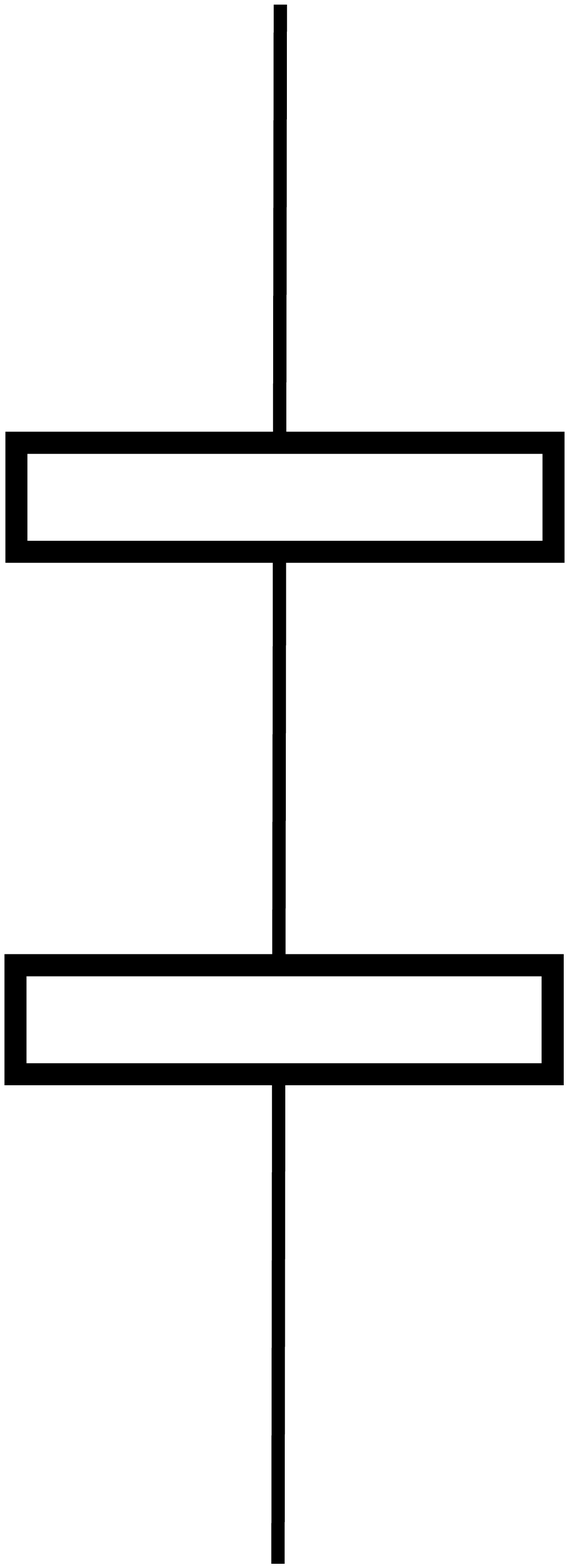}}
        \put(0,+80){\footnotesize{$n$}}
       
   \end{minipage}
  = \hspace{5pt}
     \begin{minipage}[h]{0.1\linewidth}
        \vspace{0pt}
         \hspace{100pt}
        \scalebox{0.115}{\includegraphics{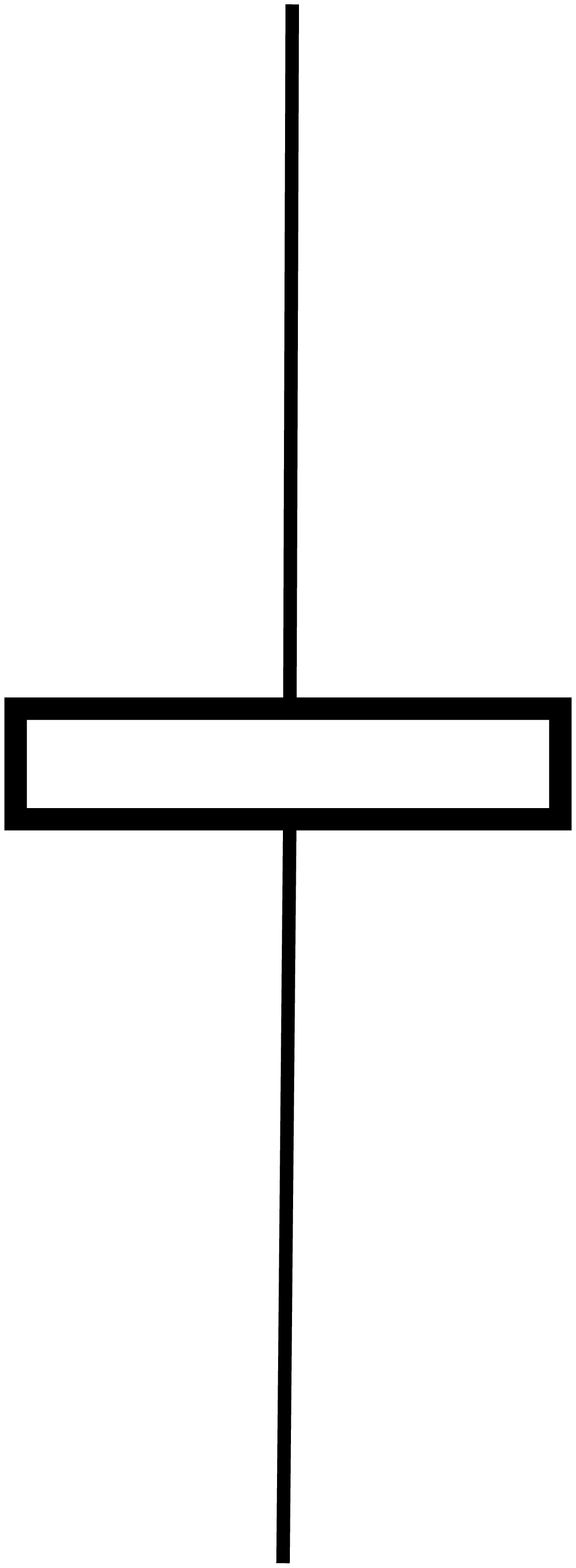}}
        \put(-60,80){\footnotesize{$n$}}
   \end{minipage}
    , \hspace{35 mm}
    \begin{minipage}[h]{0.09\linewidth}
        \vspace{0pt}
        \scalebox{0.115}{\includegraphics{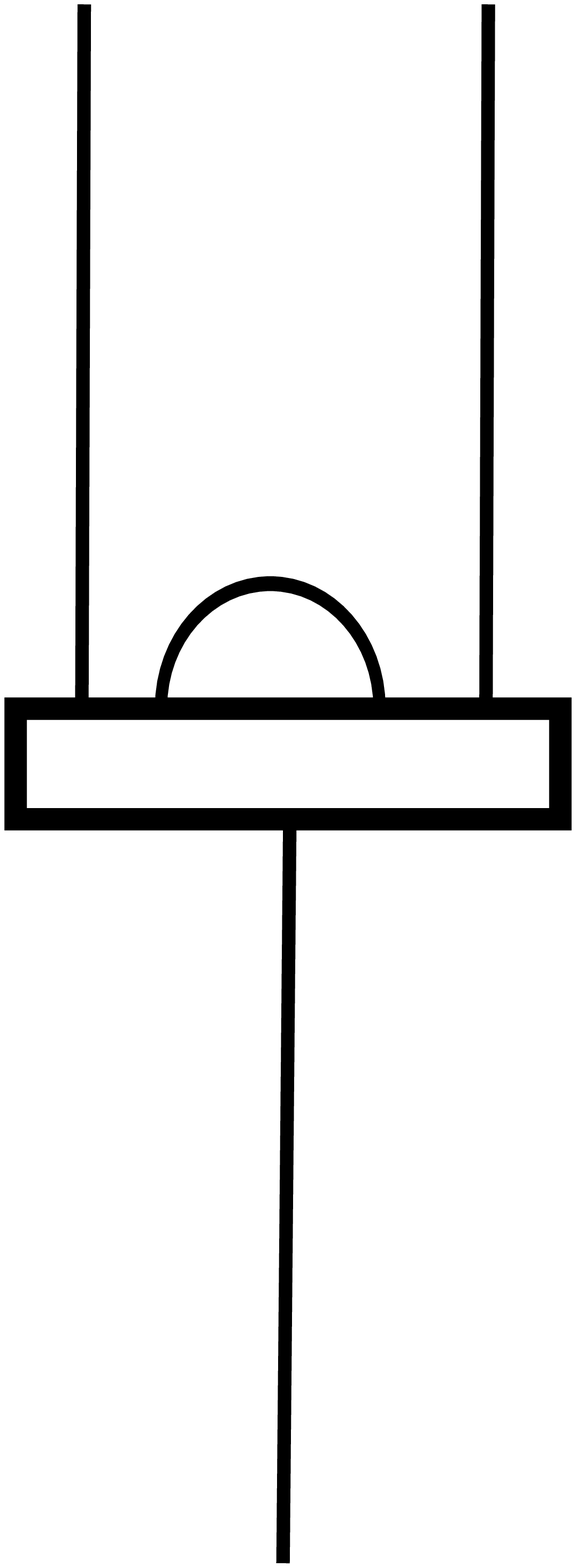}}
         \put(-70,+82){\footnotesize{$n-i-2$}}
         \put(-20,+64){\footnotesize{$1$}}
        \put(-2,+82){\footnotesize{$i$}}
        \put(-28,20){\footnotesize{$n$}}
   \end{minipage}
   =0.
   \label{AX}
  \end{eqnarray}
The second equation of \ref{AX} holds for $1\leq i\leq n-1$ and it is usually called \textit{the annihilation axiom}. The $n^{th}$ Jones-Wenzl idempotent has a recursive formula due to Wenzl \cite{Wenzl} and this formula is stated graphically as follows:   
\begin{align}
\label{recursive}
  \begin{minipage}[h]{0.05\linewidth}
        \vspace{0pt}
        \scalebox{0.12}{\includegraphics{nth-jones-wenzl-projector}}
         \put(-20,+70){\footnotesize{$n$}}
   \end{minipage}
   =
  \begin{minipage}[h]{0.08\linewidth}
        \hspace{8pt}
        \scalebox{0.12}{\includegraphics{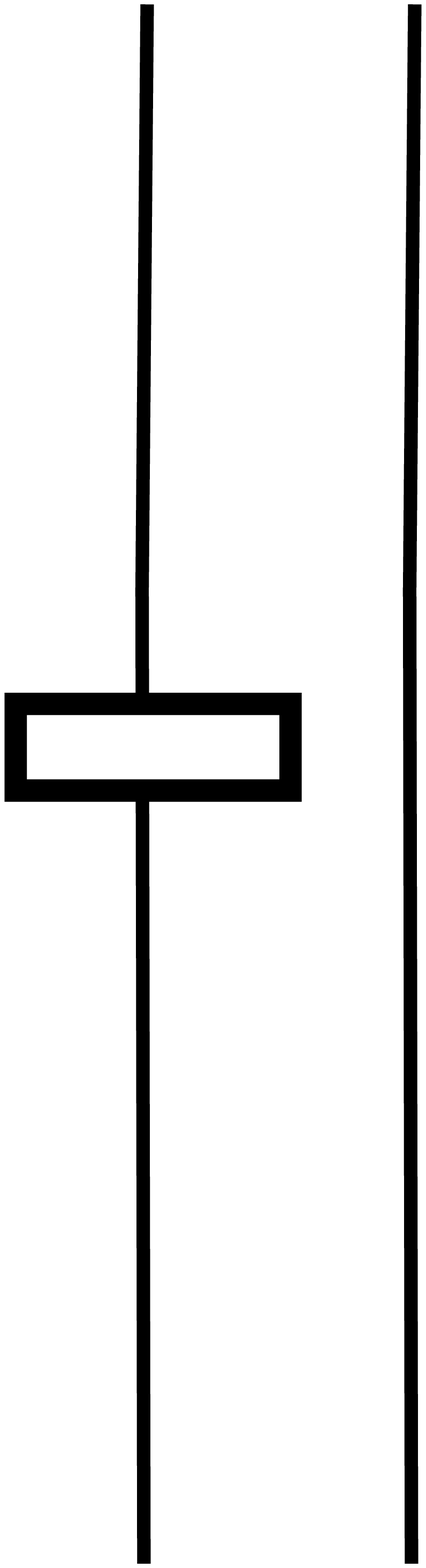}}
        \put(-42,+70){\footnotesize{$n-1$}}
        \put(-8,+70){\footnotesize{$1$}}
   \end{minipage}
   \hspace{9pt}
   -
 \Big( \frac{\Delta_{n-2}}{\Delta_{n-1}}\Big)
  \hspace{9pt}
  \begin{minipage}[h]{0.10\linewidth}
        \vspace{0pt}
        \scalebox{0.12}{\includegraphics{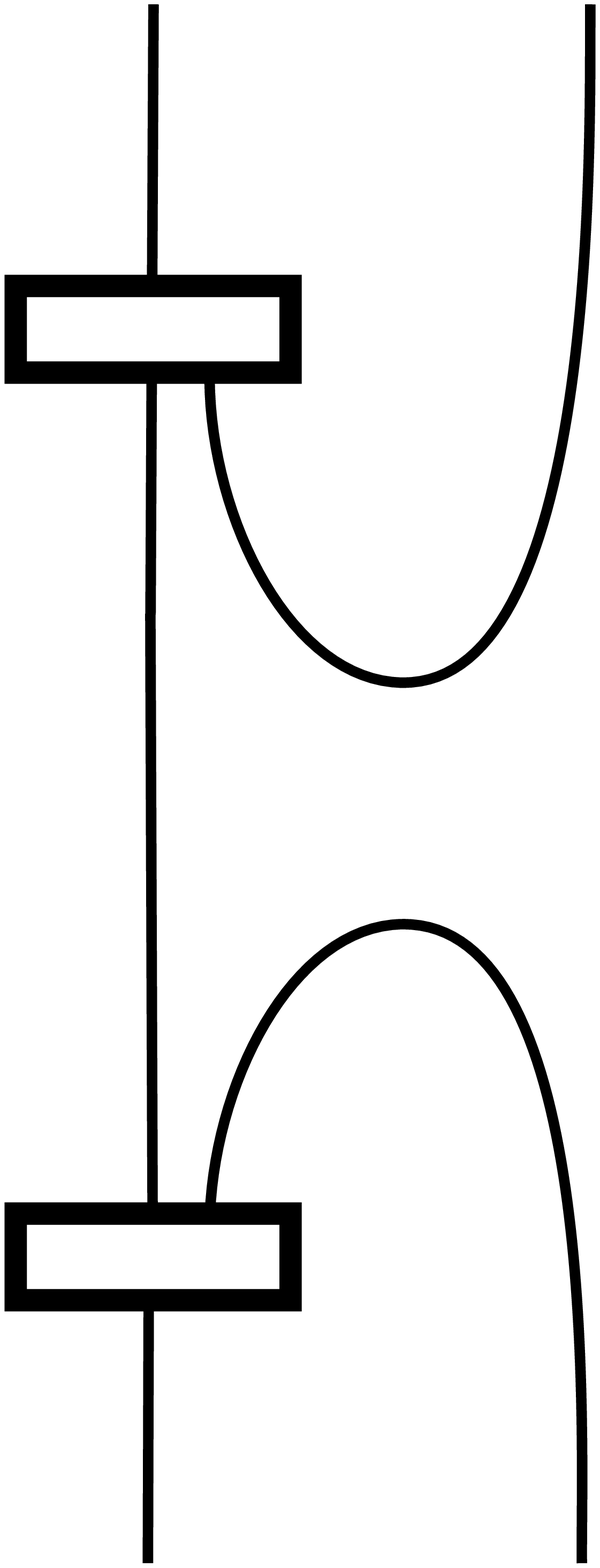}}
         \put(2,+85){\footnotesize{$1$}}
         \put(-52,+87){\footnotesize{$n-1$}}
         \put(-25,+47){\footnotesize{$n-2$}}
         \put(2,+10){\footnotesize{$1$}}
         \put(-52,+5){\footnotesize{$n-1$}}
   \end{minipage}
  , \hspace{20 mm}
    \begin{minipage}[h]{0.05\linewidth}
        \vspace{0pt}
        \scalebox{0.12}{\includegraphics{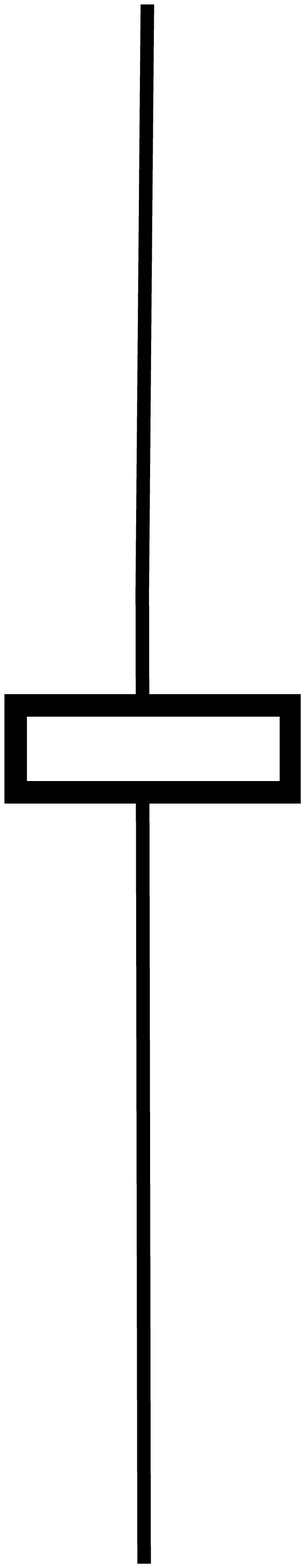}}
        \put(-20,+70){\footnotesize{$1$}}
   \end{minipage}
  =
  \begin{minipage}[h]{0.05\linewidth}
        \vspace{0pt}
        \scalebox{0.12}{\includegraphics{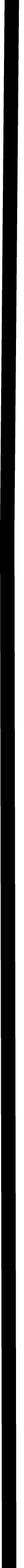}}
   \end{minipage}   
  \end{align}
  where 
\begin{equation*}
 \Delta_{n}=(-1)^{n}\frac{q^{(n+1)/2}-q^{-(n+1)/2}}{q^{1/2}-q^{-1/2}}.
\end{equation*}
The polynomial $\Delta_{n}$ is related to $[n+1]$, the $(n+1)^{th}$ quantum integer, by $\Delta _{n}=(-1)^{n}[n+1]$.
We assume that $f^{(0)}$ is the empty tangle. A proof of Wenzl's formula can be found in \cite{Lickorish1} and \cite{Wenzl}. The Jones-Wenzl idempotent satisfy the following identities
\begin{eqnarray}
\label{properties}
\hspace{0 mm}
\Delta_{n}=
  \begin{minipage}[h]{0.1\linewidth}
        \vspace{0pt}
        \scalebox{0.12}{\includegraphics{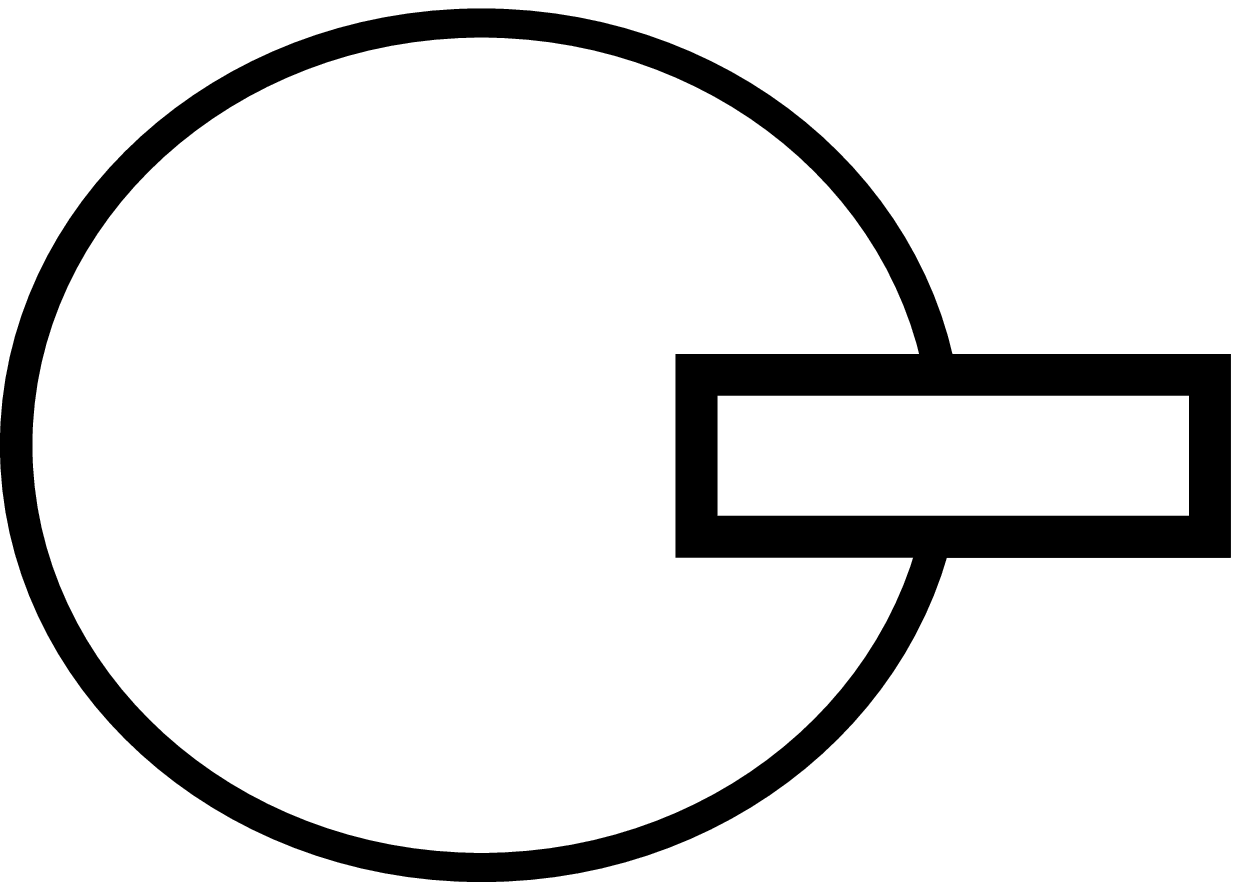}}
        \put(-29,+34){\footnotesize{$n$}}
   \end{minipage}
   , \hspace{19 mm}
     \begin{minipage}[h]{0.08\linewidth}
        \vspace{0pt}
        \scalebox{0.115}{\includegraphics{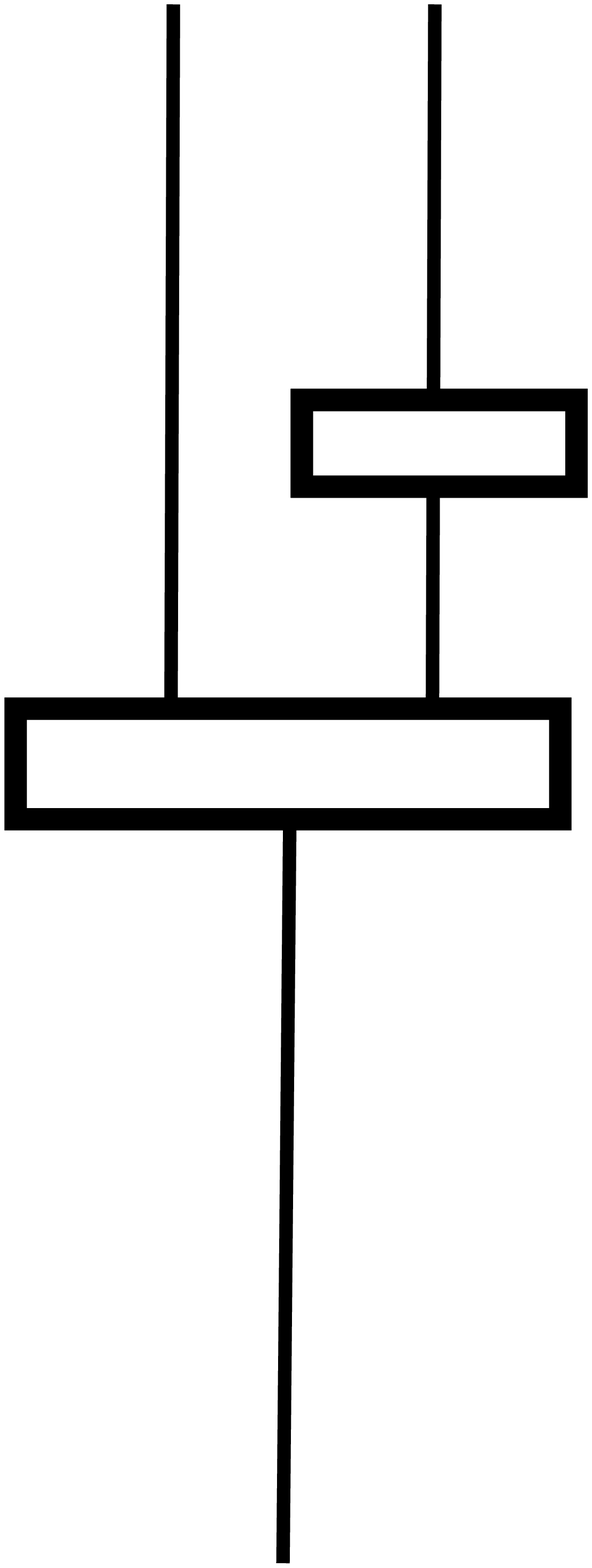}}
        \put(-34,+82){\footnotesize{$n$}}
        \put(-19,+82){\footnotesize{$m$}}
        \put(-46,20){\footnotesize{$m+n$}}
   \end{minipage}
  =\hspace{13pt}
     \begin{minipage}[h]{0.1\linewidth}
        \hspace{30pt}
        \scalebox{0.115}{\includegraphics{idempotent2}}
        \put(-9,82){\footnotesize{$m+n$}}
   \end{minipage}
  \end{eqnarray}
  and
  \begin{eqnarray}
\label{properties2}
    \begin{minipage}[h]{0.09\linewidth}
        \vspace{0pt}
        \scalebox{0.115}{\includegraphics{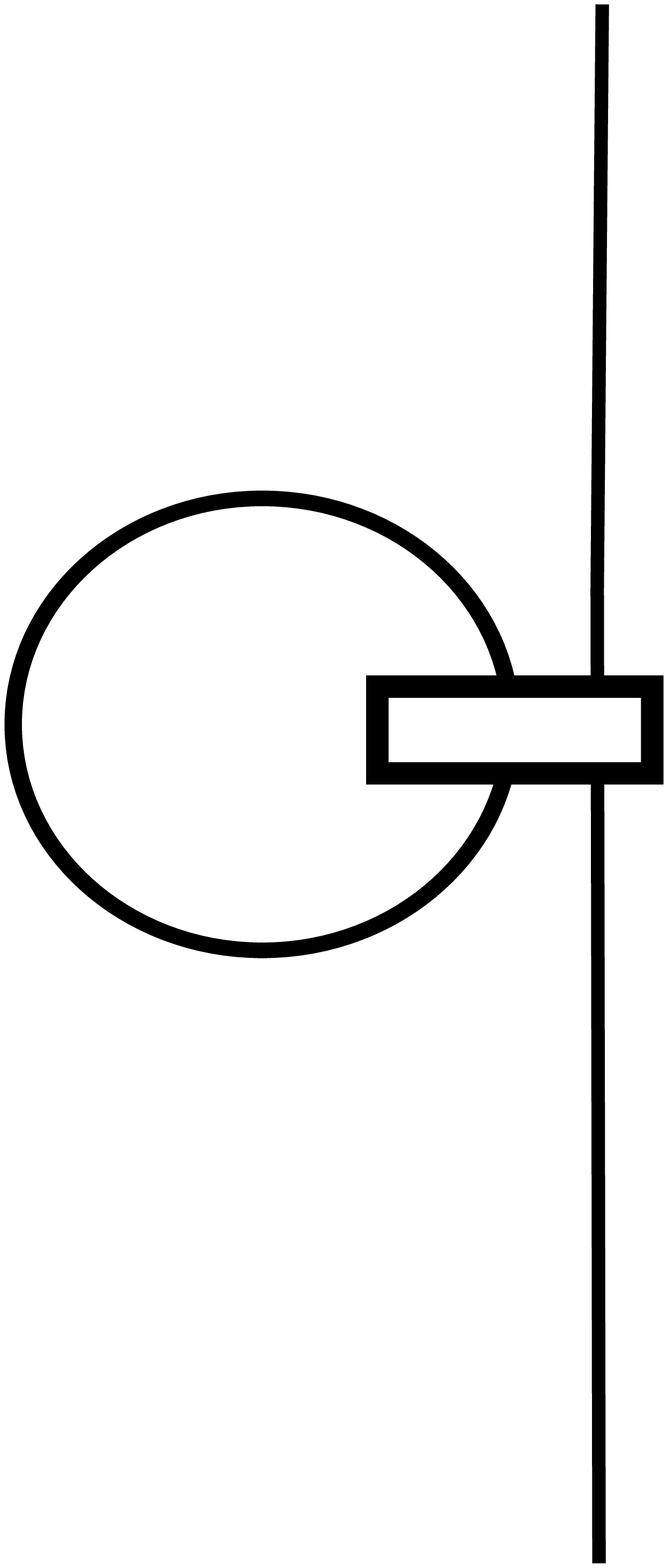}}
          \put(-13,+82){\footnotesize{$n$}}
        \put(-29,+70){\footnotesize{$m$}}
   \end{minipage}
   =\frac{\Delta_{m+n}}{\Delta_{n}}\hspace{1 mm}  
    \begin{minipage}[h]{0.06\linewidth}
        \vspace{0pt}
        \scalebox{0.115}{\includegraphics{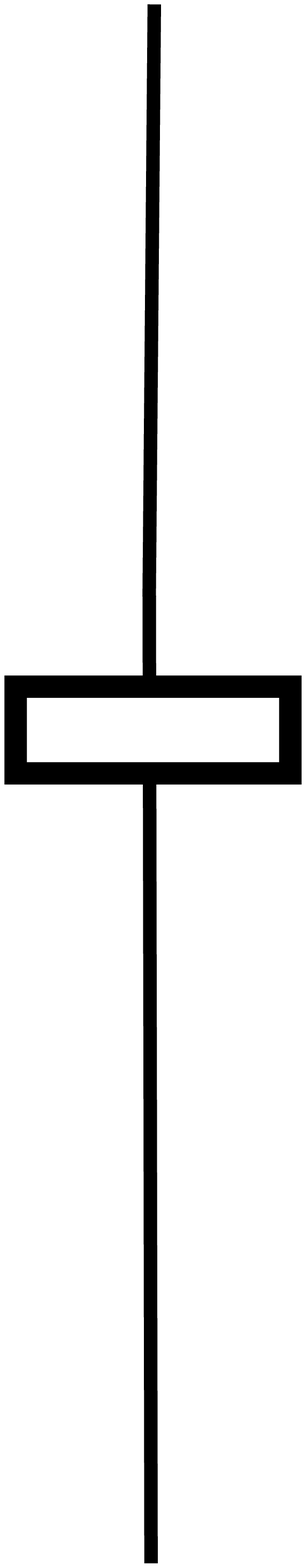}}
           \put(-19,+82){\footnotesize{$n$}}
   \end{minipage}
   \end{eqnarray}
Consider the relative skein module of the disk
with $a_{1}+...+a_{m}$\ \ marked points on the boundary. We are interested in a submodule of this module constructed as follows. Partition the set
of the $a_{1}+...+a_{m}$ points into $m$ sets of $a_{1},..,a_{m-1}$ and $a_{m}$ 
points respectively. At each cluster of these points we place an
appropriate idempotent, i.e. the one whose color matches the cardinality
of this cluster. We will denote this relative skein module by $T_{a_{1},...,a_{m}}.$ Hence an element in  $T_{a_{1},...,a_{m}}$ is obtained by taking an element in the module of the disk with $a_{1}+...+a_{m}$ marked points and then adding the idempotents  $f^{(a_1)},...,f^{(a_m)}$ on the outside of the disk. See Figure \ref{example of new element}. 

\begin{figure}[H]
  \centering
   {\includegraphics[scale=0.2]{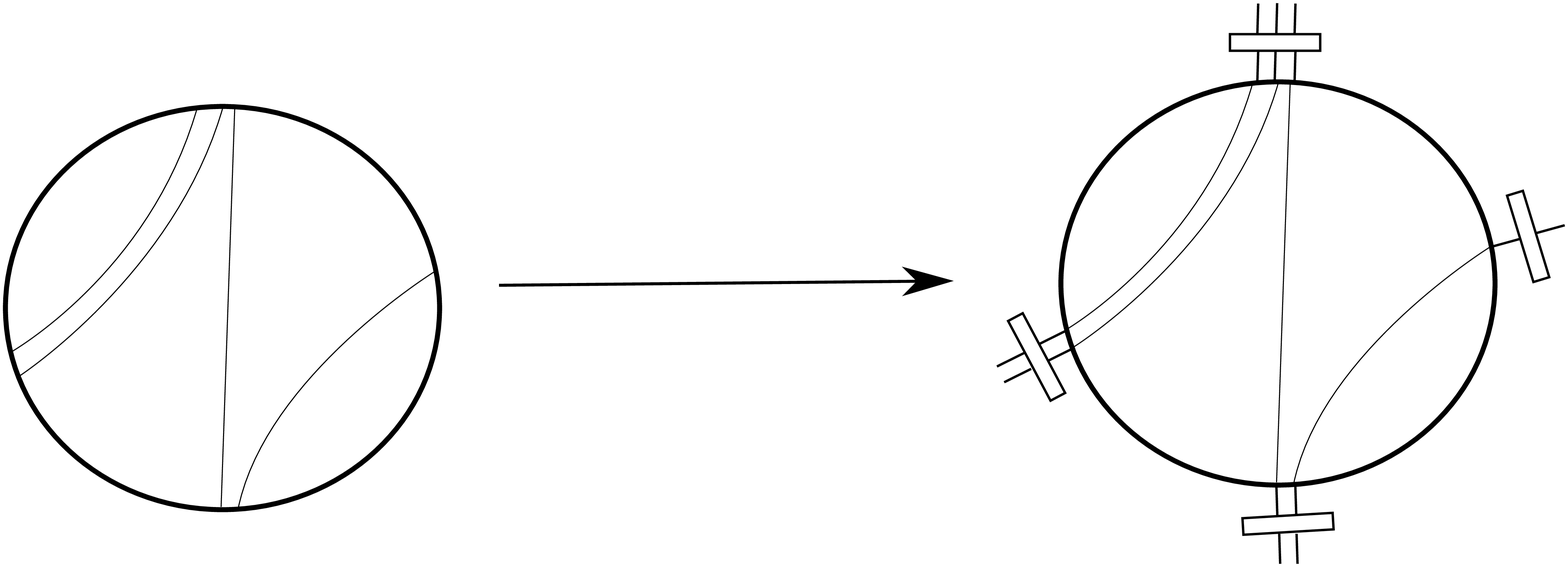}
     \caption{ An element in the Kauffman bracket skein module of the disk with $3+2+2+1$ marked points on the boundary and the corresponding element in the space $T_{3,2,2,1}.$ }
  \label{example of new element}}
\end{figure}

Of particular interest are the spaces $T_{a,b\text{ }}$,  $T_{a,b,c\text{ }}$ and $T_{a,b,c,d\text{ }}$. The properties of the Jones-Wenzl projector imply that the space $T_{a,b\text{ }}$ is zero dimensional when $%
a\neq b$ and one dimensional when $a=b$, spanned by $f^{(a)}$. Similarly, $T_{a,b,c\text{ }}$ is either zero dimensional or one
dimensional. The space $T_{a,b,c\text{ }}$ is one dimensional if and only if the element $%
\tau _{a,b,c}$ shown in Figure \ref{taw} exists. This occurs when one has
non-negative integers $x,y$ and $z$ such that the following three equations are satisfied
\begin{equation}
\label{threeequations}
a=x+y,\hspace{6pt} b=x+z,\hspace{6pt} c=y+z.
\end{equation}

\begin{figure}[H]
  \centering
   {\includegraphics[scale=0.25]{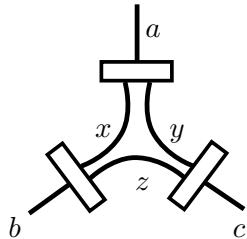}
    \put(-90,-8){$b$}
         \put(-38,68){$a$}
          \put(-57,30){$x$}
          \put(-29,30){$y$}
          \put(-42,10){$z$}
         \put(-5,-8){$c$}
     \caption{The skein element $\tau_{a,b,c}$ in the space $T_{a,b,c\text{ }}$ }
  \label{taw}}
\end{figure}
When $%
\tau _{a,b,c}$ exists we will refer to the outside colors of $%
\tau _{a,b,c}$ by the colors $a,b$ and $c$ and to the inside colors of $%
\tau _{a,b,c}$ by the colors $x,y$ and $z$. The following definition characterizes the existence of the element $\tau _{a,b,c}$ in terms of the outside colors. 
\begin{definition}
\label{admi}
A triple of colors $(a,b,c)$ is \textit{admissible} if $a+b+c\equiv0(\mod 2)$ and $a+b\geq
c\geq |a-b|.$
\end{definition}
Note that if the triple $(a,b,c)$ is admissible, then writing 
$x=(a+b-c)/2$,
$y=(a+c-b)/2$, and 
$z=(b+c-a)/2$ we have that $x$,$y$ and $z$ satisfy the equations \ref{threeequations}. If the triple $(a,b,c)$ is not admissible then the space $T_{a,b,c}$ is zero dimensional.  The fact that the inside colors are determined by the outside colors allows
us to replace $\tau _{a,b,c}$ by a trivalent graph as follows:
\begin{figure}[H]

  \centering
   {\includegraphics[scale=0.25]{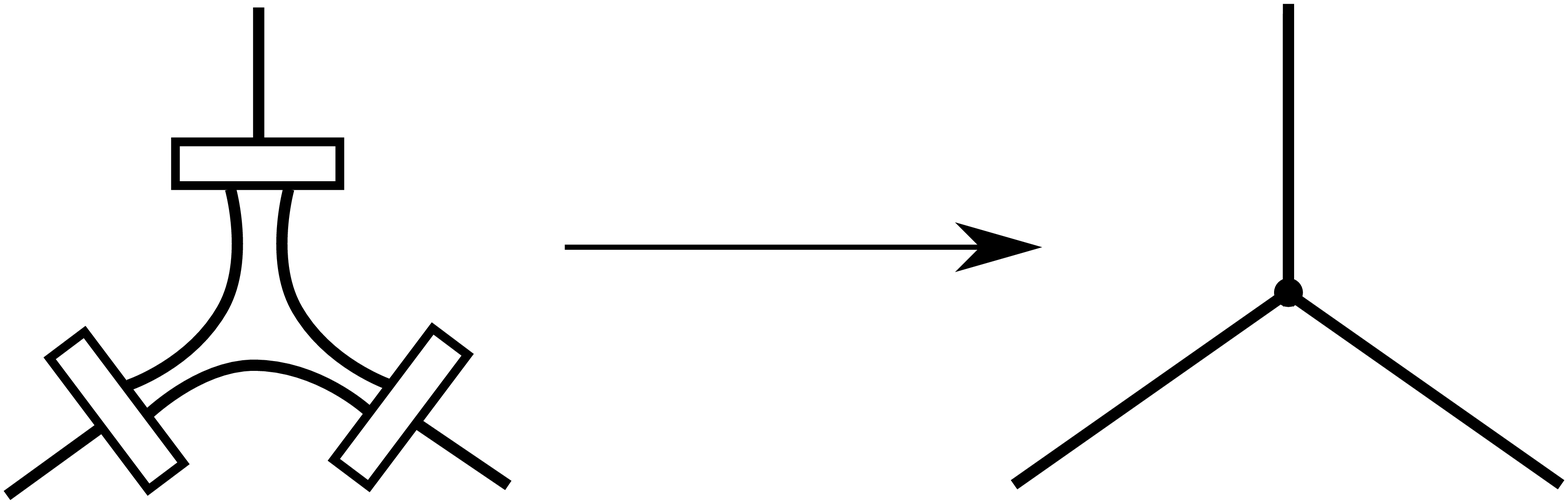}
    \put(-90,-8){$b$}
         \put(-38,68){$a$}
          \put(-227,30){$x$}
          \put(-199,30){$y$}
          \put(-212,10){$z$}
         \put(-5,-8){$c$}
            \put(-260,-8){$b$}
         \put(-205,68){$a$}
               \put(-169,-8){$c$}
     \caption{}
  \label{taw1}}
\end{figure}
A \textit{quantum spin network} is a planer trivalent graph with edges labeled by non-negative integers. A quantum spin network is called \textit{admissible} if each three edges meeting at a vertex satisfy the admissibility condition \ref{admi}. If $D$ is an admissible quantum spin network then the Kauffman bracket evaluation of $D$ is defined to be the evaluation of $D$ as an element in $\mathcal{S}(S^{2})$ after replacing each edge colored $n$ by the projector $f^{(n)}$ and each admissible vertex colored $(a,b,c)$ by the skein element $\tau_{a,b,c}$, as in Figure \ref{taw1}. \\

The skein space $T_{m,n,m^{\prime},m^{\prime}}$ with four clusters of points $m$, $n$, $m^{\prime}$ and $n^{\prime}$ is also of interest of us. In particular we are interested in the \textit{bubble skein element} in $T_{m,n,m^{\prime},m^{\prime}}$ shown in Figure \ref{bubble}.
\begin{figure}[H]
  \centering
   {\includegraphics[scale=0.13]{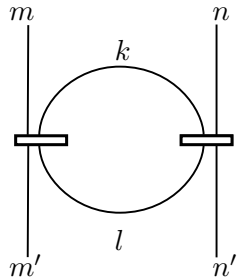}
   \put(-85,+90){$m$}
          \put(-8,+90){$n$}
          \put(-85,-7){$m^{\prime}$}
          \put(-8,-7){$n^{\prime}$}
           \put(-45,+75){$k$}
         \put(-45,+3){$l$}
  \caption{The bubble skein element $\mathcal{B}_{m^{\prime},n^{\prime}}^{m,n}(k,l)$}
  \label{bubble}}
\end{figure}
 We will denote this element by $\mathcal{B}_{m^{\prime},n^{\prime}}^{m,n}(k,l)$. In \cite{Hajij} we expanded this element in terms of some linearly independent elements in $T_{m,n,m^{\prime},m^{\prime}}$. We restate this result here.
\begin{theorem}(The bubble expansion formula \cite{Hajij})
\label{main}
Let $m,n,m^{\prime},n^{\prime} \geq 0$, and $k\geq l$,  $k,l \geq 1$. Then\\
{\small
\begin{eqnarray}
\label{bubble expansion formula1}
  \begin{minipage}[h]{0.15\linewidth}
        \vspace{0pt}
        \scalebox{0.11}{\includegraphics{second-lemma-main-bubble}}
        \put(-68,+80){$m$}
          \put(-8,+80){$n$}
          \put(-68,-7){$m^{\prime}$}
          \put(-8,-7){$n^{\prime}$}
           \put(-37,+67){$k$}
         \put(-37,+3){$l$}
   \end{minipage}
   =\displaystyle\sum\limits_{i=0}^{\min(m,n,l)}
   \left\lceil 
\begin{array}{cc}
m & n \\ 
k & l%
\end{array}%
\right\rceil _{i}
    \begin{minipage}[h]{0.15\linewidth}
        \vspace{0pt}
        \scalebox{0.11}{\includegraphics{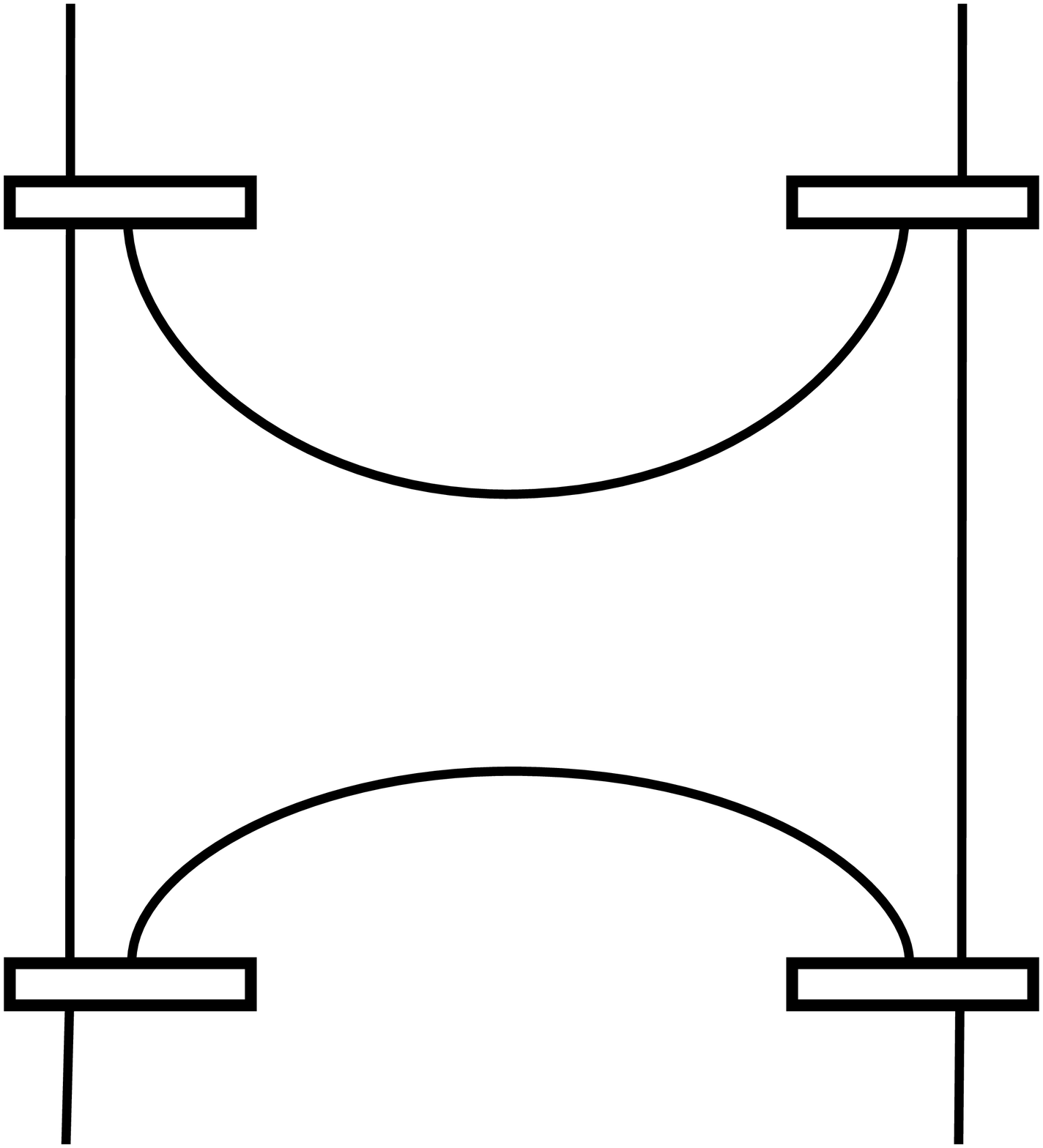}}
        \put(-68,+80){$m$}
          \put(-8,+80){$n$}
          \put(-68,-7){$m^{\prime}$}
          \put(-8,-7){$n^{\prime}$}
           \put(-37,+50){$i$}
         \put(-52,28){$k-l+i$}
   \end{minipage}
  \end{eqnarray}
}
where 
\begin{equation*}
\left\lceil 
\begin{array}{cc}
m & n \\ 
k & l%
\end{array}%
\right\rceil _{i}=(-1)^{i+l}q^{i(i-l)/2}\frac{\displaystyle\prod_{j=0}^{l-i-1}[k-j]\prod_{s=0}^{i-1}[n-s][m-s]}{\displaystyle\prod_{t=0}^{l-1}[n+k-t][m+k-t]}
{l \brack i}_{q}\prod_{j=0}^{l-i-1}[m+n+k-i-j+1]
\end{equation*}
\end{theorem}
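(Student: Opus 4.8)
The plan is to prove the formula by induction on $l$, the smaller of the two colors on the bubble, peeling one strand at a time off the lower edge using Wenzl's recursion \eqref{recursive} together with the local simplifications coming from the annihilation axiom, the addition-of-idempotents identity, and the bubble identity \eqref{properties2}. For the base case $l=1$ the lower edge is a single strand and the sum collapses to the two terms $i=0,1$, i.e.\ to the diagrams whose lower color is $k-1$ and $k$. Here I would resolve the single strand directly by sliding it against the idempotents $f^{(m)},f^{(n)}$ at the two ends of the bubble: the annihilation axiom kills the unwanted turnbacks and the loop/absorption identities fuse what remains, leaving precisely these two diagrams, and a short computation checks that their coefficients equal $\left\lceil\begin{smallmatrix} m & n \\ k & 1\end{smallmatrix}\right\rceil_0$ and $\left\lceil\begin{smallmatrix} m & n \\ k & 1\end{smallmatrix}\right\rceil_1$.

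For the inductive step I assume the formula for every choice of parameters whose lower edge is colored $l-1$. Applying \eqref{recursive} to the idempotent $f^{(l)}$ on the lower edge writes it as $f^{(l-1)}$ carrying one extra through-strand, minus $\tfrac{\Delta_{l-2}}{\Delta_{l-1}}$ times a turnback term. In the first summand the extra strand is absorbed into the neighbouring edges by the addition-of-idempotents identity, and in the turnback term the resulting cap--cup is removed by the annihilation axiom and collapsed via \eqref{properties2}; in both cases the lower color drops to $l-1$ (possibly with the corner colors $k,m,n$ shifted by one), so the induction hypothesis applies term by term. Summing the two contributions expresses $\mathcal{B}_{m',n'}^{m,n}(k,l)$ as a linear combination of the target diagrams with colors $i$ and $k-l+i$.

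The one genuinely nontrivial step is algebraic: one must verify that the coefficients generated by this recursion coincide with the stated closed form $\left\lceil\begin{smallmatrix} m & n \\ k & l\end{smallmatrix}\right\rceil_i$. Substituting the explicit expression turns the diagrammatic recursion into a two-term identity among products of quantum integers and Gaussian binomials, which I expect to follow from the Pascal-type relation ${l \brack i}_q={l-1 \brack i-1}_q+q^{i}{l-1 \brack i}_q$ once the telescoping numerator and denominator products in the definition of $\left\lceil\begin{smallmatrix} m & n \\ k & l\end{smallmatrix}\right\rceil_i$ are matched index by index. This bookkeeping, rather than any diagrammatic difficulty, is where the real work lies. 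Because the target family is linearly independent in $T_{m,n,m',n'}$, the expansion is unique, so matching coefficients is exactly what is needed: the inductive reduction already places $\mathcal{B}_{m',n'}^{m,n}(k,l)$ in their span, and no separate existence argument is required.

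As an alternative one could extract each coefficient as an inner product, pairing the bubble with a suitable dual diagram so that the closed network evaluates as a ratio of theta- and tetrahedral-net values and then inverting the resulting triangular Gram matrix of the target family. I expect the inductive route above to be cleaner, since every step stays local and avoids evaluating tetrahedra; the inner-product computation is more useful as an independent check on a single coefficient, for instance the leading term $i=0$, which should reduce to a single application of \eqref{properties2}.
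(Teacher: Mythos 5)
You should first note a mismatch with the grading premise: this paper does not prove Theorem \ref{main} at all. It is imported verbatim from the author's earlier paper \cite{Hajij} (``We restate this result here''), so there is no in-paper proof to compare against; the only meaningful comparison is with the proof in that cited source. Measured against it, your strategy is essentially the same one: \cite{Hajij} also proves the expansion by induction on $l$, establishing the one-strand case $l=1$ first, peeling strands with Wenzl's recursion (\ref{recursive}) together with the absorption, annihilation and partial-trace identities (\ref{properties2}), and then verifying that the closed-form coefficients satisfy the resulting two-term recursion via $q$-binomial (Pascal-type) identities. Your ``alternative'' route — extracting coefficients by pairing with dual diagrams and evaluating theta and tetrahedral networks — is the Masbaum--Vogel recoupling approach, which the cited paper deliberately avoids in favor of the local inductive argument, exactly the preference you express.

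Two caveats on your sketch. First, in the inductive step the claim that the extra through-strand from the first Wenzl term ``is absorbed into the neighbouring edges by the addition-of-idempotents identity'' is too quick: after replacing $f^{(l)}$ by $f^{(l-1)}\otimes 1$, the freed strand runs between the two junction regions and abuts no projector into which it could be absorbed — the corner idempotents $f^{(m')},f^{(n')}$ are separated from it by the junction wiring, and reinstating a projector on $l$ strands would just undo the recursion. That term is a ``bubble of lower color $l-1$ plus a bare strand,'' which is not itself an instance of the induction hypothesis; it has to be re-expanded, which is precisely why the $l=1$ expansion is proved first as a standing lemma and then applied inside the general picture. Your base case thus does double duty in a way your write-up does not acknowledge. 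Second, the coefficient verification — matching the diagrammatic recursion against $\left\lceil\begin{smallmatrix} m & n \\ k & l\end{smallmatrix}\right\rceil_i$ via ${l \brack i}_q={l-1 \brack i-1}_q+q^{i}{l-1 \brack i}_q$ — is only asserted (``I expect\dots''), and as you yourself say, this bookkeeping is where the real work lies. So the proposal is a correct plan following the same road as the actual proof, but as written it is a sketch with its central computation deferred, not a complete argument. Your linear-independence remark is sound and does correctly dispose of uniqueness of the expansion.
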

\label{sectionmain}
\section{Existence of the tail of an adequate skein element}
\label{3}
In \cite{Cody} C. Armond proves that the tail of the colored Jones polynomial of alternating links exist. This was done by proving that the tail of the colored Jones polynomial of an alternating link $L$ is equal to the tail a sequence of certain skein elements in $\mathcal{S}(S^2)$ obtained from an alternating link diagram of $L$. In fact, Armond proved this for a larger class of links, called Adequate links. Following \cite{Cody}, we briefly recall the proof of existence of the tail the colored Jones polynomial and we illustrate how this can be applied to our study.\\

If $P_1(q)$ and $P_2(q)$ are Laurent series, we write $P_1(q)\doteq_n P_2(q)$ if their first $n$ coefficients agree up to a sign.
\begin{definition}
Let $\mathcal{P}=\{P_n(q)\}_{n \in \mathbb{N}}$ be a sequence of formal power series in $\mathbb{Z}[[q]]$. The tail of the sequence $\mathcal{P}$- if it exists - is the formal power series $T_{\mathcal{P}}(q) $ in $\mathbb{Z}[[q]]$ that satisfies
\begin{equation*}
\label{main defintion}
T_{\mathcal{P}}(q)\doteq_{n}P_n(q), \text{ for all } n \in \mathbb{N}.
\end{equation*}
\end{definition}
Observe that the tail of the sequence $\mathcal{P}=\{P_n(q)\}_{n \in \mathbb{N}}$ exists if and if only if $P_{n}(q)\doteq_{n}P_{n+1}(q)$ for all $n$. 
\begin{remark}
\label{main remark} 
Consider the sequence $\{f_n(q)\}_{n \in \mathbb{N}}$ where $f_n(q)$ is a rational function  in $\mathbb{Q}(q)$. After multiplying by $q^{\pm s}$ for some $s$, the rational function $f_n(q)$ can be viewed as a formal power series in $\mathbb{Z}[[q]]$. Using this convention one can study the tail of the sequence $\{f_n(q)\}_{n \in \mathbb{N}}$.
\end{remark}
Let $\mathcal{D}=\{D_n(q)\}_{n \in \mathbb{N}}$ be a sequence of skein elements in $\mathcal{S}(S^2)$. The evaluation of $D_n(q)$ gives in general a rational function. Using the observation in remark \ref{main remark}, one could study the tail of the of the sequence $\mathcal{D}$.\\

 Consider a crossingless skein element $D$ in $\mathcal{S}(S^2)$ consisting of arcs connecting Jones-Wenzl idempotents of various colors. Let $\bar{D}$ be the diagram obtained from $D$ by replacing each idempotent $f^{(n)}$ with the identity $id_{n}$ in $TL_n$. The diagram $\bar{D}$ thus consists of non-intersecting circles. We say that $D$ is adequate if each circle in $\bar{D}$ passes at most once through any given region where we replaced the idempotents in $D$. See Figure \ref{cody_1} for a local picture of an adequate skein element and note that the circle indicated in the figure bounds a disk. See also Figure \ref{adequate example}. 
 
 \begin{figure}[H]
  \centering
   {\includegraphics[scale=0.094]{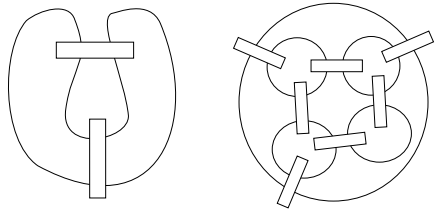}
    \caption{An inadequate skein element on the left and an adequate skein element on the right. All arcs are colored $n$.}
  \label{adequate example}}
\end{figure}

 Every alternating link diagram $L$ induces a family of adequate skein elements in the following way. For any crossing in $L$ there are two ways to smooth this crossing, the $A$-smoothing and the $B$-smoothing. See Figure \ref{smoothings}.
\begin{figure}[H]
  \centering
   {\includegraphics[scale=0.14]{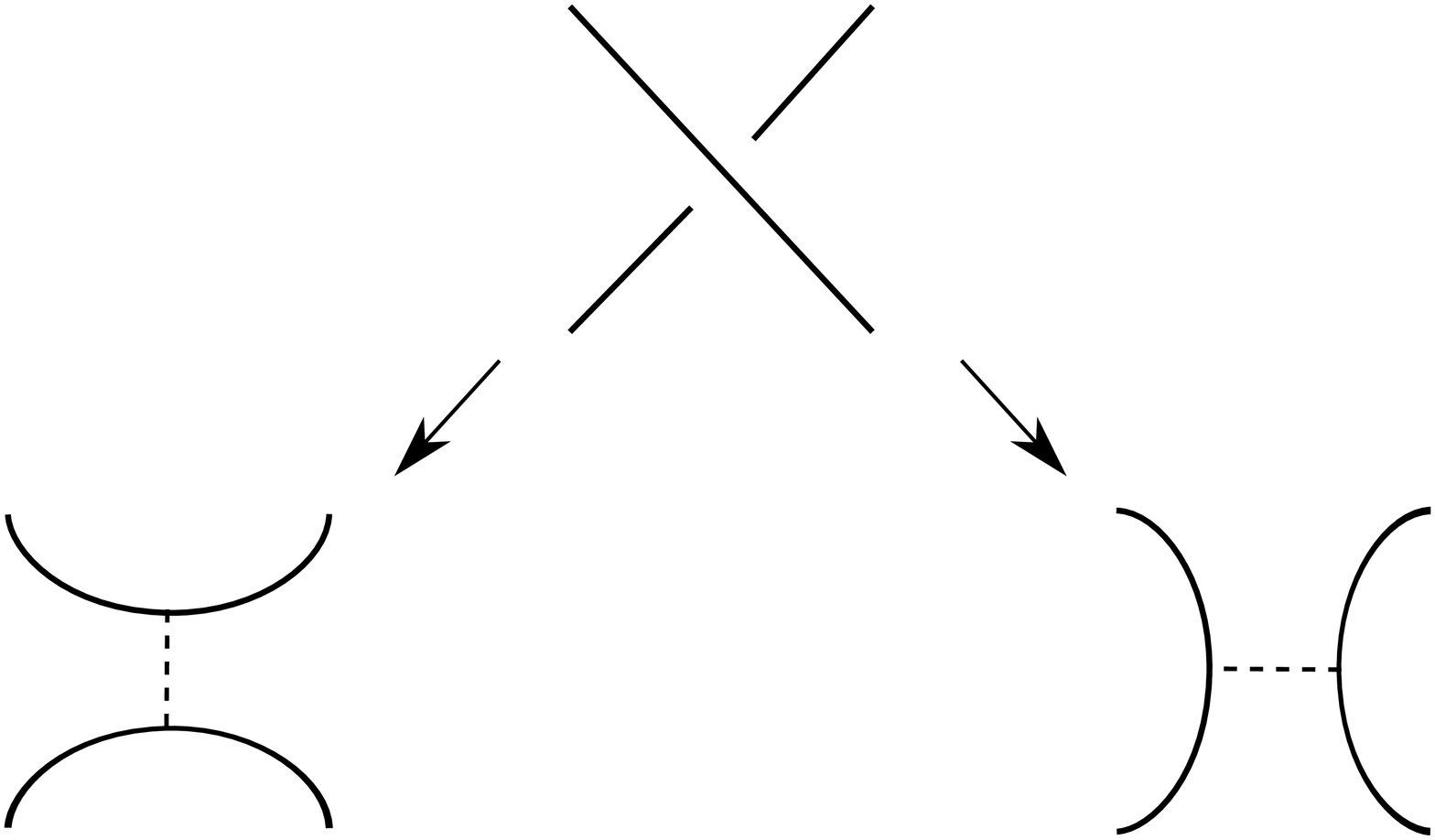}
    \put(-88,33){$A$}
          \put(-30,33){$B$}
    \caption{A and B smoothings}
  \label{smoothings}}
\end{figure}

We replace a crossing with a smoothing together with a dashed line joining the two arcs. After applying a smoothing to each crossing in $L$ we obtain diagram of a collection of disjoint circles in the plane called a \textit{Kauffman state} for the diagram $L$. The all-$A$ smoothing (all-$B$ smoothing) state of $L$ is the state obtained by replacing each crossing by an $A$ smoothing ($B$ smoothing). Write $S_A(D)$ and $S_B(L)$ to denote the all-$A$ smoothing and all-$B$ smoothing states of $L$ respectively. Now consider the skein element obtained from $S_B(L)$ by decorating each circle in $S_B(L)$ with the $n^{th}$ Jones-Wenzl idempotent and replacing each dashed line in $S_B(D)$ with the $(2n)^{th}$ Jones-Wenzl idempotent. See Figure \ref{allB1} for an example. Write $S_B^{(n)}(L)$ to denote this skein element.
\begin{figure}[H]
  \centering
   {\includegraphics[scale=0.08]{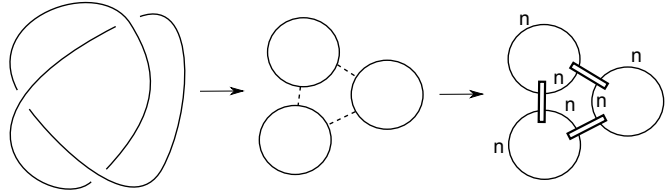}
    \caption{Obtaining $S_B^{(n)}(L)$ from a link diagram $L$}
    \label{allB1}
 }
\end{figure} 
Note that for any alternating link diagram $L$, the skein element $S_B^{(n)}(L)$ is adequate. C. Armond proves that the tail of family $\{S_B^{(n)}(L)\}_{n \in \mathbb{N}}$ exists by showing that $S_B^{(n+1)}(L)(q)\doteq_{n+1}S_B^{(n)}(L)(q)$ using three basic steps:

\begin{enumerate}
\item Since the link diagram $L$ is alternating one can observe that $S_B^{(n+1)}(L)$ is an adequate skein element and it actually looks locally like Figure \ref{cody_1}.
\begin{figure}[H]
  \centering
   {\includegraphics[scale=0.1]{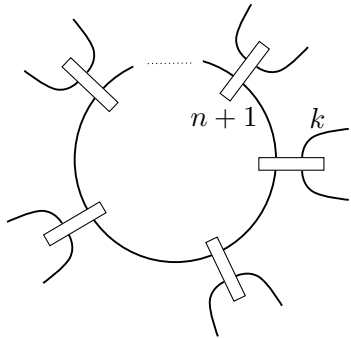}
    \put(-60,80){$n+1$}
          \put(-15,79){$k$}
    \caption{A local picture for $S_B^{(n+1)}(L)$}
    \label{cody_1}
 }
\end{figure} 

 Furthermore, we have the following equality 
\small{
\begin{eqnarray*}
   \begin{minipage}[h]{0.3\linewidth}
         \vspace{0pt}
         \scalebox{0.1}{\includegraphics{cody_1}}
         \put(-60,80){$n+1$}
          \put(-15,79){$k$}
   \end{minipage}&\doteq_{n+1}&
  \begin{minipage}[h]{0.16\linewidth}
        \vspace{0pt}
        \scalebox{0.1}{\includegraphics{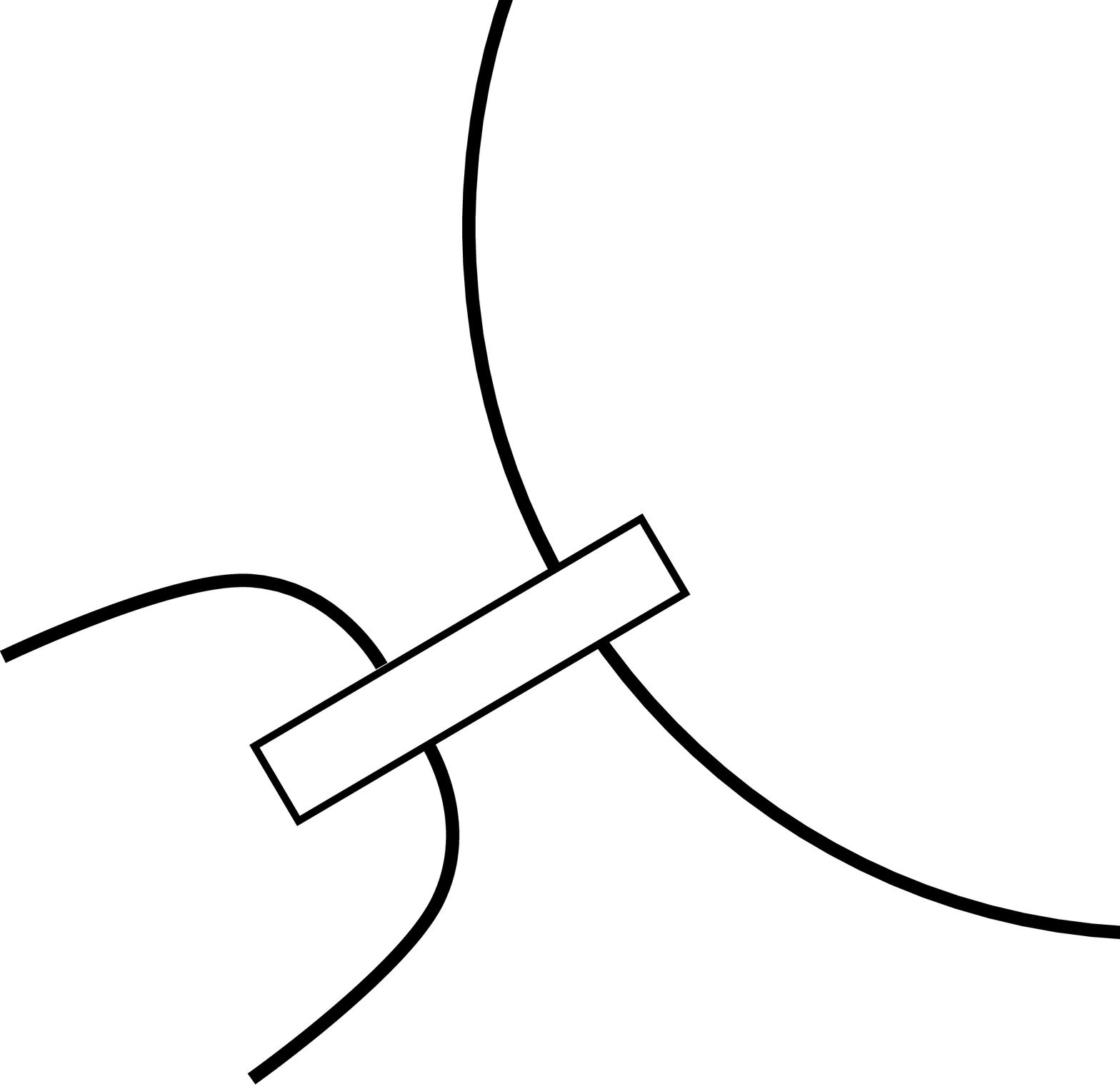}}
         \put(-48,62){$1$}
         \put(-27,77){$n$}
          \put(-15,79){$k$}
           \end{minipage}
  \end{eqnarray*}}
This is done by using the recursive definition of the Jones-Wenzl idempotent and showing that all of the other terms resulting from applying the recursive definition of the idempotent do not contribute to the first $n+1$ coefficients of $S_B^{(n+1)}(L)$.     
\item Step one can be applied around the circle until we reach  the final idempotent: 
\small{
\begin{eqnarray*}
   \begin{minipage}[h]{0.27\linewidth}
         \vspace{0pt}
         \scalebox{0.09}{\includegraphics{cody_2}}
          \put(-41,62){$1$}
         \put(-27,77){$n$}
          \put(-15,71){$k$}
   \end{minipage}\doteq_{n+1}
  \begin{minipage}[h]{0.27\linewidth}
        \vspace{0pt}
        \scalebox{0.09}{\includegraphics{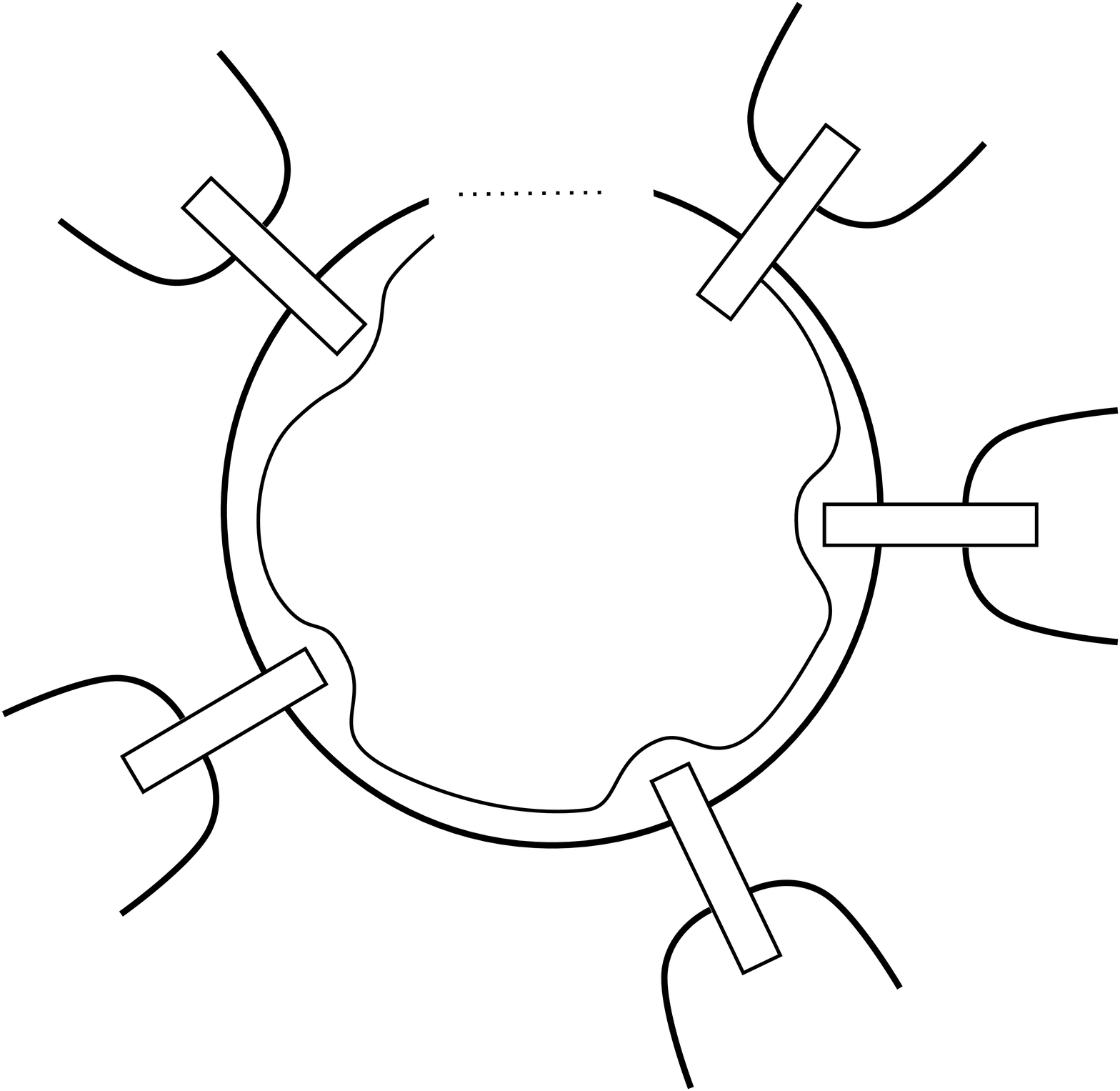}}
         \put(-41,62){$1$}
         \put(-27,77){$n$}
          \put(-15,71){$k$}
           \end{minipage}\doteq_{n+1}
  \begin{minipage}[h]{0.16\linewidth}
        \vspace{0pt}
        \scalebox{0.09}{\includegraphics{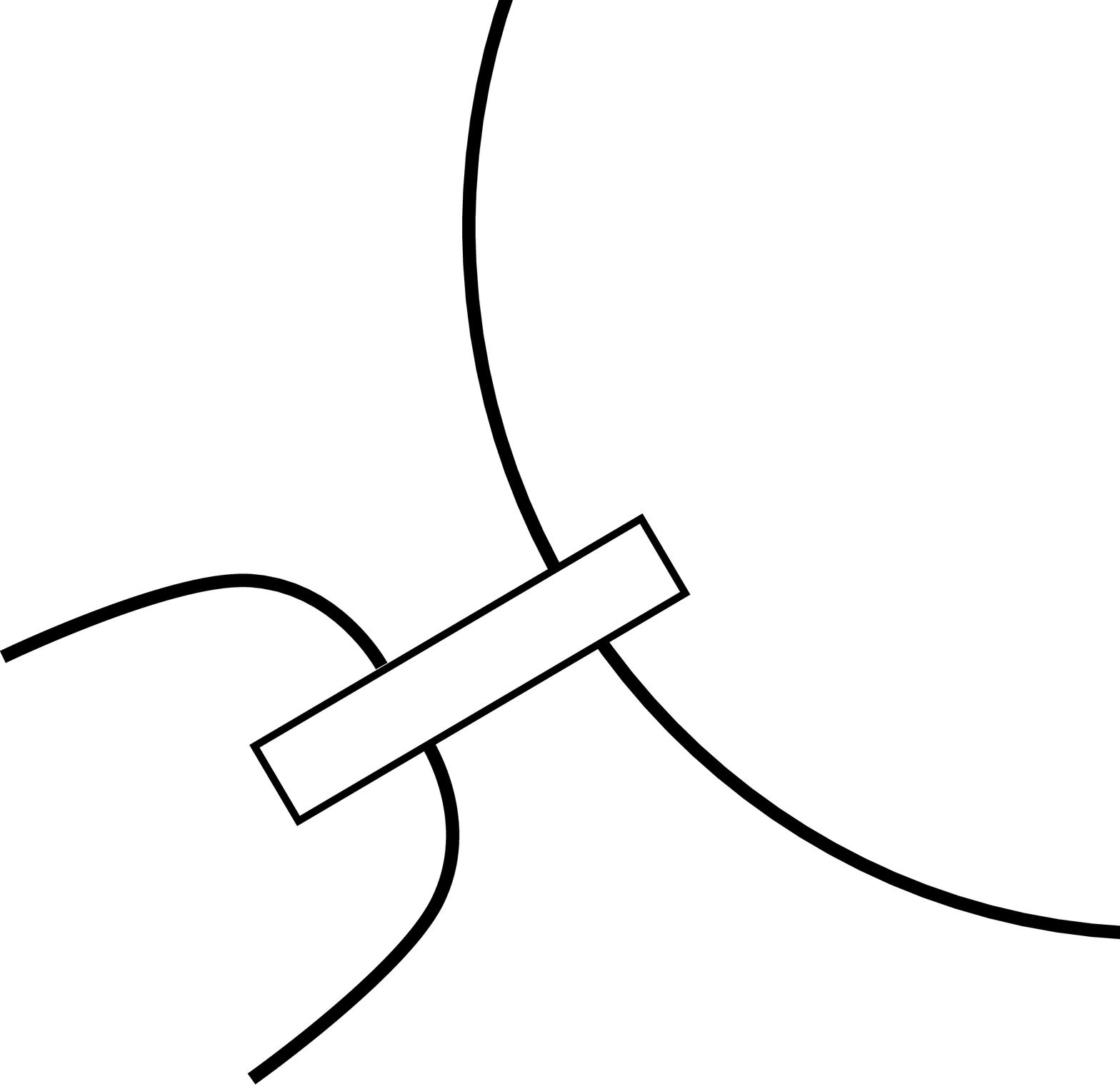}}
         \put(-48,61){$1$}
         \put(-27,77){$n$}
          \put(-15,71){$k$}
           \end{minipage}
  \end{eqnarray*}}
and finally one can show that removing the circle colored $1$ do not affect the first $n+1$ coefficients of $S_B^{(n+1)}(L)$.
\item Step one and two can be applied on every circle in  $S_B^{(n+1)}(L)$ and eventually we reduce $S_B^{(n+1)}(L)$ to $S_B^{(n)}(L)$.
\end{enumerate}

Now let $\mathcal{D}=\{D_n(q)\}_{n \in \mathbb{N}}$ be a sequence of skein elements in $\mathcal{S}(S^2)$. The previous discussion implies that the tail of the sequence $\mathcal{D}$ exists whenever the skein elements $D_n(q)$ are adequate. On the other hand, we know that the tail of the sequence $\mathcal{D}$ exists if and only if $D_{n+1}(q)\doteq_{n+1} D_{n}(q)$. This condition and the previous proof suggest that adequateness of every diagram in the family $\mathcal{D}$ may be a necessary condition for the tail of $\mathcal{D}$ to exist. This is not true however, and in the next section we give infinite family of sequences of inadequate skein elements whose tail exist. See Example \ref{inadequate}. 
\section{Computing the tail of a quantum spin network via local skein relations}
\label{section4}
Let $D$ be a banded trivalent
graph. Recall that an admissible coloring of $D$ is an assignment of colors to the edges of $D$ so that at each vertex, the three colors meeting there form an admissible triple. Consider a sequence of admissible quantum spin networks $\{D_n\}_{n \in \mathbb{N}}$ obtained from $D$ by labeling each edge by $n$ or $2n$. Recall that the evaluation of the quantum spin network $D_n$ in the skein module $\mathcal{S}(S^2)$ gives in general a rational function. Using definition \ref{main defintion} and remark \ref{main remark} one could study the tail of the sequence $\{D_n\}_{n \in \mathbb{N}}$. In this section we will study the tail of such skein elements. We start this section with a simple calculation for a certain coefficient of a crossingless matching diagram in the expansion of the Jones-Wenzl projector and we use this coefficient to derive our first local skein relation. We then use the bubble skein relation to compute more complicated local skein relations.
\begin{remark}
Since we will be working closely with identities such as the bubble expansion equation it will be easier to work with Jones-Wenzl projectors than to work with trivalent graphs. For this reason we will not state our results in terms of trivalent graphs notation.
\end{remark}
One can regard any skein element $\Gamma$ in the linear skein space $T_{a_1,a_2,..,a_m}$ as an element of the dual space $T^*_{a_1,a_2,..,a_m}$. This is done by embedding the space $T_{a_1,a_2,..,a_m}$ in $S^2$ and wiring the outside in someway to obtain a skein element in $\mathcal{S} (S^2)$. Let $\Gamma$ be an element of the skein space $T_{a_1,a_2,..,a_m}$ and let $x$ be a wiring in the disk in $S^2$ that is complementary to $T_{a_1,a_2,..,a_m}$ with the same specified boundary points. We denote by $\Gamma^*$ to the element in $T^*_{a_1,a_2,..,a_m}$ induced by the skein element $\Gamma$. We call the skein element $\Gamma^*(x) \in  \mathcal{S} (S^2) $ a \textit{closure} of $\Gamma$. In the following definition we assume that $\alpha_n$ and $\beta_n$ are admissible trivalent graphs with edges labeled $n$ or $2n$ in the skein space $T_{a_1,a_2,..,a_m}$, where $a_i {\in \{n,2n}\}$,  with  $\alpha^*_n$ and $\beta^*_n$ are the corresponding dual elements.
\begin{definition}
Let $\alpha_n$,$\beta_n$,$\alpha^*_n$ and $\beta^*_n$ be as above. Let $S$ be a subset of $T_{a_1,a_2,..,a_m}$. We say that
\begin{equation*}
\label{closure}
\alpha_n\doteq_n \beta_n 
\end{equation*}
on $S$ if 
\begin{equation*}
\alpha^*_n(x)\doteq_n\beta^*_n (x)
\end{equation*} for all $x$ in $S$.
\end{definition}

\begin{remark}
\label{important2}
The set $S$ mentioned in the definition can be chosen to be the set of all wiring $x$ such that the skein elements  $\alpha^*_n(x)$ and $\beta^*_n(x)$ are adequate. However, adequateness seems to be unnecessary in some cases and one could loosen this condition on the set $S$ further. We will give examples of such cases in this paper. Ideally, the set $S$ is supposed to be the set of all wiring $x$ such that the tail of the skein elements $\alpha^*_n(x)$ and $\beta^*_n(x)$ exist. It is not known to the author what is the largest set for which this condition holds.
\end{remark}

\begin{remark}
\label{important}
If we are working with $T_{a,b,c}$, where $(a,b,c)\in\{(2n,2n,2n),(n,n,2n)\}$, then for any skein element $\alpha_n$ in  $T_{a,b,c}$ we can write
\begin{equation}
\label{easy}
\alpha_n=P_n(q)\tau_{a,b,c}
\end{equation}
for some rational function $P_n(q)$. 
Hence if $x$ is an element in $T_{a,b,c}$ then the tail of the sequence $\{\alpha^*_n(x)\}_{n \in \mathbb{N}}$ exists if and only if the tails of the sequences $\{\tau^*_{a,b,c}(x)\}_{n \in \mathbb{N}}$ and $\{P_n(q)\}_{n \in \mathbb{N}}$ exist. In particular, \ref{easy} also implies that if the tail of $\{P_n(q)\}_{n \in \mathbb{N}}$ exists and $x$ is a wiring in $T_{a,b,c}$ such that $\tau^*_{a,b,c}(x)$ is an adequate skein element, then the tail of the sequence $\{\alpha^*_n(x)\}_{n \in \mathbb{N}}$ exists. 
Note that for every such $x$ one has $P_n(q)\doteq_n\alpha^*_n(x)/\tau^*_{a,b,c}(x)$.
\end{remark}

Following Morrison \cite{Morrison}, write $\underset{\in f^{(n)}}{ \text{coeff}}(D)$ to denote the coefficient of the crossingless matching diagram $D$ appearing in the $n^{th}$ Jones-Wenzl projector. We will use Morrison's recursive formula to calculate certain coefficients of the Jones-Wenzl idempotent. The recursive formula is explained very well in \cite{Morrison}, see Proposition 4.1 and the examples within,  and we shall not repeat it here.
\begin{lemma}
{\small
\begin{eqnarray}
\label{morrison}
\underset{\in f^{(2n)}}{ \text{coeff}}\left(
\hspace{1pt}
  \begin{minipage}[h]{0.105\linewidth}
        \vspace{5pt}
        \scalebox{0.2}{\includegraphics{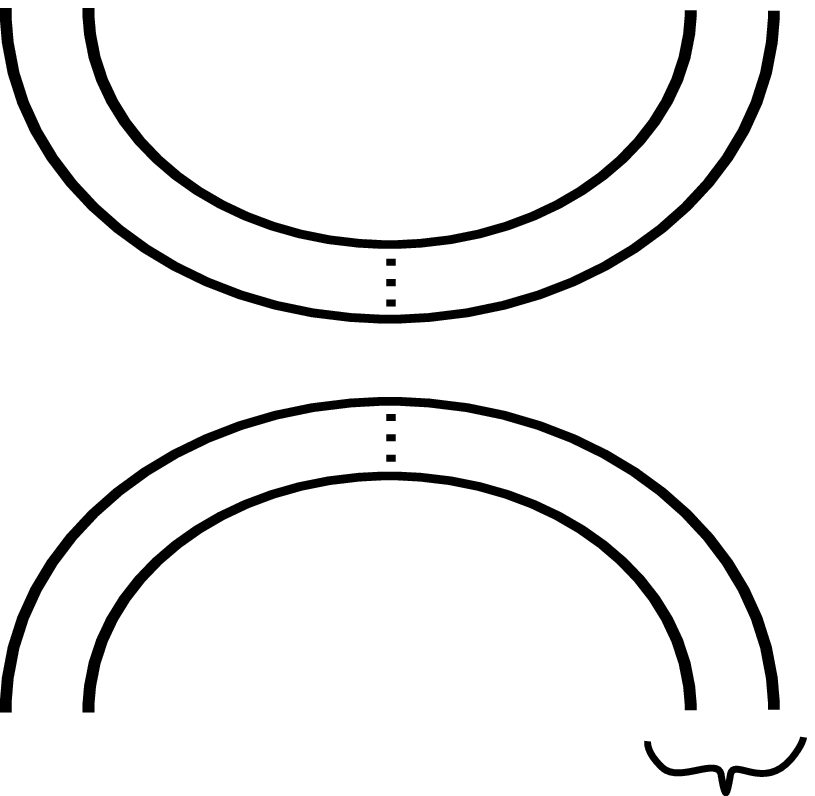}}
        \put(-8,-6){$n$}
       \end{minipage}\right)
   =\frac{([n]!)^2}{[2n]!}
   \end{eqnarray}
}
\end{lemma}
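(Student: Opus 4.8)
The plan is to read this coefficient off Morrison's description of the Jones-Wenzl projector, organised as an induction on $n$. First I would fix the pictured diagram: the ``hooks'' diagram is the crossingless matching on the $2n$ points built from $n$ nested turnbacks, so it has no through-strands, and for $n=1$ it is the single cup-cap $e_1\in TL_2$ whose coefficient in $f^{(2)}=1-\tfrac{1}{[2]}e_1$ already gives $\tfrac{1}{[2]}$. Writing $c_n$ for this coefficient, it is convenient to record the target in the telescoping form
\begin{equation*}
\frac{([n]!)^2}{[2n]!}=\prod_{j=1}^{n}\frac{[j]}{[n+j]},
\end{equation*}
since this is exactly the shape the recursion produces.

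Next I would invoke Morrison's formula \cite{Morrison}, which expresses the coefficient of any crossingless matching in $f^{(m)}$ as an explicit product of quantum-integer ratios, one factor per cup, with the factors governed by the nesting structure of the cups; the formula is derived from Wenzl's recursion \ref{recursive} together with the annihilation axiom \ref{AX}. For the fully nested ``hooks'' diagram the cups sit at successive nesting depths, so the product simplifies cleanly. Equivalently, I can run an induction in which removing the outermost hook reduces the diagram to the nested diagram on $2n-2$ strands and multiplies $c_n$ by $\tfrac{[n]^2}{[2n-1][2n]}$, a ratio that is forced by the target and is readily confirmed from \ref{recursive}. With base case $c_0=1$, the resulting recursion $c_n=\tfrac{[n]^2}{[2n-1][2n]}\,c_{n-1}$ telescopes to $\prod_{j=1}^{n}[j]/[n+j]=([n]!)^2/[2n]!$.

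The crux, and the main obstacle, is verifying that removing one hook really contributes exactly $\tfrac{[n]^2}{[2n-1][2n]}$. Concretely I must check that after applying the recursion the annihilation axiom \ref{AX} eliminates every competing crossingless matching except the smaller nested diagram, and that the closed circle produced when the removed hook is straightened contributes precisely the stated quantum integers rather than spurious powers of the loop value $-[2]$. Confirming that these partial traces collapse to the single ratio $\tfrac{[n]^2}{[2n-1][2n]}$, and not to some neighbouring one, is where the genuine computation sits; everything else is the telescoping product above. (The construction in fact produces the sign $(-1)^n$, the number of cups, which is harmless here since the tails studied in this paper are defined only up to an overall sign.)
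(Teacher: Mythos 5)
Your overall strategy---extracting the coefficient by telescoping Morrison's recursion---is the same as the paper's, but your actual induction step is unproven, and it is exactly the step where your bookkeeping diverges from what the recursion delivers. You justify the per-hook factor $\tfrac{[n]^2}{[2n-1][2n]}$ as ``forced by the target'' (circular when used inside a proof) and ``readily confirmed from \ref{recursive}'' (it is not immediate). The paper applies Morrison's Proposition 4.1 one strand at a time: the first application contributes $\tfrac{[n]}{[2n]}$ and leaves the coefficient, in $f^{(2n-1)}$, of a \emph{different} diagram (not the nested diagram on $2n-2$ strands), the second contributes $\tfrac{[n-1]}{[2n-1]}$ landing in $f^{(2n-2)}$, and after $n$ applications one obtains $[n]!/\prod_{i=n+1}^{2n}[i]=([n]!)^2/[2n]!$. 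In particular the cumulative factor after two one-strand steps is $\tfrac{[n][n-1]}{[2n][2n-1]}$, not your $\tfrac{[n]^2}{[2n][2n-1]}$, and the intermediate diagram is not the $(n-1)$-hook diagram; the two schemes agree only after the full product telescopes. So your recursion $c_n=\tfrac{[n]^2}{[2n-1][2n]}c_{n-1}$, while numerically consistent with the final answer, does not follow from two applications of the recursion as you suggest, and the ``collapse of partial traces'' you defer is precisely the missing computation --- which you candidly flag as the crux but never carry out. Restructuring the induction on the number of strands rather than on the number of hooks, as the paper does, is what makes each step a single, already-known quantum-integer ratio.

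Separately, your sign bookkeeping is off. In the paper's Kauffman-bracket conventions the loop value is $-(A^2+A^{-2})=-[2]$ and $\Delta_n=(-1)^n[n+1]$, so Wenzl's formula gives $f^{(2)}=1+\tfrac{1}{[2]}e_1$, the cup--cap coefficient is $+\tfrac{1}{[2]}$, and the nested diagram's coefficient in $f^{(2n)}$ is exactly $([n]!)^2/[2n]!$ with no sign. Your base case ``$f^{(2)}=1-\tfrac{1}{[2]}e_1$ gives $\tfrac{1}{[2]}$'' is internally inconsistent, and your parenthetical sign $(-1)^n$ signals that you are implicitly computing in the $\delta=+[2]$ convention. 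Since the lemma asserts an exact equality of rational functions, not an equality up to sign, this needs to be fixed even though the downstream applications only use $\doteq_n$.
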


\begin{proof}
Applying Morrison's induction formula, Proposition 4.1 in \cite{Morrison}, on the left hand side of (\ref{morrison}), we obtain
{\small
\begin{eqnarray*}
\label{special0}
\underset{\in f^{(2n)}}{ \text{coeff}}\left(
\hspace{1pt}
  \begin{minipage}[h]{0.105\linewidth}
        \vspace{5pt}
        \scalebox{0.2}{\includegraphics{hooks}}
        \put(-8,-6){$n$}
       \end{minipage}\right)
   &=&\frac{[n]}{[2n]}\underset{\in f^{(2n-1)}}{ \text{coeff}}\left(
\hspace{1pt}
  \begin{minipage}[h]{0.162\linewidth}
        \vspace{5pt}
        \scalebox{0.2}{\includegraphics{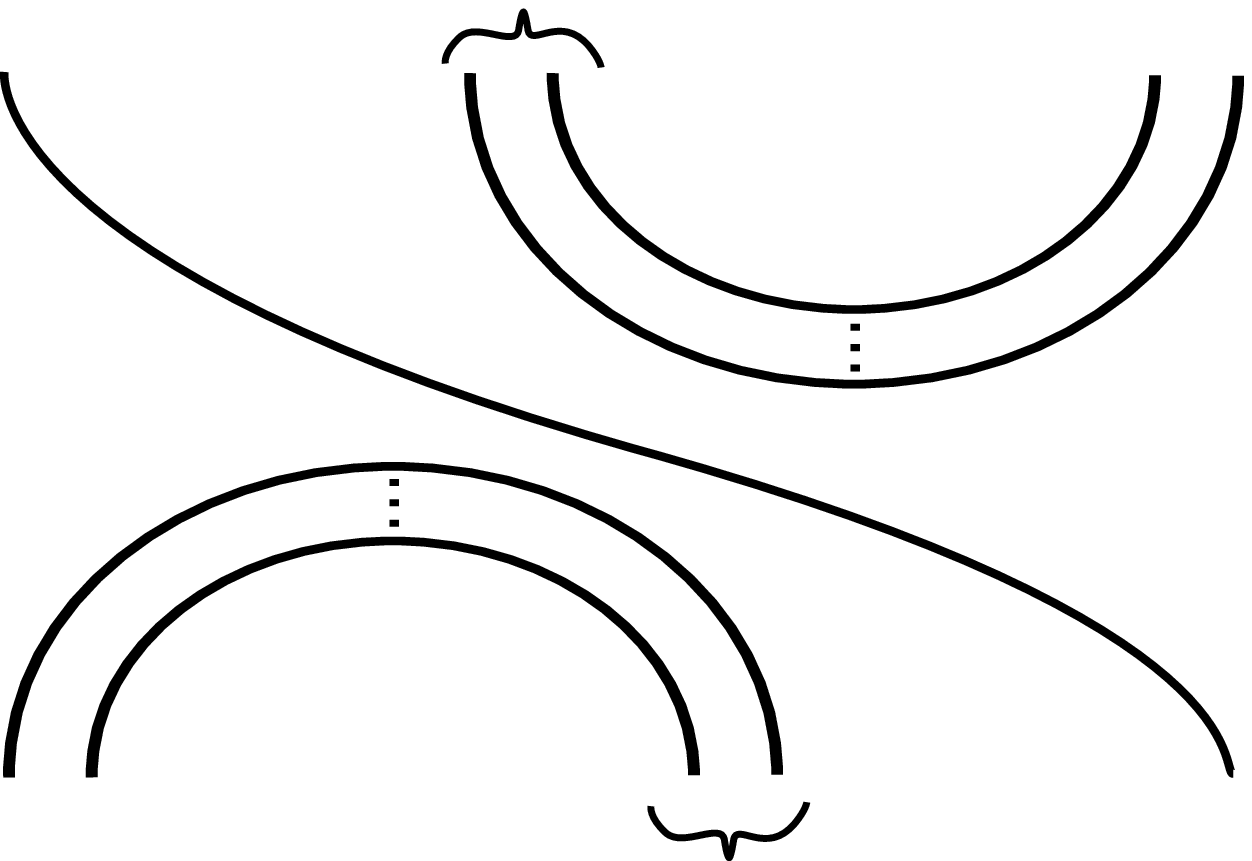}}
        \put(-40,-6){$n-1$}
        \put(-54,50){$n-1$}
        \put(-69,43){$1$}
       \end{minipage}\right)\\
         &=&\frac{[n][n-1]}{[2n][2n-1]}\underset{\in f^{(2n-2)}}{ \text{coeff}}\left(
\hspace{1pt}
  \begin{minipage}[h]{0.162\linewidth}
        \vspace{5pt}
        \scalebox{0.2}{\includegraphics{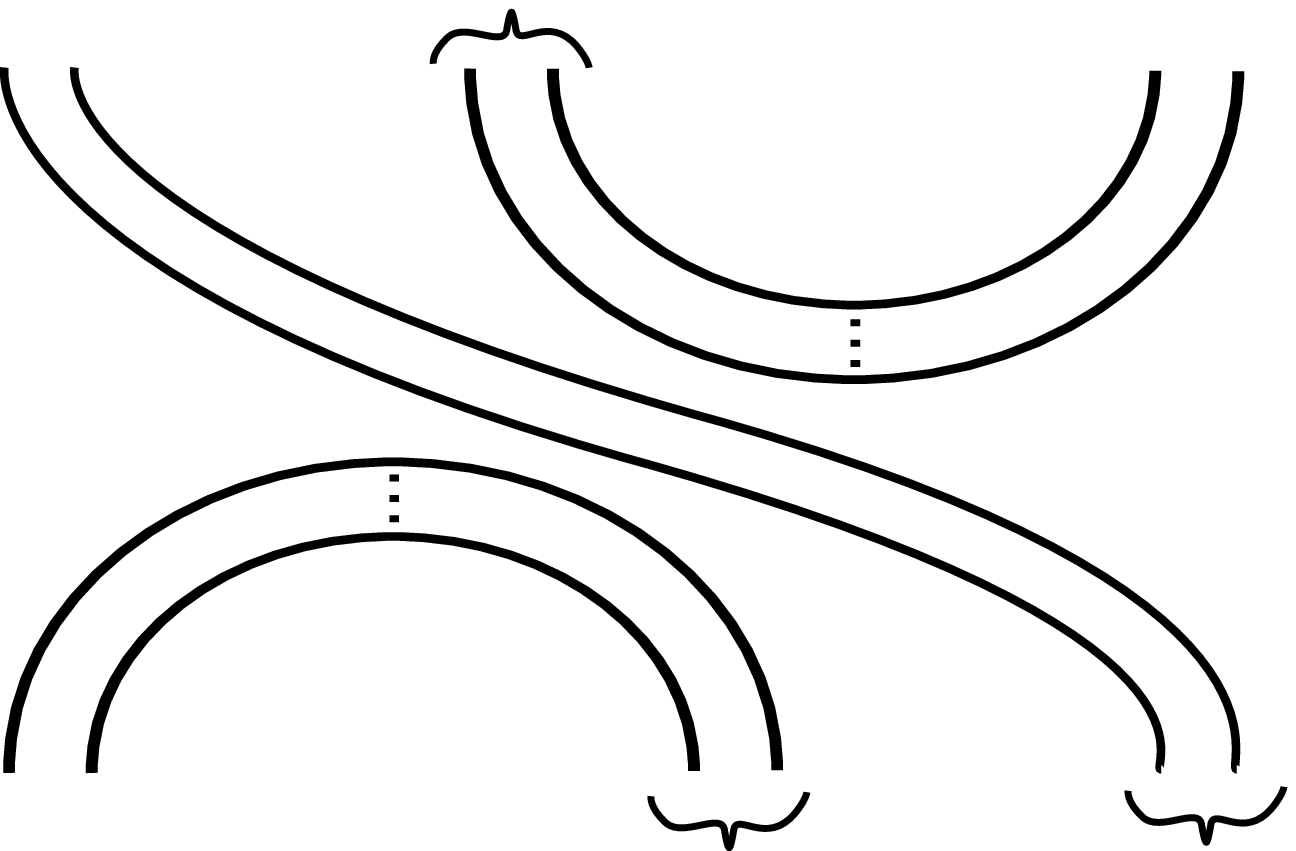}}
        \put(-42,-9){$n-2$}
        \put(-54,50){$n-2$}
         \put(-7,-9){$2$}
       \end{minipage}\right)\\
       &=&\frac{[n]!}{\prod\limits_{i=n+1}^{2n}[i]}=\frac{([n]!)^{2}}{[2n]!}
   \end{eqnarray*}
}
\end{proof}

\begin{proposition}
\label{thm1}
 For all adequate closures of the element $\tau_{n,n,2n}$ and for all $n\geq 0 $:
\begin{eqnarray}
   \begin{minipage}[h]{0.07\linewidth}
         \vspace{0pt}
         \scalebox{0.30}{\includegraphics{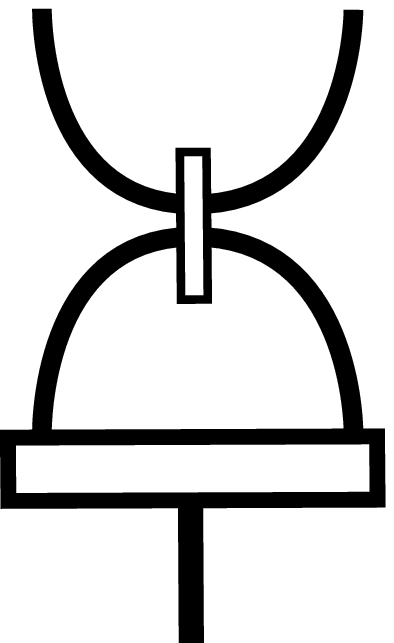}}
        \put(-1,60){$n$}
         \put(-40,60){$n$}
        \put(-22,-12){$2n$}
   \end{minipage}&\doteq_n&(q;q)_n
  \begin{minipage}[h]{0.16\linewidth}
        \vspace{0pt}
        \scalebox{0.30}{\includegraphics{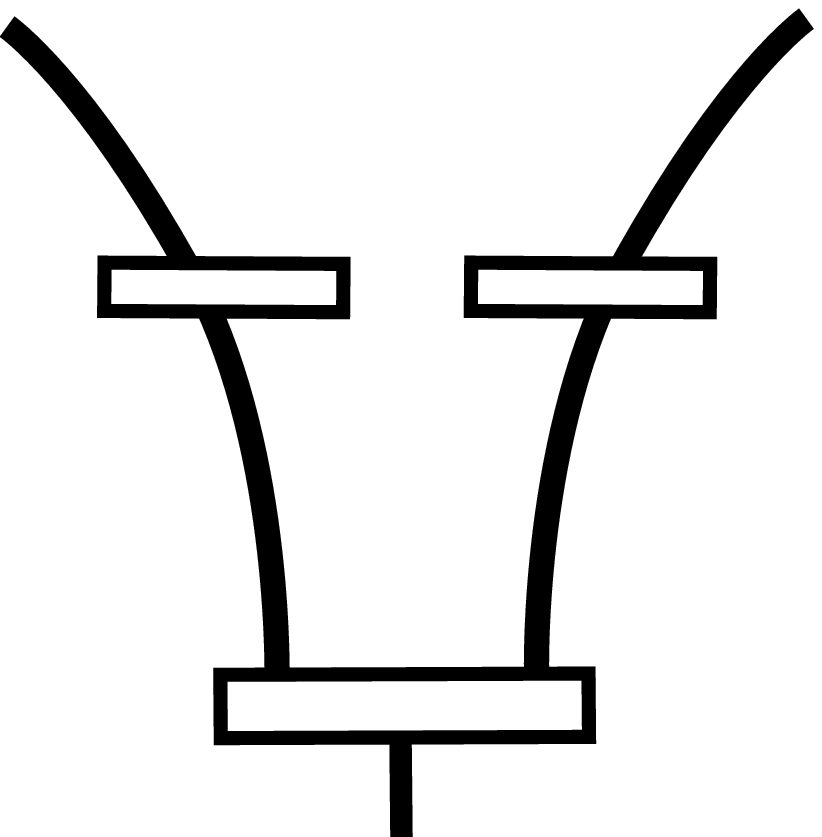}}
         \put(-45,-10){$2n$}
          \put(-1,60){$n$}
         \put(-77,60){$n$}
           \end{minipage}
           \label{di}
  \end{eqnarray}
\end{proposition}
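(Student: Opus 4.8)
The plan is to reduce the statement to the coefficient computed in the preceding Lemma, together with a routine conversion of quantum integers into a $q$-Pochhammer symbol. First I would record that the triple $(n,n,2n)$ is admissible with internal colors $x=(n+n-2n)/2=0$ and $y=z=n$; consequently $\tau_{n,n,2n}$ is nothing but the idempotent $f^{(2n)}$ with its $2n$ strands partitioned into two groups of $n$, each capped by an $f^{(n)}$. (In particular, since $(n,n,2n)$ is admissible, the space $T_{n,n,2n}$ is one dimensional, which both justifies this reduction and, if the two-bubble element carries the projector $f^{(2n)}$ on its $2n$-cluster, gives a shortcut via comparing a single coefficient.) This lets me expand $f^{(2n)}$ in the Temperley--Lieb basis of crossingless matchings via Morrison's recursion, exactly as in the Lemma, so that $\tau_{n,n,2n}=\sum_D \underset{\in f^{(2n)}}{\text{coeff}}(D)\,D$ subject to the two $f^{(n)}$ caps on top.

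Next I would isolate the hooks matching $D_0$ that appears in the Lemma. The diagram on the right-hand side of \eqref{di}, the two-bubble element, is the capped hooks configuration, so it is the term carrying the coefficient $\underset{\in f^{(2n)}}{\text{coeff}}(D_0)=([n]!)^2/[2n]!$ supplied by the Lemma. The core of the argument is then to show that, for every adequate closure and modulo the first $n$ coefficients, this single term governs the entire sum: every other crossingless matching either contains a turnback inside one of the two $n$-clusters and is annihilated against the $f^{(n)}$ caps by the annihilation axiom \eqref{AX}, or else its contribution to any adequate closure has lowest $q$-degree at least $n$ and therefore cannot affect the first $n$ coefficients. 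Here adequacy is used to guarantee both that the surviving closure is a power series with leading coefficient $\pm 1$ and that the discarded closures have sufficiently high degree.

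It then remains to carry out the conversion $([n]!)^2/[2n]!\doteq_n (q;q)_n$. Writing $[k]=q^{-(k-1)/2}(1-q^k)/(1-q)$ I obtain $\dfrac{([n]!)^2}{[2n]!}=\prod_{j=1}^{n}\dfrac{[j]}{[n+j]}=q^{n^2/2}\,\dfrac{(q;q)_n}{\prod_{j=1}^{n}(1-q^{n+j})}$; since $\prod_{j=1}^{n}(1-q^{n+j})^{-1}=1+O(q^{n+1})$ and the overall monomial $q^{n^2/2}$ is absorbed by the normalization convention of Remark \ref{main remark}, this factor agrees with $(q;q)_n$ in its first $n$ coefficients. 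Because the closure of the two-bubble element is adequate and hence has unit leading coefficient, replacing $([n]!)^2/[2n]!$ by $(q;q)_n$ is compatible with $\doteq_n$; chaining this with the term-survival step of the previous paragraph gives, on every adequate closure, $\tau^{*}_{n,n,2n}(x)\doteq_n \tfrac{([n]!)^2}{[2n]!}\,(\text{two-bubble})^{*}(x)\doteq_n (q;q)_n\,(\text{two-bubble})^{*}(x)$, which is \eqref{di}.

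I expect the main obstacle to be the middle step: rigorously justifying that the non-hooks matchings make no contribution to the first $n$ coefficients of an \emph{arbitrary} adequate closure. This is where one must combine the annihilation axiom with a careful degree estimate on the closures, and it is the only place where the adequacy hypothesis is essential rather than cosmetic; the identification of the two-bubble element with the capped hooks diagram and the $q$-Pochhammer conversion are comparatively mechanical once this is in place.
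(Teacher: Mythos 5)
Your skeleton coincides with the paper's actual proof: expand the inner $f^{(2n)}$ into crossingless matchings via Morrison's formula, observe that the surviving hooks diagram of Figure \ref{hon} carries the coefficient $([n]!)^2/[2n]!$ computed in the coefficient lemma (\ref{morrison}), and then convert $([n]!)^2/[2n]! \doteq_n (q;q)_n$ (your Pochhammer manipulation is the same as the paper's, including the absorbed monomial $q^{n^2/2}$). However, the middle step of your argument contains a genuine gap, and it sits exactly where you flag "the main obstacle." You posit a dichotomy for the non-hooks matchings --- either a turnback is killed by the $f^{(n)}$ caps, or the term contributes to an arbitrary adequate closure only in $q$-degree at least $n$ --- and you leave the second branch as an unproven "careful degree estimate" to be made uniformly over all adequate closures. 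As written this is not a proof: you never produce the estimate, and bounding the minimal degree of every adequate closure of every discarded matching is precisely the part that would be hard to do directly.

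The missing idea, which is what the paper uses, is that the second branch of your dichotomy is empty. The element on the left of (\ref{di}) contains a \emph{second} projector, the bottom $f^{(2n)}$, and because of how the strands of the expanded projector are wired around in $\Gamma_n$, every crossingless matching other than the hooks diagram produces a hook on that bottom $f^{(2n)}$; by the annihilation axiom in (\ref{AX}) each such term vanishes identically, before and independently of any closure. Hence the left-hand side equals $\frac{([n]!)^2}{[2n]!}$ times the right-hand diagram as an \emph{exact} identity in the skein space, and $\doteq_n$ enters only in the purely scalar replacement $([n]!)^2/[2n]! \doteq_n (q;q)_n$. This also refutes your closing claim that adequacy is "essential rather than cosmetic" at this step: since the skein identity is exact and multiplication of a common closure by these scalars preserves agreement of the first $n$ coefficients, no degree estimate on closures is needed at all --- which is why the paper can legitimately apply Proposition \ref{thm1} even to the inadequate skein elements of Example \ref{inadequate}. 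Your proof becomes correct, and in fact simpler than you anticipated, once the deferred degree estimate is replaced by this exact annihilation argument.
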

\begin{proof}
Write $\Gamma_n$ to denote the skein element that appears on the left hand side of $\ref{di}$.
\begin{figure}[H]
  \centering
   {\includegraphics[scale=0.32]{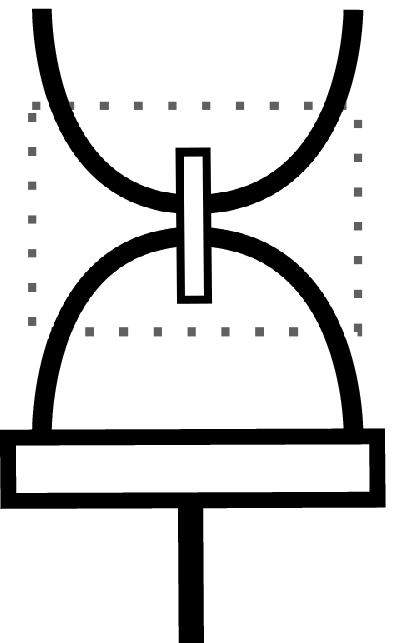}
    \put(-1,60){$n$}
         \put(-40,60){$n$}
         \put(-22,-12){$2n$}
     \caption{Expanding  $f^{(2n)}$}
  \label{DR}}
\end{figure}
Consider the idempotent $f^{(2n)}$ that appears in $\Gamma_n$ inside the square in Figure \ref{DR} and expand this element as a $\mathbb{Q}(A)$-linear summation of crossingless matching diagrams. Every crossingless matching diagram in this expansion, except for the diagram that appears in Figure \ref{hon}, is going to produce a hook to the bottom idempotent $f^{(2n)}$ in $\Gamma_n$ and hence the term with such crossingless matching diagram evaluates to zero.  
\begin{figure}[H]
  \centering
   {\includegraphics[scale=0.32]{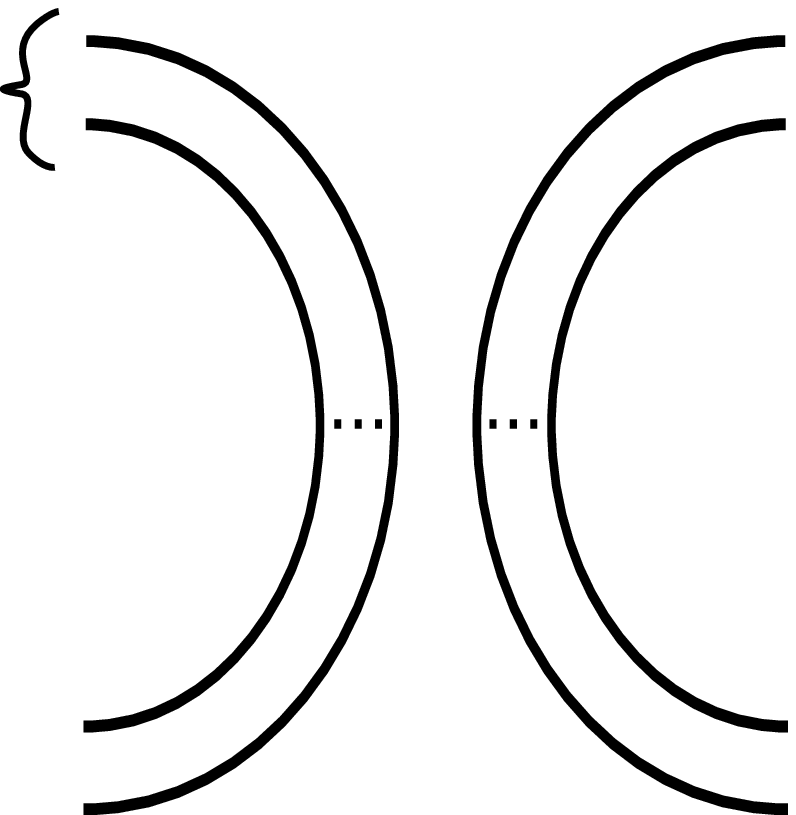}
    \put(-82,65){$n$}
     \caption{}
  \label{hon}}
\end{figure}
This allows us to write
\begin{eqnarray*}
   \begin{minipage}[h]{0.07\linewidth}
         \vspace{0pt}
         \scalebox{0.30}{\includegraphics{f_element}}
        \put(-1,60){$n$}
         \put(-40,60){$n$}
         \put(-22,-12){$2n$}
   \end{minipage}&=&\underset{\in f^{(2n)}}{ \text{coeff}}\left(
\hspace{1pt}
  \begin{minipage}[h]{0.105\linewidth}
        \vspace{5pt}
        \scalebox{0.2}{\includegraphics{hooks}}
        \put(-8,-6){$n$}
       \end{minipage}\right)
  \begin{minipage}[h]{0.16\linewidth}
        \vspace{0pt}
        \scalebox{0.30}{\includegraphics{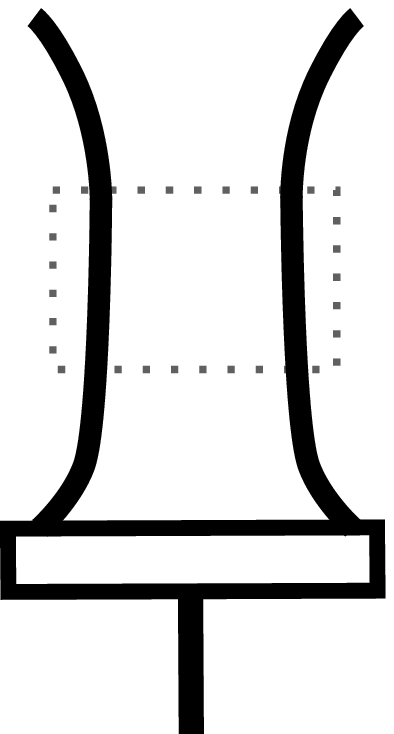}}
         \put(-25,-6){$2n$}
          \put(-1,60){$n$}
         \put(-39,60){$n$}
           \end{minipage}\\&=&\frac{([n]!)^{2}}{[2n]!} \begin{minipage}[h]{0.16\linewidth}
        \vspace{0pt}
        \scalebox{0.30}{\includegraphics{two_bubbles_4}}
         \put(-45,-10){$2n$}
          \put(-1,60){$n$}
         \put(-77,60){$n$}
           \end{minipage}
  \end{eqnarray*}  
 Using the fact $[n]!=q^{(n-n^2)/4} ( 1-q)^{-n} (q, q)_n$ we can write
  \begin{eqnarray*}
  \frac{([n]!)^{2}}{[2n]!}&=&q^{n^2/2} \frac{(q;q)^2_n}{(q;q)_{2n}}\\&=&q^{n^2/2}\frac{\left(\displaystyle\prod_{i=0}^{n-1}(1-q^{i+1})\right)^2}{\displaystyle\prod_{i=0}^{2n-1}(1-q^{i+1})}\\&=&q^{n^2/2}\frac{\displaystyle\prod_{i=0}^{n-1}(1-q^{i+1})}{\displaystyle\prod_{i=n}^{2n-1}(1-q^{i+1})}\\&=&q^{n^2/2}\displaystyle\prod_{i=0}^{n-1}\frac{(1-q^{i+1})}{(1-q^{i+n+1})}\\&\doteq_n&\displaystyle\prod_{i=0}^{n-1}(1-q^{i+1})=(q;q)_n
  \end{eqnarray*}

\end{proof}
\begin{proposition} For all adequate closures of the element $\tau_{n,n,2n}$ and for all $n\geq 0 $: 
\label{thm2}
\begin{eqnarray}
\label{eee}
   \begin{minipage}[h]{0.20\linewidth}
         \vspace{0pt}
         \scalebox{0.30}{\includegraphics{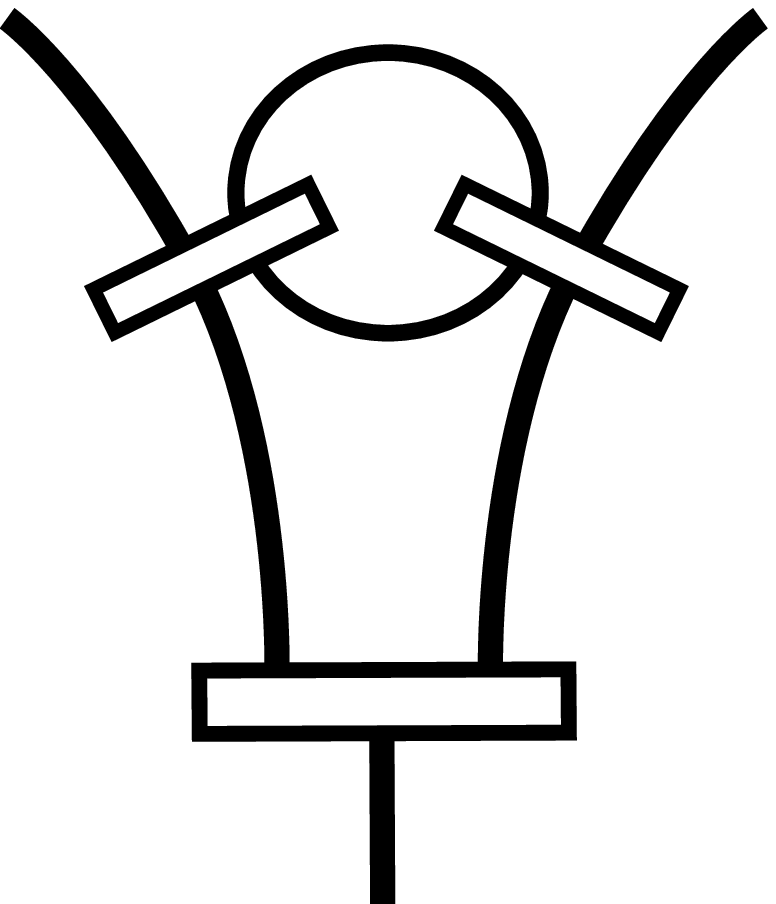}}
      \put(-45,-10){$2n$}
          \put(-1,60){$n$}
         \put(-77,60){$n$}
              \put(-35,80){$n$}
         \put(-35,40){$n$}
   \end{minipage}&\doteq_n&(q;q)_n
   \begin{minipage}[h]{0.16\linewidth}
        \vspace{0pt}
        \scalebox{0.30}{\includegraphics{two_bubbles_4}}
         \put(-45,-10){$2n$}
          \put(-1,60){$n$}
         \put(-77,60){$n$}
           \end{minipage}
  \end{eqnarray}
\end{proposition}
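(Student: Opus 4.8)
The plan is to prove \eqref{eee} by expanding the internal bubble of the left-hand element with the bubble expansion formula (Theorem \ref{main}); this is, for Proposition \ref{thm2}, the exact analogue of the device used in Proposition \ref{thm1}, where the relation was produced by expanding an idempotent and retaining a single crossingless matching. The left-hand element of \eqref{eee} differs from the diagram on its right only in the region where two strands colored $n$ form a bubble sitting over the $2n$-colored edge, so I would first read off the parameters $m,n,k,l$ of the bubble $\mathcal{B}_{m',n'}^{m,n}(k,l)$ that are forced by the surrounding colors and apply Theorem \ref{main} to rewrite the left-hand side as a finite sum
\begin{equation*}
\sum_{i} \left\lceil \begin{array}{cc} m & n \\ k & l \end{array}\right\rceil_i D_i,
\end{equation*}
where each $D_i$ is the diagram obtained by replacing the bubble with the internal edges colored $i$ and $k-l+i$.

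Next I would isolate the term that reproduces the target. Because the outer colors are pinned to $n$, $n$ and $2n$ and the target diagram has a definite internal colouring, matching internal colors singles out one index $i_0$ for which $D_{i_0}$ is a closure-equivalent of the right-hand diagram, the remaining $D_i$ being supported on different internal edges. I would then evaluate the scalar $\left\lceil \begin{smallmatrix} m & n \\ k & l \end{smallmatrix}\right\rceil_{i_0}$, expanding each quantum integer through $[k]=q^{-(k-1)/2}\tfrac{1-q^{k}}{1-q}$ and using $[n]!=q^{(n-n^2)/4}(1-q)^{-n}(q;q)_n$, exactly as in the computation $([n]!)^2/[2n]!\doteq_n(q;q)_n$ carried out at the end of the proof of Proposition \ref{thm1}. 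The aim of this step is to show that, after discarding the overall power of $q$ permitted by Remark \ref{main remark}, the surviving coefficient is $\doteq_n (q;q)_n$.

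The main obstacle will be controlling the non-leading terms. For each $i\neq i_0$ I would estimate the $q$-adic order of $\left\lceil \begin{smallmatrix} m & n \\ k & l \end{smallmatrix}\right\rceil_i$ by combining the explicit factor $q^{i(i-l)/2}$ with the orders of the quantum-integer products in its numerator and denominator; the claim to be verified is that, relative to the normalization fixed by the $i_0$ term, the lowest power of $q$ occurring in each such coefficient is at least $n$. Here the hypothesis that we work over adequate closures of $\tau_{n,n,2n}$ is essential: adequacy ensures that closing any $D_i$ with a wiring $x$ in the admissible set introduces no spurious low-degree cancellation, so a coefficient of $q$-order $\geq n$ really does contribute nothing to the first $n$ coefficients of the closed evaluation. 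Once this estimate is in place, only the $i_0$ term survives the reduction $\doteq_n$, and \eqref{eee} follows.
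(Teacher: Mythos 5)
Your opening moves coincide with the paper's own proof: you expand the bubble $\mathcal{B}^{n,n}_{n,n}(n,n)$ via Theorem \ref{main} (so $m=n=k=l=n$, the internal labels being $i$ and $k-l+i=i$), you single out $i_0=0$ as the index whose diagram reproduces the right-hand side, and your plan for evaluating $\left\lceil \begin{smallmatrix} n & n \\ n & n \end{smallmatrix} \right\rceil_0 \doteq_n (q;q)_n$ via $[n]!=q^{(n-n^2)/4}(1-q)^{-n}(q;q)_n$ is exactly what the paper does, using $(q;q)_{3n+1}/(q;q)_{2n+1}=_n 1$ and $(q;q)_{n}/(q;q)_{2n}=_n 1$.

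The genuine gap is in your disposal of the terms $i\neq 0$. The estimate you propose to verify --- that each coefficient $\left\lceil \begin{smallmatrix} n & n \\ n & n \end{smallmatrix} \right\rceil_i$ with $i\geq 1$ sits at $q$-order at least $n$ above the $i_0$ term --- is false. From the explicit formula (see the computation of $P(n,i)=\left\lceil \begin{smallmatrix} n & n \\ n & n \end{smallmatrix} \right\rceil_i \Delta_{2n}/\Delta_{n+i}$ in the proof of Lemma \ref{lma}), the minimal degree of the $i$-th coefficient is $i+i^2-n$, i.e.\ only $i(i+1)$ above the $i=0$ term; for $i=1$ that offset is $2$, nowhere near $n$. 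Indeed, if your bound held, the sums appearing in Lemma \ref{lma} and in the $2k$-bubble theorem would collapse to their $i=0$ terms and $\Psi(q^3,q)$ would never arise --- those results exist precisely because every index $i$ contributes to the first $n$ coefficients when the bubble chain is closed without a $2n$-projector underneath. What actually eliminates the terms $i\geq 1$ in Proposition \ref{thm2} is not a degree estimate but an exact skein-theoretic vanishing: in each expansion diagram $D_i$ with $i\geq 1$, the $i$-colored internal edge returns strands to the same side of the bottom idempotent $f^{(2n)}$, producing a hook, so $D_i=0$ identically by the annihilation axiom (\ref{AX}). Adequacy of the closure is then needed only to pass from the coefficient relation for the surviving $i=0$ term to the closed evaluations, not to suppress the other terms; with that substitution your outline becomes the paper's proof.
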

\begin{proof}
\begin{eqnarray*}
   \begin{minipage}[h]{0.20\linewidth}
         \vspace{-10pt}
         \scalebox{0.30}{\includegraphics{one_bubble}}
      \put(-45,-10){$2n$}
          \put(-1,60){$n$}
         \put(-77,60){$n$}
            \put(-35,80){$n$}
         \put(-35,40){$n$}
   \end{minipage}&=&\sum\limits_{i=0}^{n}\left\lceil 
\begin{array}{cc}
n & n \\ 
n & n%
\end{array}%
\right\rceil _{i}
   \begin{minipage}[h]{0.16\linewidth}
        \vspace{0pt}
        \scalebox{0.30}{\includegraphics{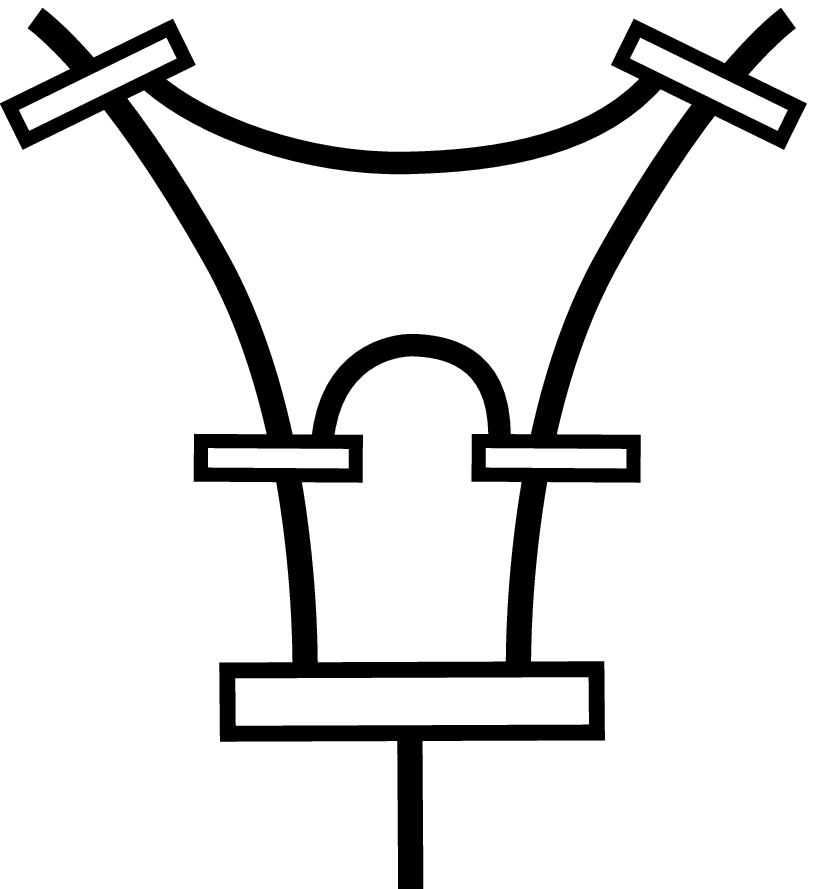}}
         \put(-45,-10){$2n$}
          \put(1,66){$n$}
         \put(-77,66){$n$}
            \put(-35,70){$i$}
         \put(-35,50){$i$}
           \end{minipage}\\&=&\left\lceil 
\begin{array}{cc}
n & n \\ 
n & n%
\end{array}%
\right\rceil _{0}
    \begin{minipage}[h]{0.16\linewidth}
        \vspace{0pt}
        \scalebox{0.30}{\includegraphics{two_bubbles_4}}
         \put(-45,-10){$2n$}
          \put(-1,60){$n$}
         \put(-77,60){$n$}
                   \end{minipage}
  \end{eqnarray*}
The first equation follows by applying the bubble expansion formula, Theorem \ref{main}, and the second last equation follows from the annihilation axiom of the Jones-Wenzl idempotent.
  
Using (\ref{qb}) and the fact that
\begin{equation}
\prod\limits_{i=0}^{j}[n-i]=q^{(2 + 3 j + j^2 - 2 n - 2 j n)/4} (1 - q)^{-1 - j}\frac{(q;q)_n}{(q;q)_{n-j-1}}
\end{equation}
we can write
\begin{equation}
\label{eq}
\left\lceil 
\begin{array}{cc}
n & n \\ 
n & n%
\end{array}%
\right\rceil _{0}=(-1)^nq^{-
 n/2} 
\frac{(q; q)^3_n(q;q)_{3n+1}}{(q;q)_{2 n}^2 (q ;q)_{2 n + 1}}.
\end{equation}

However,
\begin{equation*}
\frac{(q;q)_{3n+1}}{(q;q)_{2n+1}}
=1-q^{2n+2}+O(2n+3)=_{n}1,
\end{equation*}
and 
\begin{equation*}
\frac{(q;q)_{n}}{(q;q)_{2n}}
=_{n}1,
\end{equation*}
hence (\ref{eq}) yields:
\begin{equation*}
\left\lceil 
\begin{array}{cc}
n & n \\ 
n & n%
\end{array}%
\right\rceil _{0}\doteq_n(q;q)_n,
\end{equation*}
and the result follows.
\end{proof}
\begin{remark}
In \cite{Hajij} we showed that
\begin{equation*}
\left\lceil 
\begin{array}{cc}
n & n \\ 
n & n%
\end{array}%
\right\rceil _{0}\Delta_{2n}=\Theta(2n,2n,2n).
\end{equation*}
Hence the previous theorem implies
\begin{equation*}
\Theta(2n,2n,2n)\doteq_n \frac{(q,q)_n}{1-q}=(q^2,q)_n.
\end{equation*}
\end{remark}
Propositions \ref{thm1} and \ref{thm1} imply immediately the following result.
\begin{proposition}
 For all adequate closures of the left hand side skein element of the following equation and for all $n\geq 0 $ the following holds: 
\begin{eqnarray*}
   \begin{minipage}[h]{0.15\linewidth}
         \vspace{0pt}
         \scalebox{0.30}{\includegraphics{one_bubble}}
      \put(-45,-10){$2n$}
          \put(-1,60){$n$}
         \put(-77,60){$n$}
   \end{minipage}&\doteq_n&
   \begin{minipage}[h]{0.07\linewidth}
         \vspace{10pt}
         \scalebox{0.30}{\includegraphics{f_element}}
        \put(-1,60){$n$}
         \put(-40,60){$n$}
        \put(-22,-12){$2n$}
   \end{minipage}
  \end{eqnarray*}
\end{proposition}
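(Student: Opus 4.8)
The plan is to read the statement off directly from Propositions \ref{thm1} and \ref{thm2}, whose right-hand sides are literally the same skein element. Write $\Gamma_n$ for the skein element on the right-hand side of the equation to be proved (this is the element named $\Gamma_n$ in the proof of Proposition \ref{thm1}), write $\Lambda_n$ for the one-bubble element on the left-hand side, and write $B_n$ for the ``two bubbles'' element common to both \eqref{di} and \eqref{eee}. Proposition \ref{thm1} states that
\begin{equation*}
\Gamma_n \doteq_n (q;q)_n \, B_n,
\end{equation*}
and Proposition \ref{thm2} states that
\begin{equation*}
\Lambda_n \doteq_n (q;q)_n \, B_n,
\end{equation*}
both over the set of adequate closures of $\tau_{n,n,2n}$.

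Before chaining these two equivalences I would record that, under the common-sign convention, $\doteq_n$ is an equivalence relation on $\mathbb{Z}[[q]]$. Reflexivity and symmetry are immediate, and transitivity holds because if a series agrees with a second in its first $n$ coefficients up to a sign $\epsilon_1 \in \{\pm 1\}$, and the second agrees with a third up to a sign $\epsilon_2$, then the first agrees with the third up to the sign $\epsilon_1 \epsilon_2$. Applying the dual pairing $x \mapsto (\,\cdot\,)^*(x)$ closure-by-closure, the induced skein-level relation $\doteq_n$ on any fixed set $S$ of wirings is an equivalence relation as well.

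With this in hand the conclusion is immediate. Both $\Gamma_n$ and $\Lambda_n$ are $\doteq_n$-equivalent to the single element $(q;q)_n\,B_n$ on the set of adequate closures of $\tau_{n,n,2n}$; moreover, by Remark \ref{important} each of $\Gamma_n$, $\Lambda_n$ and $B_n$ lies in $T_{n,n,2n}$, so ``adequate closure of the left-hand side'' and ``adequate closure of $\tau_{n,n,2n}$'' describe the same set of closures and the intermediate element is genuinely shared. Symmetry and transitivity of $\doteq_n$ then give $\Lambda_n \doteq_n \Gamma_n$, which is exactly the asserted equation. There is no real obstacle here: the only things to check beyond invoking the two propositions are the elementary transitivity of $\doteq_n$ and the coincidence of the two classes of closures, after which the shared series $(q;q)_n\,B_n$ simply cancels.
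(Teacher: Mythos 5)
Your proposal is correct and matches the paper exactly: the paper gives no separate argument, stating only that Propositions \ref{thm1} and \ref{thm2} ``imply immediately'' this result, since both express their left-hand sides as $(q;q)_n$ times the identical two-bubble element over adequate closures of $\tau_{n,n,2n}$. Your additional verification that $\doteq_n$ is transitive up to composed signs is a reasonable (if routine) point the paper leaves implicit.
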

\begin{lemma}
For $n\geq 1$:\\
\label{lma}
\begin{enumerate}
\item$
\sum\limits_{i=0}^{n}\left\lceil 
\begin{array}{cc}
n & n \\ 
n & n%
\end{array}%
\right\rceil _{i}\frac{\Delta _{2n}}{\Delta _{n+i}}\doteq_{n}\Psi(q^3,q).
$
\item$
\sum\limits_{i=0}^{n}\left\lceil 
\begin{array}{cc}
n & n \\ 
n & n%
\end{array}%
\right\rceil _{i}\left\lceil 
\begin{array}{cc}
n & i \\ 
n & n%
\end{array}%
\right\rceil _{0}\frac{\Delta _{2n}}{\Delta _{n+i}}\doteq_{n}f(-q^4,-q).
$
\end{enumerate}
\end{lemma}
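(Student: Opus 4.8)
The plan is to prove both identities by a direct computation that reduces each summand modulo $q^n$ to a monomial times a fixed Pochhammer ratio, after which the sum becomes one of the Rogers--Ramanujan-type series already recorded in Section~\ref{NT}. The only two analytic inputs needed are the relation $[m]!=q^{(m-m^2)/4}(1-q)^{-m}(q;q)_m$ used in the proof of Proposition~\ref{thm1} and the elementary fact that $(q;q)_{N}\doteq_n(q;q)_\infty$ whenever $N>n$ (their difference begins in degree $>n$). The explicit values of the $\left\lceil\begin{smallmatrix}\cdot&\cdot\\\cdot&\cdot\end{smallmatrix}\right\rceil$-coefficients come from Theorem~\ref{main}, and the factor $\Delta_{2n}/\Delta_{n+i}$ is rewritten through $\Delta_m=(-1)^m[m+1]$ and $[m]=q^{(1-m)/2}(1-q^m)/(1-q)$.

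For part (1) I would first collect the consecutive quantum integers in Theorem~\ref{main} (with $m=n=k=l=n$) into factorials and convert to Pochhammer symbols, obtaining
\[
\left\lceil\begin{smallmatrix}n&n\\n&n\end{smallmatrix}\right\rceil_i=(-1)^{i+n}q^{\,i^2+(i-n)/2}\frac{(q;q)_n^6(q;q)_{3n-i+1}}{(q;q)_i^2(q;q)_{n-i}^3(q;q)_{2n}^2(q;q)_{2n+1}},
\]
which at $i=0$ reproduces \eqref{eq}. Combining this with $\frac{\Delta_{2n}}{\Delta_{n+i}}=(-1)^{n+i}q^{(i-n)/2}\frac{(q;q)_{2n+1}(q;q)_{n+i}}{(q;q)_{2n}(q;q)_{n+i+1}}$ cancels the sign and the factor $(q;q)_{2n+1}$, so the $i$-th summand becomes $q^{\,i^2+i-n}\frac{(q;q)_n^6(q;q)_{3n-i+1}(q;q)_{n+i}}{(q;q)_i^2(q;q)_{n-i}^3(q;q)_{2n}^3(q;q)_{n+i+1}}$. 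Replacing every Pochhammer symbol whose index exceeds $n$ by $(q;q)_\infty$ collapses this to $q^{\,i^2+i-n}(q;q)_\infty/(q;q)_i^2$, whence $\sum_i\doteq_n q^{-n}(q;q)_\infty\sum_{i\ge0}q^{\,i^2+i}/(q;q)_i^2=q^{-n}\Psi(q^3,q)$ by the base case of \eqref{fock2}; the global factor $q^{-n}$ is absorbed by the normalization of Remark~\ref{main remark}.

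For part (2) I would compute the extra factor the same way: with first color $n$, second color $i$, and $k=l=n$, the $0$-th coefficient is $\left\lceil\begin{smallmatrix}n&i\\n&n\end{smallmatrix}\right\rceil_0=(-1)^n q^{-n/2}\frac{(q;q)_n^2(q;q)_i(q;q)_{2n+i+1}}{(q;q)_{n+i}(q;q)_{2n}(q;q)_{n+i+1}}$, so the same large-index reduction gives $\left\lceil\begin{smallmatrix}n&i\\n&n\end{smallmatrix}\right\rceil_0\doteq_n(-1)^n q^{-n/2}(q;q)_i$. Multiplying this against the reduced summand of part (1) turns $(q;q)_i^{-2}$ into $(q;q)_i^{-1}$, so $\sum_i\doteq_n(-1)^n q^{-3n/2}(q;q)_\infty\sum_{i\ge0}q^{\,i^2+i}/(q;q)_i=(-1)^n q^{-3n/2}f(-q^4,-q)$ by the second Rogers--Ramanujan identity stated in Section~\ref{NT}; again the common sign $(-1)^n$ and the global power are harmless under $\doteq_n$.

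The genuine content, and the step I expect to be the main obstacle, is making the \emph{uniform} truncation precise: one must verify that replacing each $(q;q)_{N(n,i)}$ by $(q;q)_\infty$ perturbs only coefficients of order exceeding $n$, simultaneously for all $i$ that influence the first $n$ coefficients. This is plausible because $q^{\,i^2+i}$ already has order $>n$ once $i\gtrsim\sqrt n$, so only $O(\sqrt n)$ summands are relevant and for each of them the indices $n\pm i,\;3n-i+1,\;2n+i+1$ exceed $n$ by a margin growing with $n$; nonetheless this estimate, together with the bookkeeping of the (half-integer) global powers $q^{-n}$, $q^{-3n/2}$ and the sign $(-1)^n$ against the up-to-sign-and-shift convention defining $\doteq_n$, is where essentially all of the work lies.
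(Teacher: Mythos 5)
Your proof is correct, and at its crux it takes a genuinely different route from the paper's. The endpoints coincide: your closed form for $\left\lceil\begin{smallmatrix}n&n\\n&n\end{smallmatrix}\right\rceil_i\frac{\Delta_{2n}}{\Delta_{n+i}}$ agrees exactly with the paper's equation (\ref{mn}) once $[2n+1]/[n+i+1]$ is converted into Pochhammer symbols, your formula for $\left\lceil\begin{smallmatrix}n&i\\n&n\end{smallmatrix}\right\rceil_0$ is the paper's (\ref{nnii0}), and both arguments finish by quoting the same two classical evaluations recorded in Section \ref{NT} (Ramanujan's Entry 9 for part (1), the second Rogers--Ramanujan identity for part (2)). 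The difference is the middle step: the paper eliminates the factors $(q;q)_n/(q;q)_{n-i}$ by a pairwise telescoping trick --- proving $P(n,i)+P(n,i+1)\doteq_n P(n,i)+Q(n,i+1)$ with $Q(n,i)=\frac{(q;q)_{n-i}}{(q;q)_n}P(n,i)$ and sweeping this through the sum with degree bookkeeping at every swap (strictly, iterated, since (\ref{mn}) carries three such factors) --- whereas you truncate all $n$-dependent Pochhammer symbols to $(q;q)_\infty$ in one stroke. Moreover, the uniform estimate you single out as the main obstacle is easier than you fear and needs no $O(\sqrt n)$ heuristic: the $i$-th summand has minimal degree exactly $i^2+i-n$; every symbol you replace has index at least $n-i$; and $(q;q)_N^{\pm 1}$ differs from $(q;q)_\infty^{\pm 1}$ only in degrees $\geq N+1$; hence each replacement perturbs the $i$-th summand only in absolute degrees at least $(i^2+i-n)+(n-i+1)=i^2+1\geq 1$. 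Since the $i=0$ term pins the minimal degree of the whole sum at $-n$ with leading coefficient $1$ (all other summands having strictly larger order), the first $n$ coefficients sit in degrees $-n,\dots,-1$ and are untouched, uniformly in $i$; extending the summation range to infinity adds terms of order greater than $(n+1)^2$, and the global monomials $q^{-n}$, $q^{-3n/2}$ and the sign $(-1)^n$ are absorbed by Remark \ref{main remark} and the up-to-common-sign convention in $\doteq_n$. What your route buys is a shorter, more transparent reduction --- one degree bound instead of a chain of substitutions --- and it renders part (2) fully explicit, where the paper only gestures at ``similar calculations''; one small slip of wording: your criterion ``index exceeds $n$'' must be read as ``index depending on $n$'', since the collapse to $(q;q)_\infty/(q;q)_i^2$ also replaces $(q;q)_n$ and $(q;q)_{n-i}$, which the bound above is exactly what justifies.
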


\begin{proof}
\begin{enumerate}
\item
Set $P(n,i):=
\left\lceil 
\begin{array}{cc}
n & n \\ 
n & n%
\end{array}\right\rceil _{i}\frac{\Delta _{2n}}{\Delta _{n+i}}$. From the bubble expansion formula, Theorem \ref{main}, we obtain,
\begin{equation}
\label{PN}
P(n,i)
=q^{i(i-n)/2}\frac{[2n+1]}{[n+i+1]}\frac{\prod\limits_{j=0}^{n-i-1}[n-j]%
\prod\limits_{s=0}^{i-1}[n-s]^{2}}{\prod\limits_{h=0}^{n-1}[2n-h]^{2}}%
\binom{n}{i}_{q}\prod\limits_{k=0}^{n-i-1}[3n-i-k+1] 
\end{equation}

Using (\ref{qb}) and the fact that
\begin{equation}
\label{fact}
\prod\limits_{i=0}^{j}[n-i]=q^{(2 + 3 j + j^2 - 2 n - 2 j n)/4} (1 - q)^{-1 - j}\frac{(q;q)_n}{(q;q)_{n-j-1}}
\end{equation}

we can rewrite (\ref{PN}) to obtain the following:
\begin{equation}
\label{mn}
P(n,i)=q^{
 (2 i + 4 i^2 - 2 n)/4} 
 \frac{[2n+1]}{[n+i+1]}\frac{(q; q)^6_n(q;q)_{3n-i+1}}{(q;q)_{2 n}^2 (q ;q)_{2 n + 1} (q;q)_i^2 (q;q)_{n - i}^3}
\end{equation}
Now we shall study the first $n$ terms of $%
P(n,i+1)+P(n,i).$ We claim that 
\begin{equation*}
P(n,i)+P(n,i+1)\doteq_n P(n,i)+Q(n,i+1)
\end{equation*}
where $\frac{(q;q)_{n}}{(q;q)_{n-i}}Q(n,i)=P(n,i)$.
To prove this claim observe first that $ m( P{(n,i))}=(i+i^{2}-n).$ Note also that for all $1\leq i\leq n$:
\begin{equation*}
\frac{(q;q)_{n}}{(q;q)_{n-i}}=1-q^{(n-i+1)}+O(n-i+2)
\end{equation*}
 This implies that the minimal degree of $
Q(n,i)$ is equal to the minimal degree of $P(n,i)$. Thus
{\footnotesize
\begin{eqnarray*}
P(n,i)+P(n,i+1) &=&P(n,i)+\frac{(q,q)_{n}}{(q,q)_{n-i-1}}%
Q(n,i+1) \\
&=&P(n,i)+(1- q^{(n-i)}+O(n-i+1))Q(n,i+1) \\
&=&P(n,i)+Q(n,i+1)- q^{(n-i)}Q(n,i+1)+O(3 + 2 i + i^2)\\
&=&P(n,i)+Q(n,i+1)-
q^{n-i}(q^{i+i^{2}-n}q^{2+2i}+O(3 + 3 i + i^2 - n))+O(3 + 2 i + i^2)
\\
&=&P(n,i)+Q(n,i+1)-
q^{n+i+2}q^{i+i^{2}-n}+O(3 + 2 i + i^2)\\
&\doteq_{n}&P(n,i)+Q(n,i+1)
\end{eqnarray*}
}
The last equation is true since $%
m(P(n,i)+P(n,i+1))=i+i^{2}-n$ and $n+i+2>n+1$ for all positive integers $i$, and hence the terms $-q^{n+i+2}q^{i+i^{2}-n}+O(3+2i+ i^2)$ do not contribute the first $n$ terms of $P(n,i)+P(n,i+1)$. This proves our claim and hence we can write:
\begin{eqnarray*}
P(n,0)+P(n,2)+...+P(n,n) &=& \\
&=&P(n,0)+...+P(n,n-1)+Q(n,n) \\
&=&P(n,0)+Q(n,2)...+Q(n,n-1)+Q(n,n) \\
&=&Q(n,0)+Q(n,2)...+Q(n,n-1)+Q(n,n)
\end{eqnarray*}
The last equality follows from the fact that $P(n,0)=Q(n,0).$ Using this result and (\ref{mn}) we obtain
\begin{equation}
\label{fn}
\sum\limits_{i=0}^{n}P(n,i)\doteq_n \sum\limits_{i=0}^{n}q^{
 (i/2 + i^2)} 
 \frac{[2n+1]}{[n+i+1]}\frac{(q; q)^3_n(q;q)_{3n-i+1}}{(q;q)_{2 n}^2 (q ;q)_{2 n + 1} (q;q)_i^2}
\end{equation}
Now
\begin{equation}
\label{1}
\frac{(q;q)_{3n-i+1}}{(q;q)_{2n+1}}
=1-q^{2n+2}+O(2n+3)=_{n}1
\end{equation}
and similarly we can show that
\begin{equation}
\label{2}
\frac{(q,q)_{n}}{(q,q)_{2n}}=_{n}1 
\end{equation}
Putting (\ref{1}) and (\ref{2}) all in (\ref{fn}) we obtain:
\begin{eqnarray*}
\sum\limits_{i=0}^{n}P(n,i)&\doteq_{n}&(q;q)_{n}\sum\limits_{i=0}^{n}\frac{%
q^{i/2+i^{2}}}{(q;q)_{i}^{2}} \frac{[2n+1]}{[n+i+1]}\\&\doteq_{n}&(q;q)_{n}\sum\limits_{i=0}^{n}\frac{%
q^{i+i^{2}}}{(q;q)_{i}^{2}}\\&\doteq_{n}&\Psi(q^3,q).
\end{eqnarray*}
\item
Using \ref{fact} one can write 
\begin{equation}
\label{nnii0}
\left\lceil 
\begin{array}{cc}
n & i \\ 
n & n%
\end{array}%
\right\rceil _{0}=(-1)^nq^{-
 n/2} 
\frac{(q; q)_i(q; q)^2_n(q;q)_{2n+i+1}}{(q;q)_{n+i+1} (q ;q)_{ n +i}(q ;q)_{2 n }}.
\end{equation}
Equations \ref{PN} and \ref{nnii0} imply:
{\scriptsize
\begin{equation}
\left\lceil 
\begin{array}{cc}
n & n \\ 
n & n%
\end{array}%
\right\rceil _{i}\left\lceil 
\begin{array}{cc}
n & i \\ 
n & n%
\end{array}%
\right\rceil _{0}\frac{\Delta _{2n}}{\Delta _{n+i}}=(-1)^nq^{i/2 + i^2 - n} 
\frac{(q; q)^8_n(q;q)_{2n+i+1}(q;q)_{3n-i+1}}{(q;q)_{n+i+1} (q ;q)_{ n +i}(q ;q)^3_{ n -i}(q ;q)_{i }(q ;q)^3_{2 n }(q ;q)_{2n+1}}\frac{[2n+1]}{[n+i+1]}
\end{equation}
}
 Using similar calculations to the ones we did in (1), one can write

\begin{eqnarray*}
\sum\limits_{i=0}^{n}\left\lceil 
\begin{array}{cc}
n & n \\ 
n & n%
\end{array}%
\right\rceil _{i}\left\lceil 
\begin{array}{cc}
n & i \\ 
n & n%
\end{array}%
\right\rceil _{0}\frac{\Delta _{2n}}{\Delta _{n+i}}&\doteq_n&(q; q)_n \sum\limits_{i=0}^{n}
\frac{q^{i^2+i}}{(q;q)_{i}}\doteq_nf(-q^4,-q).
\end{eqnarray*}

\end{enumerate}
\end{proof}

\begin{proposition}
For adequate closures of the element $\tau_{n,n,2n}$ and for all $n\geq0$:
\begin{enumerate}

\item \begin{eqnarray*}
   \begin{minipage}[h]{0.20\linewidth}
         \vspace{-10pt}
         \scalebox{0.30}{\includegraphics{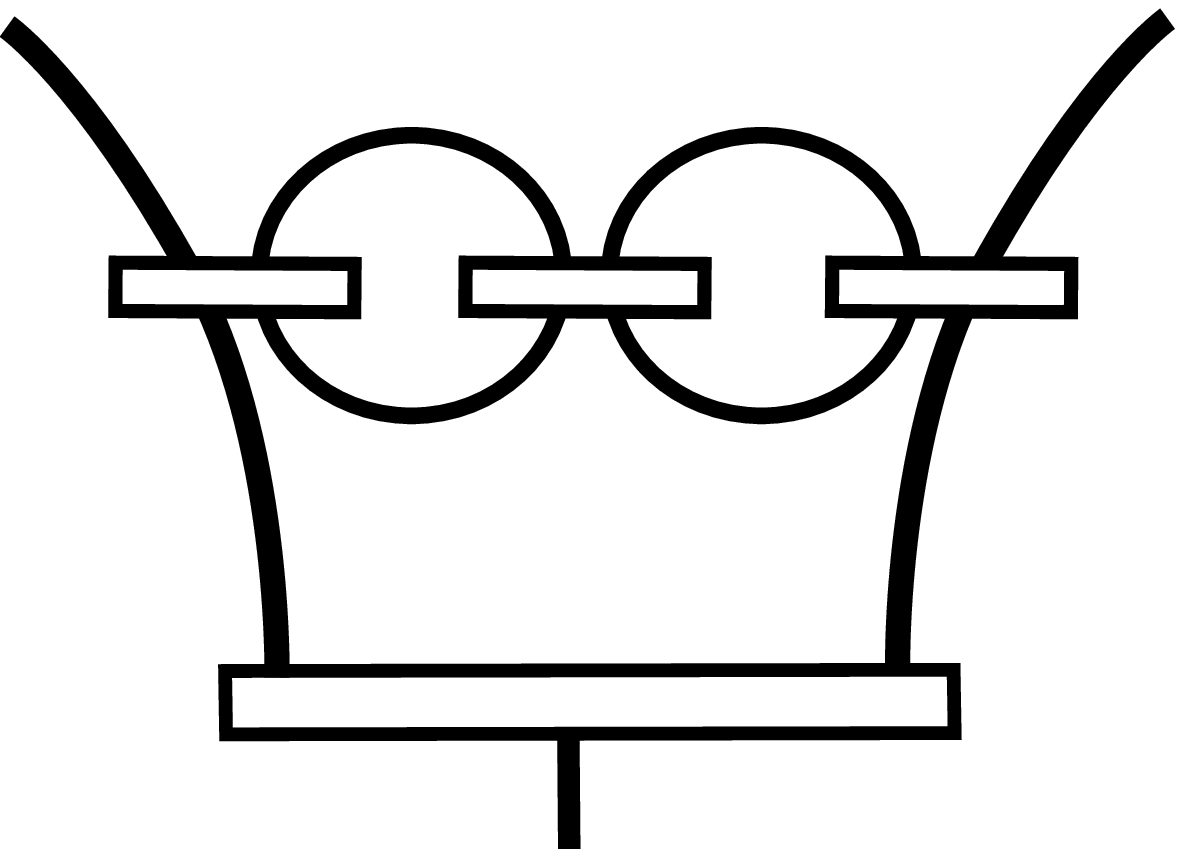}}
        \put(-68,+67){$n$}
         \put(-68,+27){$n$}
         \put(-1,60){$n$}
         \put(-108,60){$n$}
          \put(-40,+27){$n$}
         \put(-40,67){$n$}
         \put(-60,-10){$2n$}
   \end{minipage}&\doteq_n&\Psi(q^3,q)
   \begin{minipage}[h]{0.16\linewidth}
        \vspace{0pt}
        \scalebox{0.30}{\includegraphics{two_bubbles_4}}
         \put(-45,-10){$2n$}
          \put(-1,60){$n$}
         \put(-77,60){$n$}
           \end{minipage}
  \end{eqnarray*}
\item
\begin{eqnarray*}
    \begin{minipage}[h]{0.26\linewidth}
         \vspace{-10pt}
         \scalebox{0.28}{\includegraphics{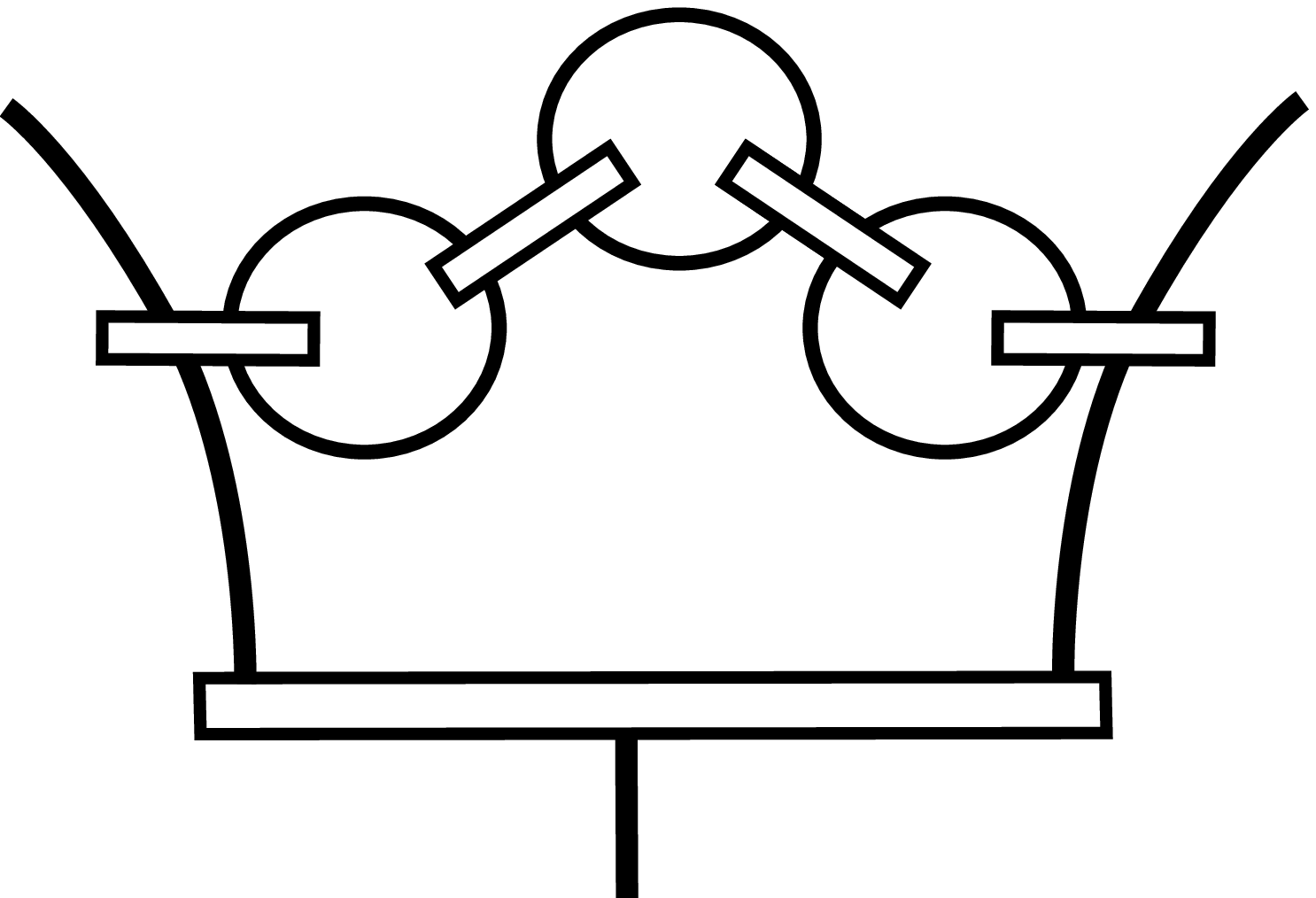}}
        \put(-88,+67){$n$}
         \put(-90,+33){$n$}
         \put(-1,60){$n$}
         \put(-128,60){$n$}
          \put(-40,+33){$n$}
         \put(-35,67){$n$}
         \put(-60,85){$n$}
         \put(-60,50){$n$}
         \put(-60,-10){$2n$}
   \end{minipage}&=&f(-q^4,-q)
   \begin{minipage}[h]{0.16\linewidth}
        \vspace{0pt}
        \scalebox{0.30}{\includegraphics{two_bubbles_4}}
         \put(-45,-10){$2n$}
          \put(-1,60){$n$}
         \put(-77,60){$n$}
           \end{minipage}
  \end{eqnarray*}
\end{enumerate}
\end{proposition}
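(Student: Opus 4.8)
The plan is, in both parts, to reduce the left-hand diagram to a scalar multiple of the reference element on the right-hand side --- call it $\Gamma_0$ --- by iterating the bubble expansion formula (Theorem \ref{main}), and then to identify the resulting scalar with the sum already evaluated in Lemma \ref{lma}. The only skein-theoretic inputs are Theorem \ref{main}, the annihilation axiom, and the loop reduction \ref{properties2}; all of the $q$-series analysis has been absorbed into Lemma \ref{lma}, so after the diagrams are reduced there is nothing left to compute.

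For part (1): first I would apply Theorem \ref{main} to the upper of the two bubbles. Because the four colors surrounding it are all $n$, this replaces that bubble by an internal edge of color $i$ and introduces the sum $\sum_{i=0}^{n}\left\lceil\begin{smallmatrix} n & n \\ n & n\end{smallmatrix}\right\rceil_i$. Unlike the single-bubble situation of Proposition \ref{thm2}, where the annihilation axiom kills every term with $i>0$, here the new $i$-colored edge runs into the second bubble rather than directly into a capping idempotent, so the whole sum survives. The second bubble then carries this $i$-edge into a loop configuration on the $2n$ edge, which the loop formula \ref{properties2} collapses to $\frac{\Delta_{2n}}{\Delta_{n+i}}\,\Gamma_0$ (the encircling and encircled colors being $n-i$ and $n+i$, so that $\frac{\Delta_{(n-i)+(n+i)}}{\Delta_{n+i}}=\frac{\Delta_{2n}}{\Delta_{n+i}}$). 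Hence the left-hand side equals $\bigl(\sum_{i=0}^{n}\left\lceil\begin{smallmatrix} n & n \\ n & n\end{smallmatrix}\right\rceil_i \frac{\Delta_{2n}}{\Delta_{n+i}}\bigr)\Gamma_0$, which by Lemma \ref{lma}(1) is $\doteq_n \Psi(q^3,q)\,\Gamma_0$.

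For part (2): the argument repeats, with the extra bubble accounting for the second ceiling symbol. I would expand the outermost bubble exactly as above, producing $\sum_{i=0}^{n}\left\lceil\begin{smallmatrix} n & n \\ n & n\end{smallmatrix}\right\rceil_i$ together with an $i$-colored edge, and then expand the middle bubble, which now sits in a region bounded by the colors $n$ and $i$ on one side and $n,n$ on the other, producing a coefficient $\left\lceil\begin{smallmatrix} n & i \\ n & n\end{smallmatrix}\right\rceil_j$. This middle bubble is capped in the sense of Proposition \ref{thm2}, so the annihilation axiom forces $j=0$ and leaves only $\left\lceil\begin{smallmatrix} n & i \\ n & n\end{smallmatrix}\right\rceil_0$; note that the index-$0$ term preserves the $i$-strand threading down to the innermost bubble, so the $i$-dependence is not lost. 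That innermost bubble then collapses by \ref{properties2} to $\frac{\Delta_{2n}}{\Delta_{n+i}}\,\Gamma_0$, whence the three-bubble diagram equals $\bigl(\sum_{i=0}^{n}\left\lceil\begin{smallmatrix} n & n \\ n & n\end{smallmatrix}\right\rceil_i\left\lceil\begin{smallmatrix} n & i \\ n & n\end{smallmatrix}\right\rceil_0 \frac{\Delta_{2n}}{\Delta_{n+i}}\bigr)\Gamma_0$, which by Lemma \ref{lma}(2) is $\doteq_n f(-q^4,-q)\,\Gamma_0$.

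The routine parts are immediate once the reductions are in place; the real work --- and the main obstacle --- is the diagrammatic bookkeeping: verifying that each bubble expansion introduces exactly the claimed ceiling symbol (with boundary colors $n,n$ for the outer bubble and $n,i$ for the middle one), that the loop collapse produces precisely $\frac{\Delta_{2n}}{\Delta_{n+i}}$ with encircling color $n-i$, and, above all, distinguishing which bubbles are \emph{free} (contributing a full sum over the expansion index) from those that are \emph{capped} and hence forced by annihilation to their index-$0$ term. Getting this split right is exactly what separates part (1) from part (2) and what makes the two reductions match the two coefficient sums of Lemma \ref{lma}.
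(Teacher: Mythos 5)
Your proof is correct and takes essentially the same route as the paper's: expand the bubble chain via Theorem \ref{main}, use the loop reduction (\ref{properties2}) to produce the factor $\Delta_{2n}/\Delta_{n+i}$, invoke the annihilation axiom to force the second expansion index to zero in part (2), and identify the resulting coefficient sums $\sum_{i}\left\lceil\begin{smallmatrix} n & n \\ n & n\end{smallmatrix}\right\rceil_i\frac{\Delta_{2n}}{\Delta_{n+i}}$ and $\sum_{i}\left\lceil\begin{smallmatrix} n & n \\ n & n\end{smallmatrix}\right\rceil_i\left\lceil\begin{smallmatrix} n & i \\ n & n\end{smallmatrix}\right\rceil_0\frac{\Delta_{2n}}{\Delta_{n+i}}$ with $\Psi(q^3,q)$ and $f(-q^4,-q)$ via Lemma \ref{lma}. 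The only deviation is the order of the local moves in part (2) --- the paper collapses the loop before expanding the remaining bubble, while you expand first and collapse last --- which is immaterial since these relations are applied to disjoint local pieces of the diagram.
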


\begin{proof}
\begin{enumerate}
\item
Applying the bubble expansion formula on the left bubble, we obtain,
\begin{eqnarray*}
   \begin{minipage}[h]{0.20\linewidth}
         \vspace{0pt}
         \scalebox{0.30}{\includegraphics{two_bubbles}}
        \put(-68,+67){$n$}
         \put(-68,+27){$n$}
         \put(-1,60){$n$}
         \put(-108,60){$n$}
          \put(-40,+27){$n$}
         \put(-40,67){$n$}
         \put(-60,-10){$2n$}
   \end{minipage}&=&\sum\limits_{i=0}^{n}\left\lceil 
\begin{array}{cc}
n & n \\ 
n & n%
\end{array}%
\right\rceil _{i}
   \begin{minipage}[h]{0.16\linewidth}
        \vspace{-10pt}
        \scalebox{0.30}{\includegraphics{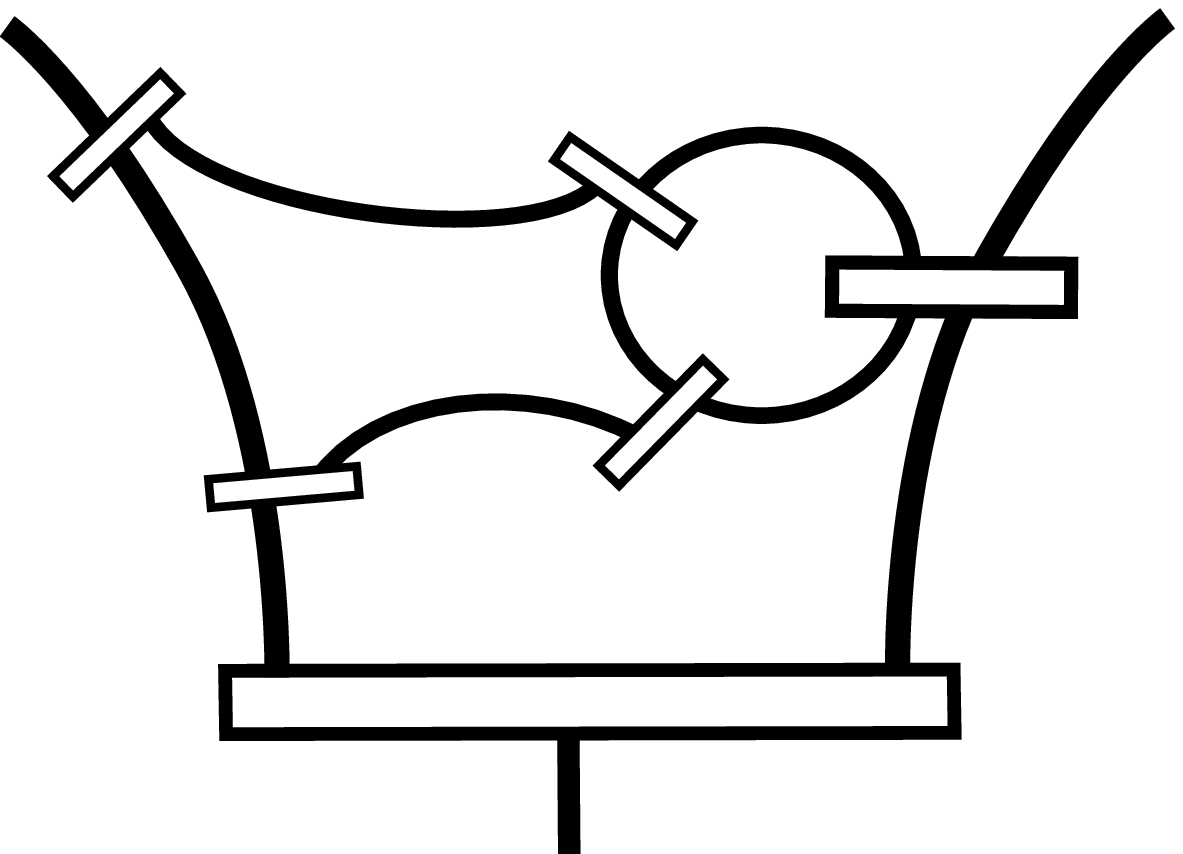}}
        \put(-68,+60){$i$}
         \put(-60,+30){$i$}
          \put(-60,-10){$2n$}
          \put(-1,60){$n$}
         \put(-108,60){$n$}
         \put(-40,67){$n$}
           \end{minipage}\\&=&\sum\limits_{i=0}^{n}\left\lceil 
\begin{array}{cc}
n & n \\ 
n & n%
\end{array}%
\right\rceil _{i}
   \begin{minipage}[h]{0.16\linewidth}
        \vspace{0pt}
        \scalebox{0.30}{\includegraphics{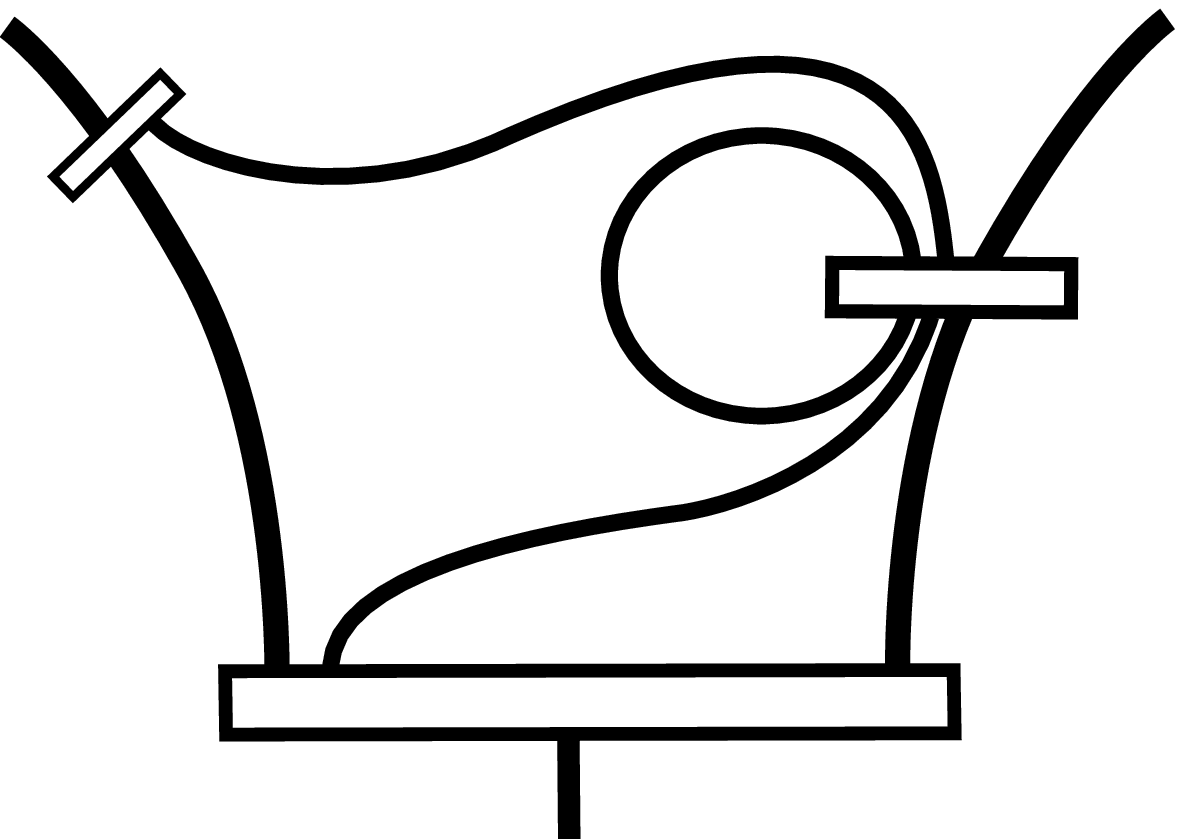}}
        \put(-68,+61){$i$}
         \put(-60,+30){$i$}
          \put(-60,-10){$2n$}
          \put(-1,60){$n$}
         \put(-108,60){$n$}
        \put(-77,+45){$n-i$}
           \end{minipage}
  \end{eqnarray*}
Using the property (\ref{properties2}) of the idempotent, we obtain
\begin{eqnarray*}
   \begin{minipage}[h]{0.20\linewidth}
         \vspace{0pt}
         \scalebox{0.30}{\includegraphics{two_bubbles}}
        \put(-68,+67){$n$}
         \put(-68,+27){$n$}
         \put(-1,60){$n$}
         \put(-108,60){$n$}
          \put(-40,+27){$n$}
         \put(-40,67){$n$}
         \put(-60,-10){$2n$}
   \end{minipage}=\sum\limits_{i=0}^{n}\left\lceil 
\begin{array}{cc}
n & n \\ 
n & n%
\end{array}%
\right\rceil _{i}\frac{\Delta _{2n}}{\Delta _{n+i}}
  \begin{minipage}[h]{0.16\linewidth}
        \vspace{5pt}
        \scalebox{0.30}{\includegraphics{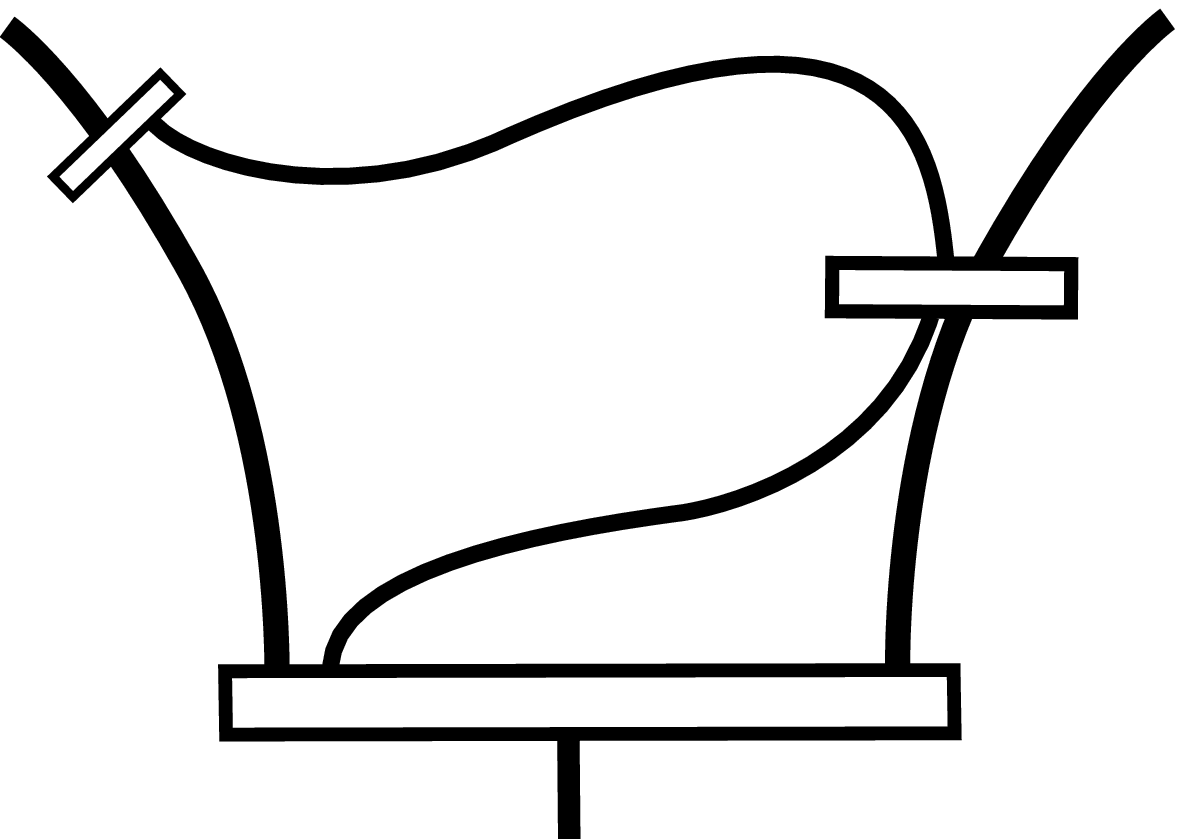}}
          \put(-68,+61){$i$}
         \put(-60,+30){$i$}
          \put(-60,-10){$2n$}
          \put(-1,60){$n$}
         \put(-108,60){$n$}
   \end{minipage}
  \end{eqnarray*}  
Using Lemma \ref{lma} (1) we obtain
\begin{eqnarray*}
    \begin{minipage}[h]{0.21\linewidth}
         \vspace{-10pt}
         \scalebox{0.30}{\includegraphics{two_bubbles}}
        \put(-68,+67){$n$}
         \put(-68,+27){$n$}
         \put(-1,60){$n$}
         \put(-108,60){$n$}
          \put(-40,+27){$n$}
         \put(-40,67){$n$}
         \put(-60,-10){$2n$}
   \end{minipage}&\doteq_n&\Psi(q^3,q)
   \begin{minipage}[h]{0.16\linewidth}
        \vspace{0pt}
        \scalebox{0.30}{\includegraphics{two_bubbles_4}}
         \put(-45,-10){$2n$}
          \put(-1,60){$n$}
         \put(-77,60){$n$}
           \end{minipage}
         \end{eqnarray*}
\item         
Using the bubble expansion formula on the left most bubble we obtain:
{\small
\begin{eqnarray*}
    \begin{minipage}[h]{0.26\linewidth}
         \vspace{5pt}
         \scalebox{0.28}{\includegraphics{three_bubbles}}
        \put(-88,+67){$n$}
         \put(-90,+33){$n$}
         \put(-1,60){$n$}
         \put(-128,60){$n$}
          \put(-40,+33){$n$}
         \put(-35,67){$n$}
         \put(-60,85){$n$}
         \put(-60,50){$n$}
         \put(-60,-10){$2n$}
   \end{minipage}&=&\sum\limits_{i=0}^{n}\left\lceil 
\begin{array}{cc}
n & n \\ 
n & n%
\end{array}%
\right\rceil _{i}
   \begin{minipage}[h]{0.16\linewidth}
        \vspace{5pt}
        \scalebox{0.28}{\includegraphics{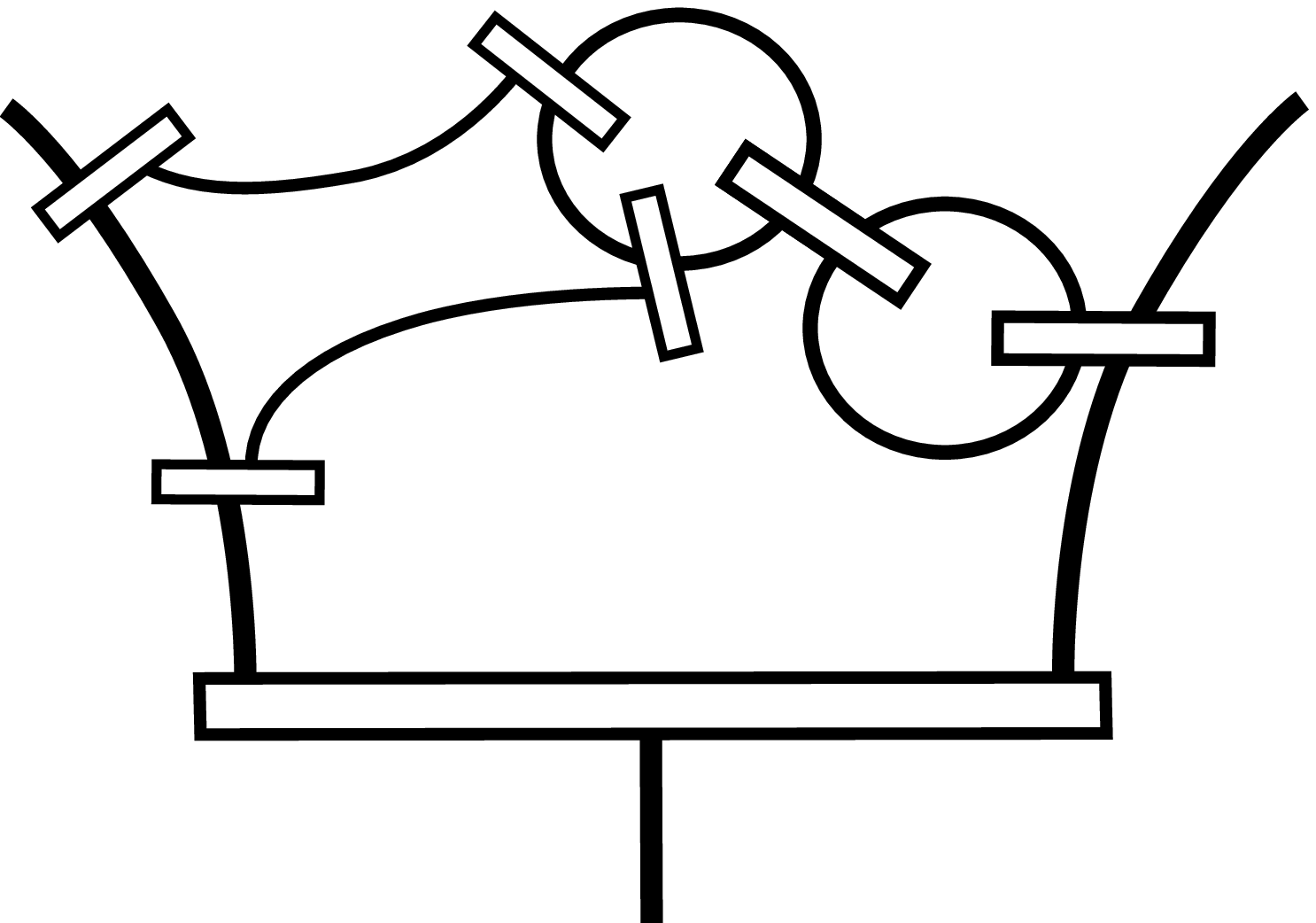}}
         \put(-60,-10){$2n$}
          \put(-1,60){$n$}
           \put(-128,60){$n$}
           \put(-30,68){$n$}
           \put(-38,35){$n$}
         \put(-89,73){$i$}
         \put(-89,45){$i$}
         \end{minipage}\\&=&\sum\limits_{i=0}^{n}\left\lceil 
\begin{array}{cc}
n & n \\ 
n & n%
\end{array}%
\right\rceil _{i}
   \begin{minipage}[h]{0.16\linewidth}
        \vspace{+5pt}
        \scalebox{0.28}{\includegraphics{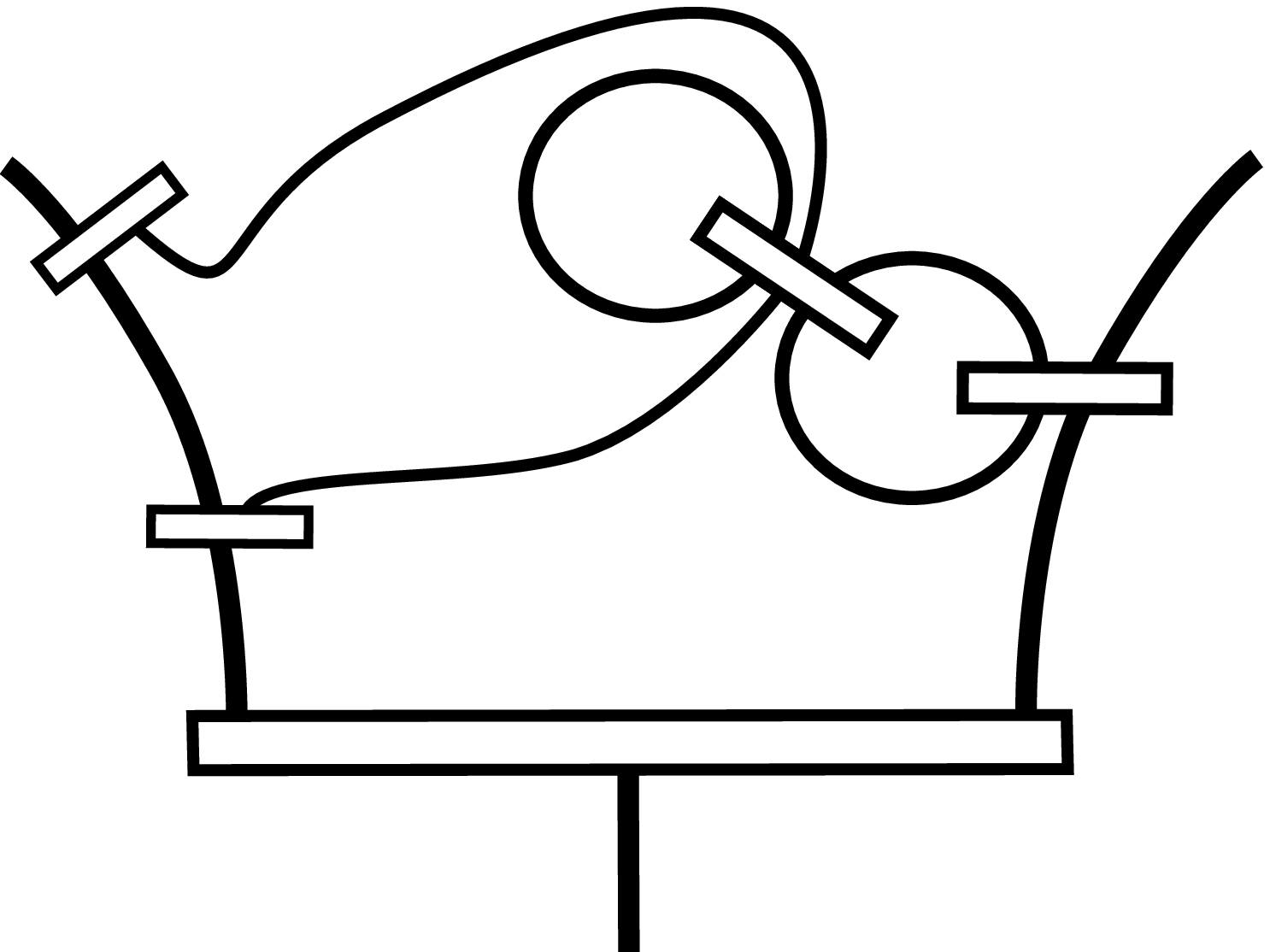}}
          \put(-60,-10){$2n$}
          \put(-1,60){$n$}
           \put(-128,60){$n$}
           \put(-30,68){$n$}
           \put(-38,35){$n$}
         \put(-89,79){$i$}
         \put(-90,58){$n-i$}      
           \end{minipage}
  \end{eqnarray*}
}
Using (\ref{properties2}) we can write the previous equation as:
{\small
\begin{eqnarray*}
    \begin{minipage}[h]{0.26\linewidth}
         \vspace{5pt}
         \scalebox{0.28}{\includegraphics{three_bubbles}}
        \put(-88,+67){$n$}
         \put(-90,+33){$n$}
         \put(-1,60){$n$}
         \put(-128,60){$n$}
          \put(-40,+33){$n$}
         \put(-35,67){$n$}
         \put(-60,85){$n$}
         \put(-60,50){$n$}
         \put(-60,-10){$2n$}
   \end{minipage}&=&\sum\limits_{i=0}^{n}\left\lceil 
\begin{array}{cc}
n & n \\ 
n & n%
\end{array}%
\right\rceil _{i}\frac{\Delta_{2n}}{\Delta_{n+i}}
   \begin{minipage}[h]{0.16\linewidth}
        \vspace{5pt}
        \scalebox{0.28}{\includegraphics{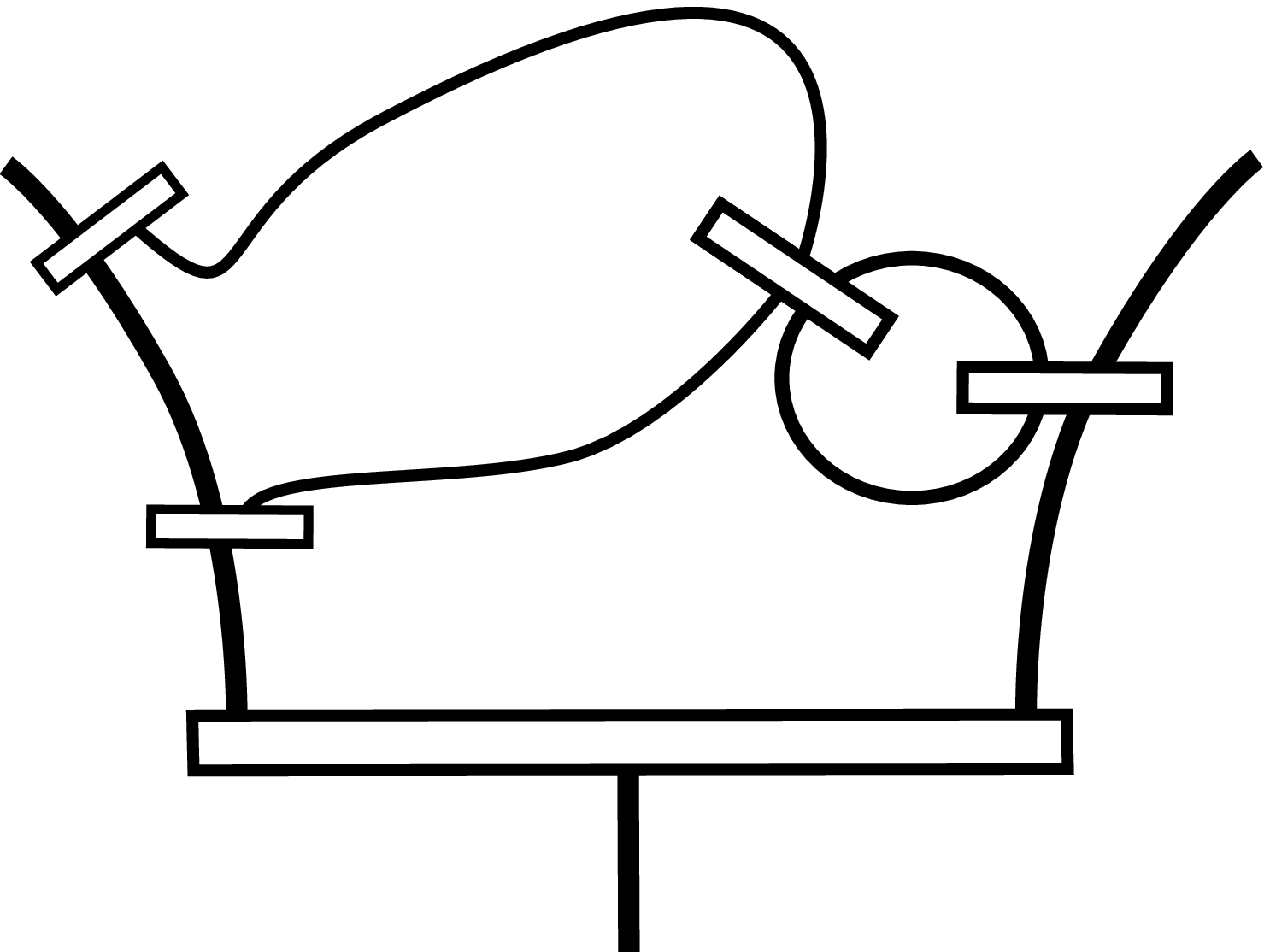}}
         \put(-60,-10){$2n$}
          \put(-1,60){$n$}
           \put(-128,60){$n$}
           \put(-30,68){$n$}
           \put(-38,35){$n$}
         \put(-89,79){$i$}
         \end{minipage}\\&=&\sum\limits_{i=0}^{n}\sum\limits_{j=0}^{i}\left\lceil 
\begin{array}{cc}
n & n \\ 
n & n%
\end{array}%
\right\rceil _{i}\left\lceil 
\begin{array}{cc}
n & i \\ 
n & n%
\end{array}%
\right\rceil _{j}\frac{\Delta_{2n}}{\Delta_{n+i}}
   \begin{minipage}[h]{0.16\linewidth}
        \vspace{5pt}
        \scalebox{0.28}{\includegraphics{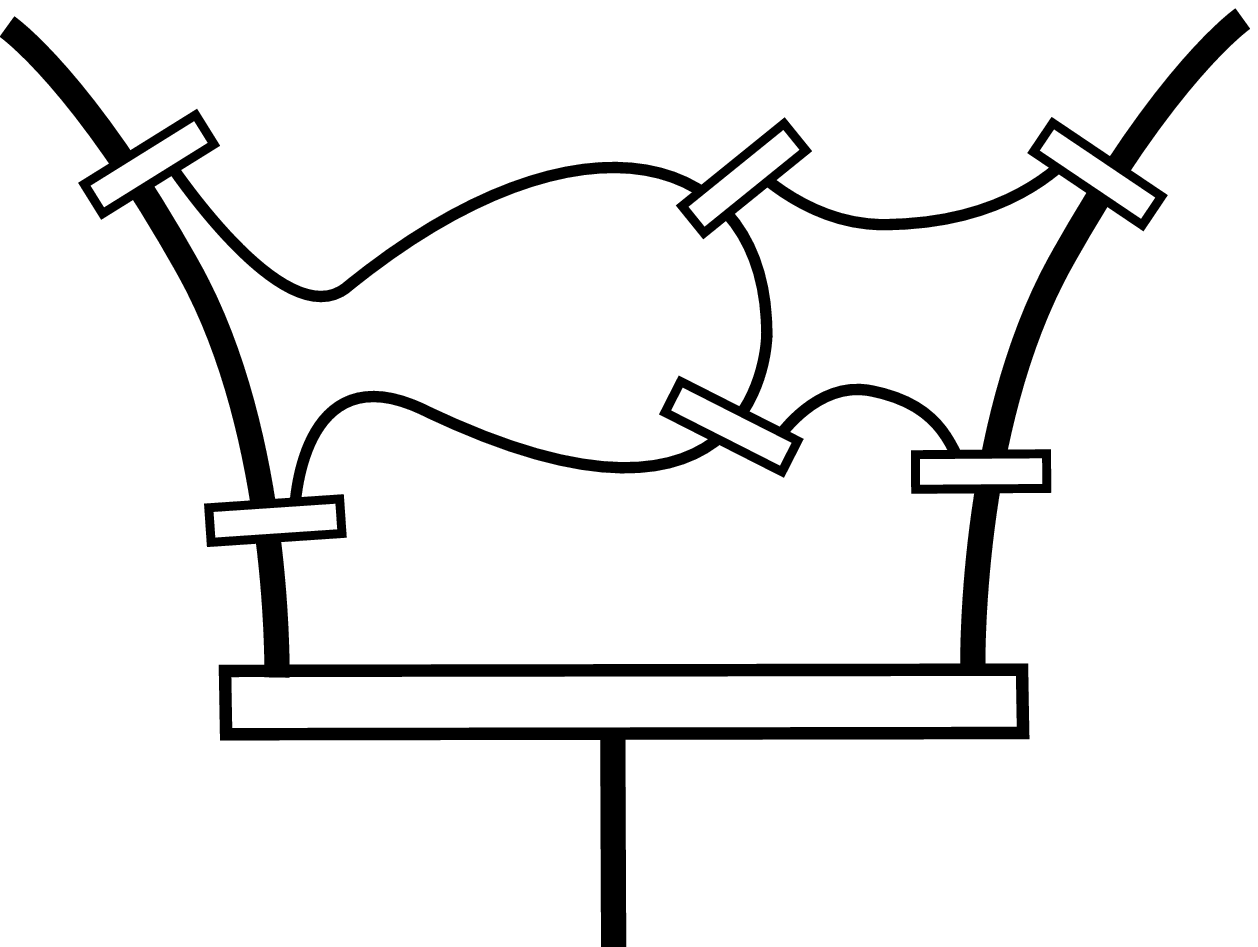}}
         \put(-60,-10){$2n$}
          \put(-1,60){$n$}
           \put(-105,60){$n$}
           \put(-30,68){$j$}
           \put(-35,35){$j$}
         \put(-70,64){$i$}    
           \end{minipage}
  \end{eqnarray*}
}
Hence,
{\small
\begin{eqnarray*}
    \begin{minipage}[h]{0.26\linewidth}
         \vspace{5pt}
         \scalebox{0.28}{\includegraphics{three_bubbles}}
        \put(-88,+67){$n$}
         \put(-90,+33){$n$}
         \put(-1,60){$n$}
         \put(-128,60){$n$}
          \put(-40,+33){$n$}
         \put(-35,67){$n$}
         \put(-60,85){$n$}
         \put(-60,50){$n$}
         \put(-60,-10){$2n$}
   \end{minipage}&=&\sum\limits_{i=0}^{n}\sum\limits_{j=0}^{i}\left\lceil 
\begin{array}{cc}
n & n \\ 
n & n%
\end{array}%
\right\rceil _{i}\left\lceil 
\begin{array}{cc}
n & i \\ 
n & n%
\end{array}%
\right\rceil _{j}\frac{\Delta_{2n}}{\Delta_{n+i}}
   \begin{minipage}[h]{0.16\linewidth}
        \vspace{5pt}
        \scalebox{0.28}{\includegraphics{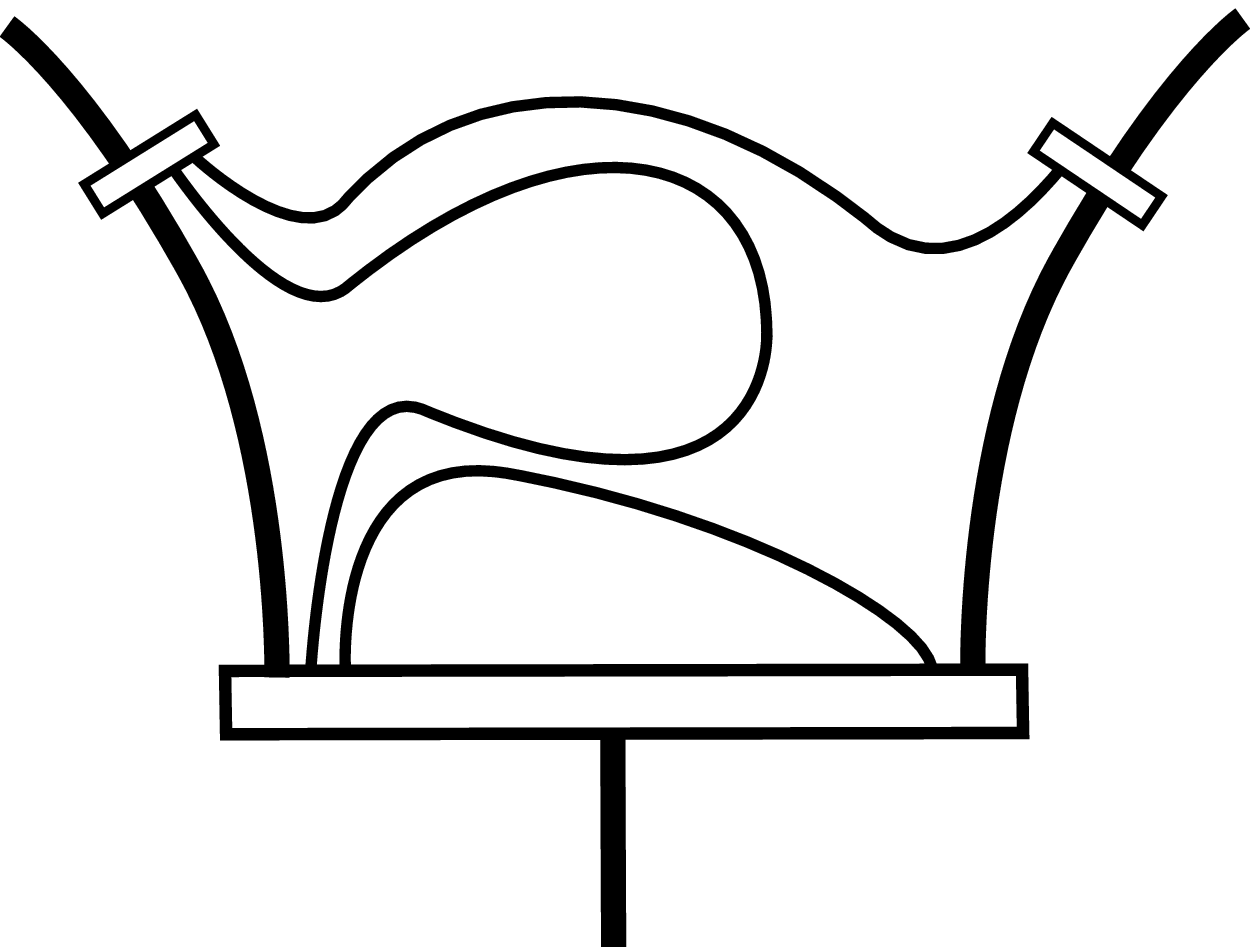}}
         \put(-60,-10){$2n$}
          \put(-1,60){$n$}
           \put(-105,60){$n$}
           \put(-30,68){$j$}
           \put(-35,35){$j$}
           \end{minipage}
  \end{eqnarray*}
}
The skein element in the on the right of the previous equation is zero unless $j=0$. Hence, Lemma \ref{lma} (2) yields the result.
\end{enumerate}

\end{proof}
More generally we have the following theorem.

\begin{theorem}
For all adequate closures of the element $\tau_{n,n,2n}$ and for all $n,k\geq1$:
\begin{enumerate}

\item \begin{eqnarray*}
   \begin{minipage}[h]{0.30\linewidth}
         \vspace{0pt}
         \scalebox{0.260}{\includegraphics{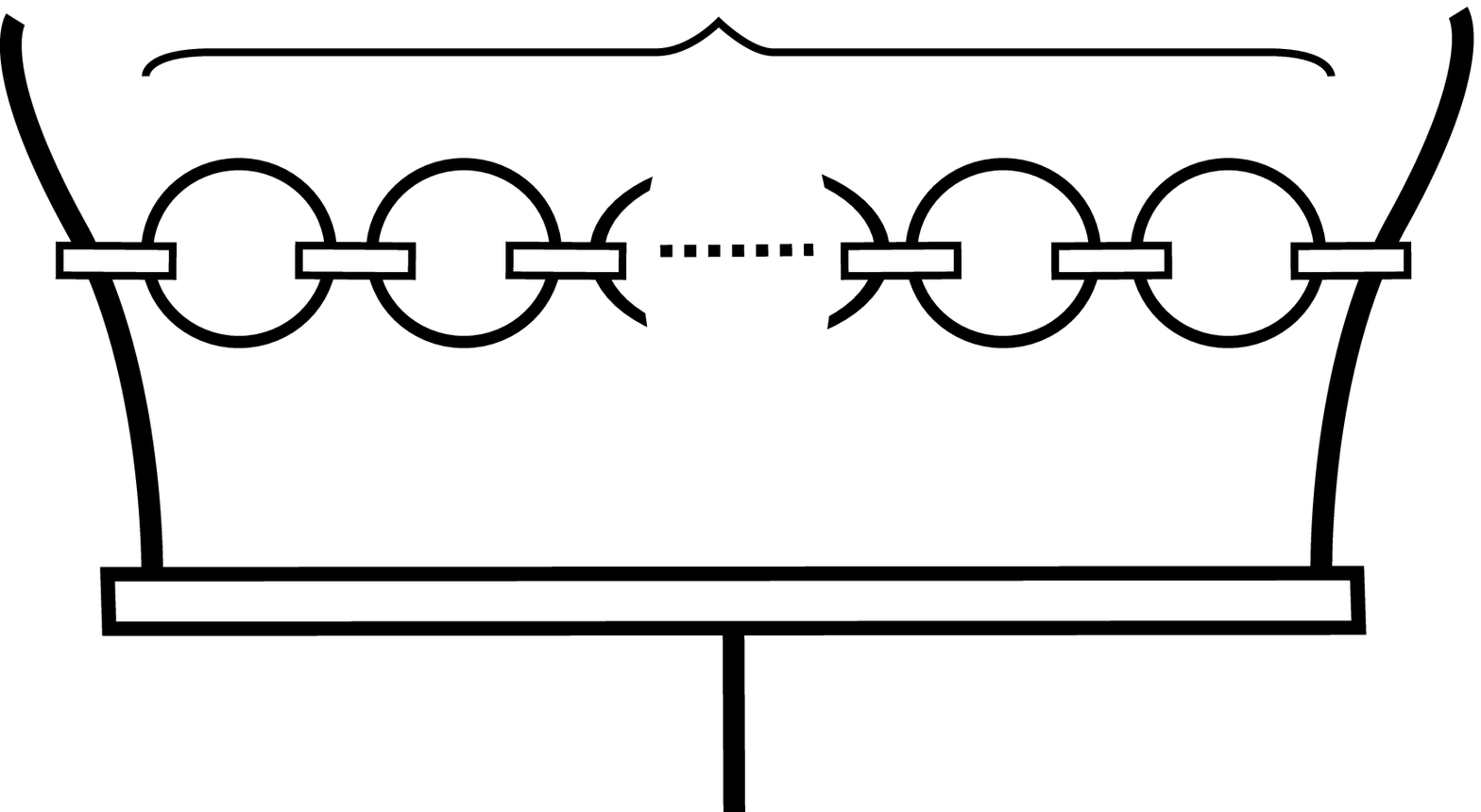}}
        \put(-90,+72){$2k$ bubbles}
         \put(-92,+35){$n$}
         \put(-112,+35){$n$}
             \put(-24,+35){$n$}
           \put(-45,+35){$n$}
             \put(-92,+58){$n$}
         \put(-112,+58){$n$}
             \put(-24,+58){$n$}
           \put(-45,+58){$n$}
         \put(4,60){$n$}
         \put(-138,60){$n$}
               \put(-75,-10){$2n$}
   \end{minipage}&\doteq_n&(q,q)_n\sum\limits_{l_1=0}^{n}\sum\limits_{l_{2}=0}^{n}...\sum\limits_{l_k=0}^{n}\frac{q^{\sum\limits_{j=1}^{k}(i_j(i_j+1))}}{(q,q)^2_{l_k}\prod\limits_{j=1}^{k-1}(q,q)_{l_j}}
   \begin{minipage}[h]{0.16\linewidth}
        \vspace{0pt}
        \scalebox{0.30}{\includegraphics{two_bubbles_4}}
         \put(-45,-10){$2n$}
          \put(-1,60){$n$}
         \put(-77,60){$n$}
           \end{minipage}
  \end{eqnarray*}
  where $i_j=\sum\limits_{s=j}^{k}l_s$.
\item \begin{eqnarray*}
   \begin{minipage}[h]{0.30\linewidth}
         \vspace{0pt}
         \scalebox{0.260}{\includegraphics{gn_bubbles}}
        \put(-90,+72){$2k+1$ bubbles}
         \put(-92,+35){$n$}
         \put(-112,+35){$n$}
             \put(-24,+35){$n$}
           \put(-45,+35){$n$}
             \put(-92,+58){$n$}
         \put(-112,+58){$n$}
             \put(-24,+58){$n$}
           \put(-45,+58){$n$}
         \put(4,60){$n$}
         \put(-138,60){$n$}
               \put(-75,-10){$2n$}
   \end{minipage}&\doteq_n&(q,q)_n\sum\limits_{l_1=0}^{n}\sum\limits_{l_{2}=0}^{n}...\sum\limits_{l_k=0}^{n}\frac{q^{\sum\limits_{j=1}^{k}(i_j(i_j+1))}}{\prod\limits_{j=1}^{k}(q,q)_{l_j}}
   \begin{minipage}[h]{0.16\linewidth}
        \vspace{0pt}
        \scalebox{0.30}{\includegraphics{two_bubbles_4}}
         \put(-45,-10){$2n$}
          \put(-1,60){$n$}
         \put(-77,60){$n$}
           \end{minipage}
  \end{eqnarray*}
  where $i_j=\sum\limits_{s=j}^{k}l_s$.
\end{enumerate}
\end{theorem}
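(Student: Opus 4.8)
The plan is to argue by induction on $k$, handling the two parts in parallel. The base case $k=1$ is exactly the preceding proposition: the $2$-bubble chain equals $\Psi(q^3,q)\,\tau_{n,n,2n}$, which is the $k=1$ instance of part (1) once we insert $\Psi(q^3,q)\doteq_n(q;q)_n\sum_i q^{i^2+i}/(q;q)_i^2$ from Lemma \ref{lma}(1); likewise the $3$-bubble chain equals $f(-q^4,-q)\,\tau_{n,n,2n}$, the $k=1$ instance of part (2) via $f(-q^4,-q)\doteq_n(q;q)_n\sum_i q^{i^2+i}/(q;q)_i$ from Lemma \ref{lma}(2). Throughout I use that $T_{n,n,2n}$ is one-dimensional (Remark \ref{important}), so that every diagram occurring is a scalar multiple of $\tau_{n,n,2n}$; by Remark \ref{important} the claimed relation on adequate closures then follows from the corresponding scalar identity modulo $\doteq_n$, and the whole argument reduces to tracking these scalars.

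For the inductive step I iterate the two moves already used for two and three bubbles, now along the whole chain. Starting from the innermost bubble, I apply the bubble expansion formula (Theorem \ref{main}) to replace it by a sum over an internal color indexed by a fresh variable, then collapse the resulting loop with the slide relation (\ref{properties2}), which produces a factor $\Delta_{2n}/\Delta_{n+i}$; the annihilation axiom kills every term in which a forbidden color survives, exactly as the $j=0$ forcing in the three-bubble computation. The essential feature is that the internal color created at one stage becomes the outer boundary color feeding the next bubble, so the colors accumulate outward; this is what manufactures the nested indices $i_j=\sum_{s=j}^{k}l_s$, the weight $q^{i_j(i_j+1)}$, and the Pochhammer denominator $(q,q)_{l_j}$ at each stage. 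The even/odd dichotomy appears only at the innermost cap: a free terminal bubble contributes the squared factor $1/(q,q)^2_{l_k}$ (the $2k$ case), whereas one extra bubble attaches a coefficient $\left\lceil\begin{smallmatrix}n&i\\ n&n\end{smallmatrix}\right\rceil_{0}$ that removes one Pochhammer factor and yields $1/(q,q)_{l_k}$ (the $2k+1$ case)---precisely the difference between Lemma \ref{lma}(1) and Lemma \ref{lma}(2).

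To turn this into a clean induction I would strengthen the statement so that the inner end of the chain is attached to a strand of an arbitrary color $p\le n$, with all nested indices correspondingly shifted by $p$; this is forced, since after processing the innermost bubble the connecting edge no longer carries color $n$, and it is exactly the device that lets the accumulated color be passed to a shorter chain. With this hypothesis, peeling off the innermost summation layer reduces a chain carrying inner color $p$ to a shorter chain carrying inner color $p+l$, and the claimed multisum is reproduced term by term once the inductive hypothesis is applied to the shorter chain and one sums over the new variable.

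The heart of the argument, and the step I expect to be the main obstacle, is the parametrized generalization of Lemma \ref{lma}: for a free boundary color $p$ one must evaluate sums such as $\sum_{l}\left\lceil\begin{smallmatrix}n&p\\ n&n\end{smallmatrix}\right\rceil_{l}\,\Delta_{2n}/\Delta_{n+l}$ and show that, modulo $\doteq_n$, they collapse to a single Andrews--Gordon weight with the nested index shifted by $p$. I would obtain these by substituting the closed forms (\ref{mn}) and (\ref{nnii0}) for the coefficients, converting quantum factorials into $q$-Pochhammer symbols via the product identity (\ref{fact}), and then discarding irrelevant tails with the truncations $\frac{(q;q)_{3n-i+1}}{(q;q)_{2n+1}}\doteq_n1$ and $\frac{(q;q)_n}{(q;q)_{2n}}\doteq_n1$ exactly as in Lemma \ref{lma}. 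The delicate point is to verify that the telescoping $P(n,i)+P(n,i+1)\doteq_nP(n,i)+Q(n,i+1)$ and the degree bound $n+i+2>n+1$ used there remain valid uniformly once the free parameter $p$ is present, so that the accumulated product of slide factors and coefficients telescopes to leave precisely $(q,q)_n$ times the stated multisum and nothing that contributes below degree $n$. Once this parametrized lemma is established the induction closes, and the two possibilities for the innermost terminal cap yield parts (1) and (2) respectively.
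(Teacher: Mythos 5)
Your proposal is correct and takes essentially the same route as the paper: the paper's own proof likewise peels the chain by $k$ successive applications of the bubble expansion formula (Theorem \ref{main}) combined with the slide relation (\ref{properties2}), accumulating the nested coefficients $\left\lceil\begin{smallmatrix}n & i_{j-1}\\ n & n\end{smallmatrix}\right\rceil_{i_j}\frac{\Delta_{2n}}{\Delta_{n+i_j}}$ with the extra factor $\left\lceil\begin{smallmatrix}n & i_{k}\\ n & n\end{smallmatrix}\right\rceil_{0}$ from the terminal bubble in the odd case, and then finishes with Lemma \ref{lma}-style $q$-series estimates and the substitution $l_j=i_j-i_{j+1}$. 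Your induction on $k$ with a strengthened hypothesis carrying a free inner color $p$ is merely a rolled-up repackaging of that same $k$-fold expansion, and the parametrized generalization of Lemma \ref{lma} that you flag as the main obstacle is exactly the computation the paper itself dispatches with ``similar calculations to the ones we did in Lemma \ref{lma}.''
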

\begin{proof}
\begin{enumerate}
\item
We proceed as in the previous theorem and we apply the bubble expansion formula on the left most bubble we obtain:
\begin{eqnarray*}
   \begin{minipage}[h]{0.32\linewidth}
         \vspace{0pt}
         \scalebox{0.260}{\includegraphics{gn_bubbles}}
        \put(-90,+72){$2k$ bubbles}
         \put(-93,+34){$n$}
         \put(-112,+34){$n$}
             \put(-25,+34){$n$}
           \put(-45,+34){$n$}
             \put(-93,+58){$n$}
         \put(-112,+58){$n$}
             \put(-25,+58){$n$}
           \put(-45,+58){$n$}
         \put(3,60){$n$}
         \put(-138,60){$n$}
               \put(-75,-10){$2n$}
   \end{minipage}&=&\sum\limits_{i_1=0}^{n}\left\lceil 
\begin{array}{cc}
n & n \\ 
n & n%
\end{array}%
\right\rceil _{i_1} 
   \begin{minipage}[h]{0.16\linewidth}
        \vspace{0pt}
        \scalebox{0.260}{\includegraphics{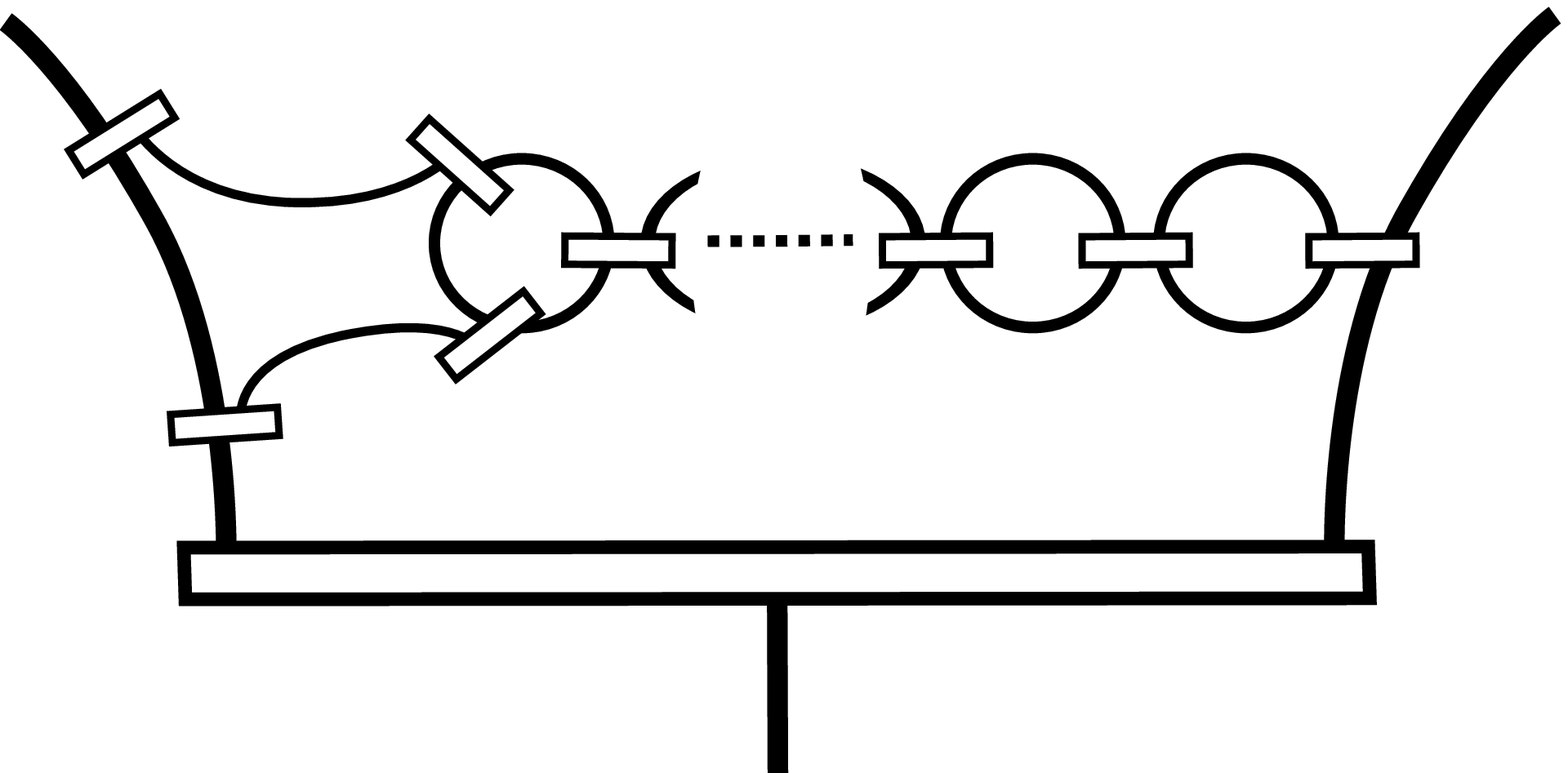}}
         \put(-95,+35){$n$}
         \put(-113,+32){$i_1$}
             \put(-30,+35){$n$}
           \put(-52,+35){$n$}
             \put(-95,+58){$n$}
         \put(-115,+58){$i_1$}
             \put(-30,+58){$n$}
           \put(-52,+58){$n$}
         \put(-1,60){$n$}
         \put(-148,60){$n$}
           \put(-75,-10){$2n$}
           \end{minipage}\\&=&\sum\limits_{i_1=0}^{n}\left\lceil 
\begin{array}{cc}
n & n \\ 
n & n%
\end{array}%
\right\rceil _{i_1} \frac{\Delta_{2n}}{\Delta_{n+i_1}}
   \begin{minipage}[h]{0.16\linewidth}
        \vspace{10pt}
        \scalebox{0.250}{\includegraphics{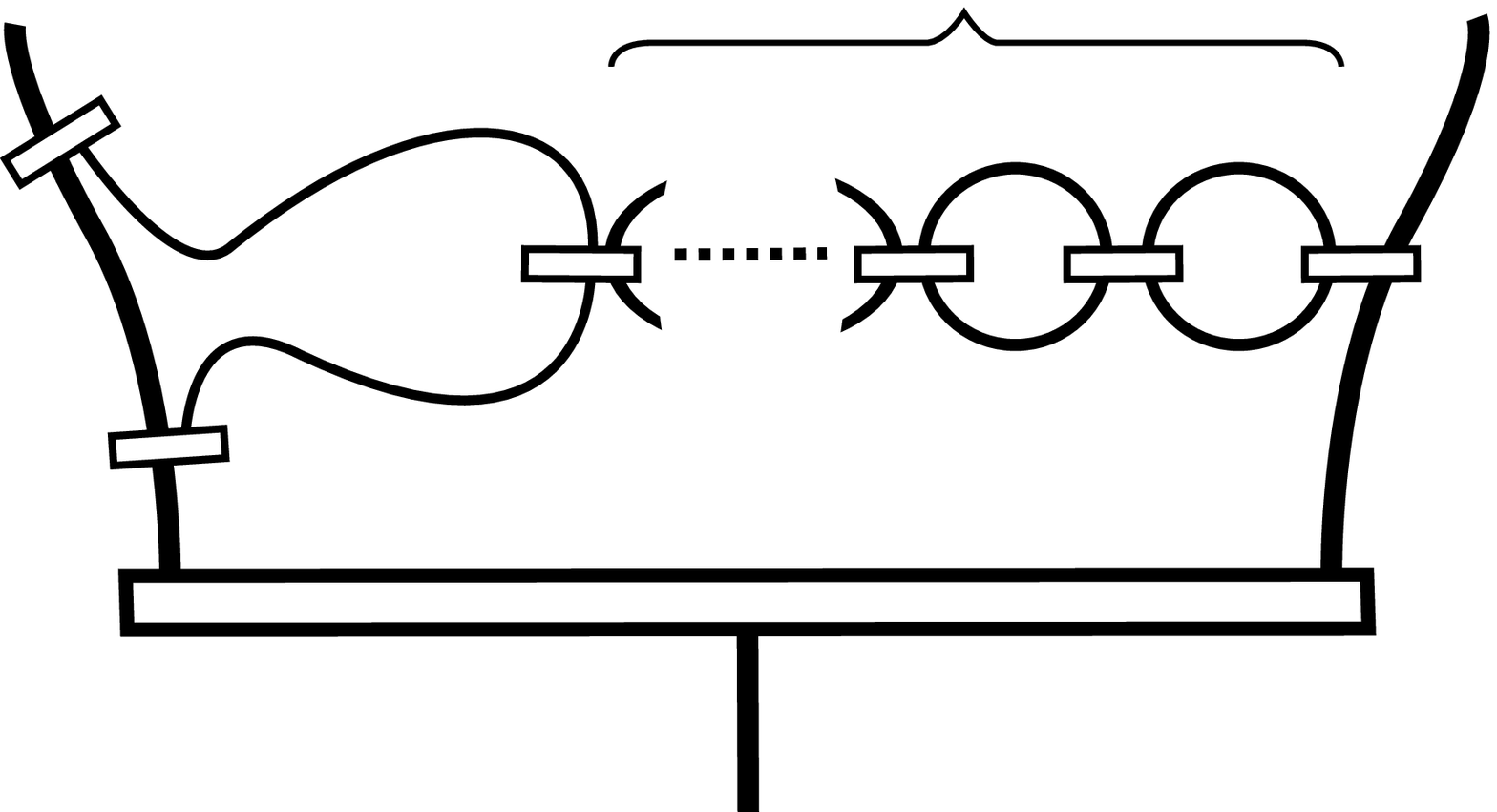}}
                  \put(-100,+27){$i_1$}
             \put(-25,+33){$n$}
           \put(-45,+33){$n$}
                \put(-105,+58){$i_1$}
             \put(-25,+58){$n$}
           \put(-45,+58){$n$}
         \put(-1,68){$n$}
          \put(-80,70){$2k-2$ bubbles}
         \put(-130,68){$n$}
           \put(-75,-10){$2n$}
           \end{minipage}
  \end{eqnarray*}
 Each time we apply the bubble expansion formula we eliminate two bubbles. Hence, after $k$ applications of the bubble expansion formula we obtain:
 {\scriptsize
 \begin{eqnarray*}
   \begin{minipage}[h]{0.25\linewidth}
         \vspace{0pt}
         \scalebox{0.240}{\includegraphics{gn_bubbles}}
        \put(-75,+67){$2k$ bubbles}
         \put(-86,+33){$n$}
         \put(-102,+33){$n$}
             \put(-23,+33){$n$}
           \put(-40,+33){$n$}
             \put(-86,+54){$n$}
         \put(-102,+54){$n$}
             \put(-23,+54){$n$}
           \put(-40,+54){$n$}
         \put(3,60){$n$}
         \put(-125,60){$n$}
               \put(-65,-5){$2n$}
   \end{minipage}&=&\sum\limits_{i_1=0}^{n}\sum\limits_{i_2=0}^{i_1}...\sum\limits_{i_k=0}^{i_{k-1}}\left\lceil 
\begin{array}{cc}
n & n \\ 
n & n%
\end{array}%
\right\rceil _{i_1}
\frac{\Delta_{2n}}{\Delta_{n+i_1}}
\prod_{j=2}^k
\left\lceil 
\begin{array}{cc}
n & i_{j-1} \\ 
n & n%
\end{array}%
\right\rceil _{i_{j}}
\frac{\Delta_{2n}}{\Delta_{n+i_j}}  
   \begin{minipage}[h]{0.16\linewidth}
        \vspace{0pt}
        \scalebox{0.260}{\includegraphics{two_bubbles_4}}
         \put(-40,-10){$2n$}
          \put(2,60){$n$}
         \put(-70,60){$n$}
           \end{minipage}
  \end{eqnarray*}
  }
Using \ref{fact} we can compute
  \begin{equation*}
\left\lceil 
\begin{array}{cc}
n & i \\ 
n & n%
\end{array}%
\right\rceil _{j}=(-1)^{j+n}q^{j^2+j/2-n/2}\frac{(q,q)^2_i(q,q)^4_n(q,q)_{2n+i-j+1}}{(q,q)_{i-j}(q,q)^2_j(q,q)_{2n}(q,q)_{n+i}(q,q)_{n+i+1}(q,q)^2_{n-j}}  
  \end{equation*}
Similar calculations to the ones we did in Lemma \ref{lma} implies:
{\scriptsize
  \begin{equation}
 \sum\limits_{i_1=0}^{n}\sum\limits_{i_2=0}^{i_1}...\sum\limits_{i_k=0}^{i_{k-1}}\left\lceil 
\begin{array}{cc}
n & n \\ 
n & n%
\end{array}%
\right\rceil _{i_1}
\frac{\Delta_{2n}}{\Delta_{n+i_1}}
\prod_{j=2}^k
\left\lceil 
\begin{array}{cc}
n & i_{j-1} \\ 
n & n%
\end{array}%
\right\rceil _{i_{j}}
\frac{\Delta_{2n}}{\Delta_{n+i_j}}\doteq_n(q,q)_n\sum\limits_{i_1=0}^{n}\sum\limits_{i_2=0}^{i_1}...\sum\limits_{i_k=0}^{i_{k-1}}\frac{q^{\sum\limits_{j=1}^{k}(i_j(i_j+1))}}{(q,q)^2_{i_k}\prod\limits_{j=2}^{k}(q,q)_{i_{j-1}-i_j}}
  \end{equation}
}
The previous summation can be written as
\begin{equation*}
\label{complicated111}
(q,q)_n\sum\limits_{i_1=0}^{n}\sum\limits_{i_2=0}^{i_1}...\sum\limits_{i_k=0}^{i_{k-1}}\frac{q^{\sum\limits_{j=1}^{k}(i_j(i_j+1))}}{(q,q)^2_{i_k}\prod\limits_{j=2}^{k}(q,q)_{i_{j-1}-i_j}}=(q,q)_n\sum\limits_{i_k=0}^{n}\sum\limits_{i_{k-1}=i_k}^{n}...\sum\limits_{i_1=i_2}^{n}\frac{q^{\sum\limits_{j=1}^{k}(i_j(i_j+1))}}{(q,q)^2_{i_k}\prod\limits_{j=2}^{k}(q,q)_{i_{j-1}-i_j}}
\end{equation*}
Now if we set $l_{j}=i_{j}-i_{j+1}$ for $j=1,...,k-1$ and $l_k=i_k$, we obtain $i_j=\sum\limits_{s=j}^{k}l_s$ and hence we can rewrite the right side of the previous equation as
\begin{equation*}
(q,q)_n\sum\limits_{i_k=0}^{n}\sum\limits_{i_{k-1}=i_k}^{n}...\sum\limits_{i_1=i_2}^{n}\frac{q^{\sum\limits_{j=1}^{k}(i_j(i_j+1))}}{(q,q)^2_{i_k}\prod\limits_{j=2}^{k}(q,q)_{i_{j-1}-i_j}}=(q,q)_n\sum\limits_{l_1=0}^{n}\sum\limits_{l_{2}=0}^{n}...\sum\limits_{l_k=0}^{n}\frac{q^{\sum\limits_{j=1}^{k}(i_j(i_j+1))}}{(q,q)^2_{l_k}\prod\limits_{j=1}^{k-1}(q,q)_{l_j}}
\end{equation*}
where $i_j=\sum\limits_{s=j}^{k}l_s$. Hence the result follows.
  \item
We apply the bubble expansion formula $k$ times we obtain:
{\footnotesize
 \begin{eqnarray*}
   \begin{minipage}[h]{0.25\linewidth}
         \vspace{0pt}
         \scalebox{0.240}{\includegraphics{gn_bubbles}}
        \put(-75,+67){$2k+1$ bubbles}
         \put(-86,+33){$n$}
         \put(-102,+33){$n$}
             \put(-23,+33){$n$}
           \put(-40,+33){$n$}
             \put(-86,+54){$n$}
         \put(-102,+54){$n$}
             \put(-23,+54){$n$}
           \put(-40,+54){$n$}
         \put(3,60){$n$}
         \put(-125,60){$n$}
               \put(-65,-5){$2n$}
   \end{minipage}&=&\sum\limits_{i_1=0}^{n}\sum\limits_{i_2=0}^{i_1}...\sum\limits_{i_k=0}^{i_{k-1}}\left\lceil 
\begin{array}{cc}
n & n \\ 
n & n%
\end{array}%
\right\rceil _{i_1}
\frac{\Delta_{2n}}{\Delta_{n+i_1}}
\prod_{j=2}^k
\left\lceil 
\begin{array}{cc}
n & i_{j-1} \\ 
n & n%
\end{array}%
\right\rceil _{i_{j}}
\frac{\Delta_{2n}}{\Delta_{n+i_j}}  
   \begin{minipage}[h]{0.14\linewidth}
        \vspace{10pt}
        \scalebox{0.260}{\includegraphics{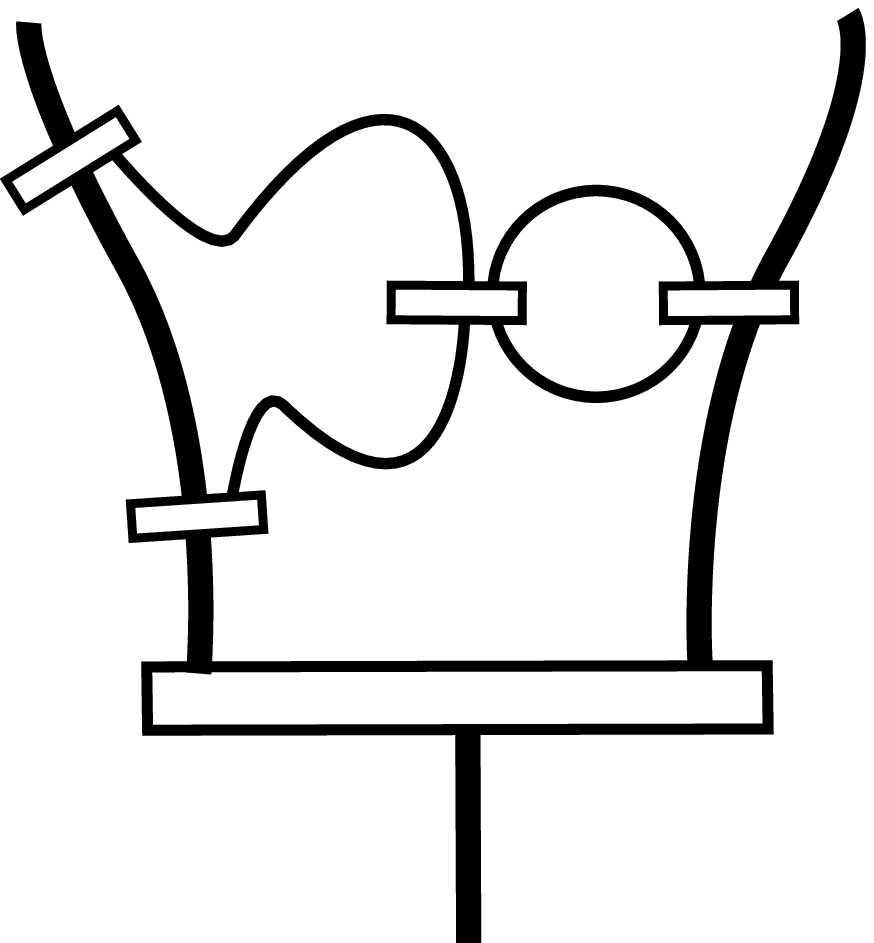}}
         \put(-50,-10){$2n$}
          \put(-50,60){$i_k$}
          \put(-40,28){$i_k$}
           \put(-70,60){$n$}
         \put(0,60){$n$}
          \put(-23,60){$n$}
          \put(-23,35){$n$}
           \end{minipage}
  \end{eqnarray*}
}
Hence,
{\scriptsize
 \begin{eqnarray*}
   \begin{minipage}[h]{0.23\linewidth}
         \vspace{0pt}
         \scalebox{0.220}{\includegraphics{gn_bubbles}}
        \put(-75,+62){$2k+1$ bubbles}
         \put(-78,+28){$n$}
         \put(-92,+28){$n$}
             \put(-23,+28){$n$}
           \put(-40,+28){$n$}
             \put(-78,+50){$n$}
         \put(-92,+50){$n$}
             \put(-23,+50){$n$}
           \put(-40,+50){$n$}
         \put(-1,60){$n$}
         \put(-110,60){$n$}
               \put(-65,-5){$2n$}
   \end{minipage}&=&\sum\limits_{i_1=0}^{n}\sum\limits_{i_2=0}^{i_1}...\sum\limits_{i_k=0}^{i_{k-1}}\left\lceil 
\begin{array}{cc}
n & n \\ 
n & n%
\end{array}%
\right\rceil _{i_1}
\left\lceil 
\begin{array}{cc}
n & i_{k} \\ 
n & n%
\end{array}%
\right\rceil _{0}
\frac{\Delta_{2n}}{\Delta_{n+i_1}}
\prod_{j=2}^k
\left\lceil 
\begin{array}{cc}
n & i_{j-1} \\ 
n & n%
\end{array}%
\right\rceil _{i_{j}}
\frac{\Delta_{2n}}{\Delta_{n+i_j}}  
   \begin{minipage}[h]{0.10\linewidth}
        \vspace{10pt}
        \scalebox{0.20}{\includegraphics{two_bubbles_4}}
         \put(-30,-10){$2n$}
          \put(-5,50){$n$}
         \put(-50,50){$n$}
           \end{minipage}
  \end{eqnarray*}
}
and one can do computations to the coefficient in the last equation similar to the ones we did in Lemma \ref{lma} and obtain:
{\scriptsize
  \begin{equation*}
 \sum\limits_{i_1=0}^{n}\sum\limits_{i_2=0}^{i_1}...\sum\limits_{i_k=0}^{i_{k-1}}\left\lceil 
\begin{array}{cc}
n & n \\ 
n & n%
\end{array}%
\right\rceil _{i_1}\left\lceil 
\begin{array}{cc}
n & i_{k} \\ 
n & n%
\end{array}%
\right\rceil _{0}
\frac{\Delta_{2n}}{\Delta_{n+i_1}}
\prod_{j=2}^k
\left\lceil 
\begin{array}{cc}
n & i_{j-1} \\ 
n & n%
\end{array}%
\right\rceil _{i_{j}}
\frac{\Delta_{2n}}{\Delta_{n+i_j}}\doteq_n(q,q)_n\sum\limits_{i_1=0}^{n}\sum\limits_{i_2=0}^{i_1}...\sum\limits_{i_k=0}^{i_{k-1}}\frac{q^{\sum\limits_{j=1}^{k}(i_j(i_j+1))}}{(q,q)_{i_k}\prod\limits_{j=2}^{k}(q,q)_{i_{j-1}-i_j}}
  \end{equation*}
}  
The previous summation can be rewritten as follows:
  \begin{equation*}
(q,q)_n\sum\limits_{i_1=0}^{n}\sum\limits_{i_2=0}^{i_1}...\sum\limits_{i_k=0}^{i_{k-1}}\frac{q^{\sum\limits_{j=1}^{k}(i_j(i_j+1))}}{(q,q)_{i_k}\prod\limits_{j=2}^{k}(q,q)_{i_{j-1}-i_j}}=(q,q)_n\sum\limits_{i_k=0}^{n}\sum\limits_{i_{k-1}=i_k}^{n}...\sum\limits_{i_1=i_2}^{n}\frac{q^{\sum\limits_{j=1}^{k}(i_j(i_j+1))}}{(q,q)_{i_k}\prod\limits_{j=2}^{k}(q,q)_{i_{j-1}-i_j}}
\end{equation*}
Set $l_j=i_{j}-i_{j+1}$ for $j=1,...,k-1$ and $l_k=i_k$, we obtain $i_j=\sum\limits_{s=j}^{k}l_s$ and hence we can rewrite the previous equation:
\begin{equation*}
(q,q)_n\sum\limits_{i_k=0}^{n}\sum\limits_{i_{k-1}=i_k}^{n}...\sum\limits_{i_1=i_2}^{n}\frac{q^{\sum\limits_{j=1}^{k}(i_j(i_j+1))}}{(q,q)_{i_k}\prod\limits_{j=2}^{k}(q,q)_{i_{j-1}-i_j}}=(q,q)_n\sum\limits_{l_1=0}^{n}\sum\limits_{l_{2}=0}^{n}...\sum\limits_{l_k=0}^{n}\frac{q^{\sum\limits_{j=1}^{k}(i_j(i_j+1))}}{\prod\limits_{j=1}^{k}(q,q)_{l_j}}
\end{equation*}
where $i_j=\sum\limits_{s=j}^{k}l_s$.
  \end{enumerate}
\end{proof}
\begin{corollary}
\label{torususe}
For all adequate closures of the element $f^{(n)}$ and for all $n,k\geq1$:
\begin{enumerate}
{\small
\item \begin{eqnarray*}
   \begin{minipage}[h]{0.10\linewidth}
         \vspace{0pt}
         \scalebox{0.33}{\includegraphics{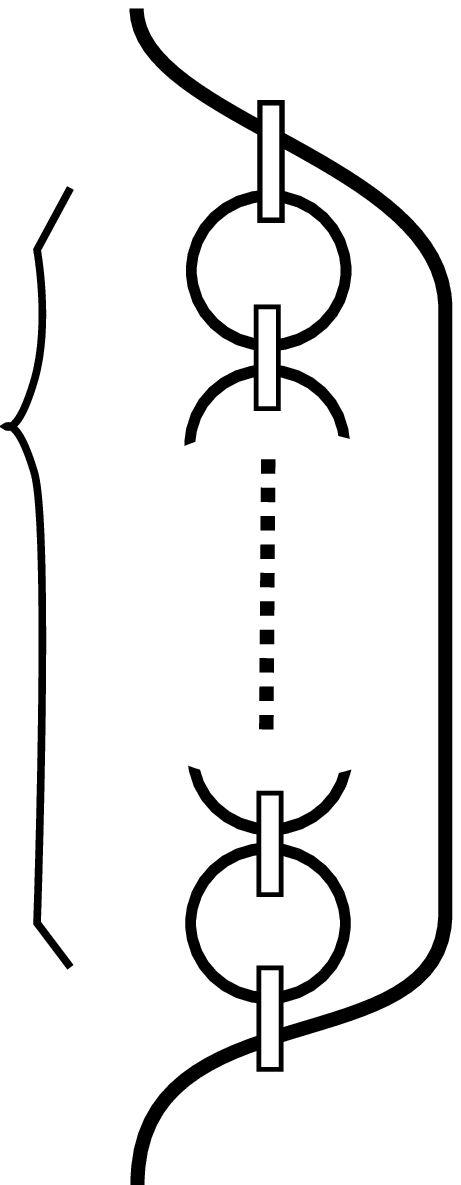}}
         \put(-35,-7){$n$}
         \put(-39,22){$n$}
           \put(-118,72){$2k+1$ bubbles}
          \put(-16,22){$n$}
          \put(-39,82){$n$}
          \put(-16,82){$n$}
         \put(-35,115){$n$}
         \end{minipage}&\doteq_n&(q,q)_n\sum\limits_{l_1=0}^{n}\sum\limits_{l_{2}=0}^{n}...\sum\limits_{l_k=0}^{n}\frac{q^{\sum\limits_{j=1}^{k}(i_j(i_j+1))}}{(q,q)^2_{l_k}\prod\limits_{j=1}^{k-1}(q,q)_{l_j}}
   \begin{minipage}[h]{0.18\linewidth}
        \vspace{0pt}
        \scalebox{0.130}{\includegraphics{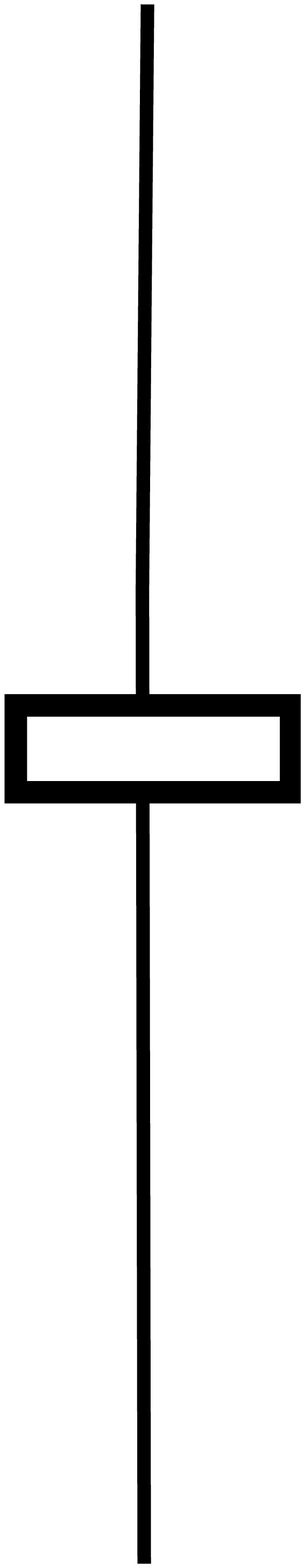}}
                 \put(-5,90){$n$}
                \end{minipage}
  \end{eqnarray*}
 where $i_j=\sum\limits_{s=j}^{k}l_s$. 
\item \begin{eqnarray*}
   \begin{minipage}[h]{0.10\linewidth}
         \vspace{0pt}
         \scalebox{0.33}{\includegraphics{bubble_iso}}
       \put(-35,-7){$n$}
         \put(-39,22){$n$}
          \put(-16,22){$n$}
          \put(-39,82){$n$}
          \put(-16,82){$n$}
         \put(-35,115){$n$}
           \put(-103,72){$2k$ bubbles}
                \end{minipage}&\doteq_n&(q,q)_n\sum\limits_{l_1=0}^{n}\sum\limits_{l_{2}=0}^{n}...\sum\limits_{l_{k-1}=0}^{n}\frac{q^{\sum\limits_{j=1}^{k-1}(i_j(i_j+1))}}{\prod\limits_{j=1}^{k-1}(q,q)_{l_j}}
   \begin{minipage}[h]{0.18\linewidth}
        \vspace{0pt}
        \scalebox{0.130}{\includegraphics{space4}}
                 \put(-5,90){$n$}
                \end{minipage}
  \end{eqnarray*}
  }
  where $i_j=\sum\limits_{s=j}^{k-1}l_s$.
\end{enumerate}
\end{corollary}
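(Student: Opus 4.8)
The plan is to obtain this Corollary directly from the preceding theorem, rather than to repeat the bubble-expansion computation, by closing up the $2n$-colored leg of $\tau_{n,n,2n}$. The first and decisive observation is that for the triple $(n,n,2n)$ the inside colors are forced: solving the system (\ref{threeequations}) gives $x=(n+n-2n)/2=0$ and $y=z=n$. Because $x=0$, the vertex $\tau_{n,n,2n}$ has no strand running between its two $n$-legs; it is nothing but the fusion of $2n$ into $n+n$, that is, two parallel copies of $f^{(n)}$ capped by a single $f^{(2n)}$ (the standard Jones--Wenzl fusion identity). Thus every $2n$-colored edge meeting such a vertex is really two parallel $n$-strands held together by one $f^{(2n)}$.

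Next I would realize the diagrams of the Corollary as adequate closures of the diagrams of the theorem. Starting from the horizontal chain of bubbles built on $\tau_{n,n,2n}$ that appears on the left-hand side of the theorem, I bend the $2n$-colored base around the sphere and resolve it through the fusion $2n=n+n$. The $f^{(2n)}$ at the base, together with the two $n$-strands that leave it and return, closes off into exactly one further bigon of two $n$-strands; every remaining edge is now colored $n$, and after an isotopy of $S^{2}$ the resulting all-$n$ diagram is precisely the necklace on $f^{(n)}$ displayed on the left-hand side of the Corollary. The sole effect of this closure is to create one extra bubble, so an $(m-1)$-bubble chain on $\tau_{n,n,2n}$ becomes an $m$-bubble necklace on $f^{(n)}$; under the same move the reference element on the right of the theorem is carried to the reference element on the right of the Corollary.

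With this identification the result is immediate. The closure just described is adequate, so the preceding theorem applies on it with no change. Matching bubble counts through the shift by one, the $2k+1$-bubble necklace of part (1) is the closure of the $2k$-bubble chain of part (1) of the theorem, and the $2k$-bubble necklace of part (2) is the closure of the $(2k-1)$-bubble chain of part (2) of the theorem (i.e. part (2) with $k$ replaced by $k-1$). In each case the coefficient is verbatim the sum recorded in the theorem, which is exactly the sum asserted in the Corollary, so no further $q$-series manipulation is needed.

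The hard part will be the geometric bookkeeping in the middle step: one has to verify carefully that folding the $2n$-base and applying $2n=n+n$ introduces exactly one new bubble and no spurious ones, and that the resulting diagram is genuinely adequate, so that it lies in the set of closures on which the relation $\doteq_n$ of the theorem is defined. Once this single-bubble shift is justified, all of the analytic content has already been absorbed into the theorem and the two identities of the Corollary follow at once.
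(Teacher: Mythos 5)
Your proposal is correct and is essentially the paper's own proof: the paper defines a wiring map $F\colon T_{n,n,2n}\rightarrow T_{n,n}$ that closes off the $2n$-colored leg, observes it is an isomorphism carrying the $2k$-bubble (resp.\ $2k-1$-bubble) chain of the theorem to the $2k+1$-bubble (resp.\ $2k$-bubble) necklace on $f^{(n)}$, and transfers the coefficients verbatim, which is exactly your bend-and-fuse argument with the one-bubble shift. Your explicit justification via the forced inside colors $x=0$, $y=z=n$ of $\tau_{n,n,2n}$ is a welcome elaboration of what the paper leaves implicit in the phrase ``this map is clearly an isomorphism.''
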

\begin{proof}
(1) Let $F:T_{n,n,2n}\longrightarrow T_{n,n}$ be the wiring linear map defined by
{\small
\item \begin{eqnarray*}
   \begin{minipage}[h]{.5\linewidth}
         \vspace{0pt}
                  \scalebox{0.30}{\includegraphics{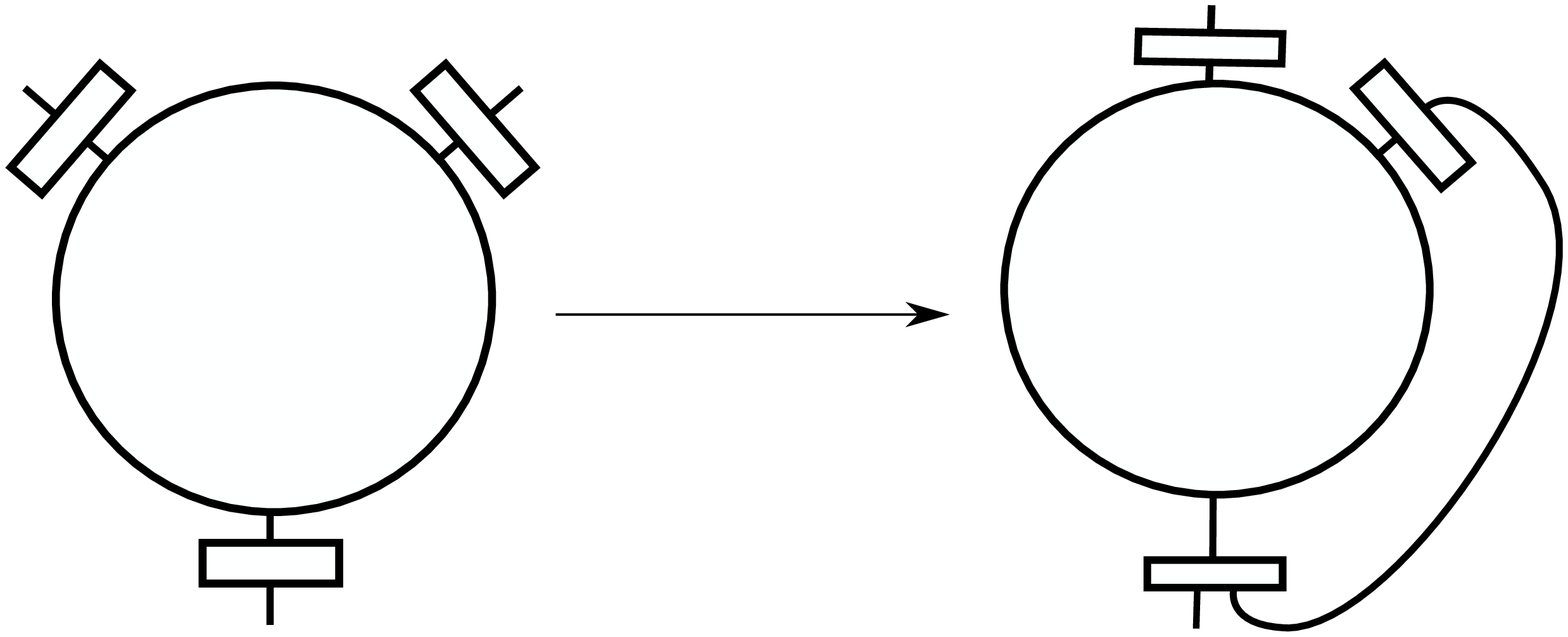}}
          \put(-35,-7){$n$}
           \put(-55,-7){$n$}
            \put(-55,85){$n$}
             \put(-180,-7){$2n$}
               \put(-110,50){$F$}
            \put(-150,80){$n$}
            \put(-215,80){$n$}
         \end{minipage}
  \end{eqnarray*}
  }
  This map is clearly an isomorphism. The result follows by noticing that 
{\footnotesize  
 \begin{eqnarray*}F\Bigg(\hspace{10pt}
   \begin{minipage}[h]{0.27\linewidth}
         \vspace{0pt}
         \scalebox{0.240}{\includegraphics{gn_bubbles}}
             \put(-86,+33){$n$}
         \put(-102,+33){$n$}
             \put(-23,+33){$n$}
           \put(-40,+33){$n$}
             \put(-86,+54){$n$}
         \put(-102,+54){$n$}
             \put(-23,+54){$n$}
           \put(-40,+54){$n$}
         \put(3,60){$n$}
         \put(-125,60){$n$}
         \put(-90,65){$2k$ bubbles}
               \put(-65,-5){$2n$}
   \end{minipage}\Bigg)=\begin{minipage}[h]{0.10\linewidth}
         \vspace{0pt}
         \scalebox{0.33}{\includegraphics{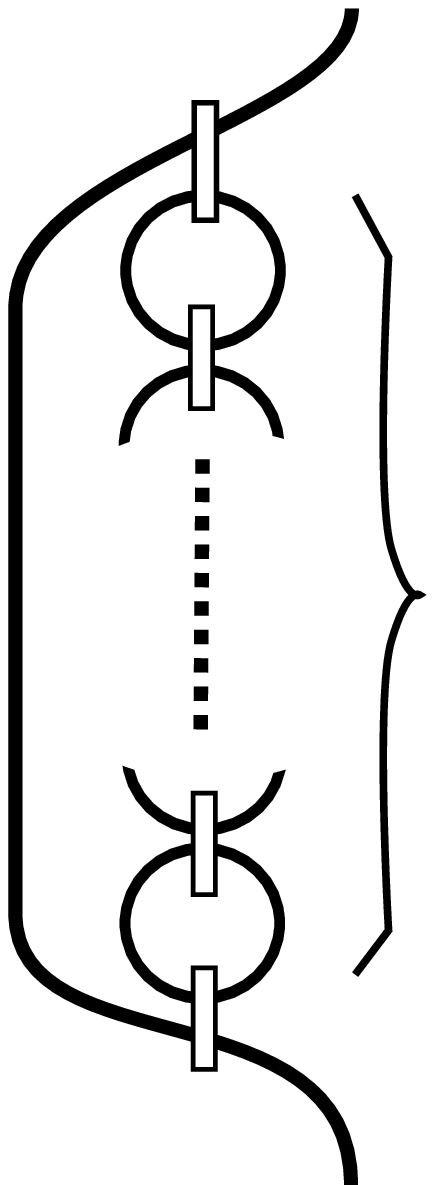}}
         \put(-15,-7){$n$}
         \put(-39,22){$n$}
           \put(-16,22){$n$}
          \put(-39,82){$n$}
          \put(-16,82){$n$}
         \put(-15,115){$n$}
         \put(0,65){$2k+1$ bubbles}
         \end{minipage}
  \end{eqnarray*}}
 (2) The proof is similar to (1). 
\end{proof}
The previous theorem and its corollary give an interesting proof of the Andrews-Gordon identities for the theta function and corresponding identities for the false theta function. We give this proof in section \ref{section6}.
\begin{theorem}
\label{states}
For all adequate closures of the element $\tau_{2n,2n,2n}$ and for all $n\geq 0 $:
\begin{eqnarray*}
   \begin{minipage}[h]{0.20\linewidth}
         \vspace{10pt}
         \scalebox{0.3}{\includegraphics{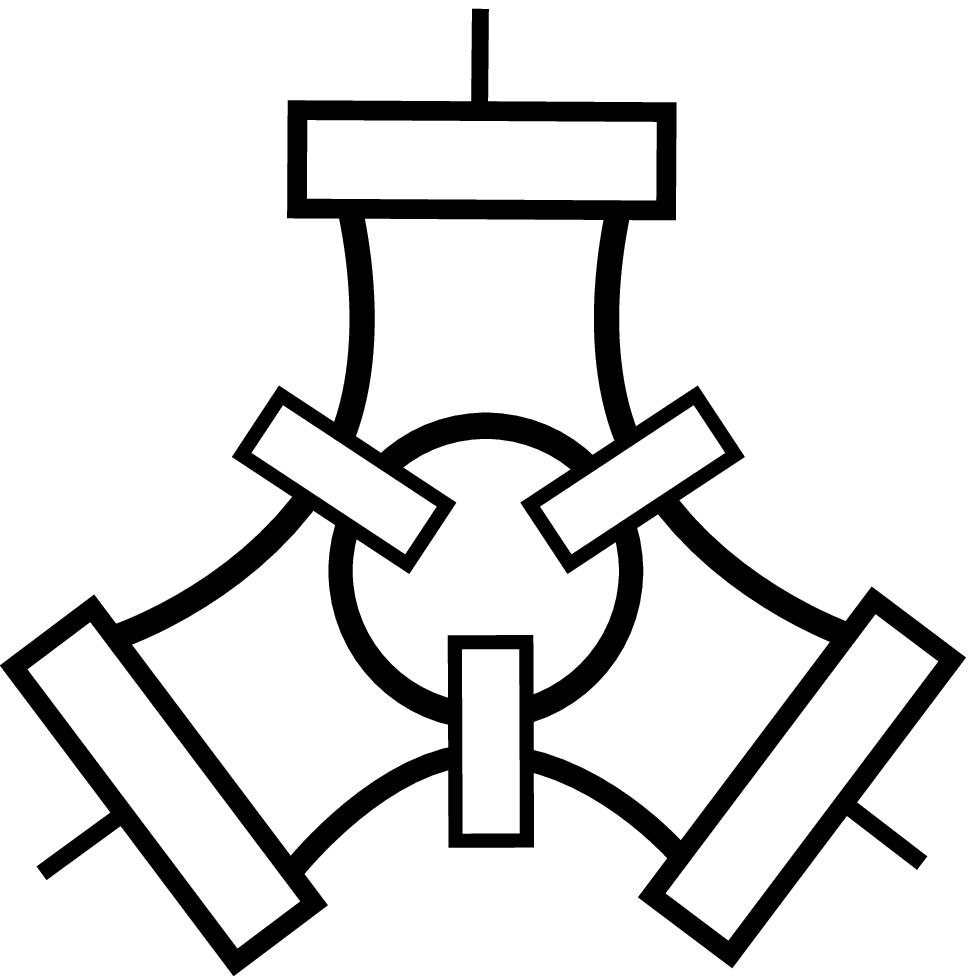}}
  \put(-47,52){$n$} 
  \put(-63,27){$n$}           
 \put(-27,27){$n$}  
          \put(-50,87){$2n$}
          \put(-10,-2){$2n$}
         \put(-90,-2){$2n$}
   \end{minipage}&\doteq_n&\Lambda(q)
   \begin{minipage}[h]{0.16\linewidth}
        \vspace{0pt}
        \scalebox{0.30}{\includegraphics{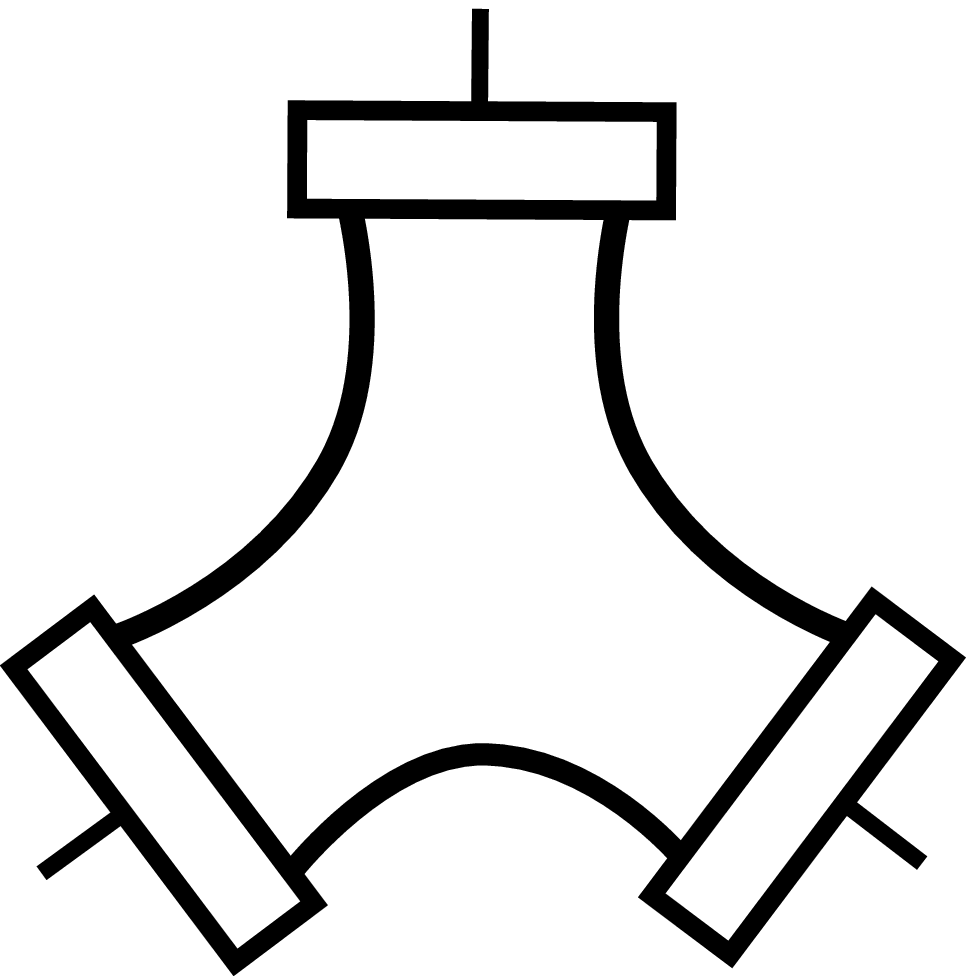}}
 \put(-50,87){$2n$}
          \put(-10,-2){$2n$}
         \put(-90,-2){$2n$}
           \end{minipage}
  \end{eqnarray*}
  where
  \begin{equation}
  \label{lamba}
  \Lambda(q)=(q;q)^2_\infty\sum\limits_{i=0}^{\infty}\frac{(-1)^{i} q^{(i +3i^2)/2}}{(q;q)^3_i}
  \end{equation}
\end{theorem}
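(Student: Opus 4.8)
Since the triple $(2n,2n,2n)$ is admissible, the space $T_{2n,2n,2n}$ is one dimensional and spanned by $\tau_{2n,2n,2n}$. Hence the skein element on the left-hand side is automatically a scalar multiple $\Lambda_n(q)\,\tau_{2n,2n,2n}$, and the theorem reduces to computing the coefficient $\Lambda_n(q)$ and showing $\Lambda_n(q)\doteq_n\Lambda(q)$. The plan is to peel the internal $n$-colored structure apart with the bubble expansion formula, collapse the resulting loops, and then analyse the surviving $q$-series exactly as in Lemma \ref{lma}.

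First I would apply the bubble expansion formula (Theorem \ref{main}) to an internal bubble of the left-hand network. Just as in the proof of the preceding theorem, each application produces a sum over an index $i$ weighted by $\bigl\lceil\begin{smallmatrix}n&n\\n&n\end{smallmatrix}\bigr\rceil_{i}$, after which the loop created by the expansion is absorbed into a quantum-dimension ratio $\Delta_{2n}/\Delta_{n+i}$ by property (\ref{properties2}), and the annihilation axiom kills every term whose internal color is incompatible with the $(2n,2n,2n)$ vertex. Because the target series $\Lambda(q)$ is a single sum, I expect only one surviving summation index; the threefold symmetry of the $(2n,2n,2n)$ vertex should then be responsible for the cube in the denominator of (\ref{lamba}). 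The outcome of this step is an explicit finite expression $\Lambda_n(q)=\sum_{i=0}^{n}c_i(n,q)$, in which each $c_i(n,q)$ is a product of bubble-expansion coefficients and quantum-dimension ratios, together with a theta-net value if the collapse closes off a $\Theta$-subnetwork.

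Next I would convert every factor of $c_i(n,q)$ into $q$-Pochhammer form using the product identity (\ref{fact}) for $\prod[n-i]$, together with the evaluations already recorded, namely $\bigl\lceil\begin{smallmatrix}n&n\\n&n\end{smallmatrix}\bigr\rceil_{0}\doteq_n(q;q)_n$ from Proposition \ref{thm2} and $\Theta(2n,2n,2n)\doteq_n(q^2;q)_n$. Applying the truncations $\tfrac{(q;q)_{3n-i+1}}{(q;q)_{2n+1}}=_n 1$ and $\tfrac{(q;q)_n}{(q;q)_{2n}}=_n 1$ used throughout Lemma \ref{lma} then discards all $n$-dependent Pochhammer tails, and letting $n\to\infty$ stabilises the sum to a fixed power series. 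What remains is to verify that the $i$-th term acquires exponent $\tfrac{i+3i^2}{2}$ and sign $(-1)^i$, that its denominator stabilises to $(q;q)_i^3$, and that the leftover constant factors assemble into the prefactor $(q;q)_\infty^2$, thereby matching $\Lambda(q)$ as defined in (\ref{lamba}).

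The main obstacle will be this final bookkeeping. Producing the squared infinite product $(q;q)_\infty^2$ and the cubed denominator $(q;q)_i^3$ at the same time is more delicate than in the single-sum identities of Lemma \ref{lma}: here several quantum-dimension ratios contribute independent infinite-product factors, three Pochhammer symbols survive in the denominator, and the quadratic exponent together with the sign must be tracked exactly through the iterated expansion and through the $\doteq_n$ truncations. Organising the reduction so that the threefold symmetry of the $(2n,2n,2n)$ vertex makes each contribution transparent, and then invoking the index manipulations of Lemma \ref{lma} verbatim, is what I would rely on to bring the computation to $\Lambda(q)$.
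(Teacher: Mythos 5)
Your opening reduction coincides with the paper's: since $T_{2n,2n,2n}$ is one dimensional, the left-hand side equals $\Lambda_n(q)\,\tau_{2n,2n,2n}$ for a scalar $\Lambda_n(q)$, and the theorem reduces to showing that this scalar, namely the ratio $Tet\left[\begin{smallmatrix}2n & 2n & 2n\\ 2n & 2n & 2n\end{smallmatrix}\right]\big/\Theta(2n,2n,2n)$, satisfies $\doteq_n\Lambda(q)$. The genuine gap is in your computational engine. The bubble expansion formula (Theorem \ref{main}) applies only to a digon --- two parallel cables joining the same pair of idempotents --- and the network at hand contains none: once the element is closed off by a vertex, the underlying graph is the tetrahedron $K_4$, in which every pair of vertices is joined by exactly one edge. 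So the instruction ``apply the bubble expansion formula to an internal bubble of the left-hand network'' fails at the very first step; unlike the chains of digons handled in the preceding theorem and Corollary \ref{torususe}, no sum over an index $i$ weighted by bubble coefficients and loop ratios $\Delta_{2n}/\Delta_{n+i}$ is ever generated. Manufacturing a digon would require a recoupling (change-of-basis) move whose coefficient is itself the tetrahedral ratio you are trying to compute, so that route is circular.

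What the paper does instead is evaluate the all-$2n$ tetrahedral network in closed form using the Masbaum--Vogel tetrahedron coefficient formula, $Tet=\frac{([n]!)^{12}}{([2n]!)^6}\sum_{i=3n}^{4n}\frac{(-1)^i[i+1]!}{([4n-i]!)^3([i-3n]!)^4}$. The single summation index you predicted comes from this formula, not from iterated local expansions; the cube $(q;q)_i^3$ in the denominator of (\ref{lamba}) descends from the factor $([4n-i]!)^3$ after the change of summation variable (it is not produced by ``threefold vertex symmetry'' acting on three independent expansions, which would more plausibly yield three independent indices); and the prefactor $(q;q)_\infty^2$ arises because the Tet contributes $(q;q)_n^3$ while the normalization $\Theta(2n,2n,2n)\doteq_n(q;q)_n/(1-q)$ (equation (\ref{one1}), via Proposition \ref{thm2}) removes one such factor. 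Your second half --- conversion to Pochhammer form via (\ref{fact}) and the truncations $\frac{(q;q)_{3n-i+1}}{(q;q)_{2n+1}}=_n1$ and $\frac{(q;q)_n}{(q;q)_{2n}}=_n1$ --- does match the paper's Lemma \ref{lma}-style endgame, but without the closed-form tetrahedral evaluation (or some substitute for it) the explicit finite sum your plan presupposes is never produced, and the proof cannot get off the ground.
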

\begin{proof}
First note that
\begin{eqnarray*}
   \begin{minipage}[h]{0.20\linewidth}
         \vspace{-10pt}
         \scalebox{0.30}{\includegraphics{all_colored_2n}}
  \put(-47,52){$n$} 
  \put(-63,27){$n$}           
 \put(-27,27){$n$}  
          \put(-50,87){$2n$}
          \put(-10,-2){$2n$}
         \put(-90,-2){$2n$}
   \end{minipage}&=& \begin{minipage}[h]{0.21\linewidth}
         \vspace{-0pt}
         \scalebox{0.35}{\includegraphics{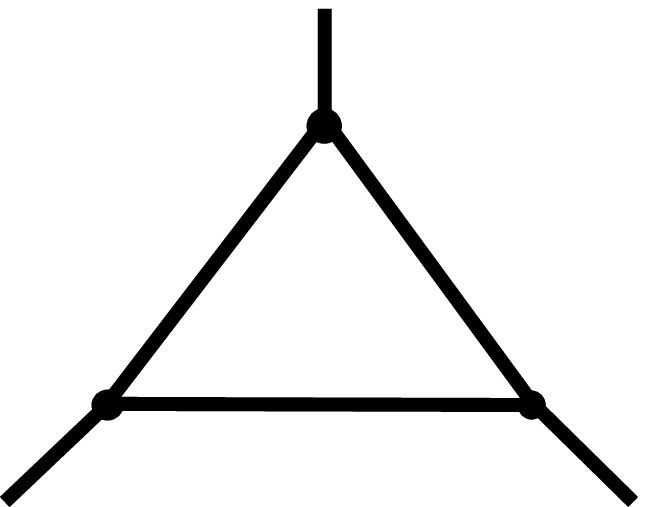}}
     \put(-69,-10){$2n$}
          \put(-1,-10){$2n$}
         \put(-26,50){$2n$}
         \put(-59,25){$2n$}
         \put(-16,25){$2n$}
          \put(-36,0){$2n$}
                    \end{minipage}
  \end{eqnarray*}
  and hence
  
\begin{eqnarray}
\label{firsty}
    \begin{minipage}[h]{0.21\linewidth}
         \vspace{-0pt}
         \scalebox{0.35}{\includegraphics{tet_identity}}
     \put(-69,-10){$2n$}
          \put(-1,-10){$2n$}
         \put(-26,50){$2n$}
         \put(-59,25){$2n$}
         \put(-16,25){$2n$}
          \put(-36,0){$2n$}
                    \end{minipage}&=&\frac{Tet\left[ 
\begin{array}{ccc}
2n & 2n & 2n \\ 
2n & 2n & 2n%
\end{array}%
\right]}{\Theta(2n,2n,2n)}
   \begin{minipage}[h]{0.16\linewidth}
        \vspace{-0pt}
        \scalebox{0.25}{\includegraphics{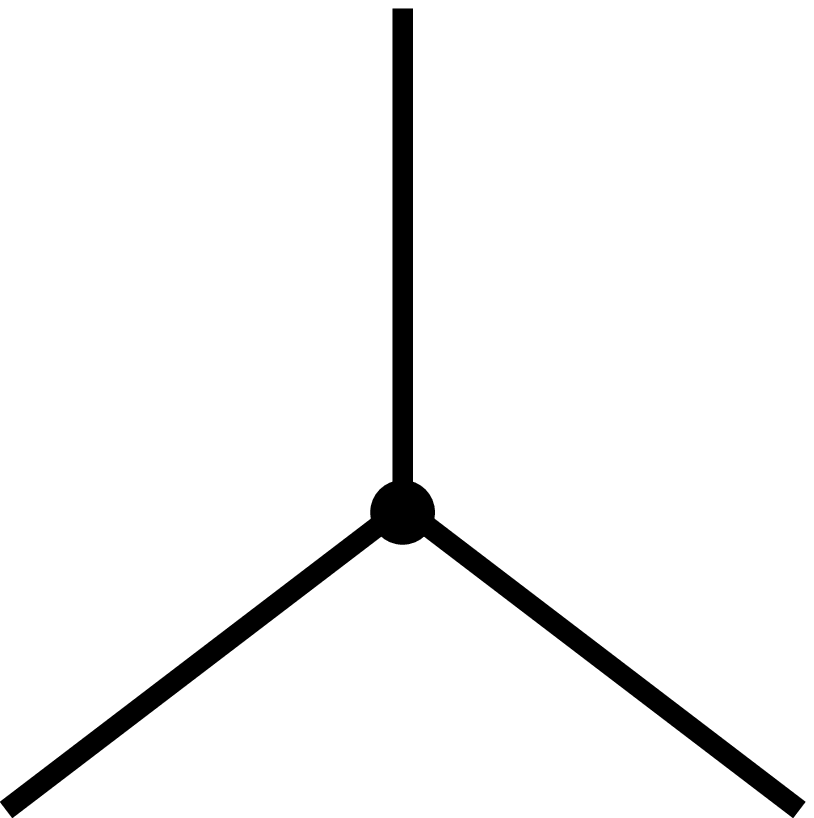}}
         \put(-60,-10){$2n$}
          \put(-1,-10){$2n$}
         \put(-26,50){$2n$}
           \end{minipage}
  \end{eqnarray}
As before we only have to show:
\begin{equation*}
\frac{Tet\left[ 
\begin{array}{ccc}
2n & 2n & 2n \\ 
2n & 2n & 2n%
\end{array}%
\right]}{\Theta(2n,2n,2n)}\doteq_n \Lambda (q)
\end{equation*}
To this end, note first that the bubble expansion formula implies 
\begin{equation*}
\label{one}
\Theta(2n,2n,2n)=
\left\lceil 
\begin{array}{cc}
n & n \\ 
n & n%
\end{array}%
\right\rceil _{0}\Delta_{2n}=q^{-
 n/2}
\frac{(q; q)^3_n(q;q)_{3n+1}}{(q;q)_{2 n}^2 (q ;q)_{2 n + 1}}[2n+1].
\end{equation*}
and hence Theorem \ref{thm2} implies
\begin{equation}
\label{one1}
\Theta(2n,2n,2n)\doteq_n\frac{(q;q)_n}{(1-q)}
\end{equation}
On the other hand one could use the Tetrahedron coefficient formula in \cite{Masbaum1} to obtain
\begin{equation}
\label{tet}
Tet\left[ 
\begin{array}{ccc}
2n & 2n & 2n \\ 
2n & 2n & 2n%
\end{array}%
\right]=\frac{([n]!)^{12}}{([2n]!)^6}\sum\limits_{i=3n}^{4n} 
 \frac{(-1)^i[i+1]!}{([4n - i]^!)^3([i - 3n]!)^4}.
\end{equation}
Using the identity
\begin{equation*}
\prod\limits_{i=0}^{j}[n-i]=q^{(2 + 3 j + j^2 - 2 n - 2 j n)/4} (1 - q)^{-1 - j}\frac{(q;q)_n}{(q;q)_{n-j-1}}
\end{equation*}
the equation (\ref{tet}) can be written as
\begin{equation*}
Tet\left[ 
\begin{array}{ccc}
2n & 2n & 2n \\ 
2n & 2n & 2n%
\end{array}%
\right]=\frac{q^{3n^2}(q;q)^{12}_n}{(q;q)^6_{2n}}\sum\limits_{i=3n}^{4n} 
 \frac{(-1)^{1 + 3 n + i}q^{-i/2 + 3i^2/2 - 12 i n + 21 n^2}(q,q)_{i+1}}{(1-q)(q,q)_{i-3n}^4(q,q)_{4n-i}^3}
\end{equation*}
One can simplify the previous equation to obtain
\begin{equation*}
Tet\left[ 
\begin{array}{ccc}
2n & 2n & 2n \\ 
2n & 2n & 2n%
\end{array}%
\right]=\frac{q^{-2n}(q;q)^{12}_n}{(q;q)^6_{2n}}\sum\limits_{i=0}^{n} 
 \frac{(-1)^{i}q^{(i +3i^2)/2}(q,q)_{4n-i}}{(1-q)(q,q)_{n-i}^4(q,q)_{i}^3}
\end{equation*}
Using the same techniques we used in Lemma \ref{lma} we can write
\begin{equation}
\label{final}
Tet\left[ 
\begin{array}{ccc}
2n & 2n & 2n \\ 
2n & 2n & 2n%
\end{array}%
\right]\doteq_n(q;q)^3_n\sum\limits_{i=0}^{\infty}\frac{(-1)^{i} q^{(i +3i^2)/2}}{(1-q)(q;q)^3_i}
\end{equation}
Putting (\ref{one1}) and (\ref{final}) in (\ref{firsty}) yield the result.
\end{proof}
\begin{remark}
The tail of the tetrahedron whose edges all colored $2n$ is computed in previous theorem, see equation (\ref{final}). The tail of this element can be seen to be $\Lambda (q)(q^2;q)_{n}$. This tail was also computed by  Garoufalidis and Lˆe in \cite{klb}.
\end{remark}
\begin{example}
All edges in the following graphs are colored $2n$.
\begin{eqnarray*}
    \begin{minipage}[h]{0.13\linewidth}
         \vspace{-0pt}
         \scalebox{0.24}{\includegraphics{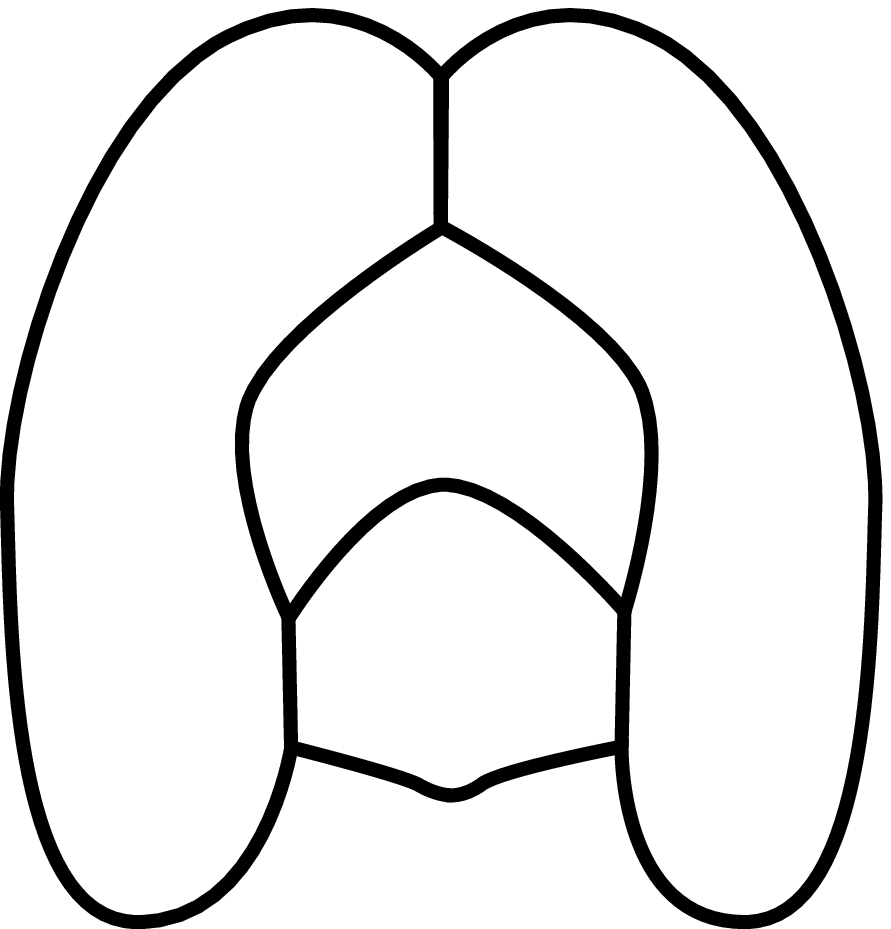}}
          \end{minipage}&\doteq_n&\Lambda(q)
   \begin{minipage}[h]{0.13\linewidth}
        \vspace{-0pt}
        \scalebox{0.24}{\includegraphics{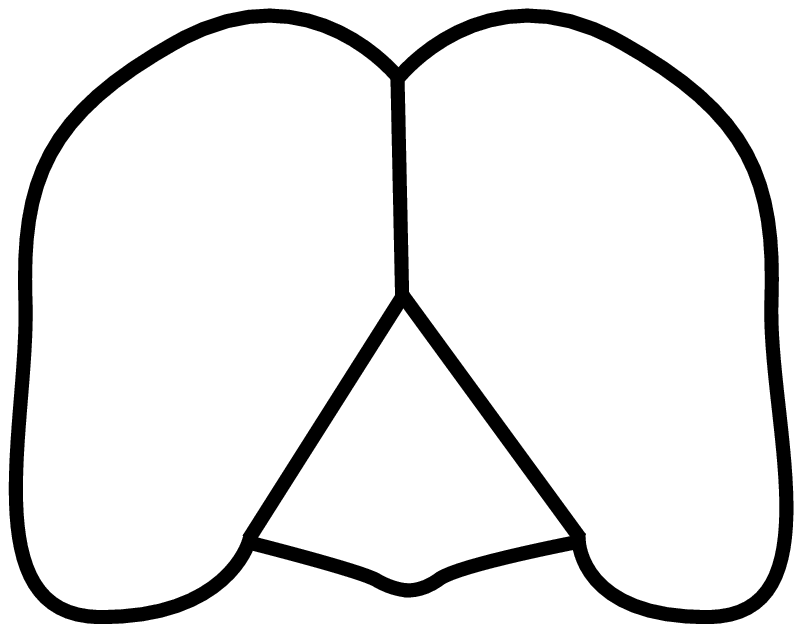}}
           \end{minipage}\\&\doteq_n&\Lambda^2 (q)(q^2;q)_{\infty}
                  \end{eqnarray*}
                  The first equation follows from the previous theorem and the second one follows from Proposition \ref{eee}.
\end{example}
\begin{example}
\label{inadequate}
All arcs in the following skein elements are colored $n$.
\begin{eqnarray*}
    \begin{minipage}[h]{0.17\linewidth}
         \vspace{-0pt}
         \scalebox{0.2}{\includegraphics{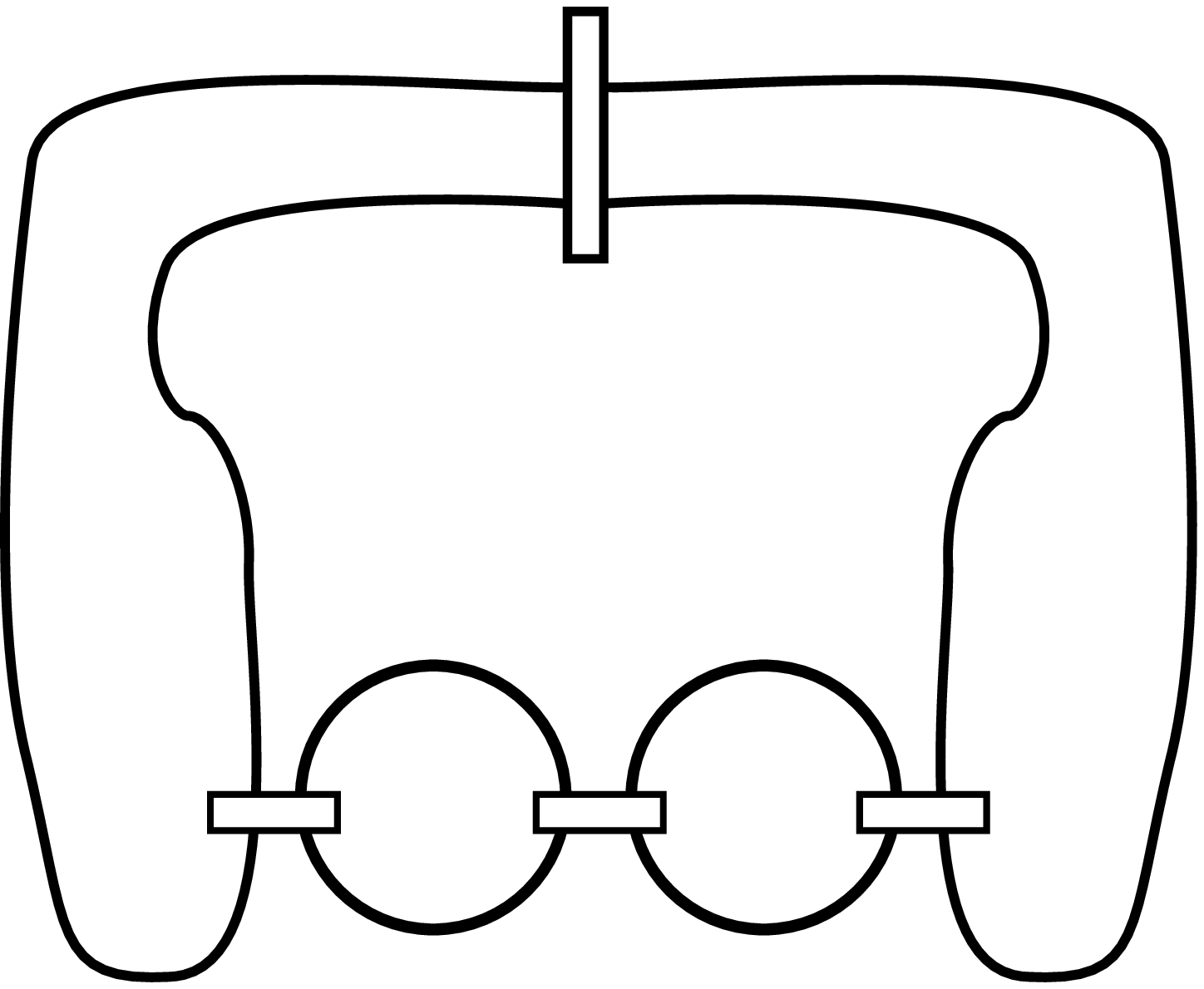}}
          \end{minipage}&\doteq_n&(q,q)_{\infty}
   \begin{minipage}[h]{0.2\linewidth}
        \vspace{-0pt}
        \scalebox{0.2}{\includegraphics{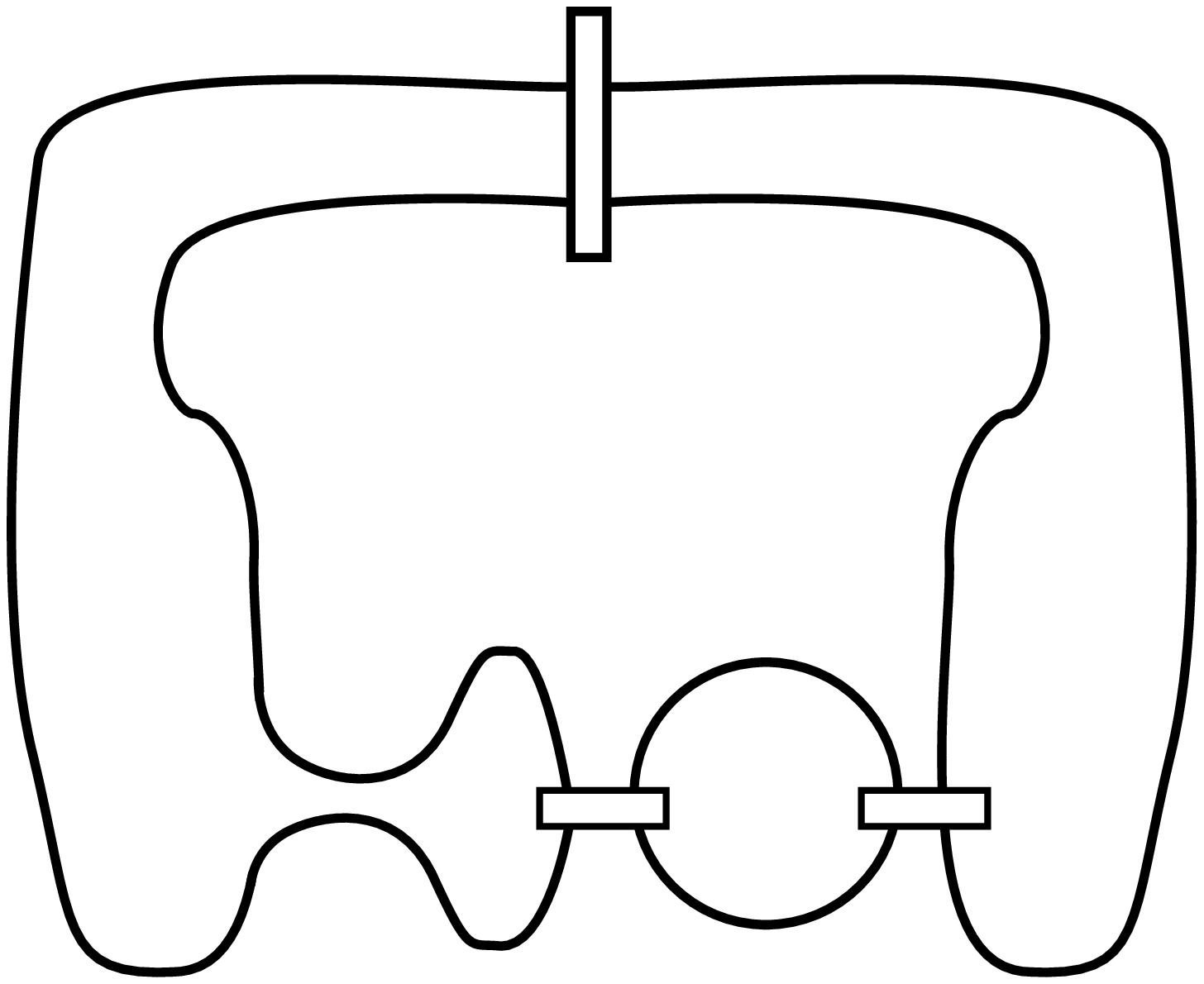}}
           \end{minipage}\\&\doteq_n&(q,q)^2_{\infty}
           \begin{minipage}[h]{0.1\linewidth}
         \vspace{-0pt}
         \scalebox{0.2}{\includegraphics{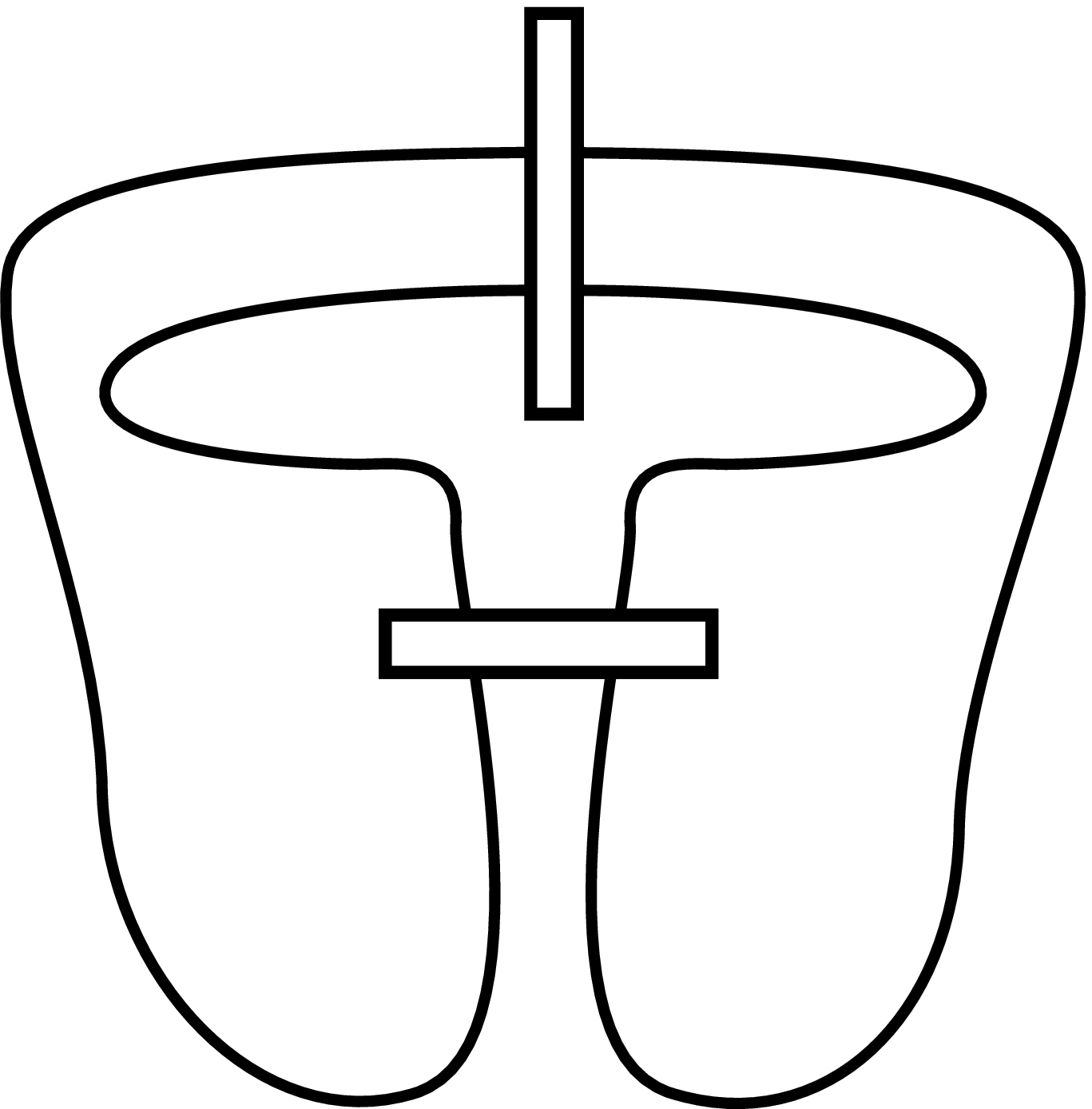}}
        \end{minipage}
        \end{eqnarray*}
The first and the second equations follow from Proposition \ref{thm1}. Observe that Proposition \ref{thm1} also implies
\begin{eqnarray*}
\begin{minipage}[h]{0.22\linewidth}
         \vspace{-0pt}
        $Tet\left[ 
\begin{array}{ccc}
2n & n & n \\ 
2n & n & n%
\end{array}\right]$\end{minipage}=
    \begin{minipage}[h]{0.17\linewidth}
         \vspace{-0pt}
         \scalebox{0.2}{\includegraphics{tet}}
          \put(-26,50){$n$}
         \put(-60,50){$n$}
           \put(-53,20){$n$}
           \put(-32,20){$n$}
           \end{minipage}&\doteq_n&(q,q)_n
   \begin{minipage}[h]{0.2\linewidth}
        \vspace{-0pt}
        \scalebox{0.2}{\includegraphics{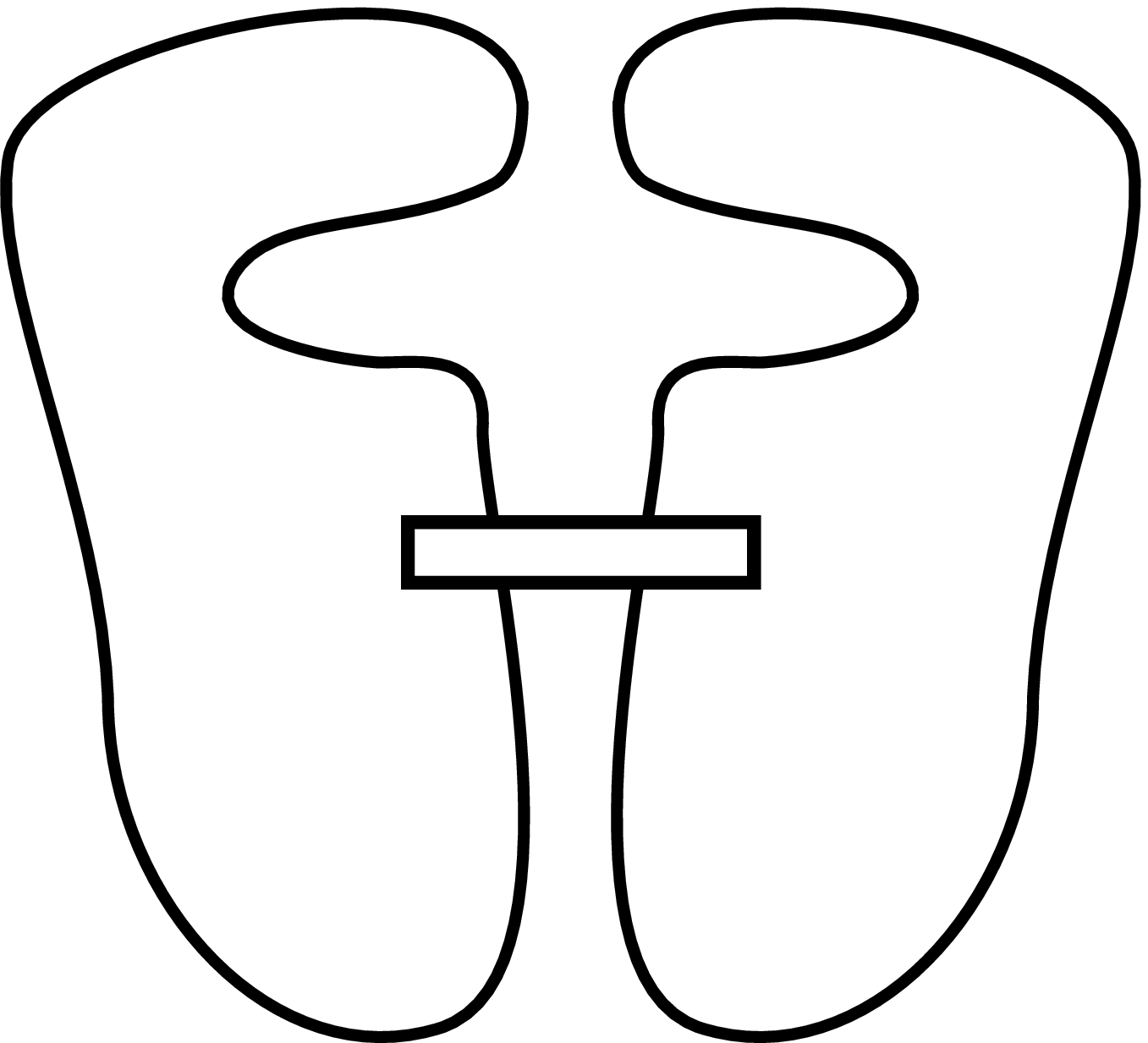}}
      \put(-87,50){$n$}
             \put(5,50){$n$}
           \end{minipage}\doteq_n\frac{(q,q)_n}{1-q}\doteq_n
           \begin{minipage}[h]{0.1\linewidth}
         \vspace{-0pt}
        $(q^2;q)_{n}.$\end{minipage}
  \end{eqnarray*}
  Hence, 
  \begin{eqnarray*}
    \begin{minipage}[h]{0.17\linewidth}
         \vspace{-0pt}
         \scalebox{0.2}{\includegraphics{n_bubble}}
          \end{minipage}&\doteq_n&(q^2,q)_{\infty}(q,q)^2_{\infty}.
        \end{eqnarray*}
 Similarly, one can compute
 \begin{eqnarray*}
    \begin{minipage}[h]{0.37\linewidth}
         \vspace{-0pt}
         \scalebox{0.2}{\includegraphics{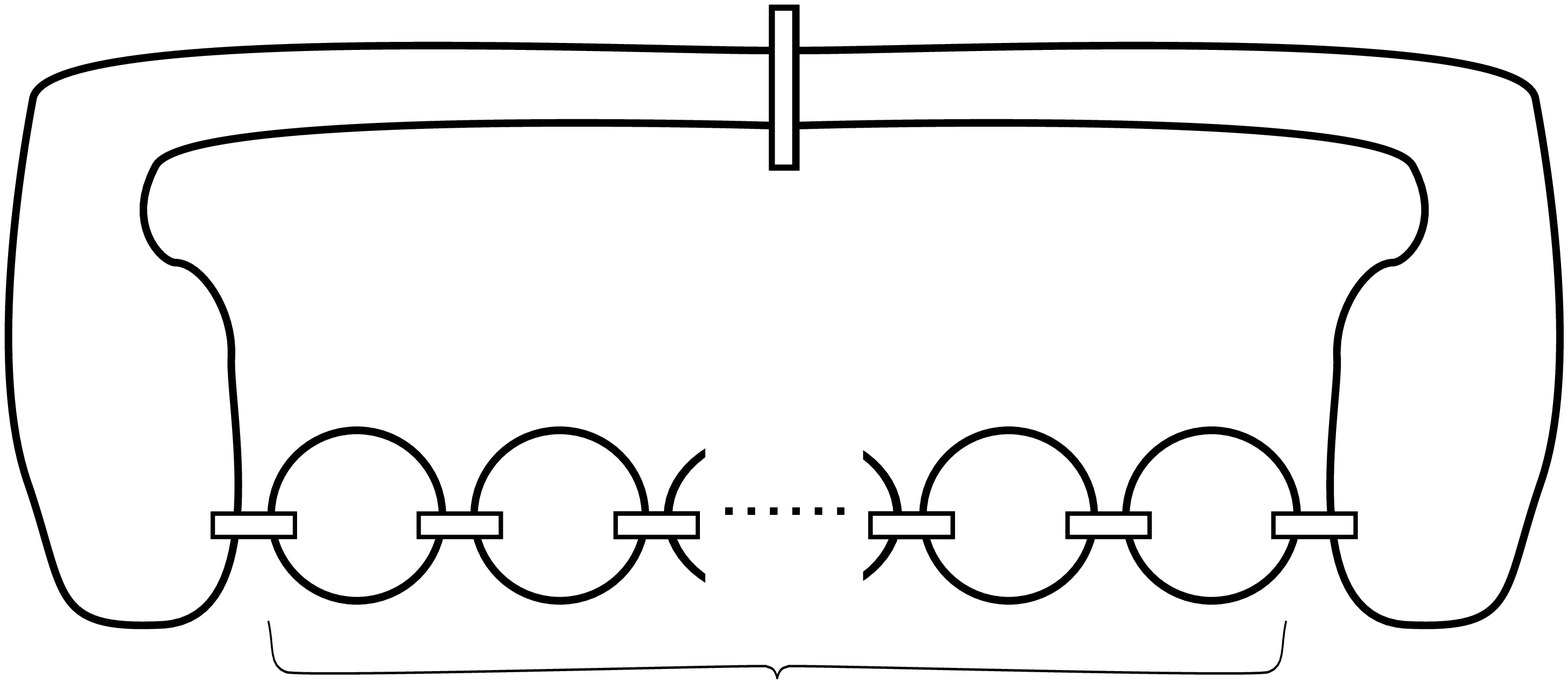}}
         \put(-96,-13){$m$ bubbles}
          \end{minipage}\doteq_n(q^2,q)_{\infty}(q,q)^m_{\infty}.
        \end{eqnarray*}
Note that the skein elements in this example are all inadequate. In particular the skein elements $Tet\left[ 
\begin{array}{ccc}
2n & n & n \\ 
2n & n & n%
\end{array}\right]$ is inadequate. 
\end{example}
\section{Tail multiplication structures on quantum spin networks}
\label{section5}
In \cite{Cody2} C. Armond and O. Dasbach defined a product structure on the tail of the color Jones polynomial. In this section we will define a few product structures on the tail of trivalent graphs in $\mathcal{S}(S^2)$ using similar techniques to the ones in \cite{Cody2}. Let $\Gamma_1$ and $\Gamma_2$ be trivalent graphs in $\mathcal{S}(S^2)$. Suppose that each of $\Gamma_1$ and $\Gamma_2$ contains the trivalent graph $\tau_{2n,2n,2n}$ as in Figure \ref{graph}.
  \begin{figure}[H]
  \centering
    {\includegraphics[scale=0.27]{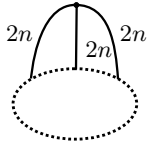}
    \footnotesize{
        \put(-50,38){$2n$}
        \put(-7,38){$2n$}
        \put(-20,32){$2n$}
        }
    \caption{The graph $\Gamma$ with a trivalent graph $\tau_{2n,2n,2n}$}
    \label{graph}
 }
\end{figure} 
Define the map
\begin{equation*}
[,]_1:\mathcal{S}(S^2)\times\mathcal{S}(S^2)\longrightarrow\mathcal{S}(S^2)
\end{equation*}
via the wiring map shown below.
  \begin{figure}[H]
  \centering
    {\includegraphics[scale=0.27]{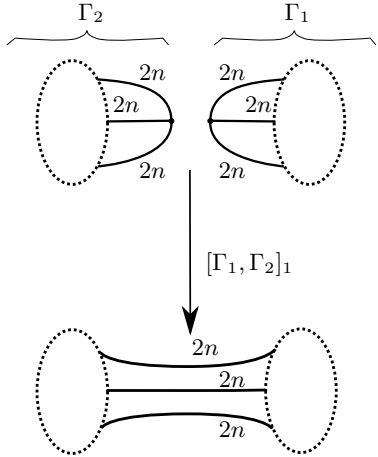}
    \footnotesize{
      \put(-60,142){$2n$}
        \put(-60,105){$2n$}
        \put(-50,130){$2n$}
         \put(-90,142){$2n$}
        \put(-90,105){$2n$}
        \put(-100,130){$2n$}
      \put(-35,165){$\Gamma_1$}
     \put(-113,165){$\Gamma_2$}
       \put(-65,70){$[\Gamma_1,\Gamma_2]_1$}
            \put(-70,38){$2n$}
        \put(-60,26){$2n$}
        \put(-60,7){$2n$}
        }
    \caption{The product $[\Gamma_1,\Gamma_2]_1$}
    \label{thisisit}
 }
\end{figure}
The proof of the following theorem is analogous to the proof of Theorem 5.1 in \cite{Cody2}.
\begin{theorem}
\label{codyy}
Let $\Gamma_1$ and $\Gamma_2$ as defined above.  Suppose further that  $T_{\Gamma_1}$ and $T_{\Gamma_2}$ exist. Then
\begin{equation*}
[\Gamma_1,\Gamma_2]_1\doteq_n\frac{1}{(q^2,q)_n}T_{\Gamma_1}T_{\Gamma_2}
\end{equation*}
\end{theorem}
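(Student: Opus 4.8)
The plan is to reduce the statement to an exact skein identity and then transport it to the level of tails. The structural input is that, since the triple $(2n,2n,2n)$ is admissible, the space $T_{2n,2n,2n}$ is one dimensional, spanned by $\tau_{2n,2n,2n}$. Each graph $\Gamma_i$ is built by wiring the three $2n$-colored legs of a copy of $\tau_{2n,2n,2n}$; cutting $\Gamma_i$ open along this distinguished vertex leaves an element of $T_{2n,2n,2n}$, which must therefore be a scalar multiple $\lambda_i\,\tau_{2n,2n,2n}$ with $\lambda_i\in\mathbb{Q}(A)$. Re-closing that element against $\tau_{2n,2n,2n}$ both recovers $\Gamma_i$ and evaluates to $\lambda_i\,\Theta(2n,2n,2n)$, so $\lambda_i=\Gamma_i/\Theta(2n,2n,2n)$.

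First I would match the wiring defining $[\Gamma_1,\Gamma_2]_1$ with the operation of gluing the two cut-open graphs through the $\tau_{2n,2n,2n}$ channel. Because $T_{2n,2n,2n}$ is one dimensional with $\langle\tau_{2n,2n,2n},\tau_{2n,2n,2n}\rangle=\Theta(2n,2n,2n)$, the identity on this space equals $\frac{1}{\Theta(2n,2n,2n)}$ times the rank-one projection onto $\tau_{2n,2n,2n}$, and inserting this along the three $2n$ strands where the two graphs meet forces the exact identity
\begin{equation*}
[\Gamma_1,\Gamma_2]_1=\lambda_1\lambda_2\,\Theta(2n,2n,2n)=\frac{\Gamma_1\,\Gamma_2}{\Theta(2n,2n,2n)}.
\end{equation*}

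It then remains to pass to tails. By hypothesis $T_{\Gamma_1}$ and $T_{\Gamma_2}$ exist, so $\Gamma_i\doteq_n T_{\Gamma_i}$, while equation (\ref{one1}) and the remark following Proposition \ref{thm2} give $\Theta(2n,2n,2n)\doteq_n (q^2;q)_n$. Viewing every quantity as a power series in $\mathbb{Z}[[q]]$ after the normalization of Remark \ref{main remark} and substituting into the displayed identity yields
\begin{equation*}
[\Gamma_1,\Gamma_2]_1\doteq_n\frac{T_{\Gamma_1}\,T_{\Gamma_2}}{(q^2;q)_n},
\end{equation*}
which is the assertion.

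The main obstacle is precisely this final substitution, namely that $\doteq_n$ respects both multiplication and inversion. One needs the two facts that $A\doteq_n A'$ and $B\doteq_n B'$ imply $AB\doteq_n A'B'$, and that $A\doteq_n A'$ for units in $\mathbb{Z}[[q]]$ implies $A^{-1}\doteq_n (A')^{-1}$; the former holds since the first $n$ coefficients of a product depend only on the first $n$ coefficients of each factor, and the latter since $\Theta(2n,2n,2n)$ has invertible leading term. The delicate part, handled exactly as in the proof of Theorem 5.1 of \cite{Cody2}, is bookkeeping the overall power of $q$ normalizing each rational function so that the common sign and the lowest degree align consistently across both the product and the quotient; once these shifts are matched, multiplicativity of $\doteq_n$ closes the argument.
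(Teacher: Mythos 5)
Your proposal is correct and follows essentially the same route as the paper: both exploit that $T_{2n,2n,2n}$ is one dimensional spanned by $\tau_{2n,2n,2n}$ to write $\Gamma_i=f_i(q)\,\Theta(2n,2n,2n)$ and $[\Gamma_1,\Gamma_2]_1=f_1(q)f_2(q)\,\Theta(2n,2n,2n)$, then pass to tails using $\Theta(2n,2n,2n)\doteq_n(q^2;q)_n$. Your extra remarks (the rank-one projection formulation and the check that $\doteq_n$ respects products and inversion of units) only make explicit steps the paper leaves implicit.
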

\begin{proof}
If you regard the element $\tau_{2n,2n,2n}$ as a map of the outside, then the fact that the space $T_{2n,2n,2n}$ is one dimensional generated by the the graph $\tau_{2n,2n,2n}$ allows us to write
\begin{equation*}
\Gamma_i=f_i(q)\Theta(2n,2n,2n),
\end{equation*}
where $f_i(q) \in \mathbb{Q}(q)$ for $i=1,2$.
On the other hand one can also use the same fact to write
\begin{equation*}
[\hat{\Gamma}_1,\hat{\Gamma}_2]_1=f_1(q)f_2(q)\Theta(2n,2n,2n).
\end{equation*}
By assumption we have
\begin{equation*}
T_{\hat{\Gamma}_i}\doteq f_i(q)\Theta(2n,2n,2n)
\end{equation*}
for $i=1,2$. Hence

\begin{eqnarray*}
[\hat{\Gamma}_1,\hat{\Gamma}_2]_1&\doteq_n&T_{\hat{\Gamma}_1}\frac{T_{\hat{\Gamma}_2}}{\Theta(2n,2n,2n)}\\&\doteq_n&\frac{1}{(q^2,q)_n}T_{\hat{\Gamma}_1}T_{\hat{\Gamma}_2}
\end{eqnarray*}
\end{proof}
Similarly, suppose that $\Upsilon_1$ and $\Upsilon_2$ are trivalent graphs in $\mathcal{S}(S^2)$ and each of them contains the idempotent $f^{(2n)}$ as in Figure \ref{graph22}. Suppose further that $\Xi_1$ and $\Xi_2$ are trivalent graphs in $\mathcal{S}(S^2)$ and each of them contains the idempotent $f^{(n)}$ as shown in Figure \ref{graph22} below.
\begin{figure}[H]
 \subfigure{\centering
    {\includegraphics[scale=0.27]{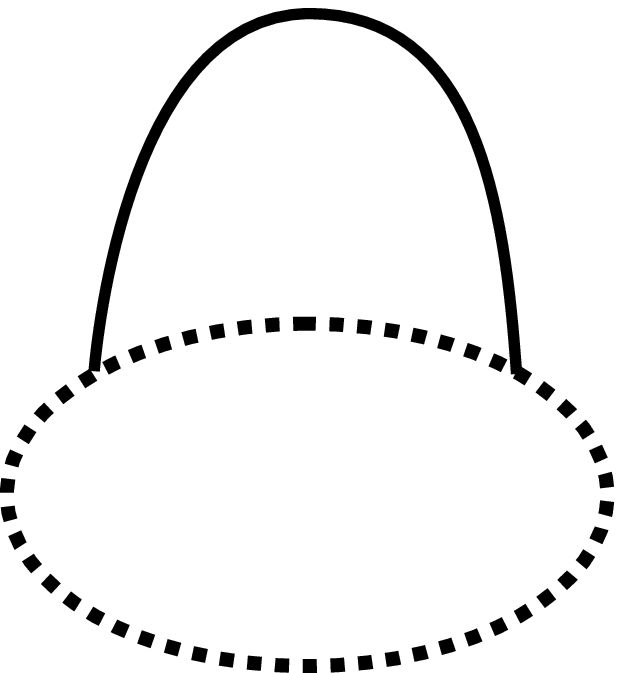}
    \footnotesize{
        \put(-50,38){$2n$}
        }}}
        \subfigure{
        \vspace{50pt}}
         \subfigure{
        \vspace{50pt}}
         \subfigure{
        \vspace{50pt}}
         \subfigure{
        \vspace{50pt}}
         \subfigure{
        \vspace{50pt}}
         \subfigure{
        \vspace{50pt}}
         \subfigure{
        \vspace{50pt}}
         \subfigure{
        \vspace{50pt}}
 \subfigure{
   \centering
    {\includegraphics[scale=0.27]{graph_2}
    \footnotesize{
        \put(-50,38){$n$}
        }}
   }
   \caption{The graph $\Upsilon$ is on the left and the graph $\Xi$ is on the right.}
   \label{graph22}
\end{figure}
Define the maps
\begin{equation*}
[,]_i:\mathcal{S}(S^2)\times\mathcal{S}(S^2)\longrightarrow\mathcal{S}(S^2)
\end{equation*}
for $i=2,3$ as shown below.
\begin{figure}[H]
 \subfigure{\centering
    {\includegraphics[scale=0.27]{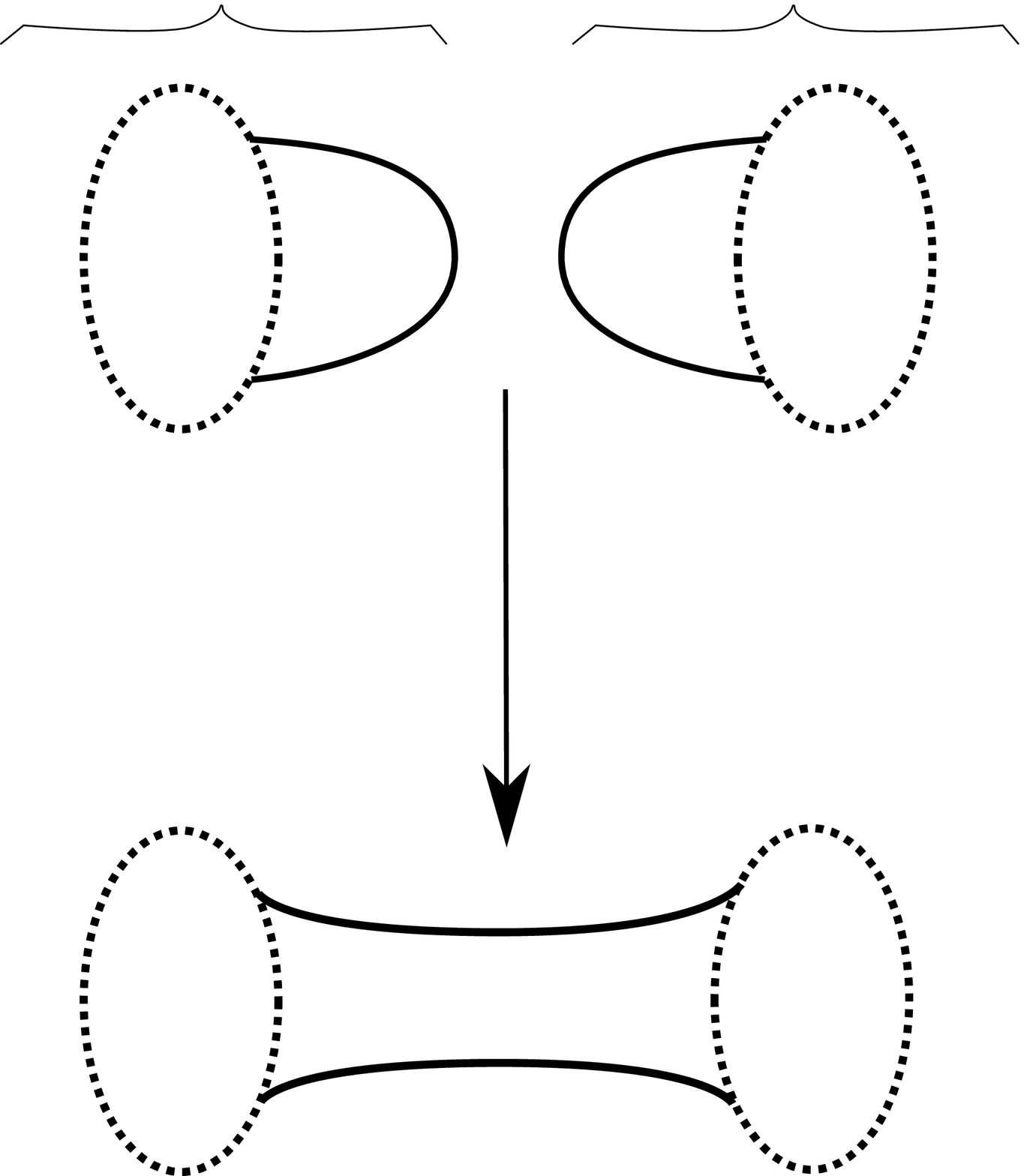}
    \footnotesize{
      \put(-60,142){$2n$}
         \put(-90,142){$2n$}
      \put(-35,165){$\Upsilon_1$}
     \put(-113,165){$\Upsilon_2$}
       \put(-65,70){$[\Upsilon_1,\Upsilon_2]_2$}
            \put(-70,38){$2n$}
        \put(-70,7){$2n$}
        }}}
        \subfigure{
        \vspace{50pt}}
         \subfigure{
        \vspace{50pt}}
 \subfigure{
   \centering
    {\includegraphics[scale=0.27]{mult_1}
    \footnotesize{
      \put(-60,142){$n$}
         \put(-90,142){$n$}
      \put(-35,165){$\Xi_1$}
     \put(-113,165){$\Xi_2$}
       \put(-65,70){$[\Xi_1,\Xi_2]_3$}
            \put(-70,38){$n$}
        \put(-70,7){$n$}
        }}
   }
   \caption{The product $[\Upsilon_1,\Upsilon_2]_2$ is on the left and the product $[\Xi_1,\Xi_2]_3$ is on the right.}
\end{figure}
As before these maps induce multiplication structures on skein elements in $\mathcal{S}(S^2)$ in the following sense.
\begin{theorem}
Suppose that $\Upsilon_1$ and $\Upsilon_2$ are trivalent graphs in $\mathcal{S}(S^2)$ and suppose that each of them contains the projector $f^{(n)}$ or $f^{(2n)}$ as in Figure \ref{graph22}. Suppose further that  $T_{\Upsilon_1}$ and $T_{\Upsilon_2}$ exist. Then
\begin{equation*}
[\Upsilon_1,\Upsilon_2]_i\doteq_n (1-q)T_{\Upsilon_1}T_{\Upsilon_2}
\end{equation*}
for $i=2,3$.
\end{theorem}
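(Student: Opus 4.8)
The plan is to run the proof of Theorem \ref{codyy} almost verbatim, replacing the trivalent vertex $\tau_{2n,2n,2n}$ (whose closure is the theta graph $\Theta(2n,2n,2n)$) by the single idempotent $f^{(a)}$, where $a=2n$ in the case $i=2$ and $a=n$ in the case $i=3$, and whose closure is the loop $\Delta_a$. Both cases are handled identically once this substitution is made, so I would phrase the argument once for a general $a\in\{n,2n\}$ and specialize at the end.

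First I would invoke the one-dimensionality of the linear skein space $T_{a,a}$ recorded in Section \ref{ST}: it is spanned by $f^{(a)}$. Cutting $\Upsilon_i$ along its distinguished $f^{(a)}$ edge presents it as an element of $T_{a,a}$ regarded as a map of the outside, hence as a scalar multiple of $f^{(a)}$; closing this up recovers the loop and lets me write
\begin{equation*}
\Upsilon_i=g_i(q)\,\Delta_a,\qquad g_i(q)\in\mathbb{Q}(q),\quad i=1,2.
\end{equation*}
Reading off the wiring map that defines $[\,\cdot,\cdot\,]_i$ from Figure \ref{graph22} and the subsequent figure, the product opens $\Upsilon_1$ and $\Upsilon_2$ along their $f^{(a)}$ edges, composes the resulting tangles, and closes up; the idempotent identity $f^{(a)}f^{(a)}=f^{(a)}$ then collapses the two idempotents into one and produces a single loop, so that
\begin{equation*}
[\Upsilon_1,\Upsilon_2]_i=g_1(q)g_2(q)\,\Delta_a,
\end{equation*}
exactly mirroring $[\hat\Gamma_1,\hat\Gamma_2]_1=f_1(q)f_2(q)\Theta(2n,2n,2n)$ in Theorem \ref{codyy}. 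By hypothesis the tails exist and $T_{\Upsilon_i}\doteq g_i(q)\Delta_a$.

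The only genuinely new ingredient is the $n$-th order behaviour of the loop value, which here replaces the estimate $\Theta(2n,2n,2n)\doteq_n(q^2;q)_n$ used previously. Writing $\Delta_a=(-1)^a[a+1]$ with $[a+1]=q^{-a/2}(1-q^{a+1})/(1-q)$ and inverting,
\begin{equation*}
\frac{1}{\Delta_a}=(-1)^a q^{a/2}\,\frac{1-q}{1-q^{a+1}}=(-1)^a q^{a/2}(1-q)\bigl(1+q^{a+1}+q^{2(a+1)}+\cdots\bigr).
\end{equation*}
Since $a+1>n$ for both $a=n$ and $a=2n$, the correction terms of degree $\ge a+1$ cannot disturb the first $n$ coefficients, so after discarding the common sign $(-1)^a$ and the power $q^{a/2}$ allowed by Remark \ref{main remark} I obtain $\tfrac{1}{\Delta_a}\doteq_n(1-q)$. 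Combining this with the displays above as in Theorem \ref{codyy} gives
\begin{equation*}
[\Upsilon_1,\Upsilon_2]_i\doteq_n T_{\Upsilon_1}\,\frac{T_{\Upsilon_2}}{\Delta_a}\doteq_n(1-q)\,T_{\Upsilon_1}T_{\Upsilon_2},
\end{equation*}
which is the assertion for both $i=2$ and $i=3$.

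The step I expect to require the most care is the very first one: I must be sure that the distinguished edge of $\Upsilon_i$ really does let me split it as a scalar times $f^{(a)}$, and that the wiring in $[\,\cdot,\cdot\,]_i$ composes the two opened tangles through a \emph{single} surviving idempotent, so that precisely one loop $\Delta_a$—and not two—remains after closing. This is the analogue of the one-dimensionality of $T_{2n,2n,2n}$ exploited in Theorem \ref{codyy}, and it is the only place where the geometry of Figure \ref{graph22} and the definition of the product must be read closely; the remaining $q$-series manipulation is routine.
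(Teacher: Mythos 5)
Your argument is correct and is essentially the paper's own proof, expanded: the paper likewise reduces to the one-dimensionality of $T_{a,a}$ spanned by $f^{(a)}$, the estimate $\frac{1}{\Delta_n}\doteq_n\frac{1}{\Delta_{2n}}\doteq_n 1-q$, and then declares the rest identical to the proof of Theorem \ref{codyy}, exactly as you carry out. Your explicit expansion $\frac{1}{\Delta_a}=(-1)^a q^{a/2}(1-q)\bigl(1+q^{a+1}+\cdots\bigr)$ correctly justifies the loop estimate the paper states without proof.
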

\begin{proof}
The proof follows from the fact that space $T_{a,a}$ is one dimensional generated by $f^{(a)}$ and
\begin{equation*}
\frac{1}{\Delta_{n}}\doteq_n \frac{1}{\Delta_{2n}}\doteq_n 1-q.
\end{equation*}
The rest of the proof is identical to the proof of \ref{codyy}.
\end{proof}
\begin{remark}
Note that the previous two products are just connect sum of two skein elements. The reason we include them here is to show that all they can be obtained in the same way Armond and Dasbach obtained their product in \cite{Cody} and we obtained the product $[,]_1$ above.  
\end{remark}
\section{Applications}
\label{section6}
\subsection{The tail of the Colored Jones polynomial}
In \cite{Cody} C. Armond and O. Dasbach introduced the tail of the colored Jones polynomial. The existence of the tail of the colored Jones polynomial of an alternating links was conjectured by Dasbach and Lin \cite{Dasbach} and in \cite{Cody2} C. Armond proved that the tail of colored Jones polynomial of adequate links exists. Higher order stability of the coefficients of the colored Jones polynomial of alternating links is studied by Garoufalidis and Le in \cite{klb}. Explicit calculations were done on the knot table to determine the tail of colored Jones polynomial of alternating links in \cite{Cody}. The knot $8_5$ is the first knot on the knot table whose tail could not be determined by a direct application of techniques in \cite{Cody}. In \cite{Hajij} we use theorem \ref{cody thm} and the bubble expansion formula to compute the tail of the $8_5$ and we prove that it equals to:
\begin{equation*}
T_{8_5}(q)=(q^{2};q)_{\infty}(q;q)_{\infty}\sum\limits_{k=0}^{\infty}\frac{q^{k+k^{2}}}{%
(q;q)_{k}}(\sum\limits_{i=0}^{k}q^{(-2i(k-i))}\left[ 
\begin{array}{c}
k \\ 
i%
\end{array}%
\right] _{q}^{2})
\end{equation*}
Recently, Garoufalidis and Vuong gave an algorithm for computing the tail of any alternating link \cite{klb2}. In this section we apply the results we obtain in section \ref{section4} to study the tail of the color Jones polynomial.\\
\begin{remark}
\label{normalize}
When dealing with the tail of colored Jones polynomial of a link $L$ we usually compute the tail of normalized polynomial $\tilde{J}_{n,L}(q)/\Delta_n(q)$. We will adapt this convention in this section.
\end{remark}
  
We recall here the definition of the unreduced colored Jones polynomial. Let $L$ be a framed link in $S^3$. Decorate every component of $L$, according to its framing, by the $n^{th}$ Jones-Wenzl idempotent and consider this decorated framed link as an element of $\mathcal{S}(S^3)$. Up to a power of $\pm A$, that depends on the framing of $L$, the value of this element is the $n^{th}$ (unreduced) colored Jones polynomial $\tilde{J}_{n,L}(A)$. Recall from section \ref{3} that the skein elements $\{S_B^{(n)}(D)\}_{n\in \mathbb{N}}$ are obtained from all-$B$ smoothing Kauffman state of $D$. See Figure \ref{allB1}. It was proven in \cite{Cody} that tail of the unreduced colored Jones polynomial depends only on the sequence $\{S_B^{(n)}(D)\}_{n\in \mathbb{N}}$. We state this theorem here.
\begin{theorem}(C. Armond \cite{Cody2})
\label{cody thm}
Let $L$ be a link in $S^3$ and $D$ be a reduced alternating knot diagram of $L$. Then
\begin{equation*}
\tilde{J}_{n,L}(q)\doteq_{(n+1)}S_B^{(n)}(D).
\end{equation*} 
\end{theorem}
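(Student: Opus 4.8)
The plan is to compute $\tilde{J}_{n,L}(q)$ directly from the Kauffman bracket of the $n$-cabled diagram and to argue that, when $D$ is reduced and alternating, only the all-$B$ Kauffman state survives into the first $n+1$ coefficients. First I would cable as in the definition just given: replace each component of $D$ by $n$ parallel strands decorated with $f^{(n)}$, so that up to an overall monomial in $A$ from the framing (which, after passing to $\mathbb{Z}[[q]]$ as in Remark \ref{main remark}, affects neither the sign-normalization nor the first $n+1$ coefficients) the invariant $\tilde{J}_{n,L}(q)$ equals the evaluation of this decorated diagram in $\mathcal{S}(S^2)\cong\mathbb{Q}(A)$.

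Next I would resolve the crossings. At each crossing two $n$-cables meet, and I would expand the pair of parallel $n$-strands running into the crossing through the Jones--Wenzl fusion decomposition, writing the crossing as a linear combination over fusion channels $f^{(2j)}$, $0\le j\le n$, each weighted by the twist coefficient produced by the Kauffman relation. The extreme channel $j=n$, aligned along the $B$-smoothing, carries the full idempotent $f^{(2n)}$ across the smoothing band, and collecting these extreme channels over all crossings reproduces precisely the skein element $S_B^{(n)}(D)$ of Figure \ref{allB1}.

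The heart of the argument, and the step I expect to be the main obstacle, is a uniform minimal-degree estimate showing that every deviation from this configuration is pushed past the first $n+1$ coefficients. Here I would use that a reduced alternating diagram is $B$-adequate: in $S_B(D)$ each smoothing band joins two \emph{distinct} state circles, so $S_B^{(n)}(D)$ is adequate in the sense of Section \ref{3} and behaves locally like the picture in Figure \ref{cody_1}. The required lemma is that replacing the top idempotent $f^{(2n)}$ on any band by a lower channel $f^{(2j)}$ with $j<n$, or selecting the $A$-smoothing at any crossing, strictly raises the minimal $q$-degree, and by enough that the contribution cannot reach the first $n+1$ coefficients; this is the crossing-level refinement of the three-step reduction recalled in Section \ref{3}. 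I would carry out the estimate by repeatedly applying Wenzl's recursion \eqref{recursive} and tracking the degree shifts encoded in the ratios $\Delta_{n-2}/\Delta_{n-1}$, exactly as in the step-one computation there, with adequacy guaranteeing that the surviving leading terms do not cancel. The delicate point is controlling these simultaneous degree bounds over all crossings at once.

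Granting the estimate, every term other than the all-$B$ top-idempotent term contributes only to coefficients past the $(n+1)$-st, so $\tilde{J}_{n,L}(q)\doteq_{n+1}S_B^{(n)}(D)$, with the common sign absorbed into the relation $\doteq_{n+1}$; this recovers Armond's theorem \cite{Cody2}.
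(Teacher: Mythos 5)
First, a point of calibration: this paper does not prove the statement at all --- Theorem \ref{cody thm} is imported verbatim from Armond \cite{Cody2}, and the only related argument the paper reproduces (in Section \ref{3}) is the companion claim $S_B^{(n+1)}(L)\doteq_{n+1}S_B^{(n)}(L)$, i.e.\ the existence of the tail of the skein sequence, not the identification of that sequence with the colored Jones polynomial. So your proposal must stand on its own, and judged that way it is an outline with the theorem's actual content left unproved. The reduction you set up is the standard and correct one: cable by $f^{(n)}$, expand each crossing in the fusion basis over channels $f^{(2j)}$ with the twist eigenvalues as coefficients, and observe that the term taking the maximal channel $f^{(2n)}$ at every crossing is, up to a monomial in $A$, exactly $S_B^{(n)}(D)$. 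But the ``uniform minimal-degree estimate'' that you yourself flag as the main obstacle \emph{is} the theorem; everything before it is routine. It is not enough to say one would track degree shifts through Wenzl's recursion \eqref{recursive}: dropping one crossing from channel $n$ to channel $j$ gains $q$-degree roughly $\tfrac{1}{2}\bigl(n(n+1)-j(j+1)\bigr)$ from the twist eigenvalue, but the bracket of the deviated diagram can have strictly \emph{smaller} minimal degree than the all-maximal-channel diagram (the state-circle count and the $\Delta$-factors change), so one needs a net lower bound, and one that is provably additive over simultaneous deviations at all crossings. This is precisely where Armond's through-strand/degree lemmas and $B$-adequacy do real work --- adequacy enters through quantitative bounds on the minimal degree of a skein element in terms of its cut strands, not merely, as you suggest, through ``the surviving leading terms do not cancel.'' Without a stated and proved version of that lemma, the proposal is a plan for a proof rather than a proof.

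A secondary but telling slip: in the fusion-channel expansion there is no separate ``$A$-smoothing'' term to exclude. The $A$-smoothing of a cabled crossing is itself a linear combination of the channels $f^{(2j)}$, so your sentence about ruling out ``selecting the $A$-smoothing at any crossing'' conflates two different expansions --- the channel decomposition of the crossing and the Kauffman-state expansion of the $n^2$ individual cable crossings. Either expansion can be made to work (Armond's argument is closer in spirit to degree bookkeeping for projector expansions), but the degree estimate must be formulated and proved relative to one fixed expansion, with one fixed indexing of the error terms; as written, it is not pinned down which collection of terms your bound is supposed to control, and the count of such terms (on the order of $(n+1)^c$ for $c$ crossings) is exactly why the simultaneity you worry about cannot be waved through.
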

Let $D$ be a link diagram. The \textit{$A$-graph} $A(D)$ and the \textit{$B$-graph} $B(D)$ are two graphs associated to the all-$A$ smoothing and all-$B$ smoothing states of $D$. The set of vertices  of $A(D)$ is equal to the set of circles in $S_A(D)$. Moreover, an edge in the set of edges of $A(D)$ is obtained by joining two vertices of $A(D)$ for each crossing in $D$ between the corresponding circles.  We obtain the \textit{reduced $A$-graph} $A(D)^{\prime}$ by keeping the same set of vertices of $A(D)$ and replacing parallel edges by a single edge. See Figure \ref{thisisit} for an example. We define the $B$-graph $B(D)$ and reduced $B$-graph $B(D)^{\prime}$ similarly.
 
 \begin{figure}[H]
  \centering
    {\includegraphics[scale=0.06]{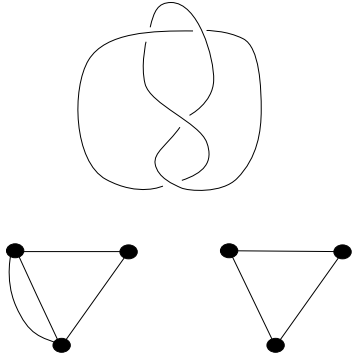}
    \caption{The knot $4_1$, its $A$-graph on the left and its reduced $A$-graph on the right}
    \label{thisisit}
 }
\end{figure}

Theorem \ref{cody thm} implies that the tail of the unreduced colored Jones polynomial of an alternating link only depends on the reduced $B$-graph:  
\begin{theorem}(C. Armond, O. Dasbach \cite{Cody})
\label{cody thm2}
Let $L_1$ and $L_2$ be two alternating links with alternating diagrams $D_1$ and $D_2$. If the graph $B(D_1)^{\prime}$ coincides with $B(D_2)^{\prime}$, then $T_{K_1}=T_{K_2}$.
\end{theorem}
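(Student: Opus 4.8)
The plan is to push the problem from links down to skein elements via Armond's theorem, and then to show that the tail of the $B$-state skein elements sees only the reduced graph, i.e. is blind to the multiplicity of edges in $B(D)$.

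First I would invoke Theorem \ref{cody thm}: since $D_1$ and $D_2$ are reduced alternating diagrams, $\tilde{J}_{n,L_i}(q)\doteq_{n+1}S_B^{(n)}(D_i)$, so $T_{L_i}$ is exactly the tail of the sequence $\{S_B^{(n)}(D_i)\}_{n}$, which exists because each $S_B^{(n)}(D_i)$ is adequate. The statement thus reduces to a purely skein-theoretic claim about $\{S_B^{(n)}(D)\}_n$. Recalling the construction from Section \ref{3}, $S_B^{(n)}(D)$ is obtained by decorating each circle of $S_B(D)$ (a vertex of $B(D)$) with $f^{(n)}$ and each dashed arc (an edge of $B(D)$, i.e. a crossing) with $f^{(2n)}$; hence the decorated diagram is literally determined by $B(D)$ together with its edge multiplicities. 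It therefore suffices to prove that collapsing a family of parallel edges of $B(D)$ to a single edge leaves the tail unchanged, for this is precisely the passage from $B(D)$ to $B(D)'$.

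Next I would isolate the local move. Suppose two circles $C_1,C_2$ are joined by at least two crossings. A consecutive pair of the corresponding $f^{(2n)}$ rungs, together with the $f^{(n)}$ arcs of $C_1$ and $C_2$ running between their feet, cobounds a quadrilateral region whose decoration is precisely a bubble in the sense of Figure \ref{bubble}, with outer colors $n$ and rung colors $2n$. I would expand this region by the bubble expansion formula, Theorem \ref{main}. The annihilation axiom for the Jones--Wenzl idempotent then forces every term of the expansion to vanish except the one in which the two rungs fuse into a single connecting strand, and this surviving term is exactly the configuration in which the parallel pair has been replaced by one edge. This is the same mechanism already exploited in Propositions \ref{thm1} and \ref{thm2}, where a bubble between two projectors is resolved into a single strand up to a scalar. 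Iterating the move collapses every bundle of parallel edges and carries $S_B^{(n)}(D)$ to the $B$-state skein element of $B(D)'$.

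The main obstacle is controlling the scalar produced at each collapse. Expanding the bubble yields a coefficient $\lceil \cdots \rceil_i$ (with outer colors $n$ and rung colors $2n$) evaluated at the surviving index, and for the two tails to agree on the nose I must show that this coefficient is $\doteq_{n+1}1$, i.e. contributes only a unit to the first $n+1$ coefficients. This is the delicate point: it is the analogue of the identity $\left\lceil\begin{smallmatrix} n & n \\ n & n\end{smallmatrix}\right\rceil_{0}\doteq_n(q;q)_n$ from Proposition \ref{thm2}, but with rung colors $2n$ rather than $n$, so one must check that the resulting quotient of $q$-Pochhammer symbols degenerates to $1$ through order $q^{n}$. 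I expect this to follow from the same estimates used throughout Section \ref{section4} --- that ratios such as $(q;q)_{3n+1}/(q;q)_{2n+1}$ and $(q;q)_n/(q;q)_{2n}$ are $\doteq_n1$ --- together with the adequacy of $S_B^{(n)}(D)$, which guarantees no spurious lower-degree terms appear; it is here, and only here, that the alternating (hence adequate) hypothesis is genuinely used. Once triviality of the scalar in the tail is established, iterating over all multiple edges gives that the tail of $\{S_B^{(n)}(D_i)\}_n$ equals the tail of the $B$-state sequence of $B(D_i)'$, and since $B(D_1)'=B(D_2)'$ we conclude $T_{L_1}=T_{L_2}$.
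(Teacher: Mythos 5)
You begin on solid ground, and in fact your first paragraph matches the only argument the paper itself offers: the paper does not prove this theorem but quotes it from \cite{Cody}, presenting it as a consequence of Theorem \ref{cody thm}, exactly your reduction to the claim that the tail of $\{S_B^{(n)}(D)\}_{n}$ survives collapsing parallel edges of $B(D)$. The genuine gap is in your local move. A doubled edge is the bubble $\mathcal{B}^{n,n}_{n,n}(n,n)$ whose two \emph{boxes} are the $f^{(2n)}$ rungs and whose internal bundles are the two $n$-colored circle arcs between their feet; Theorem \ref{main} therefore expands it into $\min(m,n,l)+1=n+1$ terms, and the $i$-th diagram replaces the pair of rungs by turnbacks: $i$ strands passing from $C_1$ to $C_2$ at each of the two old rung positions, with $n-i$ strands running through along each circle. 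None of these turnbacks returns to a single Jones--Wenzl projector, so the annihilation axiom kills nothing here. Annihilation is what powers Propositions \ref{thm1} and \ref{thm2}, where the bubble sits directly on an $f^{(2n)}$ leg and every $i\geq 1$ turnback hooks that projector; in a generic adequate closure of a doubled edge, all $n+1$ terms survive. Indeed the paper's own Section \ref{section4} shows such terms genuinely feed the tail: the multisums in Lemma \ref{lma}, the $2k$-bubble theorem, and Corollary \ref{torususe} arise precisely from the surviving $i\geq 1$ terms of iterated bubble expansions, with coefficients $\left\lceil\begin{smallmatrix}n&n\\n&n\end{smallmatrix}\right\rceil_i$ of minimal degree $i^2+i-n$ that do contribute within the first $n$ coefficients for small $i$. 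If your annihilation claim were correct, those computations --- and hence the Andrews--Gordon series for the torus knots in Theorem \ref{usingskein} --- would collapse to a single term, which is false.

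A second, related defect: no term of the expansion is literally ``the two rungs fused into a single connecting strand.'' The surviving diagrams are turnback configurations (at $i=n$ the two circles merge into one; at $i=0$ they disconnect), so even identifying which term should dominate, and matching its closure with the single-edge graph, is itself the substantive work. A correct proof must show, by a lowest-degree/adequacy estimate in the spirit of the existence argument sketched in Section \ref{3} (replacing idempotents by the identity plus controlled lower-order terms, with adequacy guaranteeing no spurious low-degree contributions), that the lower channels only enter beyond degree $n$, and separately control the scalar attached to the dominant channel. Your third paragraph correctly anticipates that a coefficient estimate is needed, but since the structural collapse that estimate was meant to normalize does not occur, the proposal as written does not prove the theorem.
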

Remark \ref{important} implies the following important result. 
\begin{theorem}
\label{newproduct}
    (1) Let $G$ be the graph shown on the right hand side of the following identity, then there exists a $q$-power series $A(q)$ series such that
     \begin{eqnarray}T\Bigg(
     \label{equationone1}
    \begin{minipage}[h]{0.18\linewidth}
         \vspace{-7pt}
         \scalebox{0.42}{\includegraphics{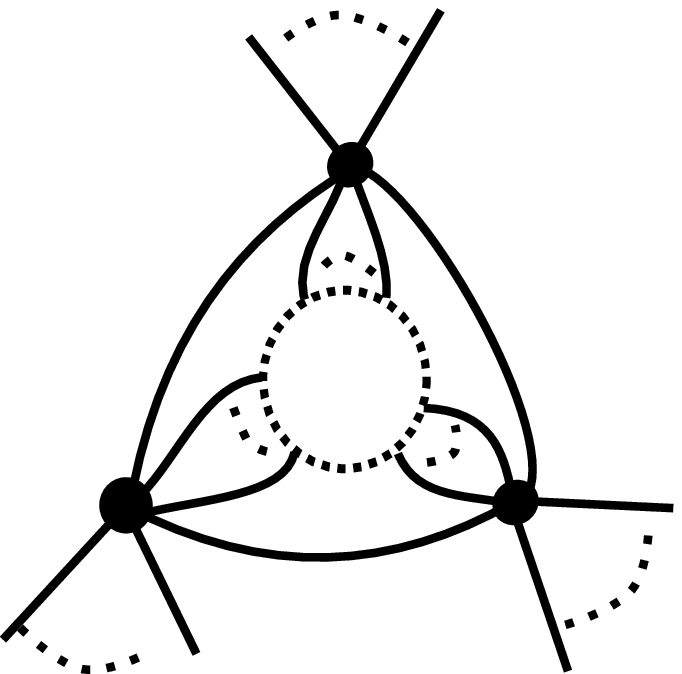}}
                   \put(-43,+32){\small{$G$}}
           \end{minipage}\Bigg)&\doteq_n&A(q)
   T\Bigg(\begin{minipage}[h]{0.18\linewidth}
        \vspace{0pt}
        \scalebox{0.42}{\includegraphics{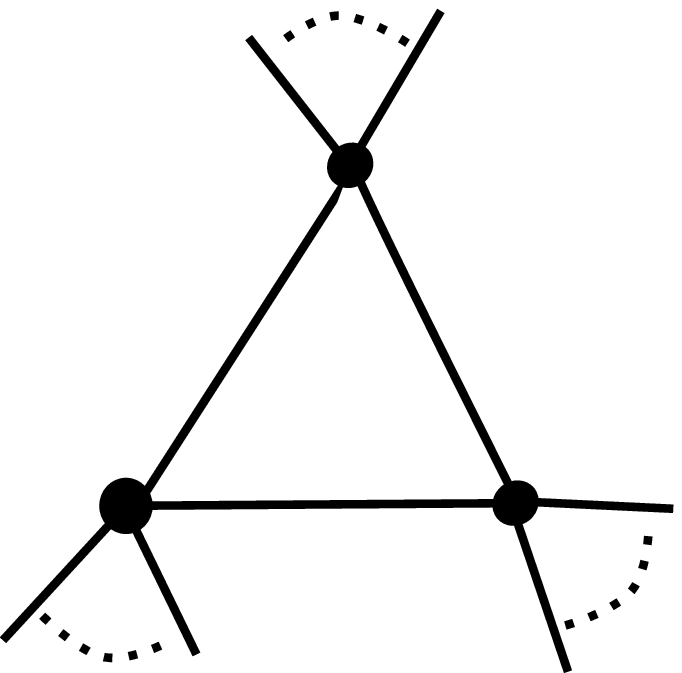}}
          \end{minipage}\Bigg)
            \end{eqnarray}
(2) (Armond and Dasbach \cite{Cody}) Let $G^{\prime}$ be the graph shown on the right hand side of the following identity, then there exists a $q$-power series $A^{\prime}(q)$ series such that
               \begin{eqnarray}
                  T\Bigg( \label{equationtwo2}
    \begin{minipage}[h]{0.17\linewidth}
         \vspace{-8pt}
         \scalebox{0.34}{\includegraphics{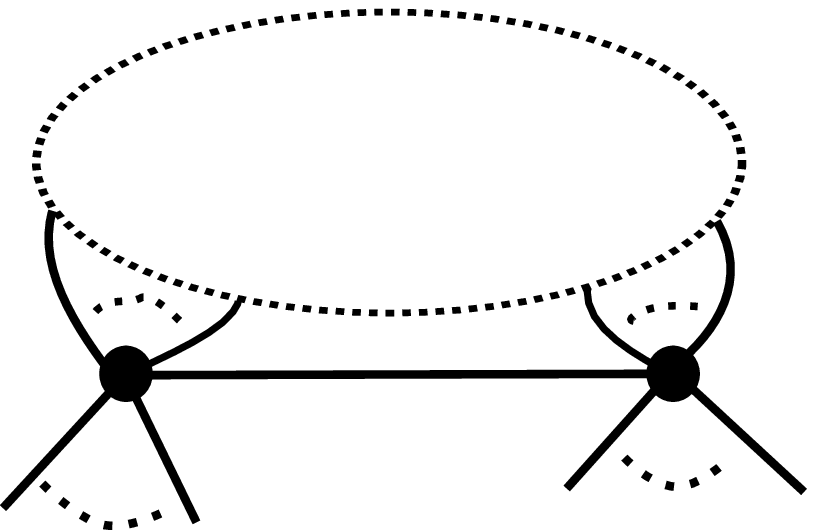}}
          \put(-43,+32){\small{$G^{\prime}$}}
           \end{minipage}\Bigg)&\doteq_n&A^{\prime}(q)T\Bigg(
   \begin{minipage}[h]{0.20\linewidth}
        \vspace{0pt}
        \scalebox{0.34}{\includegraphics{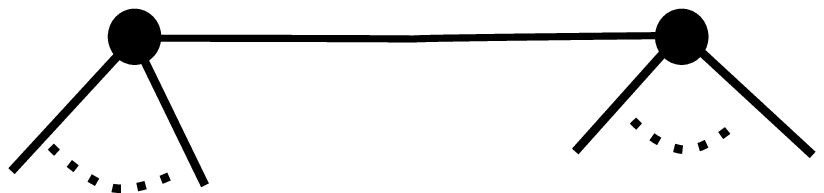}}
          \end{minipage}\Bigg)
            \end{eqnarray}
\end{theorem}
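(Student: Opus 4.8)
The plan is to exploit the one-dimensionality of the skein spaces $T_{2n,2n,2n}$ and $T_{n,n,2n}$, which is exactly the structural feature isolated in Remark \ref{important}. The subgraph $G$ (respectively $G'$) is built from edges colored $n$ or $2n$ and meets the rest of the diagram along three legs whose colors form the admissible triple sitting in $\{(2n,2n,2n),(n,n,2n)\}$. Attaching the matching Jones--Wenzl idempotents to those three legs turns $G$ into an element of the corresponding space $T_{a,b,c}$, and since that space is one-dimensional, spanned by $\tau_{a,b,c}$, we may write $G = P_n(q)\,\tau_{a,b,c}$ for a uniquely determined rational function $P_n(q)$. This is precisely the relation \eqref{easy}, and it is the only skein-theoretic input the argument needs.

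First I would make the local replacement explicit. Write $\Gamma_1$ for the left-hand graph and $\Gamma_2$ for the graph in which $G$ has been replaced by the single vertex $\tau_{a,b,c}$ (the right-hand side, $G$ shown as graphid\_8 and $G'$ as graphid\_2). For each $n$ both are closures of the same complementary wiring $x$, namely $\Gamma_{1,n}=G^*(x)$ and $\Gamma_{2,n}=\tau_{a,b,c}^*(x)$, so the factorization $G=P_n(q)\tau_{a,b,c}$ closes up to the exact identity $\Gamma_{1,n}=P_n(q)\,\Gamma_{2,n}$, valid for every $n$. Because these closures are adequate, the tail $T(\Gamma_2)$ exists by the existence result of Section \ref{3}, and the last sentence of Remark \ref{important} gives $P_n(q)\doteq_n \Gamma_{1,n}/\Gamma_{2,n}$. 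Setting $A(q)$ to be the tail of the sequence $\{P_n(q)\}$, the biconditional in Remark \ref{important} then shows that $T(\Gamma_1)$ exists and that $T(\Gamma_1)\doteq_n A(q)\,T(\Gamma_2)$, which is assertion (1). Part (2) is the theorem of Armond and Dasbach \cite{Cody}; since its proof is word-for-word the same with the triple $(n,n,2n)$ in place of $(2n,2n,2n)$, I would simply record this parallel rather than repeat it.

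The step I expect to be delicate is verifying that the scalar $A(q)$ is genuinely a well-defined $q$-series and that multiplication by the sequence $\{P_n(q)\}$ is compatible with $\doteq_n$, so that the exact per-level identity $\Gamma_{1,n}=P_n(q)\Gamma_{2,n}$ descends to the stated relation between the limiting series $T(\Gamma_1)$ and $A(q)\,T(\Gamma_2)$. This is where Remark \ref{important} does the real work: adequacy of $\Gamma_{2,n}$ forces its tail to exist, and the equivalence recorded there guarantees that the tail of $\{P_n\}$ exists precisely when that of $\Gamma_1$ does, so that $A(q)$ lands in $\mathbb{Z}[[q]]$ once the minimal-degree bookkeeping of the factors is controlled exactly as in the earlier propositions. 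No new diagrammatic manipulation beyond the one-dimensionality of $T_{a,b,c}$ is required; the content is entirely in transporting the local factorization through the closure and through the passage to tails.
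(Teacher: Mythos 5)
Your proposal is correct and follows essentially the same route as the paper: both arguments rest entirely on Remark \ref{important} (the one-dimensionality of $T_{2n,2n,2n}$ and $T_{n,n,2n}$, the factorization $G = P_n(q)\,\tau_{a,b,c}$, and its compatibility with closures and tails). The only cosmetic difference is that the paper pins down the scalar explicitly by pairing with the dual vertex, $A(q)\doteq_n \alpha^*_n(\tau_{2n,2n,2n})/\Theta(2n,2n,2n)\doteq_n \alpha^*_n(\tau_{2n,2n,2n})/(q^2;q)_{\infty}$, which is exactly your relation $P_n(q)\doteq_n\Gamma_{1,n}/\Gamma_{2,n}$ specialized to the theta closure.
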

\begin{proof}
(1) Note that equation (\ref{equationone1}) is equivalent to 
 \begin{eqnarray}
 \label{onemoretime}
    \begin{minipage}[h]{0.19\linewidth}
         \vspace{0pt}
         \scalebox{0.33}{\includegraphics{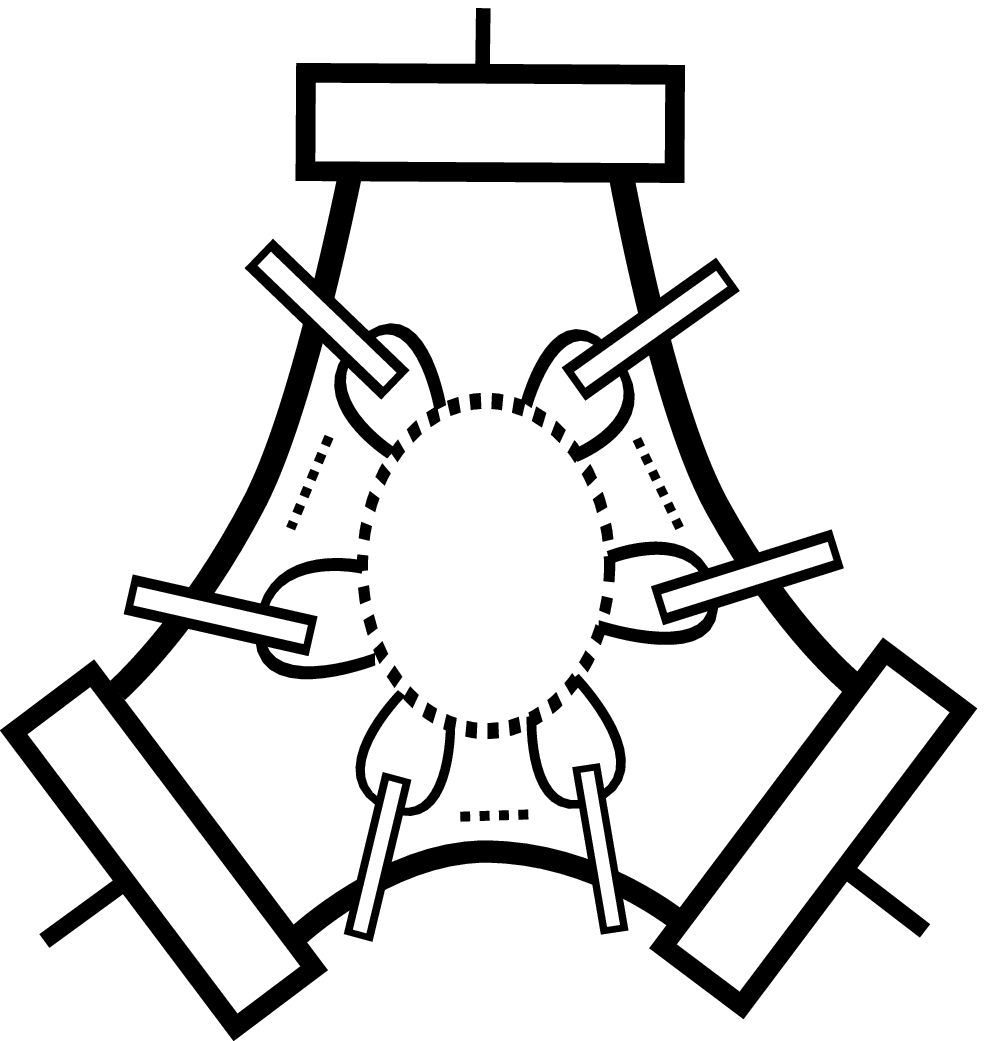}}
          \put(-49,+43){\small{$S$}}
            \put(-50,99){$2n$}
          \put(-10,-2){$2n$}
         \put(-90,-2){$2n$}
           \end{minipage}&\doteq_n&A(q)
   \begin{minipage}[h]{0.21\linewidth}
        \vspace{0pt}
        \scalebox{0.33}{\includegraphics{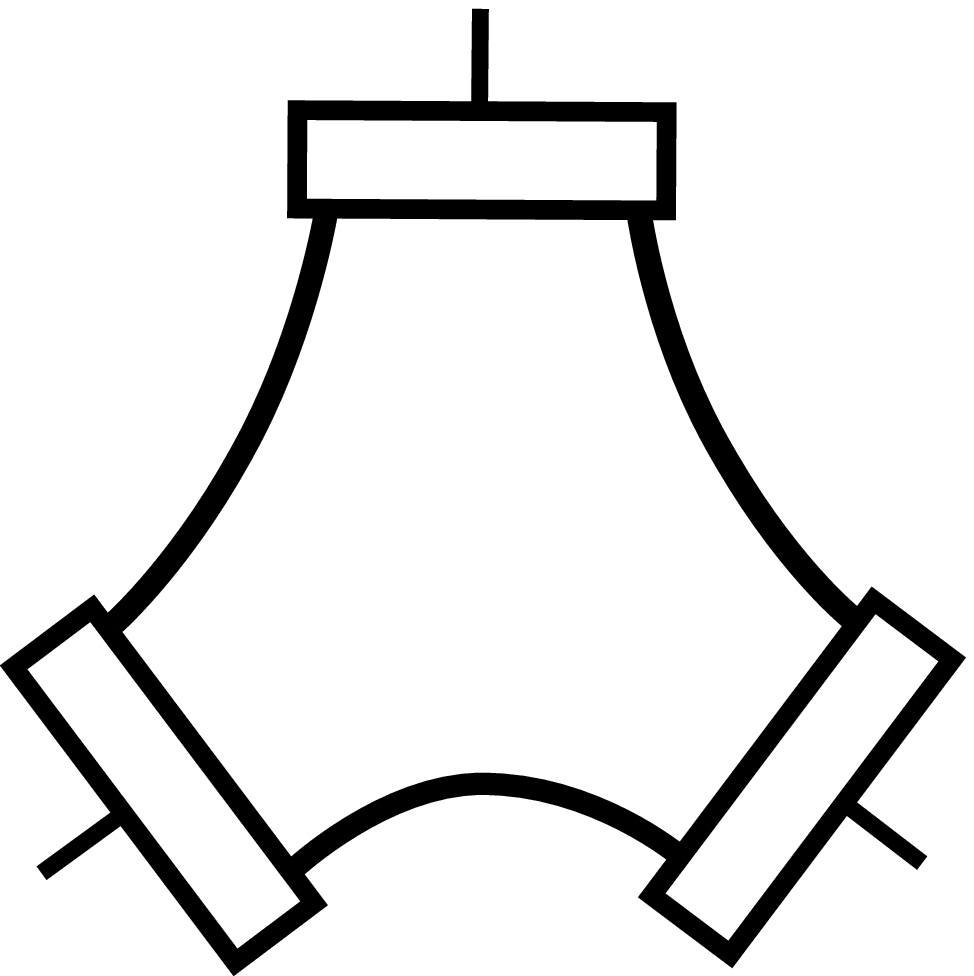}}
          \put(-50,95){$2n$}
          \put(-10,-2){$2n$}
         \put(-90,-2){$2n$}
          \end{minipage}
            \end{eqnarray}
where $S$ is the skein element obtained from $G$ by replacing every vertex by a circle colored $n$ and every edge by an idempotent that connects two circles. Write $\alpha^*_n$ to denote the skein element on the right hand side of (\ref{onemoretime}) then the result follows by noticing
\begin{eqnarray*}
 A(q)&\doteq_n& \alpha^*_n(\tau_{2n,2n,2n})/\tau^*_{2n,2n,2n}(\tau_{2n,2n,2n})\\&=& \alpha^*_n(\tau_{2n,2n,2n})/\theta(2n,2n,2n)
 \\&\doteq_n& \alpha^*_n(\tau_{2n,2n,2n})/(q^2;q)_{\infty}.
\end{eqnarray*}
(2)  Identity (\ref{equationtwo2}) is equivalent to 

 \begin{eqnarray}
    \begin{minipage}[h]{0.15\linewidth}
         \vspace{-10pt}
         \scalebox{0.27}{\includegraphics{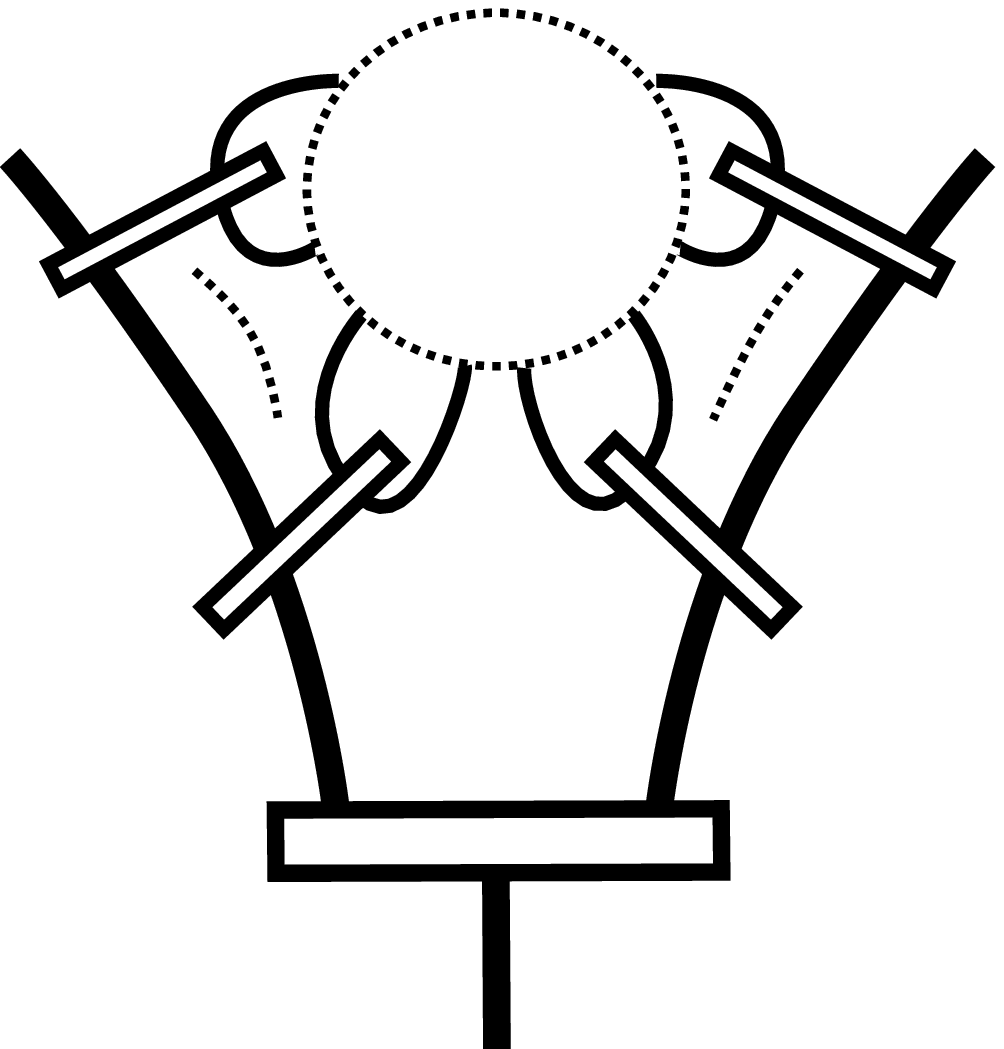}}
          \put(-42,+62){\small{$S$}}
           \end{minipage}&\doteq_n&B(q)
   \begin{minipage}[h]{0.21\linewidth}
        \vspace{0pt}
        \scalebox{0.27}{\includegraphics{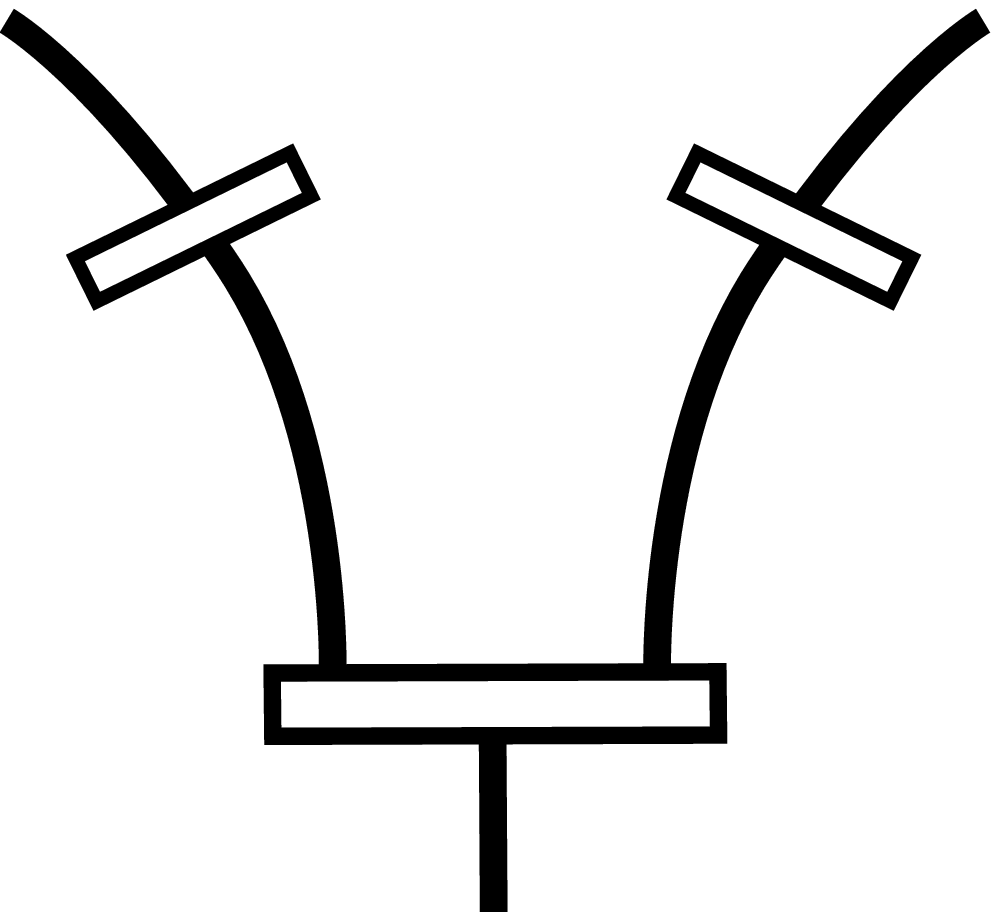}}
          \end{minipage}
            \end{eqnarray}
where $S$ is the skein element obtained from $G$ as explained in (1). Furthermore,  
\begin{eqnarray*}
 B(q)&\doteq_n& \beta^*_n(\tau_{n,n,2n})/\tau^*_{n,n,2n}(\tau_{2n,2n,2n})\\&=& \beta^*_n(\tau_{n,n,2n})/\theta(n,n,2n)
 \\&\doteq_n& \beta^*_n(\tau_{2n,2n,2n}).
\end{eqnarray*}
The result follows.
\end{proof}
As mentioned in the previous section, Armond and Dasbach \cite{Cody} showed that if $G_1$ and $G_2$ are reduced graphs then the product of the tails $T_{G_1}$ and $T_{G_2}$ is equal to the tail of the graph $G_1*G_2$ obtained from $G_1$ and $G_2$ by gluing one edge from $G_1$ and another edge from $G_2$. In other words the following identity holds
\begin{equation}
T_{G_1}T_{G_2}=T_{G_1*G_2}.
\end{equation}
Theorem \ref{newproduct} (2) merely a restatement of this result. On the other hand, Theorem \ref{newproduct} (1) implies immediately the following result.
\begin{corollary}
\label{mustafaproduct}
The tail of reduced graphs satisfies the following product: 
 \begin{eqnarray}
              T\Bigg( \begin{minipage}[h]{0.12\linewidth}
         \vspace{-6pt}
         \scalebox{0.38}{\includegraphics{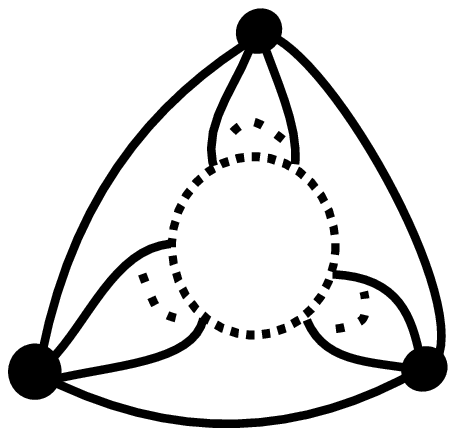}}
          \put(-26,+18){\small{$G_1$}}
           \end{minipage}\Bigg)
            T\Bigg( \begin{minipage}[h]{0.12\linewidth}
         \vspace{-6pt}
         \scalebox{0.38}{\includegraphics{graphid_12}}
          \put(-26,+18){\small{$G_2$}}
           \end{minipage}\Bigg)&\doteq_n&T\Bigg( \begin{minipage}[h]{0.10\linewidth}
        \vspace{0pt}
        \scalebox{0.36}{\includegraphics{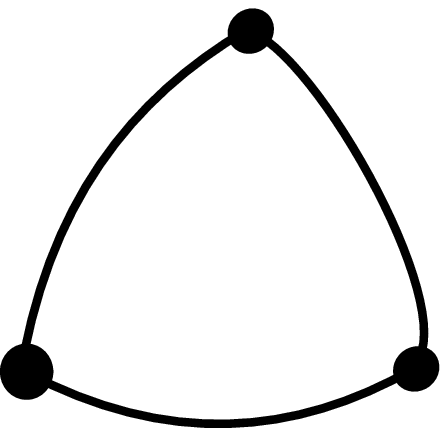}}
         \end{minipage}\Bigg)
  T\Bigg( \begin{minipage}[h]{0.21\linewidth}
        \vspace{0pt}
        \scalebox{0.36}{\includegraphics{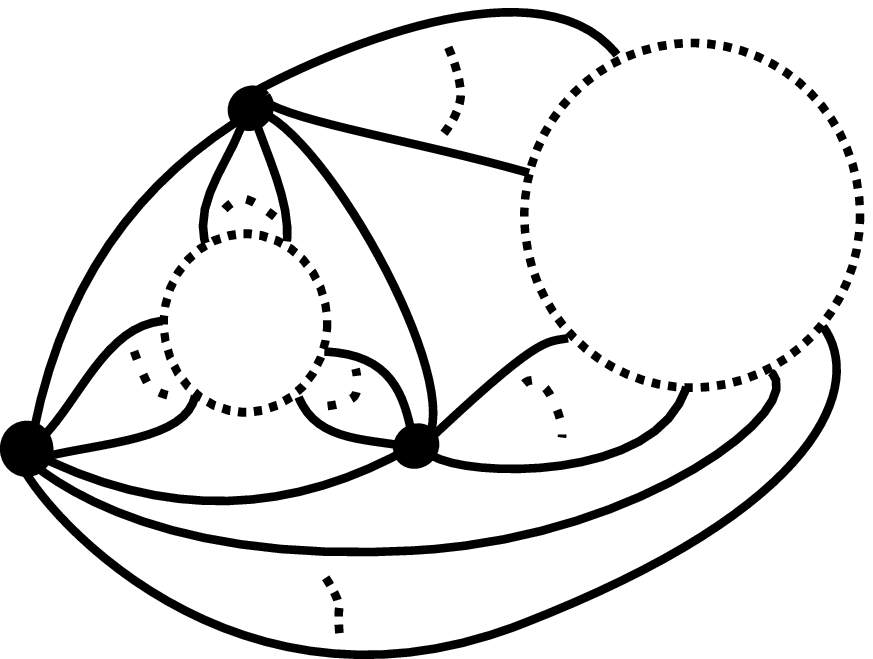}}
         \put(-20,+43){\small{$G_2$}}
         \put(-70,+32){\small{$G_1$}}
          \end{minipage}\Bigg)
            \end{eqnarray}
     \end{corollary}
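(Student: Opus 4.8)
The plan is to reduce the claim to repeated applications of Theorem~\ref{newproduct}(1), which extracts from any subgraph attached at a $\tau_{2n,2n,2n}$ vertex a scalar power-series factor while leaving behind a $\Theta(2n,2n,2n)$. Write $\Gamma_{G_1}$ and $\Gamma_{G_2}$ for the two factors on the left-hand side, $\Theta(2n,2n,2n)$ for the first factor on the right-hand side, and $\Gamma_{12}$ for the second (combined) graph carrying both $G_1$ and $G_2$. First I would apply Theorem~\ref{newproduct}(1) separately to $\Gamma_{G_1}$ and to $\Gamma_{G_2}$, producing power series $A_1(q)$ and $A_2(q)$ with
\begin{equation*}
T(\Gamma_{G_1}) \doteq_n A_1(q)\, T\big(\Theta(2n,2n,2n)\big), \qquad T(\Gamma_{G_2}) \doteq_n A_2(q)\, T\big(\Theta(2n,2n,2n)\big).
\end{equation*}
Since the first $n$ coefficients of a product in $\mathbb{Z}[[q]]$ depend only on the first $n$ coefficients of the factors, the relation $\doteq_n$ is compatible with multiplication, so these combine to give
\begin{equation*}
T(\Gamma_{G_1})\, T(\Gamma_{G_2}) \doteq_n A_1(q)\, A_2(q)\, T\big(\Theta(2n,2n,2n)\big)^2.
\end{equation*}

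Next I would treat the combined graph $\Gamma_{12}$. The essential point is that the reduction in Theorem~\ref{newproduct}(1) is local: as in Remark~\ref{important}, the factor attached to a subgraph sitting at a $\tau_{2n,2n,2n}$ vertex equals $\alpha^*_n(\tau_{2n,2n,2n})/(q^2;q)_\infty$, which depends only on that subgraph. Because $T_{2n,2n,2n}$ is one-dimensional, after reducing the $G_1$-block the remaining skein element is forced to be a scalar multiple of $\tau_{2n,2n,2n}$, so the $G_2$-block is untouched and can then be reduced independently by the same factor. Applying Theorem~\ref{newproduct}(1) once to each block therefore yields
\begin{equation*}
T(\Gamma_{12}) \doteq_n A_1(q)\, A_2(q)\, T\big(\Theta(2n,2n,2n)\big).
\end{equation*}
Multiplying by $T(\Theta(2n,2n,2n))$ and invoking transitivity of $\doteq_n$ against the earlier display then gives
\begin{equation*}
T\big(\Theta(2n,2n,2n)\big)\, T(\Gamma_{12}) \doteq_n A_1(q)\, A_2(q)\, T\big(\Theta(2n,2n,2n)\big)^2 \doteq_n T(\Gamma_{G_1})\, T(\Gamma_{G_2}),
\end{equation*}
which is the asserted identity.

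The step I expect to be the main obstacle is justifying that the two reductions inside $\Gamma_{12}$ are genuinely independent and reproduce exactly the same series $A_1(q)$ and $A_2(q)$ as the single-block reductions; everything else is formal bookkeeping. This independence rests on the one-dimensionality of $T_{2n,2n,2n}$ together with the locality recorded in Remark~\ref{important}, and on the elementary facts that $\doteq_n$ is transitive and respects products in $\mathbb{Z}[[q]]$, both immediate from the observation that low-degree coefficients of a product are determined by the low-degree coefficients of its factors.
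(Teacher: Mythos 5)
Your proposal is correct and takes essentially the same route as the paper: the paper offers no separate argument, stating only that Theorem \ref{newproduct}(1) ``implies immediately'' the corollary, and your write-up is exactly that intended argument made explicit --- apply the theorem once to each stand-alone graph and twice inside the combined graph, with the factors $A_1(q)$ and $A_2(q)$ intrinsic to their blocks by the one-dimensionality of $T_{2n,2n,2n}$ and the locality recorded in Remark \ref{important}, then compare both sides to $A_1(q)A_2(q)\,T\big(\Theta(2n,2n,2n)\big)^2$ using that $\doteq_n$ respects products and is transitive.
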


Note that Theorem \ref{states} is a special case of (\ref{equationone1}) (1) and it can be stated as:
      \begin{eqnarray}
      \label{graa}T\Bigg(
    \begin{minipage}[h]{0.15\linewidth}
         \vspace{-6pt}
         \scalebox{0.30}{\includegraphics{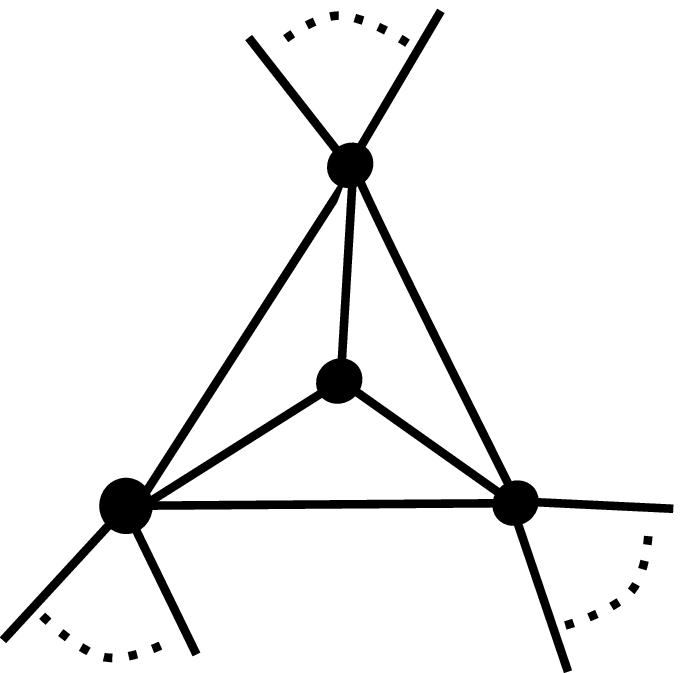}}
           \end{minipage}\Bigg)&\doteq_n&\Lambda(q)
   T\Bigg(\begin{minipage}[h]{0.14\linewidth}
        \vspace{-6pt}
        \scalebox{0.30}{\includegraphics{graphid_8}}
          \end{minipage}\Bigg)
            \end{eqnarray}
The following examples illustrate how one could apply the results obtained in section \ref{section4} to compute the tail of a reduced graph.
\begin{example}
\begin{eqnarray*}
    T\Bigg(\begin{minipage}[h]{0.16\linewidth}
         \vspace{-7pt}
         \scalebox{0.2}{\includegraphics{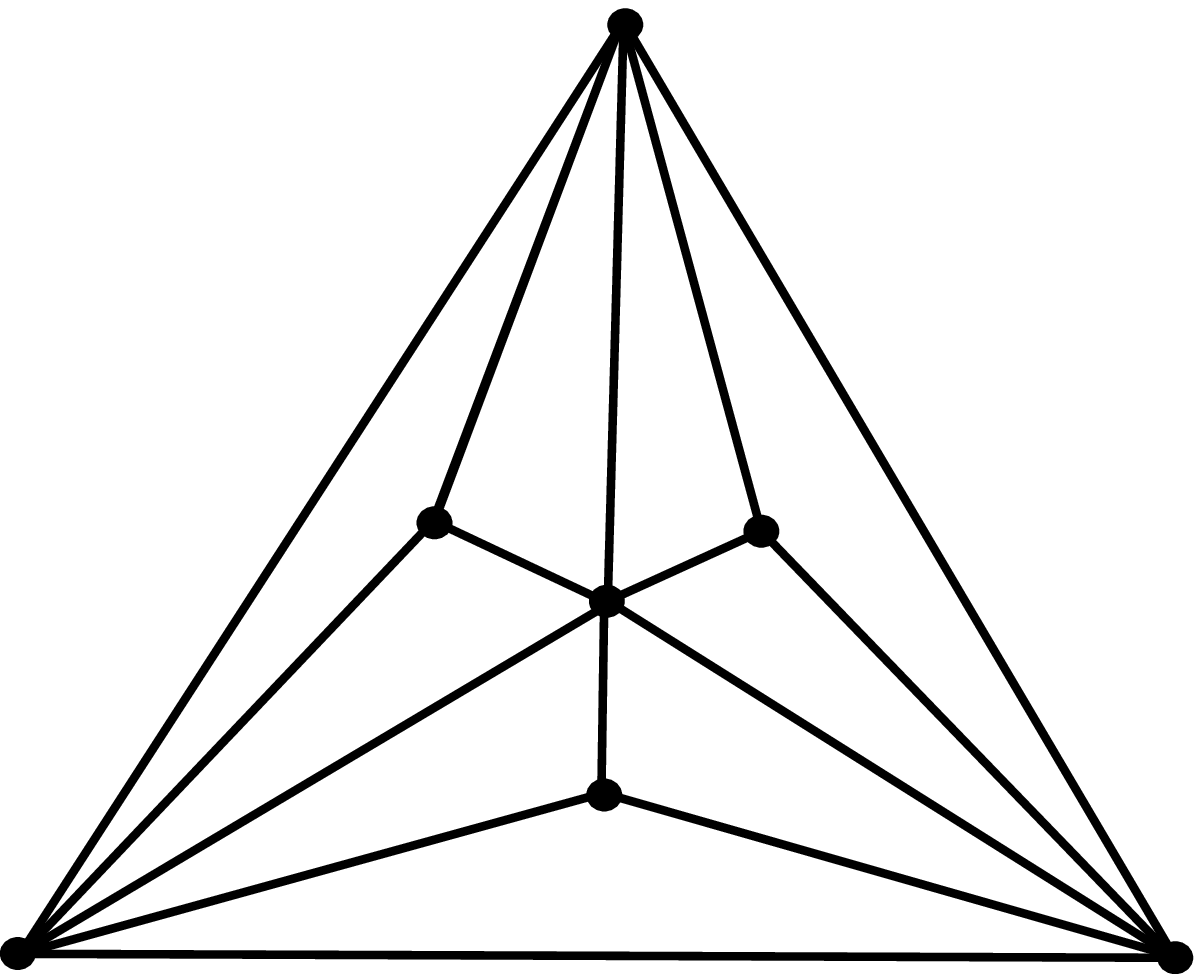}}
         \end{minipage}\Bigg)&\doteq_n&(\Lambda(q))^4
   T\Bigg(\begin{minipage}[h]{0.16\linewidth}
        \vspace{0pt}
        \scalebox{0.2}{\includegraphics{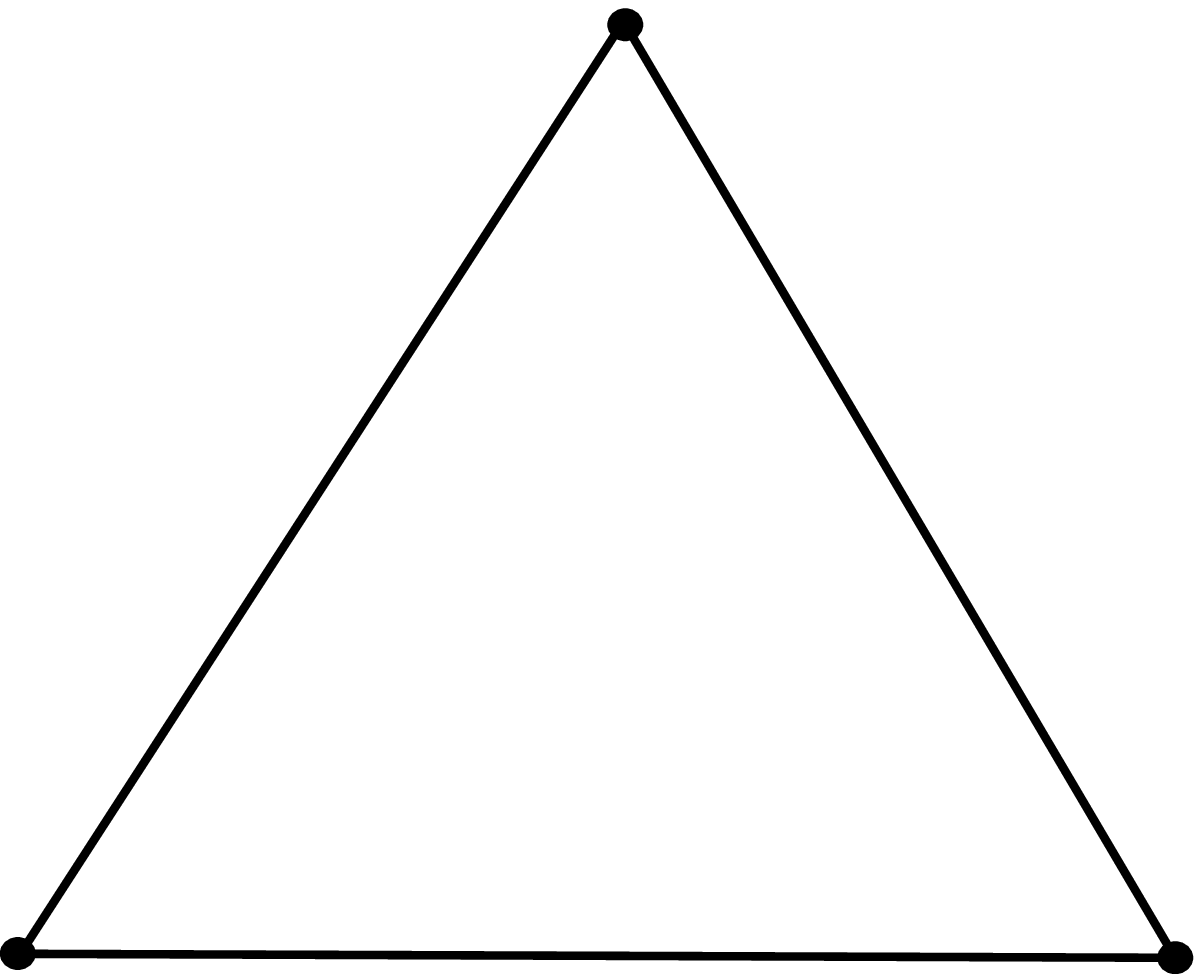}}
    
           \end{minipage}\Bigg)\doteq_n
           \begin{minipage}[h]{0.10\linewidth}
         \vspace{-7pt}
        $(\Lambda (q))^4(q;q)_{\infty}$.\end{minipage}. 
  \end{eqnarray*}Where in the first equality we used equation (\ref{graa}) and in the second equality we used the fact that the tail of a triangle is the same as the tail of $\Theta(2n,2n,2n)$ which is just $(q^2;q)_{\infty}$. Recall here that we normalize tail by dividing by $\Delta_n$. See remark \ref{normalize}.
\end{example}
\begin{example}
Let $G_m$ be the reduced graph in the Figure \ref{ba3sa}. 

\begin{figure}[H]
  \centering
   {\includegraphics[scale=0.14]{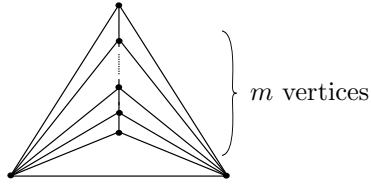}
    \put(5,30){\small{$m$ vertices}}
    \caption{The graph $G_m$}
  \label{ba3sa}}
\end{figure}
Then the tail of this graph can be computed as follows:
\begin{eqnarray*}
    T(G_m)&\doteq_n&(\Lambda(q))^m
   T\Bigg(\begin{minipage}[h]{0.16\linewidth}
        \vspace{0pt}
        \scalebox{0.2}{\includegraphics{bary1}}
    
           \end{minipage}\Bigg)\doteq_n
           \begin{minipage}[h]{0.10\linewidth}
         \vspace{-7pt}
        $(\Lambda (q))^m(q;q)_{\infty}$.\end{minipage}. 
  \end{eqnarray*}Here used again equation (\ref{graa}) in the first equality.
\end{example}
\begin{example}
\label{cccc}
Let $k\geq 1$ and $l\geq 0$. Let $G_{k,l}$ be the reduced graph in the Figure \ref{ba3sa2}. 

\begin{figure}[H]
  \centering
   {\includegraphics[scale=0.16]{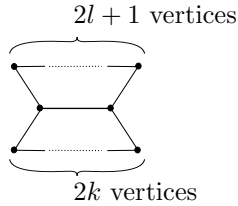}
    \put(-27,58){\small{$2l+1$ vertices}}
    \put(-27,-9){\small{$2k$ vertices}}
    \caption{The graph $G_{k,l}$}
  \label{ba3sa2}}
\end{figure}
Then using Theorem \ref{complicated111} one could see that
\begin{eqnarray*}
    T(G_{k,l})&\doteq_n&\Psi(q^{2k+1},q)
   T\Bigg(\begin{minipage}[h]{0.14\linewidth}
        \vspace{0pt}
        \scalebox{0.17}{\includegraphics{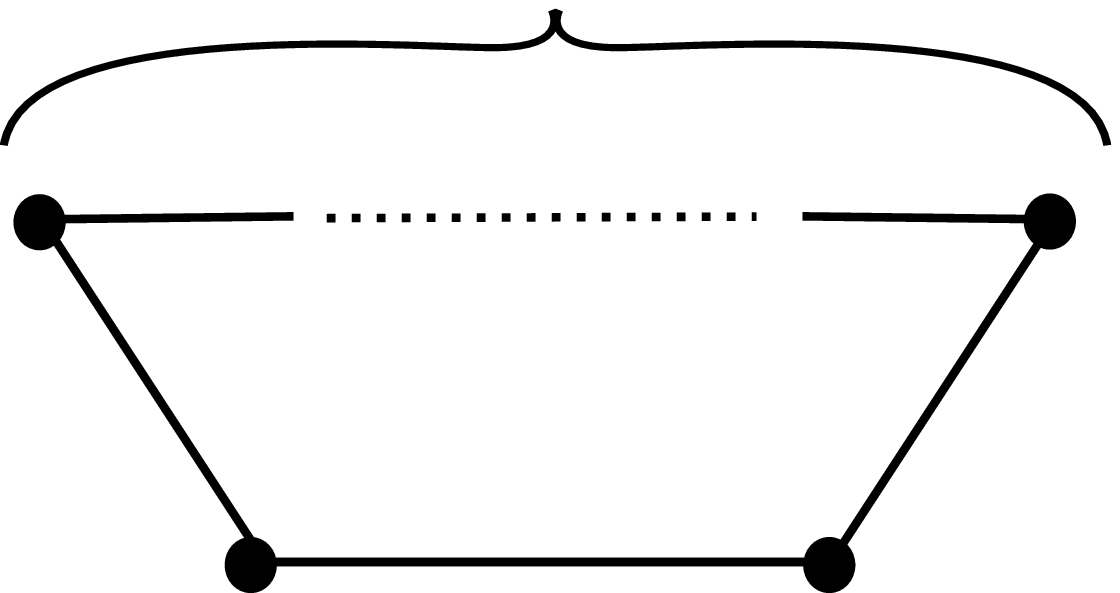}}
    	 \put(-40,33){\tiny{$2l+1$ vertices}}
           \end{minipage}\Bigg)\doteq_n
           \begin{minipage}[h]{0.10\linewidth}
         \vspace{-0pt}
        $\Psi(q^{2k+1},q)f(-q^{2l+2},q)$.\end{minipage} 
  \end{eqnarray*}
\end{example}
\begin{remark}
Example \ref{cccc} can be computed also using the techniques of Armond and Dasbach in \cite{Cody}.
\end{remark}

\subsection{The tail of the Colored Jones polynomial and Andrews-Gordon identities}
The fact that the tail of an alternating link $L$ is a well-defined $q$-power series invariant implies that any two expressions of the tail of $L$ are equal. This can be used to prove various $q$-identities and it was first utilized by Armond and Dasbach in \cite{Cody} where they showed that the Andrews-Gordon identity for the theta function can be proven using two methods to compute the of the tail of the $(2,2k+1)$ torus knots. In particular Armond and Dasbach use $R$-matrices and a combinatorial version of the quantum determinant formulation of Huynh and Le \cite{Thang1} developed by Armond \cite{Cody3} to compute the colored Jones polynomial of the $(2,2k+1)$ torus knot. These computations are then used to obtain two expressions of the tail associated with the $(2,2k+1)$ torus knot. The $q$-series they obtained are precisely the two sides of the Andrews-Gordon identity for the theta function. In this section we show that the skein theoretic techniques developed in this paper can be used to prove the following false theta function identity: 

\begin{eqnarray*}\sum\limits_{i=0}^{\infty} q^{ki^2+(k-1)i}-\sum\limits_{i=1}^{\infty} q^{k(i^2-i)+i}=(q,q)_{\infty}\sum\limits_{l_1=0}^{\infty}\sum\limits_{l_{2}=0}^{\infty}...\sum\limits_{l_{k-1}=0}^{\infty}\frac{q^{\sum\limits_{j=1}^{k-1}(i_j(i_j+1))}}{(q,q)^2_{l_{k-1}}\prod\limits_{j=1}^{k-2}(q,q)_{l_j}}
\end{eqnarray*}
with $k\geq $2 and $i_j=\sum\limits_{s=j}^{k-1}l_s$.\\

We also show that the same skein theoretic techniques can be applied to prove the Andrews-Gordon identity for the theta function \ref{and1}. Our techniques to prove the identity (\ref{and1}) has the advantage over the ones in \cite{Cody} in that our method restricts the tools used to prove this identity to skein theory.\\

Denote the torus knot $(2,f)$ in Figure \ref{TK} by $K_f$.
\begin{figure}[H]
  \centering
   {\includegraphics[scale=0.23]{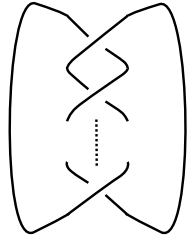}
    \caption{The $(2,f)$ torus knot}
  \label{TK}}
\end{figure}
\begin{theorem}
\label{usingskein}

\begin{enumerate}
\item For all $k\geq $2:
\begin{eqnarray*}\sum\limits_{i=0}^{\infty} q^{ki^2+(k-1)i}-\sum\limits_{i=1}^{\infty} q^{k(i^2-i)+i}=(q,q)_{\infty}\sum\limits_{l_1=0}^{\infty}\sum\limits_{l_{2}=0}^{\infty}...\sum\limits_{l_{k-1}=0}^{\infty}\frac{q^{\sum\limits_{j=1}^{k-1}(i_j(i_j+1))}}{(q,q)^2_{l_{k-1}}\prod\limits_{j=1}^{k-2}(q,q)_{l_j}}
\end{eqnarray*}
with $i_j=\sum\limits_{s=j}^{k-1}l_s$.
\item (The Andrews-Gordon identity for the theta function) For all $k\geq $1
\begin{eqnarray*}\sum\limits_{i=0}^{\infty}(-1)^i q^{k(i^2+i)}q^{i(i-1)/2}+\sum\limits_{i=1}^{\infty}(-1)^i q^{k(i^2-i)}q^{i(i+1)/2}=(q,q)_{\infty}\sum\limits_{l_1=0}^{\infty}\sum\limits_{l_{2}=0}^{\infty}...\sum\limits_{l_{k-1}=0}^{\infty}\frac{q^{\sum\limits_{j=1}^{k-1}(i_j(i_j+1))}}{\prod\limits_{j=1}^{k-1}(q,q)_{l_j}}
\end{eqnarray*}
with $i_j=\sum\limits_{s=j}^{k-1}l_s$.
\end{enumerate}

\end{theorem}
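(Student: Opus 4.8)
The plan is to compute the tail of the colored Jones polynomial of the torus knot $K_f=(2,f)$ in two genuinely independent ways and then to equate them; since this tail is a well-defined element of $\mathbb{Z}[[q]]$ by Theorem \ref{cody thm}, the two resulting $q$-series must agree. I would obtain part (2) from the odd torus knots $K_{2k-1}$ and part (1) from the even torus links $K_{2k-2}$, so that the parity of $f$ is precisely what separates the genuine theta function from the false theta function.

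For the first (skein) computation I would invoke Theorem \ref{cody thm}: the normalized tail of $\tilde{J}_{n,K_f}$ coincides, up to the usual common sign, with the evaluation of the all-$B$ skein element $S_B^{(n)}(K_f)$. For the standard alternating diagram of $(2,f)$ this decorated all-$B$ state is exactly a closed chain of $f$ bubbles attached along a $2n$-colored spine, i.e. one of the diagrams treated in the theorem preceding Corollary \ref{torususe}. That theorem, applied through an adequate closure of $\tau_{n,n,2n}$ and combined with the elementary fact $(q,q)_n\doteq_n(q,q)_\infty$, then expresses the tail as $(q,q)_\infty$ times the iterated sum on the right-hand side of the identity: the even case $f=2k-2$ produces the squared factor $(q,q)^2_{l_{k-1}}$ of part (1), and the odd case $f=2k-1$ produces the product $\prod_j(q,q)_{l_j}$ of part (2). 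This half is essentially already carried out in the paper; it only remains to recognize the torus diagram as the relevant bubble chain.

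For the second (twist) computation I would fuse the two parallel $n$-colored strands of the $2$-braid $\sigma_1^{\,f}$ by the fusion rule (\ref{properties2}), writing them as $\sum_{c}\frac{\Delta_c}{\Theta(n,n,c)}\,\tau_c$ with $c$ even and $0\le c\le 2n$. The $f$ crossings multiply the color-$c$ channel by $\lambda_c^{\,f}$, where $\lambda_c=(-1)^{\,n-c/2}A^{(c(c+2)-2n(n+2))/2}$ is the half-twist eigenvalue, and closing up returns a factor $\Theta(n,n,c)$ that cancels the fusion denominator, leaving
\begin{equation*}
\tilde{J}_{n,K_f}\doteq(\text{framing})\sum_{c\ \mathrm{even}}\lambda_c^{\,f}\,\Delta_c .
\end{equation*}
Substituting $c=2i$, using $\Delta_{2i}=[2i+1]=\dfrac{q^{(2i+1)/2}-q^{-(2i+1)/2}}{q^{1/2}-q^{-1/2}}$ together with $A^{4}=q$, and letting $n\to\infty$ coefficientwise, the truncated sum turns into a bilateral theta series; comparison with the specializations of $f(a,b)$ and $\Psi(a,b)$ recorded in \ref{NT} then identifies it with the left-hand side of the claimed identity.

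The hard part is this second computation, and specifically the sign bookkeeping. The power $\lambda_c^{\,f}$ contributes a factor $(-1)^{\,if}$ (after absorbing the $c$-independent $(-1)^{nf}$ into the framing), while $[2i+1]$ is itself a difference $q^{(2i+1)/2}-q^{-(2i+1)/2}$, so each term $\lambda_{2i}^{\,f}\Delta_{2i}$ splits into two pieces that, after reindexing, assemble the two half-sums occurring in the definitions of $f(a,b)$ and $\Psi(a,b)$. When $f$ is odd the factor $(-1)^{if}=(-1)^i$ is present and one lands on the genuine theta function $f(-q^{2k},-q)$; when $f$ is even the sign is trivial and one lands on the false theta function $\Psi(q^{2k-1},q)$. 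Pinning down this $(-1)^{if}$, the exact quadratic exponent $f\,i(i+1)/2$, and the justification that the partial sum over $0\le c\le 2n$ already fixes the first $n$ coefficients of the limit, is where the real work lies; equating the two computations then yields both identities at once.
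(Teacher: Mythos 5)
Your proposal is in substance the paper's own proof: the paper likewise computes $\tilde{J}_{n,K_f}(q)/\Delta_n(q)$ skein-theoretically via fusion and half-twist eigenvalues, obtaining $\sum_{i=0}^{n}(-1)^{f(n-i)}q^{f(2i+2i^2-2n-n^2)/4}\Delta_{2i}(q)/\Delta_n(q)$ and recognizing the limit as $\Psi(q^{2k-1},q)$ or $f(-q^{2k},-q)$ according to the parity of $f$, then equates this with the bubble-chain evaluation of the all-$B$ state $S_B^{(n)}(K_f)$ through Theorem \ref{cody thm} and Corollary \ref{torususe}, exactly as you outline. The one slip is an off-by-one in your choice of torus knots: to obtain the identities with parameter $k$ as stated you need $f=2k$ for part (1) and $f=2k+1$ for part (2), whereas your $K_{2k-2}$ and $K_{2k-1}$ yield the same family shifted to parameter $k-1$ and degenerate at the boundary cases $k=2$ and $k=1$ respectively.
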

\begin{proof}
\begin{enumerate}
\item
 Using linear skein theory Kauffman bracket one can easily compute the colored Jones polynomial of $K_f$.  See \cite{Kauffman1} or \cite{Lickorish1} for more details about skein theory.
\begin{equation*}
\tilde{J}_{n,K_f}(q)/\Delta_n(q)=\frac{1}{\Delta_n(q)}\sum\limits_{i=0}^{n}(-1)^{f(n-i)} q^{f(2 i + 2 i^2 - 2 n - n^2)/4}\Delta_{2i}(q).
\end{equation*}
Hence,
\begin{equation}
\label{jonestorus}
\tilde{J}_{n,K_f}(q)/\Delta_n(q)\doteq_n \sum\limits_{i=0}^{n}(-1)^{fi} q^{f(i + i^2)/2}(q^{-i} - q^{1 + i}).
\end{equation}
If $f=2k$, then we can rewrite the previous equation:
\begin{eqnarray*}
\label{ann2}
\sum\limits_{i=0}^{n} q^{2k(i + i^2)/2}(q^{-i} - q^{1 + i})&=&\sum\limits_{i=0}^{n} q^{-i + i k + i^2 k}-\sum\limits_{i=0}^{n} q^{1 + i + i k + i^2 k}\\&=_n&\sum\limits_{i=0}^{\infty} q^{ki^2+(k-1)i}-\sum\limits_{i=1}^{\infty} q^{k(i^2-i)+i}
\end{eqnarray*}
Hence
\begin{equation}
\tilde{J}_{n,K_{2k}}(q)/\Delta_n(q)\doteq_n\Psi(q^{2k-1},q)
\end{equation}
On the other hand theorem \ref{cody thm} implies
    \begin{eqnarray*}\tilde{J}_{n,K_{2k}}(q)/\Delta_n(q)&\doteq_n&\frac{1}{\Delta_n(q)}
    \begin{minipage}[h]{0.18\linewidth}
         \vspace{0pt}
         \scalebox{0.30}{\includegraphics{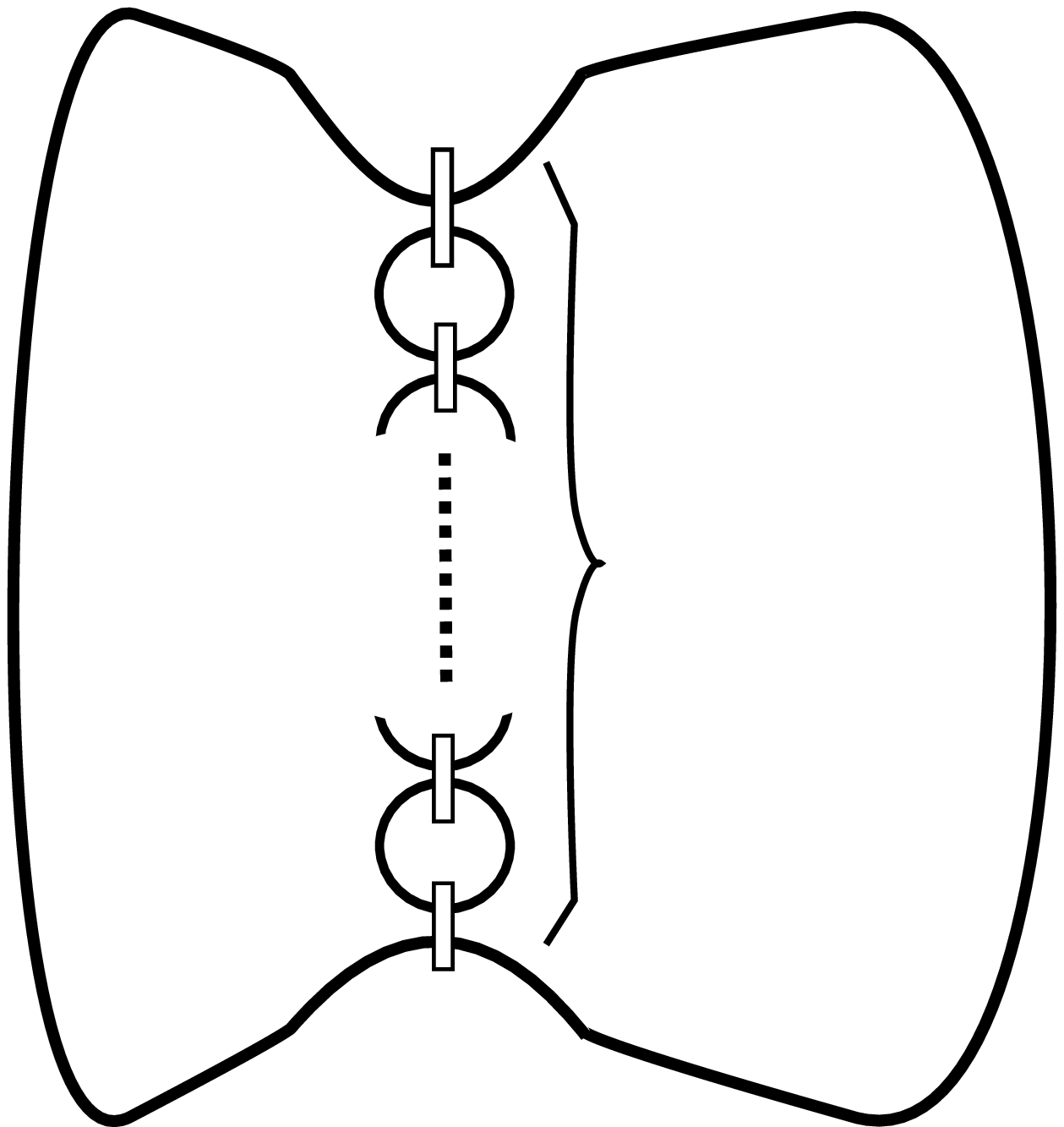}}
          \put(-37,+67){\footnotesize{$2k-1$}}
          \put(-40,+57){\footnotesize{ bubbles}}
           \end{minipage}
            \end{eqnarray*}
 and the tail of the skein element in the previous equation can be computed from corollary \ref{torususe} and we can obtain
 \begin{eqnarray}
\label{ann1} 
 \tilde{J}_{n,K_{2k}}(q)/\Delta_n(q)&\doteq_n&(q,q)_n\sum\limits_{l_1=0}^{n}\sum\limits_{l_{2}=0}^{n}...\sum\limits_{l_{k-1}=0}^{n}\frac{q^{\sum\limits_{j=1}^{k-1}(i_j(i_j+1))}}{(q,q)^2_{l_{k}}\prod\limits_{j=1}^{k-2}(q,q)_{l_j}}
            \end{eqnarray}
with $i_j=\sum\limits_{s=j}^{k-1}l_s$.            
Equations (\ref{ann2}) and (\ref{ann1}) yield the result.
\item
Substituting ($f=2k+1$) in (\ref{jonestorus}), we obtain
\begin{eqnarray*}
\label{jonestorus2}
\tilde{J}_{n,K_{2k+1}}(q)/\Delta_n(q)&\doteq_n& \sum\limits_{i=0}^{n}(-1)^{i} q^{(2k+1)(i + i^2)/2}(q^{-i} - q^{1 + i})\\&=&\sum\limits_{i=0}^{n}(-1)^{i} q^{-i/2 + i^2/2 + i k + i^2 k}-\sum\limits_{i=0}^{n}(-1)^{i} q^{1 + 3i/2 + i^2/2 + i k + i^2 k}\\&=&\sum\limits_{i=0}^{n}(-1)^{i} q^{-i/2 + i^2/2 + i k + i^2 k}-\sum\limits_{i=1}^{n}(-1)^{i} q^{i/2 + i^2/2 - i k + i^2 k}
\end{eqnarray*}
Hence
\begin{equation}
\label{final1}
\tilde{J}_{n,K_{2k+1}}(q)/\Delta_n(q)\doteq_nf(-q^{2k},-q)
\end{equation}
Theorem \ref{cody thm} implies
    \begin{eqnarray*}\tilde{J}_{n,K_{2k+1}}(q)/\Delta_n(q)&\doteq_n&\frac{1}{\Delta_n(q)}
    \begin{minipage}[h]{0.18\linewidth}
         \vspace{0pt}
         \scalebox{0.30}{\includegraphics{torusknotBstate}}
            \put(-35,+65){\footnotesize{$2k$}}
          \put(-43,+55){\footnotesize{ bubbles}}
           \end{minipage}
            \end{eqnarray*}
However corollary \ref{torususe} implies
\begin{eqnarray}
\label{final2}
\frac{1}{\Delta_n(q)}
\begin{minipage}[h]{0.26\linewidth}
         \vspace{0pt}
         \scalebox{0.30}{\includegraphics{torusknotBstate}}
            \put(-35,+65){\footnotesize{$2k$}}
          \put(-43,+55){\footnotesize{ bubbles}}
           \end{minipage}\doteq_n(q,q)_{\infty}\sum\limits_{l_1=0}^{\infty}\sum\limits_{l_{2}=0}^{\infty}...\sum\limits_{l_{k-1}=0}^{\infty}\frac{q^{\sum\limits_{j=1}^{k-1}(i_j(i_j+1))}}{\prod\limits_{j=1}^{k-1}(q,q)_{l_j}}
\end{eqnarray}
with $i_j=\sum\limits_{s=j}^{k-1}l_s$.
Equations (\ref{final1}) and (\ref{final2}) yield the result.
\end{enumerate}
\end{proof}

\end{document}